\newcommand{\mainhcsref}{\ref{main_hcs}}	
\newtheorem{thrm}{\bf Theorem}[section]
\newtheorem{cor}[thrm]{\bf Corollary} 
\newtheorem{remark}[thrm]{\bf Remark} 
\newtheorem{lemma}[thrm]{\bf Lemma} 
\newtheorem{definition}[thrm]{\bf Definition}
\newcommand{\bigast}{\mathop{\scalebox{1.5}{\raisebox{-0.2ex}{$\ast$}}}}
\newcommand{\lnorm}{\mathop{\langle\langle}}
\newcommand{\rnorm}{\mathop{\rangle\rangle}}
\newcommand{\C}{\mathop{\mathcal{C}}}
\newcommand{\PT}{\mathop{\mathcal{PT}}}
\newcommand{\NC}{\mathop{\mathcal{N_C}}}
\newcommand{\BC}{\mathop{\mathcal{B_C}}}
\newcommand{\proC}{\text{pro-}\mathcal{C}}
\newcommand{\CCS}{\mathop{\mathcal{C}\text{-CS}}}
\newcommand{\CHCS}{\mathop{\mathcal{C}\text{-HCS}}}
\DeclareMathOperator{\specam}{\star}
\DeclareMathOperator{\normleq}{\unlhd}
\DeclareMathOperator{\Aut}{\mathop{Aut}}
\DeclareMathOperator{\End}{\mathop{End}}
\DeclareMathOperator{\sstar}{\mathop{star}}
\DeclareMathOperator{\link}{\mathop{link}}
\DeclareMathOperator{\FL}{\mathop{FL}}
\DeclareMathOperator{\LL}{\mathop{LL}}
\DeclareMathOperator{\supp}{\mathop{supp}}
\DeclareMathOperator{\PC}{\mathop{Pc}}
\DeclareMathOperator{\id}{\mathop{id}}               
\title{On conjugacy separability of graph products of groups}
\author{Michal Ferov}
\address{Building 54, Mathematical Sciences, University of Southampton, Highfield, Southampton SO17 1BJ, UK}
\email{michalferov@gmail.com}
\keywords{graph products, hereditary conjugacy separability, conjugacy separability, pro-$\C$ topology.}
\subjclass[2010]{20E06, 20E26, 20E45, 20F36, 20F55, 20F65}
\date{}     
\begin{document}
 
\begin{abstract}
	We show that the class of $\C$-hereditarily conjugacy separable groups is closed under taking arbitrary graph products whenever the class $\C$ is an extension closed variety of finite groups. As a consequence we show that the class of $\C$-conjugacy separable groups is closed under taking arbitrary graph products. In particular, we show that right angled Coxeter groups are hereditarily conjugacy separable and 2-hereditarily conjugacy separable, and we show that infinitely generated right angled Artin groups are hereditarily conjugacy separable and $p$-hereditarily conjugacy separable for every prime number $p$.
\end{abstract}

\maketitle
\tableofcontents

\section{Introduction}

Let $G$ be a group. We say that $G$ is \emph{residually finite} (RF) if for every non-trivial element $g \in G$ there exists a a finite group $F$ and a group homomorphism $\phi \colon G \to F$ such that $\phi(g) \neq 1$ in $F$. We say that the group $G$ is \emph{conjugacy separable} (CS) if for every pair of elements $f,g \in G$ such that $f$ is not conjugate to $g$ in $G$ there is a finite group $F$ and a homomorphism $\phi \colon G \to F$ such that $\phi(f)$ is not conjugate to $\phi(g)$ in $F$.
	
	Group properties of this type are called \emph{separability properties}. In this paper we will be dealing with conjugacy separability and its various generalisations.
\subsection{Motivation}
 	 Separability properties provide an algebraic analogue to solvability of decision problems for finitely presented groups. Mal'cev \cite{malcev} proved that finitely presented residually finite groups have solvable word problem. Similarly, Mostowski \cite{mostowski} showed that finitely presented conjugacy separable groups have solvable conjugacy problem.

The following classes of groups are known to be conjugacy separable: virtually free groups (Dyer \cite{dyer}), virtually polycyclic groups (Formanek \cite{polycyclic_formanek}, Remeslennikov \cite{polycyclic_remeslennikov}), virtually surface groups (Martino \cite{armando}), limit groups (Chagas and Zalesskii \cite{limit}), finitely generated right angled Artin groups (Minasyan \cite{raags}), even Coxeter groups whose diagram does not contain $(4,4,2)$-triangles (Caprace and Minasyan \cite{racgs}), finitely presented residually free groups (Chagas and Zalesskii \cite{chagas}), one-relator groups with torsion (Minasyan and Zalesskii \cite{1-rel}) and fundamental groups of compact orientable 3-manifolds (Hamilton, Wilton and Zalesskii \cite{compact}).

Conjugacy separability is similar to residual finiteness but is much stronger. It can be easily seen that every CS group is RF, but the implication in the opposite direction does not hold. Perhaps the easiest example of a RF group which is not CS was given by Stebe \cite{stebe_sl3z} and independently by Remeslenikov \cite{remeslennikov} when they proved that $\text{SL}_3(\mathbb{Z})$ is not CS.

It is easy to see that being residually finite is a hereditary property: if a group $G$ is RF then every $H \leq G$ is residually finite as well. Unlike residual finiteness, conjugacy separability does not behave well with respect to subgroups, not even of finite index. Martino and Minasyan \cite{martino} showed that for every integer $m \geq 2$ there exists a finitely presented conjugacy separable group $T$ that contains a subgroup $S \leq T$ such that $|T:S| = m$ and $S$ is not CS. We say that a group $G$ is \emph{hereditarily conjugacy separable} (HCS) if $G$ is conjugacy separable and if $H \leq G$ is of finite index in $G$ then $H$ is CS as well.

Let $\mathcal{C}$ be a class of finite groups (we will always assume that classes of finite groups are closed under isomorphisms) and let $G$ be a group.	We say that $G$ is \emph{residually}-$\C$ if for every non-trivial $g \in G$ there is a group $F \in \C$ and a homomorphism $\phi \colon G \to F$ such that $\phi(g)$ is non-trivial in $F$. Similarly, we say that $G$ is \emph{$\C$-conjugacy separable} ($\CCS$) if for every tuple $f,g \in G$ such that $f$ is not conjugate to $g$ in $G$ there is a group $F \in \C$ and a homomorphism $\phi \colon G \to F$ such that $\phi(f)$ is not conjugate to $\phi(g)$ in $F$. We say that $G$ is \emph{$\C$-hereditarily conjugacy separable} ($\CHCS$) if it is $\CCS$ and every subgroup $H \leq G$, 
open in pro-$\C$ topology, is $\CCS$ ($H$ is open in pro-$\C$ topology if and only if there is $K \normleq G$ such that $K \leq H$ and $G/K \in \C$ - see Section \ref{pro_C}). If the class $\C$ satisfies certain closure properties we can equip the group $G$ with the so called pro-$\C$ topology and use basic terminology and methods from point-set topology to significantly simplify our proofs. Basic properties of pro-$\C$ topologies on groups, their connection to closure properties of the class $\C$ and definitions of residually-$\C$, $\CCS$ and $\CHCS$ groups in terms of pro-$\C$ topologies are given in Section \ref{pro_C}.

We say that a class of finite groups $\C$ is an \emph{extension closed variety of finite groups} if it is closed under taking subgroups, direct products, quotients and extensions. The most common examples of extension closed varieties of finite groups would be the class of all finite $p$-groups, where $p$ is a prime number, the class of all finite soluble groups or the class of all finite groups.  

In this paper we study the behaviour of $\C$-(hereditary) conjugacy separability under group constructions, where the class $\C$ is an extension closed variety of finite groups. It is easy to see that a direct product of $\CCS$ groups is again a conjugacy separable group, similarly for hereditary conjugacy separability. It was proved by Stebe \cite{stebe} and independently by Remeslennikov \cite{remeslennikov} that the class of CS groups is closed under taking free products and using this result one can show that a free product of HCS groups is again an HCS group. In his paper \cite{toinet} Toinet proved a specialised version of Dyer's theorem: free-by-(finite $p$) groups are $p\text{-CS}$. This result was generalised by Ribes and Zalesskii \cite{zalesskii}: finitely generated free-by-$\C$ groups are $\CCS$ whenever $\C$ is an extension closed variety of finite groups. Using the result of Ribes and Zalesskii one could easily generalise the result of Stebe and Remeslennikov to $\CCS$ and $\CHCS$ groups. Our aim is to show that the property of being $\C$-(H)CS is stable under graph products, group theoretic construction naturally generalising direct and free products in the category of groups.

\subsection{Statement of the results}
By a graph we will always mean a simplicial graph: i.e. graph $\Gamma$ is a tuple $(V\Gamma, E\Gamma)$, where $V\Gamma$ is a set and $E\Gamma \subseteq \binom{V\Gamma}{2}$. We call $V\Gamma$ the set of vertices of $\Gamma$ and $E\Gamma$ the set of edges of $\Gamma$.

Let $\Gamma$ be a graph and let $\mathcal{G}=\{G_v|v \in V\Gamma\}$ be a family of groups indexed by the vertices of $\Gamma$. The \emph{graph product} $\Gamma \mathcal{G}$ is the quotient of the free product $\bigast_{v \in V\Gamma}G_v$ obtained by adding all the relations of the form
	\begin{displaymath}
		g_u g_v = g_v g_u \mbox{ for all $g_u \in G_u, g_v \in G_v$ such that $\{u,v\} \in E\Gamma$}.
	\end{displaymath}
The groups $G_v$ will be usually referred to as \emph{vertex groups}.

Clearly if $\Gamma$ is a complete graph then $\Gamma \mathcal{G}$ is equal to the direct product $\prod_{v \in V\Gamma}G_v$ and if $\Gamma$ is the totally disconnected graph, meaning that $E\Gamma = \emptyset$, the resulting graph product is equal to the free product $\bigast_{v \in V\Gamma}G_v$. We say that the group $\Gamma \mathcal{G}$ is a \emph{finite graph product} if the corresponding graph $\Gamma$ is finite. 

If $G_v = \mathbb{Z}$, the additive group of integers, for all $v \in V\Gamma$, then we are talking about \emph{right angled Artin groups} (RAAGs), and if $G_v = C_2$, the cyclic group of order 2, we are talking about \emph{right angled Coxeter groups} (RACGs). In a way, RAAGs sit between free groups and free abelian groups. Since both free abelian groups and free groups are CS it is natural to ask whether RAAGs are CS as well. The positive answer to this question was given by Minasyan \cite{raags}, when he proved that finitely generated RAAGs are HCS. Toinet \cite{toinet} modified Minasyan's proof and showed that finitely generated RAAGs are $p\text{-HCS}$ for every prime number $p$. The main results of this paper are the following two theorems.
\begin{thrm}
	\label{main_cs}
	Assume that $\C$ is an extension closed variety of finite groups. Then the class of $\mathcal{C}\text{-CS}$ groups is closed under taking arbitrary graph products.
\end{thrm}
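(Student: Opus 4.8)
The plan is to deduce Theorem~\maincsref from its companion Theorem~\mainhcsref, which states that the class of $\CHCS$ groups is closed under taking arbitrary graph products, by means of two observations: a finite group lying in $\C$ is itself $\CHCS$, and the non-conjugacy of two elements of a graph product of $\CCS$ groups can always be witnessed in a quotient which is again a graph product, this time of finite groups from $\C$.

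First I would reduce to finite graph products. If $G=\Gamma\mathcal{G}$ and $f,g\in G$ are non-conjugate, then only finitely many vertex groups occur in reduced expressions for $f$ and $g$, so $f,g\in\Lambda\mathcal{G}$ for some finite full subgraph $\Lambda\leq\Gamma$. The retraction $\rho\colon\Gamma\mathcal{G}\to\Lambda\mathcal{G}$ killing $G_w$ for $w\notin V\Lambda$ is a well-defined homomorphism (it respects every commutation relation) and restricts to the identity on $\Lambda\mathcal{G}$, so $f\not\sim g$ already in $\Lambda\mathcal{G}$; any homomorphism from $\Lambda\mathcal{G}$ onto a group in $\C$ under which $f$ and $g$ have non-conjugate images separates them in $G$ after precomposition with $\rho$. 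Hence it suffices to treat finite graph products.

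Next, a finite group $F\in\C$ is $\CHCS$: it is $\CCS$ via $\id_F$; every subgroup of $F$ is open in the pro-$\C$ topology (take $K=1$); and each such subgroup again lies in $\C$, since $\C$ is closed under subgroups, hence is $\CCS$ via its own identity map. By Theorem~\mainhcsref it follows that every graph product of finite groups from $\C$ is $\CHCS$, in particular $\CCS$. So it remains to find normal subgroups $N_v\normleq G_v$ with $G_v/N_v\in\C$ such that the homomorphism $\pi\colon\Lambda\mathcal{G}\to\Lambda\overline{\mathcal{G}}$ onto the graph product of the finite groups $\overline{G}_v=G_v/N_v$ induced by the quotient maps keeps $\pi(f)$ and $\pi(g)$ non-conjugate: composing $\pi$ with a homomorphism from $\Lambda\overline{\mathcal{G}}$ onto a group in $\C$ separating those two images, and then with $\rho$, completes the proof.

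To construct $\pi$ I would use the conjugacy criterion for graph products. After replacing $f$ and $g$ by cyclically reduced conjugates (harmless for a separability statement), non-conjugacy of two cyclically reduced elements of a common length $n$ means that their syllable sequences cannot be carried one into the other by cyclic permutations and commutation shuffles (and if the lengths differ they are non-conjugate for a trivial reason); the cases $n\leq 1$ are disposed of directly, using that $\Lambda\mathcal{G}$ is residually-$\C$ and that two elements of a vertex group $G_v$ are conjugate in $\Lambda\mathcal{G}$ if and only if they are conjugate in $G_v$ (the subgroup generated by $G_v$ and its neighbours is a retract of $\Lambda\mathcal{G}$ and decomposes as a direct product with $G_v$ as one factor). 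In every case the obstruction involves only finitely many elements of the $G_v$ — essentially the syllables occurring in $f$ and $g$ — so, as each $G_v$ is $\CCS$ and hence residually-$\C$, I can choose each $N_v$ so that $G_v\to\overline{G}_v$ is injective on the relevant finite subset of $G_v$, and additionally separates the two conjugacy classes of $G_v$ in the length-$1$ case. Since the moves available depend only on $\Lambda$, a sequence of cyclic permutations and shuffles carrying $\pi(f)$ to $\pi(g)$ in $\Lambda\overline{\mathcal{G}}$ would lift to one carrying $f$ to $g$ in $\Lambda\mathcal{G}$, which is impossible. I expect this last point — promoting ``$f\not\sim g$ in $\Lambda\mathcal{G}$'' to non-conjugacy in a graph product of finite groups from $\C$, that is, checking that the pro-$\C$ topology on $\Lambda\mathcal{G}$ is fine enough to detect the combinatorial obstruction supplied by the conjugacy criterion — to be the main difficulty; the passage to a finite subgraph, the remark about finite $\C$-groups, and the appeal to Theorem~\mainhcsref are routine.
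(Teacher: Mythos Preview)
Your overall strategy matches the paper's: reduce to a finite full subgraph via the canonical retraction, replace the cyclically reduced representatives by their images in a graph product of finite $\C$-quotients, and appeal to Corollary~\ref{finite} (finite graph products of $\C$-groups are $\CHCS$). The reduction and the appeal are carried out essentially as you describe. However, there is a genuine gap in your handling of the conjugacy criterion, and it is exactly where the hypothesis ``$\CCS$'' (as opposed to merely ``residually-$\C$'') on the vertex groups is needed.

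Your criterion ``two cyclically reduced elements of the same length are conjugate iff their syllable sequences are related by cyclic permutations and commutation shuffles'' is false. Take $\Gamma$ a single edge $\{a,b\}$, so $G = G_a \times G_b$, and let $g_0 = xy$, $f_0 = x'y$ with $x,x' \in G_a$ conjugate but unequal and $y \in G_b \setminus\{1\}$. Both are cyclically reduced of length $2$ and they are conjugate in $G$, yet no cyclic permutation or shuffle carries $(x,y)$ to $(x',y)$. The correct criterion (Lemma~\ref{conjugacy_criterion_for_graph_products}) uses the $P$--$S$ decomposition: a cyclically reduced $g$ factors as $s(g)p(g)$, where $s(g)$ lies in the direct product $G_{S(g)}$ of those vertex groups whose vertex is adjacent to every other vertex in $\supp(g)$; then $f_0 \sim_G g_0$ requires that $p(f_0)$ be a cyclic permutation of $p(g_0)$ \emph{and} that $s(f_0) \in s(g_0)^{G_{S(g_0)}}$.

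The case $s(f_0) \notin s(g_0)^{G_{S(g_0)}}$ is precisely where the full $\CCS$ hypothesis enters for elements of length $\geq 2$, not only in your length-$1$ case: choosing the $N_v$ merely so that syllables stay distinct is not enough to keep $s(f_0)$ and $s(g_0)$ non-conjugate in the quotient. The paper handles this by retracting onto $G_{S(g_0)} = \prod_{v \in S(g_0)} G_v$, which is $\CCS$ as a direct product of $\CCS$ groups (Remark~\ref{direct}). Once you replace your criterion by the $P$--$S$ version and treat this third case separately, your argument coincides with the paper's.
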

\begin{thrm}
	\label{main_chcs_infinite}
	Let $\C$ be an extension closed variety of finite groups. Then the class of $\CHCS$ groups is closed under taking arbitrary graph products.
\end{thrm}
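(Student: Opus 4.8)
The plan is to prove the theorem first for \emph{finite} graphs, by induction on the number of vertices, and then to remove the finiteness hypothesis by a retraction argument. Two structural features of graph products will be used repeatedly. First, for every full subgraph $\Delta$ of $\Gamma$ the homomorphism $\rho_\Delta\colon\Gamma\mathcal{G}\to\Delta\mathcal{G}$ that kills the vertex groups $G_v$ with $v\notin V\Delta$ is a retraction onto the subgroup generated by $\{G_w\mid w\in V\Delta\}$, which is therefore closed in the pro-$\C$ topology. Second, for any $v\in V\Gamma$ there is an amalgamated free product decomposition $\Gamma\mathcal{G}=A\ast_C B$, where $C$ is the subgroup generated by $\{G_w\mid w\in\link(v)\}$, the group $B$ is generated by $\{G_w\mid w\in V\Gamma\setminus\{v\}\}$, and $A$ is generated by $G_v$ together with $C$; since $v$ is adjacent to every vertex in $\link(v)$ one has $A\cong G_v\times C$, and by the first point $C$ is a retract of both $A$ and $B$. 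I will also use the facts recalled in the introduction (a direct product of $\CHCS$ groups is $\CHCS$; the free product of two $\CHCS$ groups is $\CHCS$, by Stebe--Remeslennikov together with Ribes--Zalesskii) and a criterion of Minasyan, in its pro-$\C$ form: a $\CCS$ group $G$ is $\CHCS$ if and only if for every $g\in G$ the centraliser $C_G(g)$ is closed and conjugacy $\C$-distinguished in $G$. Thus it is enough, at each stage, to control conjugacy classes together with centralisers.

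For finite $\Gamma$ I would argue by induction on $|V\Gamma|$, the case $|V\Gamma|=1$ being the hypothesis on the vertex groups. For the inductive step, fix $v\in V\Gamma$. If $v$ is adjacent to every other vertex, then $\Gamma\mathcal{G}$ is the direct product of $G_v$ with the graph product on $V\Gamma\setminus\{v\}$, which is $\CHCS$ by induction, so $\Gamma\mathcal{G}$ is $\CHCS$ by the direct-product fact. If $v$ is isolated, then $\Gamma\mathcal{G}$ is the free product of $G_v$ with that graph product, so $\Gamma\mathcal{G}$ is $\CHCS$ by the free-product fact. In the remaining case use the decomposition $\Gamma\mathcal{G}=A\ast_C B$ above: here $C$ and $B$ are graph products on fewer than $|V\Gamma|$ vertices, hence $\CHCS$ by induction, and then $A\cong G_v\times C$ is $\CHCS$ by the direct-product fact; moreover $C$, being a retract of each of $A$ and $B$, is closed in the pro-$\C$ topology of each and conjugacy $\C$-distinguished in each. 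These are precisely the separability properties of the edge group that are needed in order to carry out a conjugacy-separability analysis of the amalgam.

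It would then remain, for the amalgam $A\ast_C B$, to verify the two conditions in Minasyan's criterion: that $A\ast_C B$ is $\CCS$, and that each centraliser $C_{A\ast_C B}(g)$ is closed and conjugacy $\C$-distinguished. I would carry this out by the standard Bass--Serre analysis of the splitting, treating separately the case in which $g$ acts hyperbolically on the Bass--Serre tree and the case in which $g$ is elliptic, that is, conjugate into $A$ or into $B$. In the elliptic case, conjugacy and centralisers are governed by the corresponding data inside the $\CHCS$ groups $A$ and $B$ together with the way the conjugates of $C$ intersect one another; in the hyperbolic case one argues with cyclically reduced forms along the tree. The delicate point --- and the step I expect to be the main obstacle --- is to assemble the finitely many separating quotients that arise in this analysis into a \emph{single} quotient lying in $\C$ which still records the relevant conjugacy and centraliser information and with respect to which the action on the Bass--Serre tree descends faithfully enough to be exploited. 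This is exactly where the full strength of the hypotheses on $\C$ is used: that $\C$ is an extension-closed variety, and the theorem of Ribes and Zalesskii that finitely generated free-by-$\C$ groups are $\CCS$ (applied to the finite graphs of finite groups arising as quotients of the splitting); and it is here that all the bookkeeping with cosets, double cosets and centralisers of the edge group takes place in the pro-$\C$ setting.

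Finally, to remove the finiteness hypothesis on $\Gamma$: every element of $\Gamma\mathcal{G}$ has finite support, so any finite list of elements of $\Gamma\mathcal{G}$ lies in $\Delta\mathcal{G}$ for some finite full subgraph $\Delta$ of $\Gamma$, and there is a retraction $\rho_\Delta\colon\Gamma\mathcal{G}\to\Delta\mathcal{G}$ fixing $\Delta\mathcal{G}$ pointwise. Two observations then reduce the verification of Minasyan's criterion for $\Gamma\mathcal{G}$ to the finite case already proved. On the one hand, since $\Delta\mathcal{G}$ is a subgroup of $\Gamma\mathcal{G}$ with $C_{\Delta\mathcal{G}}(g)\le C_{\Gamma\mathcal{G}}(g)$ for $g\in\Delta\mathcal{G}$, non-conjugacy in $\Gamma\mathcal{G}$ of elements of $\Delta\mathcal{G}$, and the failure of an element of $\Delta\mathcal{G}$ to be conjugate into $C_{\Gamma\mathcal{G}}(g)$, pass down to the corresponding statements in $\Delta\mathcal{G}$. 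On the other hand, $\rho_\Delta$ maps $C_{\Gamma\mathcal{G}}(g)$ into $C_{\Delta\mathcal{G}}(g)$ for $g\in\Delta\mathcal{G}$ and fixes the given elements, so precomposing with $\rho_\Delta$ a $\C$-quotient of $\Delta\mathcal{G}$ that separates the relevant conjugacy data produces a $\C$-quotient of $\Gamma\mathcal{G}$ separating the same data. It follows from the finite case that $\Gamma\mathcal{G}$ is $\CCS$ and that every centraliser in $\Gamma\mathcal{G}$ is closed and conjugacy $\C$-distinguished, whence $\Gamma\mathcal{G}$ is $\CHCS$ by Minasyan's criterion, completing the proof.
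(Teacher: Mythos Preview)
Your proposal has two substantive gaps.

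First, the criterion you attribute to Minasyan is not the one that is actually used. The equivalence established in the paper (Theorem~\ref{cc_and_cs_is_hcs}) is that a $\CCS$ group $G$ is $\CHCS$ if and only if $G$ satisfies the \emph{$\C$-centraliser condition}: for every $g\in G$ and every $K\in\NC(G)$ there exists $L\in\NC(G)$ with $L\le K$ such that $C_{G/L}(\psi(g))\subseteq\psi\bigl(C_G(g)K\bigr)$. The phrase ``centraliser closed and conjugacy $\C$-distinguished'' is not equivalent to this, and you never make precise what you mean by it. All of the finite-case machinery in the paper is organised around verifying $\C\text{-CC}$, not around separability of centralisers per se.

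Second, and more seriously, your reduction from infinite $\Gamma$ to finite full subgraphs does \emph{not} verify $\C\text{-CC}$. Fix $g\in G$ and $K\in\NC(G)$. If you retract onto $G_\Delta$ for some finite full subgraph $\Delta$ containing $\supp(g)$, then any $\C$-quotient of $G$ that factors through $\rho_\Delta$ has kernel containing $\ker(\rho_\Delta)$, which is the normal closure of all $G_v$ with $v\notin V\Delta$. This kernel is typically infinite and is certainly not contained in the prescribed $K$, so you cannot produce the required $L\le K$. There is a related obstruction at the level of centralisers: if $v\notin V\Delta$ is adjacent to every vertex of $\supp(g)$, then $G_v\le C_G(g)$ but $G_v$ is killed by $\rho_\Delta$; the information carried by $G_{\link(\supp(g))}$ is simply lost under retraction. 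The paper's proof of the infinite case is therefore \emph{not} a retraction to a full subgraph. Instead, given $g$ and $K$, it manufactures a new \emph{finite} graph $\Delta$ by keeping $A=\supp(g')$ intact and collapsing $V\Gamma\setminus A$ under the equivalence ``same link in $A$ and same image in $G/K$''; the resulting map $\delta\colon G\to\Delta\mathcal{D}$ has $\ker(\delta)\le K$ by construction, restricts to an isomorphism on $G_A$, and satisfies $\delta(G_{\link(A)})=D_{\link(A)}$, whence $C_D(\delta(g'))=\delta(C_G(g'))$ via Lemma~\ref{centralisers_in_graph_products}. That is the missing idea.

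Finally, your treatment of the finite case is only a sketch: the ``standard Bass--Serre analysis'' you invoke is precisely the content of Sections~\ref{special_amalgams} and~\ref{HCS} of the paper, where the special-amalgam structure, the description of centralisers (Lemma~\ref{7.8}), the conjugacy criterion (Lemma~\ref{conjugacy_criterion_special_amalgams}), and the control of intersections of conjugates of full subgroups (Lemma~\ref{humus}) are all developed in detail. You correctly identify this as the main obstacle, but you do not resolve it.
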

Note that we do not impose any restrictions on the cardinality of the corresponding graph, i.e. $|V\Gamma|$ can be any cardinal.

\paragraph{\textbf{Acknowledgements}} The author would like to thank Ashot Minasyan for explaining his work in \cite{raags}, discussions, help and guidance.

\section{Pro-$\mathcal{C}$ topologies on groups}
\label{pro_C}
In this section we will explain basic properties of $\proC$ topologies on groups. In the profinite (or pro-$p$) case these are standard results and are part of mathematical folklore. We include this section in order to make this paper self-contained and readers familiar with $\proC$ topologies on groups can skip it.
 
What closure properties do we require the class $\C$ to have? We will be considering the following ones:
\begin{itemize}
	\item[(c1)]	subgroups: let $G \in \C$ and suppose that $H \leq G$; then $H \in \C$,
	\item[(c2)]	finite direct products: let $G_1, G_2 \in \C$; then $G_1 \times G_2 \in \C$,
	\item[(c3)] quotients: let $G \in \C$ and let $N \normleq G$; then $G/N \in \C$,
	\item[(c4)]	extensions: let $K, Q \in \C$ and let $G$ be a group such that the following sequence
		\begin{displaymath}
			1 \rightarrow K \rightarrow G \rightarrow Q \rightarrow 1
		\end{displaymath}
		is exact; then $G \in \C$.
\end{itemize}

Let $\C$ be a class of finite groups and let $G$ be a group. If $N \normleq G$ is such that $G/N \in \C$ then we say that $N$ is a \emph{co-$\C$} subgroup of $G$. We will use $\NC(G) = \{N \normleq G \mid G/N \in \C \}$ to denote the set of all co-$\C$ subgroups of $G$. We want the system of cosets $\BC(G) = \{gN \mid g \in G, N \in \NC(G)\}$ to form a basis of open sets for a topology on $G$, thus we need the set $\NC(G)$ to be closed under intersections. It can be easily seen that if $\C$ satisfies (c1) and (c2), then the set $\NC(G)$ is closed under intersections for every group $G$.

Suppose that the system of cosets $\BC(G)$ forms a basis of open sets for a topology on a group $G$. This topology is called the \emph{pro-$\C$ topology} on $G$ and we will use pro-$\C(G)$ to refer to this topology. If $\C$ is the class of all finite groups this topology is the profinite topology $\PT(G)$ and if $\C$ is the class of all finite $p$-groups, where $p$ is a prime number, this topology is referred to as pro-$p$ topology and is denoted by pro-$p(G)$.

We say that a subset $X \subseteq G$ is \emph{$\C$-separable} or \emph{$\C$-closed} in $G$ if the subset $X$ is closed pro-$\C(G)$. In other words, a subset $X \subseteq G$ is $\C$-separable if for every $g \in G \setminus X$ there is $N \in \NC(G)$ such that $gN \cap X = gN \cap XN = \emptyset$. Similarly we say that a subset $X \subseteq G$ is \emph{$\C$-open} if it is open in pro-$\C(G)$.
\subsection{Basic properties}
Unless stated otherwise we will only assume assume that the class $\C$ satisfies (c1) and (c2).

If the class $\C$ satisfies (c1) and (c2) then the $\text{pro-}\mathcal{C}$ topology on $G$ is well-defined for every group $G$. Note that it would be enough to assume that the class $\C$ is closed under subdirect products. However, if we assume that the class $\C$ is also closed under taking subgroups we see that equipping a group $G$ with $\proC$ topology is actually a faithfull functor from the category of groups to the category of topological groups: homomorphisms between groups are continuous maps with respect to the corresponding $\proC$ topologies and isomorphisms are homeomorphisms, thus preimages of $\C$-open/closed sets are $\C$-open/closed and isomorphic images of $\C$-open/closed sets are $\C$-open/closed. 

Obviously, the $\proC$ topology on a group $G$ is invariant under group translation: if $X \subseteq G$ is $\C$-closed in $G$ then so are the sets $gX$ and $Xg$ for all $g \in G$. 

The following lemma will help us to shorten and simplify proofs.
\begin{lemma}
\label{closed_simplification}
	Let $G$ be a group. Then $X \subseteq G$ is $\mathcal{C}$-closed in $G$ if and only if for every $g \in G \setminus X$ there is a group $F$ and a homomorphism $\phi: G \rightarrow F$, such that $\phi(g) \not\in \phi(X)$ and $\phi(X)$ is $\mathcal{C}$-closed in $F$.
\end{lemma}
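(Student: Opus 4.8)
The plan is to prove both implications directly from the coset description of $\C$-closedness, i.e. that $X \subseteq G$ is $\C$-closed if and only if for every $g \in G \setminus X$ there is $N \in \NC(G)$ with $gN \cap XN = \emptyset$. The forward implication is immediate: if $X$ is $\C$-closed in $G$, take $F = G$ and $\phi = \id_G$; then for every $g \in G \setminus X$ we have $\phi(g) = g \notin X = \phi(X)$, and $\phi(X) = X$ is $\C$-closed in $F = G$ by assumption.

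For the converse, fix $g \in G \setminus X$ and use the hypothesis to pick a group $F$ and a homomorphism $\phi \colon G \to F$ with $\phi(g) \notin \phi(X)$ and $\phi(X)$ $\C$-closed in $F$. Applying the coset criterion in $F$ to the point $\phi(g) \in F \setminus \phi(X)$ gives $M \in \NC(F)$ with $\phi(g)M \cap \phi(X)M = \emptyset$. I would then put $N = \phi^{-1}(M) \normleq G$ and check $N \in \NC(G)$: the map $\phi$ induces an injection $G/N \hookrightarrow F/M$, and since $F/M \in \C$, closure of $\C$ under subgroups (c1) yields $G/N \in \C$.

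It then remains to verify that $gN \cap XN = \emptyset$, which is the required separation and shows $X$ is $\C$-closed in $G$. If the intersection were nonempty we could write $gn = xn'$ with $x \in X$ and $n, n' \in N$; applying $\phi$ and using $\phi(n), \phi(n') \in M$ gives $\phi(g)M = \phi(x)M \subseteq \phi(X)M$, contradicting the choice of $M$.

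The only point requiring any care — and the one I would flag as the ``main obstacle'', although it is genuinely routine in this setting — is the passage from $M \in \NC(F)$ to $\phi^{-1}(M) \in \NC(G)$, i.e. that preimages of co-$\C$ subgroups stay co-$\C$; this is precisely where (c1) is used, and it is the reason the lemma is formulated for classes closed under subgroups. Everything else is an unwinding of the definition of $\text{pro-}\C$ topology together with its translation-invariance, already noted above.
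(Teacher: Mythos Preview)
Your proof is correct. The forward direction matches the paper exactly, and for the converse you unwind the separation directly at the coset level: given $g$, you pull back a separating $M \in \NC(F)$ to $N = \phi^{-1}(M) \in \NC(G)$ and verify $gN \cap XN = \emptyset$.

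The paper's proof of the converse is packaged slightly more topologically: rather than working coset-by-coset, it observes that each $\phi_g^{-1}(\phi_g(X))$ is $\C$-closed in $G$ (since homomorphisms are continuous for the $\proC$ topologies, a fact recorded just before the lemma) and then writes $X = \bigcap_{g \in G \setminus X} \phi_g^{-1}(\phi_g(X))$ as an intersection of $\C$-closed sets. Your argument is exactly what one gets by unwinding ``preimage of a closed set is closed'' down to the level of basic open cosets, so the two proofs are essentially the same; the paper's version is marginally shorter because it reuses the continuity statement, while yours is self-contained and makes the role of (c1) explicit.
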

\begin{proof}
	Suppose $X$ is $\mathcal{C}$-closed in $G$. Clearly if we take $F = G$ and $\phi = \id_G$ then $\phi(X) = X$ is $C$-closed in $G$ and $\phi(g) = g \not\in \phi(X) = X$ for all $g\in G \setminus X$.
	
	Let $X \subseteq G$ and suppose that for every $g \in G \setminus X$ there is a group homomorphism $\phi_g \colon G \to F_g$ such that $\phi_g(g) \not\in \phi_g(X)$ and $\phi_g(X)$ is $\mathcal{C}$-closed in $F_g$. We see that the set $\phi_g^{-1}(\phi_g(X))$ is $\C$-closed in $G$ as it is a preimage of a $\C$-closed set. Clearly $X = \bigcap_{g \in G\setminus X} \phi_g^{-1}(\phi_g(X))$ and thus $X$ is $\C$-closed in $G$ as it is intersection of $\C$-closed sets.  
\end{proof}

As we already said: a group $G$ is \emph{residually-$\mathcal{C}$} if for every $g \in G\setminus \{1\}$ there is a group $F \in \mathcal{C}$ and a group homomorphism $\pi \colon G \to F$ such that $\pi(g) \neq 1$ in $F$ or, equivalently, we say that $G$ is residually-$\mathcal{C}$ if for every $g \in G\setminus\{1\}$ there is $N \in \mathcal{N_C}(G)$ such that $g \not\in N$. Assuming that the class $\C$ satisfies (c1) and (c2) one can easily check that the following are equivalent:
	\begin{enumerate}
		\item	$G$ is residually-$\mathcal{C}$,
		\item $\{1\}$ is $\mathcal{C}$-closed in $G$,
		\item $\bigcap_{N \in \mathcal{N_C}(G)}N = \{1\}$,
		\item pro-$\mathcal{C}$(G) is Hausdorff.
	\end{enumerate}

Being residually-$\C$ is clearly a hereditary property.
\begin{remark}
	Let $G$ be a group and let $H \leq G$. If $G$ is residually-$\mathcal{C}$ then $H$ is residually-$\mathcal{C}$.
\end{remark}

Let $G$ be a group and assume that $H \leq G$. For an element $g \in G$ we will use $g^H$ to denote $\{hgh^{-1} \mid h \in H\} \subseteq G$, the set of $H$-conjugates of $H$. The symbol $\sim_H$ will denote the relation of being $H$-conjugates, i.e. $f \sim_H g$ if and only if $f \in g^H$. We can then restate the definition of $\C$-conjugacy separability in terms of pro-$\C$ topologies: group $G$ is $\CCS$ if the conjugacy class $g^G$ is $\C$-closed in $G$ for every $g \in G$.

Note that a specialised version of Lemma \ref{closed_simplification} is that a group $G$ is $\mathcal{C}\text{-CS}$ if and only if for every tuple of elements $f,g \in G$ such that $f \not\sim_G g$ there is a $\mathcal{C}\text{-CS}$ group $H$ and a homomorphism $\phi \colon G \to H$ such that $\phi(f) \not\sim_H \phi(g)$.

\subsection{$\C$-open and $\C$-closed subgroups}
Let $H \leq G$ and suppose that there is $N \in \NC(G)$ such that $N \leq H$. Then clearly $H$ is a union of cosets of $N$ and hence $H$ is $\C$-open in $G$ as it is a union of $\C$-open subsets of $G$. It was proved by Hall \cite[Theorem 3.1]{hall} that the opposite implication holds as well.
\begin{lemma}[Classification of $\mathcal{C}$-open subgroups]
	\label{open_subgroups}
	Let $G$ be a group and let $H \leq G$. Then $H$ is $\mathcal{C}$-open in $G$ if and only if there is $N \in \NC(G)$ such that $N \leq H$. Moreover, every $\mathcal{C}$-open subgroup is $\mathcal{C}$-closed in $G$ and is of finite index in $G$.
\end{lemma}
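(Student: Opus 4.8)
The plan is to prove the non-trivial implication first and then read off the two ``moreover'' assertions, using crucially that the basis $\BC(G)$ consists of cosets of \emph{normal} co-$\C$ subgroups.

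The direction ``$N \le H$ for some $N \in \NC(G)$ $\Rightarrow$ $H$ is $\C$-open'' is already recorded just before the statement: $H = \bigcup_{h \in H} hN$ is a union of elements of $\BC(G)$. For the converse, I would argue: if $H$ is $\C$-open, then since $\BC(G)$ is a basis for $\proC(G)$ and $1 \in H$, there are $g \in G$ and $N \in \NC(G)$ with $1 \in gN \subseteq H$. But $1 \in gN$ forces $g \in N$, hence $gN = N$, so $N \le H$ with $N \in \NC(G)$, as required. This single observation is the whole content of the equivalence, and it is exactly here that defining the basis via \emph{normal} co-$\C$ subgroups does the work; in the classical formulation, where one takes cosets of all finite-index subgroups, this step would instead require passing to a normal core and invoking (c1) and (c2) to see that the core is still co-$\C$.

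For the ``moreover'' part I would fix the $N \in \NC(G)$ with $N \le H$ just produced. Since $\C$ is a class of \emph{finite} groups, $G/N$ is finite, so $[G:H] \le [G:N] = |G/N| < \infty$, giving finite index. For $\C$-closedness, note first that $N$ is $\C$-closed, since its complement $G \setminus N = \bigcup\{\, gN \mid gN \neq N \,\}$ is a union of elements of $\BC(G)$, hence $\C$-open; by translation invariance of $\proC(G)$ every coset $gN$ is then $\C$-closed, and $H = \bigcup_{h \in H} hN$ is a union of only $[H:N] \le [G:N] < \infty$ such cosets, so $H$, being a finite union of $\C$-closed sets, is $\C$-closed. (Alternatively one can apply Lemma~\ref{closed_simplification} to the quotient map $G \to G/N$: the group $G/N$ lies in $\C$, so $\{1\} \in \NC(G/N)$ and $\proC(G/N)$ is discrete, whence the image of $H$ is $\C$-closed in $G/N$.) I do not anticipate a real obstacle here: the lemma is a direct unwinding of the definitions together with the finiteness of the groups in $\C$, and the only point to handle with care is not to conflate this set-up with the classical profinite one, where the basis is built from arbitrary finite-index cosets and a genuine normal-core argument is needed.
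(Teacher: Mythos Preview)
Your argument is correct in every detail: the key observation that a basic open neighbourhood of $1$ contained in $H$ must be of the form $N$ with $N \in \NC(G)$ (because $1 \in gN$ forces $gN = N$) immediately gives the non-trivial implication, and your derivation of finite index and $\C$-closedness from this is clean and complete.

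As for the comparison: the paper does not actually supply its own proof here. The easy direction is recorded in the paragraph preceding the lemma, and for the converse the paper simply cites Hall \cite[Theorem 3.1]{hall}; the ``moreover'' clause is left without justification. Your write-up therefore goes beyond what the paper provides, giving a self-contained elementary argument in place of the citation. The content is the same standard argument one finds in Hall's paper, so there is no mathematical divergence, but your version has the advantage of making the lemma genuinely self-contained within the present setup and of highlighting exactly where the choice to build $\BC(G)$ from \emph{normal} co-$\C$ subgroups pays off.
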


Lemma \ref{open_subgroups} allows us to restate the definition of $\C$-hereditary conjugacy separability in terms of pro-$\C$ topology: a group $G$ is $\CHCS$ if $G$ is $\CCS$ and $H \leq G$ is $\CCS$ as well whenever $H$ is $\C$-open in $G$.

Obviously, an intersection of $\C$-open subgroups is a $\C$-closed subgroup. It was proved by Hall \cite[Theorem 3.3]{hall} that all $\C$-closed subgroups arise in this way.
\begin{lemma}[Classification of $\mathcal{C}$-closed subgroups]
	\label{closed_subgroups}
	Let $G$ be a group and let $H \leq G$. Then $H$ is $\mathcal{C}$-closed in $G$ if and only if it is an intersection of $\mathcal{C}$-open subgroups of $G$. 
\end{lemma}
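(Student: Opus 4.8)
The plan is to prove the two implications separately, noting that only the forward direction requires work. The reverse implication is already recorded in the discussion immediately preceding the statement: every $\C$-open subgroup is $\C$-closed by Lemma \ref{open_subgroups}, so an arbitrary intersection of $\C$-open subgroups is an intersection of $\C$-closed sets, and is therefore $\C$-closed. Nothing more is needed there.

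For the substantive direction, suppose $H \leq G$ is $\C$-closed. I would introduce the canonical candidate, namely the intersection of all $\C$-open subgroups that contain $H$:
\[
  K = \bigcap \{ U \leq G \mid U \text{ is } \C\text{-open and } H \leq U \}.
\]
Since $H$ lies in each such $U$ we have $H \leq K$, and $K$ is by construction an intersection of $\C$-open subgroups. It therefore suffices to establish the reverse inclusion $K \leq H$, for then $H = K$ exhibits $H$ in precisely the required form.

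To prove $K \leq H$ I would argue contrapositively: fix $g \in G \setminus H$ and produce a single $\C$-open subgroup that contains $H$ but misses $g$. Because $H$ is $\C$-closed, the defining condition supplies $N \in \NC(G)$ with $gN \cap HN = \emptyset$, which is equivalent to the membership statement $g \notin HN$. The key construction is to upgrade this witness to a subgroup: since $N \normleq G$, the product $HN$ is a subgroup of $G$; it contains $H$, and it contains the co-$\C$ subgroup $N$, so $HN$ is $\C$-open by Lemma \ref{open_subgroups}. Thus $HN$ is a $\C$-open subgroup with $H \leq HN$ and $g \notin HN$, whence $g \notin K$. As $g$ was an arbitrary element outside $H$, this yields $K \subseteq H$.

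The argument is short, and the only genuine content is the middle step where the witness $N$ to $\C$-closedness is promoted to the $\C$-open subgroup $HN$ separating $g$ from $H$; everything else is formal bookkeeping. The two points to be careful about are the passage between the coset formulation $gN \cap HN = \emptyset$ and the membership statement $g \notin HN$, and the verification that $HN$ is genuinely a subgroup, which relies on the normality of $N$ in $G$. Observe that this direction uses only the standing hypotheses (c1) and (c2), through the availability of the pro-$\C$ topology and of Lemma \ref{open_subgroups}; the extension property (c4) plays no role.
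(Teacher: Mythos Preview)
Your argument is correct and is exactly the standard proof of this fact. The paper itself does not supply a proof but merely cites Hall \cite[Theorem 3.3]{hall}; your write-up fills in precisely the argument one expects there, so there is nothing to compare beyond noting that you have supplied what the paper only referenced.
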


\subsection{Restrictions of $\proC$ topologies}
Imagine the following situation: let $G$ be a group and let $H$ be its subgroup.  The $\proC$ topology induces a subspace topology on $H$, say $\mathcal{T}$. However, this topology might not necessarily be the same as $\proC$ topology on $H$: $\proC(H)$ will usually be finer than $\mathcal{T}$. For example, if $G$ is $F_2$, the free group on 2 generators and $H$ is $[G,G]$, the commutator subgroup of $G$, then $H$ contains only countably many subgroups that are open in $\mathcal{T}$; however, as $H$ is infinitely generated free group, $\NC(H)$ is uncountable.

For $H \leq G$ we say that $\proC(H)$ is \emph{induced} by $\proC(G)$ if $\proC(H)$ coincides with the subspace topology on $H$ induced by $\proC(G)$.

If $H\leq G$ and $X\subseteq H$ is $\mathcal{C}$-closed in $G$ then clearly $X$ is $\mathcal{C}$-closed in $H$. However, the implication in the other direction does not hold: let $G$ be the Baumslag-Solitar group BS(2,3) given by the presentation $\langle a,b \| ba^2b^{-1} = a^3 \rangle$. It is well known that this group is not residually finite. Take $H = \langle a \rangle \leq G$. Clearly $H \cong (\mathbb{Z},+)$, which is a residually finite group. Thus the singleton set $\{1\}$ is closed in the profinite topology on $H$ but it is not closed in the profinite topology on $G$ as $G$ is not residually finite. 

\begin{definition}
	Let $G$ be a group and let $H \leq G$ be its subgroup. We say that the pro-$\mathcal{C}(H)$ is a restriction of pro-$\mathcal{C}(G)$  if for all $X \subseteq H$ we have that $X$ is $\mathcal{C}$-closed in $H$ if and only if it is $\mathcal{C}$-closed in $G$.
\end{definition}

One can easily check that for $H \leq G$ $\proC(H)$ is a restriction of $\proC(G)$ if and only if $\proC(H)$ is induced by $\proC(G)$ and $H$ is $\C$-closed in $G$.

One type of subgroup on which the $\text{pro-}\mathcal{C}$ topologies behave well are retracts. Let $G$ be a group and let $R \leq G$. We say that $R$ is a \emph{retract} of $G$ if there is a homomorphism $\rho \colon G \to R$ such that $\rho \restriction_R = \id_R$. We will refer to $\rho$ as to the \emph{retraction} corresponding to $R$. We will often abuse the notation and consider $\rho$ as an endomorphism of $G$ as well. One could then equivalently say that $R$ is a retract of $G$ if and only if there is $\rho \colon G \to G$ such that $\rho \circ \rho = \rho$ and $R$ is the image of $\rho$.

Note that if $R \leq G$ is a retract then $G$ can be viewed as semidirect product $G = \ker(\rho) \rtimes R$, where $\rho \colon G \to R$ is the corresponding retraction. One can easily show the following by using the proof of \cite[Lemma 3.1.5]{profinite}.
\begin{lemma}
	\label{retract_restriction}
	Let $G$ be a residually-$\C$ group and let $R \leq G$ be a retract. Then $R$ is $\C$-closed in $G$ and $\proC(R)$ is induced by $\proC(G)$, hence $\proC(R)$ is a restriction of $\proC(G)$.
\end{lemma}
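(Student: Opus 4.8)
The plan is to prove the two component assertions separately — that $R$ is $\C$-closed in $G$, and that $\proC(R)$ is induced by $\proC(G)$ — and then to quote the equivalence recorded immediately before the statement, namely that for $H\leq G$ the topology $\proC(H)$ is a restriction of $\proC(G)$ precisely when it is induced by $\proC(G)$ and $H$ is $\C$-closed in $G$. Throughout write $\rho\colon G\to R$ for the retraction and $K=\ker\rho$, so that $G=K\rtimes R$; each $g\in G$ factors uniquely as $g=kr$ with $k\in K$ and $r=\rho(g)\in R$, and $g\in R$ if and only if $g\rho(g)^{-1}=1$.

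To see that $\proC(R)$ is induced by $\proC(G)$: the inclusion $R\hookrightarrow G$ is a homomorphism, hence continuous for the $\proC$ topologies, so the subspace topology on $R$ is contained in $\proC(R)$. For the reverse inclusion it is enough to show that each $M\in\NC(R)$ is open in the subspace topology, the general basic open sets $rM$ then following by $R$-translation. I would take $N=\rho^{-1}(M)$: since $\rho$ maps onto $R$ we get $N\normleq G$ with $G/N\cong R/M\in\C$, so $N\in\NC(G)$; and $N\cap R=M$ because $\rho$ restricts to the identity on $R$. Hence $M=NM\cap R$ exhibits $M$ as the trace on $R$ of a $\C$-open subset of $G$.

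To see that $R$ is $\C$-closed in $G$, I would use residual-$\C$-ness. Given $g\in G\setminus R$, the element $k:=g\rho(g)^{-1}$ lies in $K\setminus\{1\}$, so there is $L\in\NC(G)$ with $k\notin L$. Put $N:=L\cap\rho^{-1}(L\cap R)$: this is normal in $G$, and $G/N$ embeds into $G/L\times G/\rho^{-1}(L\cap R)$ with $G/\rho^{-1}(L\cap R)\cong R/(L\cap R)\cong RL/L\leq G/L$, so $G/N\in\C$ by (c1) and (c2), i.e. $N\in\NC(G)$. Finally $gN\cap R=\emptyset$: if $g=nr'$ with $n\in N$, $r'\in R$, then $\rho(g)=\rho(n)r'$, whence $k=g\rho(g)^{-1}=n\rho(n)^{-1}$; but $n\in L$ and $\rho(n)\in L\cap R\subseteq L$, so $k\in L$, a contradiction. (One could also argue more quickly that $\proC(G)$ is a group topology, so $g\mapsto g\rho(g)^{-1}$ is continuous and $R$ is the preimage of the $\C$-closed singleton $\{1\}$; but the explicit $N$ above is the form that gets reused later.)

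Combining the two facts with the cited equivalence yields the lemma. I expect the only delicate point to be the choice of $N$ in the closedness step: $L$ on its own need not stop $R$ from surjecting onto $G/L$, and $\rho^{-1}(L\cap R)$ on its own need not detect $k$, so one has to pass to the intersection $L\cap\rho^{-1}(L\cap R)$, which accomplishes both simultaneously; the rest is bookkeeping with the closure properties (c1) and (c2).
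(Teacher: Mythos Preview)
Your proof is correct. Both parts are argued cleanly: pulling back $M\in\NC(R)$ along $\rho$ gives the required $N\in\NC(G)$ with $N\cap R=M$, and your separation of $g\notin R$ from $R$ via $N=L\cap\rho^{-1}(L\cap R)$ works exactly as you say. The paper does not supply its own argument here; it simply refers the reader to the proof of \cite[Lemma~3.1.5]{profinite}, so there is no in-paper proof to compare against.

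Two small remarks. First, the parenthetical ``quick'' argument you give --- that $R$ is the preimage of the $\C$-closed singleton $\{1\}$ under the continuous map $g\mapsto g\rho(g)^{-1}$ --- is already a complete and perfectly acceptable proof of the closedness assertion, and is essentially how the Ribes--Zalesskii argument goes; the explicit $N$ you construct is correct but not strictly needed, and contrary to your closing sentence it does not appear to be reused in this particular paper. Second, your diagnosis of why $L$ alone may fail is right (because $L$ need not be $\rho$-invariant, so $\rho(l)$ need not lie in $L$), though the phrase ``stop $R$ from surjecting onto $G/L$'' is a slightly odd way to say it.
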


So far we have only been using assumptions that the class $\C$ satisfies (c1) and (c2). From now on we will also require the class $\C$ to satisfy (c4). The proof the next lemma follows from the proof of \cite[Lemma 3.1.4]{profinite}.
\begin{lemma}
	\label{open_restriction}
	Suppose that $\C$ is a class of finite groups satisfying (c1), (c2) and (c4). Let $G$ be a group and let $H \leq G$ be $\mathcal{C}$-open in $G$. Then pro-$\mathcal{C}(H)$ is a restriction of pro-$\mathcal{C}(G)$. 
\end{lemma}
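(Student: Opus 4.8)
The plan is to verify the two conditions recorded just before the statement: that $H$ is $\C$-closed in $G$, and that $\proC(H)$ is induced by $\proC(G)$. The first is immediate: since $H$ is $\C$-open, Lemma \ref{open_subgroups} gives that $H$ is $\C$-closed in $G$ and, moreover, that there is $N_0 \in \NC(G)$ with $N_0 \leq H$; in particular $[G:N_0] = |G/N_0| < \infty$, so $N_0$ and $H$ have finite index in $G$. It therefore remains to compare the subspace topology $\mathcal{T}$ on $H$ induced by $\proC(G)$ with $\proC(H)$. Since $N_0 \normleq G$ and the cosets $gN$ ($N \in \NC(G)$) form a basis of $\proC(G)$, a basis for $\mathcal{T}$ is given by the sets $H \cap gN$; a short computation shows that each nonempty such set is a coset $h(H\cap N)$ with $h \in H$ and $N \in \NC(G)$.

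For the inclusion $\mathcal{T} \subseteq \proC(H)$ it suffices to see that $H\cap N \in \NC(H)$ whenever $N \in \NC(G)$, and this follows from (c1) together with the isomorphism $H/(H\cap N) \cong HN/N \leq G/N \in \C$. (This inclusion uses only (c1) and (c2), and holds for every $H \leq G$.)

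The substantive direction is $\proC(H) \subseteq \mathcal{T}$, and this is where finite index and (c4) enter. It is enough to show that for every $M \in \NC(H)$ there is $N \in \NC(G)$ with $N \cap H \leq M$: then, since $N \cap H \leq M \leq H$, every coset $hM$ is a union of cosets of $N\cap H$ in $H$ and hence $\mathcal{T}$-open. To build such an $N$, put $P = N_0 \cap M$. Because $N_0 \leq H$, conjugation by any element of $N_0$ preserves both $N_0$ (normal in $G$) and $M$ (normal in $H$), so $P \normleq N_0$; moreover $N_0/P \cong N_0 M/M \leq H/M \in \C$, whence $N_0/P \in \C$ by (c1). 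Since $P \leq N_0 \normleq G$, every conjugate $gPg^{-1}$ is contained in $N_0$ and depends only on the coset $gN_0$; as $[G:N_0] < \infty$, the intersection $N := \bigcap_{g\in G} gPg^{-1}$ is a finite intersection, is normal in $G$, and satisfies $N \leq P \leq M$. Each $gPg^{-1}$ is normal in $N_0$ with $N_0/gPg^{-1} \cong N_0/P \in \C$, so $N_0/N$ embeds into a finite direct product of copies of $N_0/P$ and lies in $\C$ by (c2) and (c1); finally, applying (c4) to the exact sequence $1 \rightarrow N_0/N \rightarrow G/N \rightarrow G/N_0 \rightarrow 1$ gives $G/N \in \C$, i.e.\ $N \in \NC(G)$, as required.

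The point that needs care is that $H$ need not be normal in $G$, so one cannot simply intersect the $G$-conjugates of $M$ (they may fail to lie in $H$, and the conjugation orbit may be infinite). Passing first to $P = N_0 \cap M$ inside the genuinely normal, finite-index subgroup $N_0$ repairs both defects simultaneously, and (c4) is exactly what is needed to promote the resulting normal subgroup $N$ from being co-$\C$ inside $N_0$ to being co-$\C$ inside $G$. This is the argument extracted from the proof of \cite[Lemma 3.1.4]{profinite}.
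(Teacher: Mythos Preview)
Your proof is correct and is precisely the argument the paper intends: the paper does not give its own proof but simply points to \cite[Lemma 3.1.4]{profinite}, and you have faithfully unpacked that reference. In particular, your use of the normal co-$\C$ subgroup $N_0 \leq H$ to replace $M$ by $P = N_0 \cap M$, then taking the finite intersection of $G$-conjugates and invoking (c4) on the extension $1 \to N_0/N \to G/N \to G/N_0 \to 1$, is exactly the standard route.
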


As we mentioned earlier, the property of being residually-$\C$ is passed to subgroups. Obviously, the implication in the opposite direction does not hold: the group BS(2,3) contains a residually finite subgroup but BS(2,3) is not residually finite. However, the property of being residually-$\C$ is passed upwards from $\C$-open subgroups.
\begin{cor}
	\label{residually-C_open_subgroup}
	Let $\C$ be a class of finite groups satisfying (c1), (c2) and (c4). Let $G$ be a group and let $H \leq G$ be $\C$-open in $G$. If $H$ is residually-$\C$ then $G$ is residually-$\C$ as well.
\end{cor}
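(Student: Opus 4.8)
The plan is to reduce the residual-$\C$-ness of $G$ to that of $H$ using the fact that $H$ is $\C$-open, and then exploit Lemma \ref{open_restriction}. First I would observe that by Lemma \ref{open_subgroups}, since $H$ is $\C$-open in $G$, there is $N \in \NC(G)$ with $N \leq H$; in particular $N$ is a finite-index normal subgroup of $G$ contained in $H$, and $G/N \in \C$. Now take an arbitrary $g \in G \setminus \{1\}$; I want to produce $M \in \NC(G)$ with $g \notin M$. There are two cases. If $g \notin N$, then $N$ itself works, since $N \in \NC(G)$ and $g \notin N$. The interesting case is $g \in N \subseteq H$, $g \neq 1$.

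In the second case, $g$ is a nontrivial element of $H$, so residual-$\C$-ness of $H$ gives $L \in \NC(H)$ with $g \notin L$. The point is now to push $L$ up to a co-$\C$ subgroup of $G$. Here I would invoke Lemma \ref{open_restriction}: since $H$ is $\C$-open in $G$, $\proC(H)$ is a restriction of $\proC(G)$, so the singleton $\{g\}$ — which is $\C$-closed in $H$ because $H$ is residually-$\C$, or more directly, because $g \notin L$ shows $\{g\}$ is separated from $1$ — has a $\C$-closed complement behaviour that transfers to $G$. More cleanly: the set $\{1\}$ is $\C$-closed in $H$ (as $H$ is residually-$\C$), hence by Lemma \ref{open_restriction} it is $\C$-closed in $G$, and $g \in G \setminus \{1\}$ means there is $M \in \NC(G)$ with $gM \cap \{1\} = \emptyset$, i.e. $g \notin M$. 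Since this covers every $g \neq 1$ (the first case being subsumed, or handled directly as above), $\bigcap_{M \in \NC(G)} M = \{1\}$ and $G$ is residually-$\C$.

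Actually the cleanest route skips the case division entirely: $H$ residually-$\C$ means $\{1\}$ is $\C$-closed in $H$; Lemma \ref{open_restriction} (which applies because $\C$ satisfies (c1), (c2), (c4) and $H$ is $\C$-open) says $\proC(H)$ is a restriction of $\proC(G)$, so $\{1\}$ being $\C$-closed in $H$ forces $\{1\}$ to be $\C$-closed in $G$, which is exactly the statement that $G$ is residually-$\C$. The main (and only) subtlety is making sure the hypotheses of Lemma \ref{open_restriction} are genuinely in force — in particular that $\C$ is assumed to satisfy (c4) here, which it is by the standing assumption stated just before Lemma \ref{open_restriction} and carried into this corollary. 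So there is essentially no obstacle; the corollary is a one-line consequence of Lemma \ref{open_restriction} together with the characterization of residual-$\C$-ness as $\{1\}$ being $\C$-closed.
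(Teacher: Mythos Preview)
Your proposal is correct, and the ``cleanest route'' you arrive at in the final paragraph is exactly the paper's argument: $\{1\}$ is $\C$-closed in $H$ since $H$ is residually-$\C$, Lemma~\ref{open_restriction} transfers this to $\{1\}$ being $\C$-closed in $G$, hence $G$ is residually-$\C$. The initial case division is harmless but unnecessary, as you yourself note.
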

\begin{proof}
	The singleton set $\{1\}$ is $\C$-closed in $H$ as $H$ is residually-$\C$. By Lemma \ref{open_restriction} we see that the singleton set $\{1\}$ is $\C$-closed in $G$ as the $\proC(H)$ is a restriction of $\proC(G)$. Hence, we see that the group $G$ is residually-$\C$.
\end{proof}

The structure of classes of finite groups that satisfy (c1) and (c2) only can be quite wild. However, what if we also require the class $\C$ to satisfy (c4)? Suppose that $\C$ is a class of finite groups satisfying (c1), (c2) and (c4) and suppose that there is a nontrivial group $F$ such that $F \in \C$. Let $n = |F|$ and let $p$ be a prime number such that $p$ divides $n$. Clearly there is $g \in F$ such that $\text{ord}(g) = p$ and thus $F$ contains $C_p$, the cyclic group of size $p$ as a subgroup and thus $C_p \in \C$ as well. We see that the class $\C$ contains all finite $p$-groups as every finite $p$-group can be constructed from $C_p$ by a series of extensions. It is well known fact (see \cite{gruenberg}) that free groups are residually-$p$ for every prime number $p$ and therefore free groups are residually-$\C$ whenever the class $\C$ satisfies (c1), (c2) and (c4) and contains at least one nontrivial group.

\begin{lemma}
\label{free-by-C_is_residually-C}
	Let $\C$ be a class of finite groups satisfying (c1), (c2) and (c4) and assume that $\C$ contains a nontrivial group. Then free-by-$\C$ groups are residually-$C$.
\end{lemma}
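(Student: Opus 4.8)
The plan is to take an arbitrary free-by-$\C$ group $G$, so that there is a short exact sequence $1 \to F \to G \to Q \to 1$ with $F$ free and $Q \in \C$, and to verify directly that for every $g \in G \setminus \{1\}$ there is $N \in \NC(G)$ with $g \notin N$. First I would split into two cases according to whether $g$ lies in $F$. If $g \notin F$, the quotient map $G \to G/F \cong Q \in \C$ already separates $g$ from $1$, so $N = F$ suffices.

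The substantive case is $g \in F \setminus \{1\}$. Here I would invoke the fact recorded just before the statement, namely that free groups are residually-$\C$ under our hypotheses on $\C$ (this is the only place the assumption that $\C$ contains a nontrivial group is used): choose $M \in \NC(F)$ with $g \notin M$. The issue is that $M$ need not be normal in $G$, so one cannot quotient $G$ by it directly. To repair this, note that since $Q$ is finite, $F$ has finite index in $G$, and hence so does $M$; therefore $M$ has only finitely many conjugates in $G$, so its normal core $M_0 = \bigcap_{x \in G} x M x^{-1}$ is a \emph{finite} intersection. Consequently $M_0 \normleq G$, $M_0 \leq M$ (so $g \notin M_0$), and $M_0$ still has finite index in $G$.

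It then remains to check that $G/M_0 \in \C$. Because $F \normleq G$, conjugation by any $x \in G$ restricts to an automorphism of $F$ taking $M$ to $x M x^{-1}$, whence $F/xMx^{-1} \cong F/M \in \C$ and $x M x^{-1} \in \NC(F)$ for every $x \in G$. Since $\C$ satisfies (c1) and (c2), $\NC(F)$ is closed under finite intersections, so $M_0 \in \NC(F)$, i.e. $F/M_0 \in \C$. Now $M_0 \normleq G$ with $M_0 \leq F$, and we have an exact sequence $1 \to F/M_0 \to G/M_0 \to G/F \to 1$ in which the kernel $F/M_0$ and the quotient $G/F \cong Q$ both lie in $\C$; by (c4), $G/M_0 \in \C$. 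Hence $M_0 \in \NC(G)$ separates $g$ from $1$, and $G$ is residually-$\C$.

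The only genuine obstacle is the non-normality of $M$ in $G$; everything else is routine bookkeeping with the closure properties of $\C$. The normal-core trick resolves this precisely because $F$, and therefore $M$, is of finite index in $G$ — which is exactly what the ``-by-$\C$'' hypothesis provides.
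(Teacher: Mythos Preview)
Your proof is correct and is essentially the same argument as the paper's, just unpacked: the paper invokes Corollary~\ref{residually-C_open_subgroup} (a $\C$-open subgroup that is residually-$\C$ forces the ambient group to be residually-$\C$), whose content --- via Lemma~\ref{open_restriction} --- is precisely the normal-core-plus-extension-closure step you wrote out by hand. The paper's version is shorter because that machinery has already been isolated; yours is self-contained.
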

\begin{proof}
	Let $G$ be a free-by-$\C$ group. By assumption there is $N \in \NC(G)$ such that $N$ is free. Clearly $N$ is $\C$-open in $G$. By previous argumentation we know that $N$ is residually-$C$ and hence $G$ is residually-$\C$ by Corollary \ref{residually-C_open_subgroup}.
\end{proof}



\section{Graph products of groups}
\label{graph_products}
Let $\Gamma$ be a simplicial graph and let $\mathcal{G} = \{G_v \mid v \in V\Gamma\}$ be a family of groups indexed by $V\Gamma$. Recal that the graph product $\Gamma \mathcal{G}$ is the quotient of the free product $\bigast_{v \in V\Gamma}G_v$ obtained by adding relations of the form
	\begin{displaymath}
		g_u g_v = g_v g_u \mbox{ for all $g_u \in G_u, g_v \in G_v$ such that $\{u,v\} \in E\Gamma$}.
	\end{displaymath}

For $v \in V \Gamma$ we define the $\link(v)$ to be the the set of vertices adjacent to $v$ in $\Gamma$ (excluding $v$ itself), more precisely 
$\link(v)=\{u \in V\Gamma \mid \{u,v\} \in E\Gamma\}$.
For a subset $A \subseteq V\Gamma$ we define $\link(A) = \bigcap_{v\in A} \link(v)$.

For $v \in V \Gamma$ we define the $\sstar(v)$ to be the the set of vertices adjacent to $v$ in $\Gamma$ including $v$, more precisely 
$\sstar(v)=\{u \in V\Gamma \mid \{u,v\} \in E\Gamma\} \cup \{v\}$.
For a subset $A \subseteq V\Gamma$ we define $\sstar(A) = \bigcap_{v\in A} \sstar(v)$.

Let $G = \Gamma\mathcal{G}$ be a graph product. Then every $g \in G$ can be obtained as a product of a sequence of generators $W \equiv (g_1, g_2, \dots , g_n)$ where each $g_i$ belongs to some $G_{v_i} \in \mathcal{G}$. We will say that $W$ is a \emph{word} in $G$ and the $g_i$ are its \emph{syllables}. The number of syllables is the \emph{length} of a word.

Transformations of the three following types can be defined on words in graph products:
\begin{enumerate}
	\item[(T1)]	remove a syllable $g_i$ if $g_i =_{G_v} 1$, where $v \in V\Gamma$ and $g_i \in G_v$,
	\item[(T2)]	remove two consecutive syllables $g_i, g_{i+1}$ belonging to the same vertex group $G_v$ and replace them by a single syllable $g_i g_{i+1} \in G_v$,
	\item[(T3)] interchange consecutive syllables $g_i \in G_u$ and $g_{i+1} \in G_v$ if $\{u, v\} \in E\Gamma$.
\end{enumerate}
The last transformation is also called \emph{syllable shuffling}. Note that the transformations (T1) and (T2) decrease the length of a word whereas (T3) preserves it. Thus by applying these transformations to a word $W$ we will eventually get a word $W'$ of minimal length representing the same element in $G$.

For $1 \leq i < j \leq n$ we say that syllables $g_i, g_j$ can be \emph{joined together} if they belong to the same vertex group and \lq everything in between commutes with them\rq. More formally: $g_i, g_j \in G_v$ for some $v \in V\Gamma$ and for all $i < k < j$ we have $g_k \in G_{v_k}$ such that $v_k \in \link(v)$. In this case obviously the words $W \equiv (g_1, \dots, g_{i-1}, g_i, g_{i+1}, \dots, g_{j-1}, g_j, g_{j+1}, \dots, g_n)$ and $W' \equiv (g_1, \dots, g_{i-1}, g_i g_j, g_{i+1}, \dots, g_{j-1}, g_{j+1}, \dots, g_{n})$ represent the same group element in $G$, but the word $W'$ is strictly shorter than $W$.

We say that a word $W \equiv (g_1, g_2, \dots , g_n)$ is \emph{$\Gamma$-reduced} if it is either an empty word or if $g_i \neq 1$ for all $i$ and no two distinct syllables can be joined together. To avoid any confusion with the terminology of special amalgams (see Section \ref{special_amalgams}) we will be using the Greek letter $\Gamma$ to emphasise that we are considering sequences and elements (cyclically) reduced in the context of graph products and not in the context of special amalgams.

As it turns out, the notion of being $\Gamma$-reduced and the notion of having minimal length coincide: the following theorem was proved by E. Green \cite[Theorem 3.9]{green}.
\begin{thrm}[Normal Form Theorem]
	\label{nft}
	Let $G = \Gamma\mathcal{G}$ be a graph product. Every element $g \in G$ can be represented by a $\Gamma$-reduced word. Moreover, if two $\Gamma$-reduced words $W, W'$ represent the same element in the group $G$ then $W$ can be obtained from $W'$ by a finite sequence of syllable shuffling. In particular, the length of a $\Gamma$-reduced word is minimal among all words representing $g$, and a $\Gamma$-reduced word represents the trivial element if and only if it is the empty word.
\end{thrm}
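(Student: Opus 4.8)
The plan is to reduce to a finite graph, settle the existence statement by an easy shortening argument, and obtain everything else by induction on the number of vertices, feeding in the classical normal form theorem for amalgamated free products. We may assume $\Gamma$ is finite: a word, or a pair of words, involves only finitely many syllables and hence only the vertex groups of the full subgraph $\Gamma'$ spanned by the vertices that actually occur, the natural map $\Gamma'\mathcal{G}\to\Gamma\mathcal{G}$ is injective since the homomorphism killing every vertex group outside $\Gamma'$ and fixing the others is a well-defined retraction (here it is essential that $\Gamma'$ be a \emph{full} subgraph), and a word is $\Gamma$-reduced exactly when it is $\Gamma'$-reduced. For existence, take a word of minimal length representing a given $g\in G$: a trivial syllable could be deleted by (T1), and two joinable syllables could be brought together by (T3) and then fused — and, if their product is trivial, removed — by (T2) and (T1), each step shortening the word; hence a minimal-length word is $\Gamma$-reduced.

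For the remaining assertions I would induct on $|V\Gamma|$, the inductive hypothesis being the full Normal Form Theorem for every graph product on fewer vertices. If $\Gamma$ is complete then $\Gamma\mathcal{G}=\prod_v G_v$: a $\Gamma$-reduced word carries at most one nontrivial syllable in each vertex group, every permutation of syllables is a syllable shuffle, and the statement is immediate. If $\Gamma$ is not complete, pick a vertex $v$ not adjacent to all others, write $\Gamma_1$ for the full subgraph on $V\Gamma\setminus\{v\}$ and $\Gamma_0$ for the full subgraph on $\link(v)$, and verify by comparing defining presentations that
\[
\Gamma\mathcal{G}\;=\;\Gamma_1\mathcal{G}\;\ast_{\Gamma_0\mathcal{G}}\;\bigl(G_v\times\Gamma_0\mathcal{G}\bigr),
\]
where $\Gamma_0\mathcal{G}$ sits in $\Gamma_1\mathcal{G}$ as a retract sub-graph-product and in $G_v\times\Gamma_0\mathcal{G}$ as the second factor. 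Both amalgam factors are graph products on strictly fewer vertices, so the inductive hypothesis applies to them.

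It then remains to translate between $\Gamma$-reduced words for $\Gamma\mathcal{G}$ and reduced words for this amalgam, and this is where the real work lies. Given a nonempty $\Gamma$-reduced word $W$: if it has no syllable in $G_v$ it is $\Gamma_1$-reduced and nontrivial by induction; otherwise its $G_v$-syllables split it as $W=(U_0,h_1,U_1,\dots,h_k,U_k)$ with each $h_l\in G_v\setminus\{1\}$ and each $U_j$ a $\Gamma_1$-reduced word. The step I expect to be the main obstacle is the lemma that no internal block $U_j$ (for $1\le j\le k-1$) represents an element of $\Gamma_0\mathcal{G}$: if it did, then by the inductive uniqueness of $\Gamma_1$-reduced words up to shuffling, $U_j$ would be a shuffle of a word using only $\link(v)$-vertex groups, so every syllable of $U_j$ would commute with $G_v$, and shuffling $h_j$ past $U_j$ would produce two adjacent $G_v$-syllables, contradicting that $W$ is $\Gamma$-reduced. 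Granting this, after absorbing the outer blocks $U_0,U_k$ (which may be empty, or represent elements of $\Gamma_0\mathcal{G}$) into the neighbouring $G_v$-syllables, $W$ reads in the amalgam as a reduced word of length at least two, or as a single factor-element lying outside $\Gamma_0\mathcal{G}$ — hence nontrivial, by the normal form theorem for amalgamated free products. Running the correspondence in the reverse direction, and expanding the two kinds of amalgam syllables via their own (inductively known) normal forms, shows that every element of $\Gamma\mathcal{G}$ has, up to syllable shuffling, a unique $\Gamma$-reduced representative. This yields triviality, uniqueness up to shuffling, and therefore also minimality of length of $\Gamma$-reduced words and the characterisation of the empty word, completing the induction. (One could instead package (T1)--(T3) into a terminating rewriting system whose irreducibles are the $\Gamma$-reduced words and prove confluence, but the critical-pair analysis requires essentially the same amalgam input and does not seem shorter.)
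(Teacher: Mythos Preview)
The paper does not prove this theorem: it is quoted from Green's thesis \cite{green} without argument, so there is no in-paper proof to compare against. Your outline is the standard one and is correct in architecture --- reduction to finite $\Gamma$, existence by length-minimisation, and the inductive step via the amalgam splitting $\Gamma\mathcal{G} = \Gamma_1\mathcal{G} \ast_{\Gamma_0\mathcal{G}} (G_v \times \Gamma_0\mathcal{G})$, which is precisely the decomposition the paper later records as Remark~\ref{structure_of_graph_products}. Your key lemma (an internal block $U_j$ cannot lie in $\Gamma_0\mathcal{G}$) is argued correctly, and the deduction that a nonempty $\Gamma$-reduced word is nontrivial via the amalgam normal form is sound. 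Your choice of $v$ not adjacent to all other vertices is exactly what is needed to make both amalgam factors strictly smaller.

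The one place where the sketch is thin is the passage from ``nontrivial'' to ``unique up to shuffling''. The phrase ``running the correspondence in the reverse direction'' hides real bookkeeping: the amalgam normal form determines the alternating sequence only up to interleaved factors of $H=\Gamma_0\mathcal{G}$, so if $W$ and $W'$ yield amalgam-reduced sequences related by $a_i' = h_{i-1}^{-1} a_i h_i$ with $h_i \in H$, you must argue that (i) each $h_i$, having all syllables in $\link(v)$-groups, can be shuffled across the adjacent $G_v$-syllable, and (ii) after this redistribution the resulting $\Gamma_1$-reduced words for the $A$-blocks are shuffle-equivalent by the inductive hypothesis. This is routine once written out, but it is the step that actually delivers the ``moreover'' clause of the theorem, and as stated your paragraph asserts it rather than proves it. I would add two or three sentences making the mechanism explicit; with that, the proof is complete.
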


Thanks to Theorem \ref{nft} the following definitions make sense. Let $g \in G$ and let $W = (g_1,\dots, g_n)$ be a $\Gamma$-reduced word representing $g$. We define the \emph{length} of $g$ in $G$ to be $|g| = n$
and the \emph{support} of $g$ in $G$ to be
 $$\supp(g) = \{v \in V\Gamma | \exists i \in \{1,\dots, n\} \mbox{ such that } g_i \in G_v\setminus \{1\}\}.$$
  
 We define $\FL(g) \subseteq V\Gamma$ as the set of all $v \in V\Gamma$ such that there is a $\Gamma$-reduced word $W$ that represents the element $g$ and starts with a syllable from $G_v$. Similarly we define $\LL(g) \subseteq V\Gamma$ as the set of all $v \in V\Gamma$ such that there is a $\Gamma$-reduced word $W$ that represents the element $g$ and ends with a syllable from $G_v$. Note that $\FL(g) = \LL(g^{-1})$.
 
Let $x, y \in G$ and let $W_x \equiv (x_1, \dots ,x_n)$ be a $\Gamma$-reduced expression for $x$ and let $W_y \equiv (y_1 \dots, y_m)$ be a $\Gamma$-reduced expression for $y$. We say that the product $xy$ is a\emph{$\Gamma$-reduced product} if the word $(x_1, \dots, x_n, y_1, \dots, y_m)$ is $\Gamma$-reduced. Obviously, $xy$ is a $\Gamma$-reduced product if and only if $|xy| = |x| + |y|$. Or equivalently we could say that $xy$ is $\Gamma$-reduced product if and only if $\LL(x) \cap \FL(y) = \emptyset$. We can naturally extend this definition: for $g_1, \dots, g_n \in G$ we say that the product $g_1 \dots g_n$ is \emph{$\Gamma$-reduced} if $|g_1 \dots g_n| = |g_1| + \dots + |g_n|$.

\subsection{Full and parabolic subgroups}
\label{full_subgroups}
Let $\Gamma$ be a graph. For any subset $A \subseteq V\Gamma$ we will denote the corresponding full subgraph by $\Gamma_A$: $V \Gamma_A = A$ and for $u,v \in A$ we have $\{u,v\} \in E\Gamma_A$ if and only if $\{u,v\} \in E\Gamma$.

Let $A \subseteq V\Gamma$ and let $G_A$ denote the subgroup of $G$ generated by all $G_v$, where $v\in A$. Using Theorem \ref{nft} one can easily check that $G_A$ is isomorphic to the graph product $\Gamma_A \mathcal{G}_A$, where $\mathcal{G}_A = \{G_v \mid v \in A\}$. We see that every $A \subset V\Gamma$ induces a subgroup $G_A \leq G$. We will call subgroups of this type \emph{full subgroups} of $\Gamma\mathcal{G}$, and we say that a full subgroup $G_A$ is a \emph{proper} full subgroup if $A$ is a proper subset of $V\Gamma$. We say that a full subgroup $G_A \leq G$ is \emph{maximal} if $|V\Gamma \setminus A| = 1$. By definition $G_{\emptyset} = \{1\}$ is also a full subgroup corresponding to the empty subset of $V\Gamma$.

It can be easily seen that full subgroups are actually retracts.
\begin{remark}
	\label{full_is_retract}
	Let $G = \Gamma\mathcal{G}$ be a graph product of groups and let $G_A \leq G$ be a full subgroup. Then $G_A$ is a retract in $G$ with corresponding retraction map $\rho_A: G \rightarrow G_A$ defined on generators of G as follows:
	\begin{displaymath}
		\rho_A(g) = \left\{ 
			\begin{array}{ll}
				g & \mbox{ if } g \in G_v \mbox{ and } v\in A,\\
				1 & \mbox{otherwise.}
			\end{array} \right.
	\end{displaymath}
\end{remark}

Let $A,B \subseteq V\Gamma$ be arbitrary. Let $G_A, G_B \leq G$ be the corresponding full subgroups and let $\rho_A, \rho_B \in \End(G)$ be the corresponding retractions. We see that $\rho_A$ and $\rho_B$ commute: $\rho_A \circ \rho_B = \rho_B \circ \rho_A$. It follows that $G_A \cap G_B = G_{A\cap B}$ and $\rho_A \circ \rho_B = \rho_{A \cap B}$. This result can be generalised and strengthened.

Let $K \leq G$. We say that $K$ is \emph{parabolic} subgroup of $G$ if $K$ is conjugate to a full subgroup, i.e., if there are $A \subseteq V\Gamma$ and $g \in G$ such that $K = g G_A g^{-1}$. As it turns out, the intersection of parabolic subgroups is again a parabolic subgroup. The following theorem was proved in \cite[Corollary 3.6]{yago}. 
\begin{thrm}
	\label{intersection}
	Let $G = \Gamma\mathcal{G}$ be a graph product and let $K, L \leq G$ such that $K = g G_A g^{-1}$ and $H = f G_B f^{-1}$, where $A, B \subseteq V\Gamma$ and $f, g \in G$. Then there is $h \in G$ and $C \subseteq A \cap B$ such that $K \cap L = h G_C h^{-1}$.
\end{thrm}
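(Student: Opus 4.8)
The plan is to strip the two parabolic subgroups down to an essential case by conjugating and by replacing the conjugating element with a shortest representative of its double coset, and then to read the answer off Green's Normal Form Theorem (Theorem \ref{nft}). Write $K\cap L=gG_Ag^{-1}\cap fG_Bf^{-1}$; conjugating by $g^{-1}$ we may assume $K=G_A$ and $L=tG_Bt^{-1}$ with $t=g^{-1}f$. Choose $u\in G_AtG_B$ of minimal length and write $t=aub$ with $a\in G_A$, $b\in G_B$; since $bG_Bb^{-1}=G_B$ and $a^{-1}G_Aa=G_A$ one obtains $G_A\cap tG_Bt^{-1}=a\,(G_A\cap uG_Bu^{-1})\,a^{-1}$, so it suffices to prove that $G_A\cap uG_Bu^{-1}=G_C$ for some $C\subseteq A\cap B$, and then $h=ga$ works. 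Minimality of $u$ forces $\FL(u)\cap A=\emptyset$ and $\LL(u)\cap B=\emptyset$: otherwise one could peel a syllable off the front (resp.\ back) of $u$ into $G_A$ (resp.\ $G_B$) and shorten the representative. Consequently, since $\supp(x)\subseteq A$ for $x\in G_A$ and $\supp(y)\subseteq B$ for $y\in G_B$, the products $xu$ and $uy$ are $\Gamma$-reduced products, so $|xu|=|x|+|u|$ and $|uy|=|u|+|y|$.

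I expect $G_A\cap uG_Bu^{-1}=G_C$ with $C:=A\cap B\cap\link(\supp(u))$. The inclusion $G_C\subseteq G_A\cap uG_Bu^{-1}$ is immediate: for $v\in C$ the group $G_v$ commutes with $u$ syllable by syllable, so $G_v=uG_vu^{-1}$, and $v\in A\cap B$ gives $G_v\subseteq G_A$ and $G_v=uG_vu^{-1}\subseteq uG_Bu^{-1}$. For the reverse inclusion, take $x\in G_A\cap uG_Bu^{-1}$ with $x\neq 1$ and put $y:=u^{-1}xu\in G_B$. Then $xu=uy$ with both sides $\Gamma$-reduced products, so $|x|=|y|$ and the two $\Gamma$-reduced words $(x_1,\dots,x_k,u_1,\dots,u_m)$ and $(u_1,\dots,u_m,y_1,\dots,y_k)$ represent the same element; by Theorem \ref{nft} they differ by syllable shuffling. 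It now suffices to prove that every syllable $x_i\in G_{v_i}$ of $x$ commutes with every syllable $u_j\in G_{w_j}$ of $u$ (that is, $\{v_i,w_j\}\in E\Gamma$): this makes $x$ commute with $u$, so $x=y\in G_B$, and it gives $\supp(x)\subseteq\link(\supp(u))$, whence $\supp(x)\subseteq C$.

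The crux is this syllable-by-syllable commutation claim: one must show that in passing between the two reduced words the block $u_1\dots u_m$ is not dispersed and that each $x_i$ gets dragged across all of it, and one must use minimality to exclude the possibility that a syllable of $u$ lies in a vertex group $G_v$ with $v\in\supp(x)\subseteq A$. I would prove it by induction --- either on $|u|$, peeling off a front syllable $u_1\in G_v$ of $u$ with $v\in\FL(u)\subseteq V\Gamma\setminus A$ and replacing $x$ by $u_1^{-1}xu_1$; or, when $\Gamma$ is finite, on $|V\Gamma|$, via the amalgamated-product decomposition $G=G_{V\Gamma\setminus\{v\}}\ast_{G_{\link(v)}}G_{\sstar(v)}$ and an analysis of how $K$ and $L$ act on the associated Bass--Serre tree, which pushes the intersection into a vertex or an edge group, i.e.\ into a graph product over a smaller graph. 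The expected difficulty is the bookkeeping: for the induction on $|u|$ one must select the correct ``type'' $A'$ for the shortened element $u_1^{-1}xu_1$ so that the shortened $u$ remains a shortest double-coset representative (the naive choice $A'=\{v\}\cup A$ need not satisfy the analogue of $\FL(u)\cap A=\emptyset$); for the tree induction one needs the auxiliary fact that a parabolic subgroup of $G$ contained in a full subgroup $G_D$ is already a parabolic subgroup of $G_D$, together with a case split according to whether $K$ and $L$ fix vertices of the tree and whether these can be taken to coincide. The case of infinite $\Gamma$ is then handled by the combinatorial argument, which uses nothing about the cardinality of $V\Gamma$.
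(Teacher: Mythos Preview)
The paper does not prove this theorem: it is quoted verbatim as \cite[Corollary 3.6]{yago} (Antol\'{i}n--Minasyan) and used as a black box thereafter. So there is no in-paper proof to compare against.

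That said, your outline is precisely the route taken in the cited reference. The reduction to a shortest double-coset representative $u\in G_AtG_B$, the observations $\FL(u)\cap A=\emptyset$ and $\LL(u)\cap B=\emptyset$, and the candidate $C=A\cap B\cap\link(\supp(u))$ are exactly what appear there. Your easy inclusion and the set-up $xu=uy$ with both sides $\Gamma$-reduced are correct. The only genuine gap is the one you flag yourself: the ``crux'' that every syllable of $x$ commutes with every syllable of $u$ is asserted with two suggested inductive strategies but not actually carried out. In \cite{yago} this is handled by an induction on $|u|$ together with a careful tracking of which vertices can occur in $\supp(u)$ given the minimality hypothesis; the Bass--Serre tree alternative you mention is viable but heavier than needed. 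If you want this to stand as a self-contained proof rather than a proof sketch, you need to execute one of those inductions and verify that the minimality condition is preserved at each step --- as you correctly anticipate, the naive enlargement $A'=A\cup\{v\}$ does not quite work, and one instead argues directly that the first syllable of $u$ commutes with all of $x$ (using that its vertex lies outside $A\supseteq\supp(x)$ and that shuffling must move some syllable of $u$ to the front of $uy$).
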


As an easy consequence of Theorem \ref{intersection} we get the following lemma.
\begin{lemma}
	\label{maximal_full_subgroup}
Let $g \in G = \Gamma\mathcal{G}$ and suppose that $|V\Gamma| < \infty$ and $g \neq 1$. Then there is a maximal full subgroup $A$ of $G$ such that $g \not \in A^{G}$.
\end{lemma}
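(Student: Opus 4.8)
The plan is to argue by contradiction, the whole point being to convert ``$g$ is conjugate into every maximal full subgroup'' into a statement about an intersection of parabolic subgroups and then apply Theorem \ref{intersection}. First I would record that the maximal full subgroups of $G$ are precisely the subgroups $G_{V\Gamma\setminus\{v\}}$ for $v\in V\Gamma$ (by definition $|V\Gamma\setminus A|=1$ means $A=V\Gamma\setminus\{v\}$). So, assuming the conclusion fails, for every $v\in V\Gamma$ there is $h_v\in G$ with $h_v g h_v^{-1}\in G_{V\Gamma\setminus\{v\}}$; equivalently $g\in P_v:=h_v^{-1}G_{V\Gamma\setminus\{v\}}h_v$, a parabolic subgroup. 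Consequently $g\in\bigcap_{v\in V\Gamma}P_v$, an intersection of \emph{finitely many} parabolic subgroups.

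Next I would show, by induction on the size of the finite set $V\Gamma$, that $\bigcap_{v\in V\Gamma}P_v$ is again parabolic, say $\bigcap_{v\in V\Gamma}P_v=hG_Ch^{-1}$, with $C\subseteq\bigcap_{v\in V\Gamma}(V\Gamma\setminus\{v\})$. Enumerate $V\Gamma=\{v_1,\dots,v_n\}$. Suppose we have already written $P_{v_1}\cap\dots\cap P_{v_k}=q_kG_{C_k}q_k^{-1}$ with $C_k\subseteq\bigcap_{i\le k}(V\Gamma\setminus\{v_i\})$. Applying Theorem \ref{intersection} to the parabolic subgroups $q_kG_{C_k}q_k^{-1}$ and $P_{v_{k+1}}=h_{v_{k+1}}^{-1}G_{V\Gamma\setminus\{v_{k+1}\}}h_{v_{k+1}}$ yields some $q_{k+1}\in G$ and $C_{k+1}\subseteq C_k\cap(V\Gamma\setminus\{v_{k+1}\})$ with $P_{v_1}\cap\dots\cap P_{v_{k+1}}=q_{k+1}G_{C_{k+1}}q_{k+1}^{-1}$, and $C_{k+1}\subseteq\bigcap_{i\le k+1}(V\Gamma\setminus\{v_i\})$ as required. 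This is exactly where the hypothesis $|V\Gamma|<\infty$ is used: Theorem \ref{intersection} handles a pairwise, hence a finite, intersection, but gives nothing for an infinite one.

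Finally, since $\bigcap_{v\in V\Gamma}(V\Gamma\setminus\{v\})=\emptyset$, the index set $C$ is empty, so $\bigcap_{v\in V\Gamma}P_v=hG_\emptyset h^{-1}=\{1\}$ by the convention $G_\emptyset=\{1\}$. But $g$ lies in this intersection, forcing $g=1$, contrary to hypothesis; hence some maximal full subgroup $A$ satisfies $g\notin A^G$. I do not expect a genuine obstacle: Theorem \ref{intersection} does all the structural work, and the only things to be careful about are the bookkeeping of the index sets through the induction and the trivial observation that the degenerate cases $|V\Gamma|\le 1$ are covered by the same argument (when $|V\Gamma|=1$ one already has $C_1\subseteq V\Gamma\setminus\{v_1\}=\emptyset$).
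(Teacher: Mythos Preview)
Your proof is correct and follows essentially the same route as the paper's own proof: assume for contradiction that $g$ lies in a conjugate of every maximal full subgroup, intersect these parabolics, and use Theorem~\ref{intersection} to conclude that the intersection is conjugate to $G_\emptyset=\{1\}$. The only difference is that you spell out the finite induction needed to pass from the pairwise statement of Theorem~\ref{intersection} to a finite intersection, whereas the paper applies the theorem to the full intersection in one step without comment; your extra care here is entirely justified.
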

\begin{proof}
	Let $g \in G \setminus \{1\}$ and assume that the statement of the lemma does not hold for $g$, thus for every maximal full subgroup $A_v$, where $A_v = G_{V\Gamma \setminus \{v\}}$ for some $v \in V\Gamma$, there is $h_v \in G$ such that $g \in h_v A_v h_v^{-1}$. We see that $g \in \bigcap_{v \in V\Gamma}h_v A_v h_v^{-1}$. By Theorem \ref{intersection} we see that there are $h \in G$ and $C \subseteq \bigcap_{v \in V\Gamma}V\Gamma \setminus \{v\}$ such that $g \in h G_C h^{-1}$. However, $\bigcap_{v \in V\Gamma}V\Gamma \setminus \{v\} = \emptyset$ and thus we see that $g = 1$, which is a contradiction because we assumed that $g \neq 1$.
\end{proof}

The following theorem was proved in \cite[Proposition 3.10]{yago}.

\begin{thrm}
\label{parabolic_closure_well-defined}
	Let $X$ be a subset of the graph product $G = \Gamma \mathcal{G}$ such that at least one of the following conditions holds:
	\begin{enumerate}
		\item[(i)] the graph $\Gamma$ is finite;
		\item[(ii)] the subgroup $\langle X\rangle \leq G$ is finitely generated.
	\end{enumerate}
	Then there exists a unique minimal parabolic subgroup of $G$ containing $X$.
\end{thrm}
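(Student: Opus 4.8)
The plan is to work with the family $\mathcal{P}$ of all parabolic subgroups of $G$ that contain $X$. This family is non-empty since $G = G_{V\Gamma}$ is parabolic, and it suffices to exhibit a single $P_0 \in \mathcal{P}$ with $P_0 \leq Q$ for every $Q \in \mathcal{P}$; then $P_0 = \bigcap \mathcal{P}$ is the unique minimal parabolic subgroup of $G$ containing $X$. The two ingredients I would rely on are Theorem \ref{intersection} --- which in particular shows that $\mathcal{P}$ is closed under finite intersections and controls the type of an intersection --- and the retractions $\rho_A \colon G \to G_A$ attached to full subgroups (Remark \ref{full_is_retract}). From the latter I would first extract the following auxiliary fact: \emph{if $A \subseteq V\Gamma$ and $R \leq G_A$ is a parabolic subgroup of $G$, then $R$ is a parabolic subgroup of the graph product $G_A = \Gamma_A \mathcal{G}_A$.} Indeed, writing $R = h G_B h^{-1}$ with $h \in G$, $B \subseteq V\Gamma$, and using that $\rho_A$ is the identity on $G_A \supseteq R$, one gets $R = \rho_A(R) = \rho_A(h)\, G_{B \cap A}\, \rho_A(h)^{-1}$ with $\rho_A(h) \in G_A$ and $B \cap A \subseteq A$, which exhibits $R$ as a conjugate, by an element of $G_A$, of the full subgroup $G_{B\cap A}$ of $G_A$.

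Under hypothesis (i) I would prove the existence of such a $P_0$ by induction on $|V\Gamma|$, the case $V\Gamma = \emptyset$ being trivial. For the inductive step, if $X$ is contained in no conjugate of any proper full subgroup of $G$, then every $g G_A g^{-1} \in \mathcal{P}$ must have $A = V\Gamma$ and hence equal $G$, so $G$ is the only --- in particular the unique minimal --- member of $\mathcal{P}$. Otherwise $X \subseteq g G_A g^{-1}$ for some $g \in G$ and some proper $A \subsetneq V\Gamma$; then $X' := g^{-1} X g \subseteq G_A$, and by the inductive hypothesis applied to the smaller graph product $G_A$ there is a unique minimal parabolic subgroup $P'$ of $G_A$ containing $X'$. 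Put $P_0 := g P' g^{-1} \in \mathcal{P}$. For an arbitrary $Q \in \mathcal{P}$, the subgroup $Q \cap g G_A g^{-1}$ is parabolic in $G$ by Theorem \ref{intersection}, contains $X$, and lies inside $g G_A g^{-1}$; conjugating by $g^{-1}$ and invoking the auxiliary fact, it becomes a parabolic subgroup of $G_A$ containing $X'$, hence contains $P'$ by minimality; therefore $P_0 \leq Q \cap g G_A g^{-1} \leq Q$.

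Under hypothesis (ii), write $\langle X \rangle = \langle x_1, \dots, x_n \rangle$ and set $A_0 = \supp(x_1) \cup \dots \cup \supp(x_n)$, a finite subset of $V\Gamma$; by the Normal Form Theorem (Theorem \ref{nft}) each $x_i$ lies in $G_{A_0}$, so $X \subseteq \langle X \rangle \leq G_{A_0}$. Since $\Gamma_{A_0}$ is finite, case (i) provides a unique minimal parabolic subgroup $P_0$ of $G_{A_0}$ containing $X$, and $P_0$ is also parabolic in $G$. For any $Q \in \mathcal{P}$ the subgroup $Q \cap G_{A_0}$ is parabolic in $G$ by Theorem \ref{intersection}, hence parabolic in $G_{A_0}$ by the auxiliary fact, and it contains $X$; by minimality of $P_0$ inside $G_{A_0}$ we get $P_0 \leq Q \cap G_{A_0} \leq Q$. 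This produces the desired $P_0$ and completes the proof.

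The point I expect to require the most care is the one hidden in this outline: one cannot simply take $\bigcap \mathcal{P}$ and verify directly that it is parabolic, because $\mathcal{P}$ need not be a chain and intersecting two of its members need not strictly decrease any complexity parameter, so the naive approach does not terminate. The detour through the finite full subgroup $G_{A_0}$ (for (ii)) and through the induction on $|V\Gamma|$ (for (i)) is precisely what circumvents this, and the auxiliary fact --- resting on the retractions $\rho_A$ --- is the bridge that lets an intersection computed inside $G$ be reinterpreted inside a smaller graph product. One should also double-check that Theorem \ref{intersection} is being applied in exactly the forms used above, i.e.\ that the intersection of $Q$ with a conjugate of a full subgroup again has type contained in that of the full subgroup.
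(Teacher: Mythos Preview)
The paper does not give its own proof of this theorem: it is quoted verbatim from \cite[Proposition~3.10]{yago} and used as a black box. So there is no in-paper argument to compare against, and the relevant question is simply whether your argument is sound.

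It is. Your auxiliary fact --- that a parabolic subgroup of $G$ contained in a full subgroup $G_A$ is already parabolic in the graph product $G_A$ --- is correct as stated (indeed $\rho_A(G_B)=G_{A\cap B}$, so $R=\rho_A(R)=\rho_A(h)\,G_{A\cap B}\,\rho_A(h)^{-1}$), and this is precisely \cite[Lemma~3.7]{yago}, which the present paper invokes later in the proof of Theorem~\ref{main_chcs_infinite}. The induction on $|V\Gamma|$ in case~(i) and the reduction to a finite full subgroup $G_{A_0}$ in case~(ii) are exactly the right manoeuvres; in each step Theorem~\ref{intersection} guarantees that $Q\cap gG_Ag^{-1}$ (respectively $Q\cap G_{A_0}$) is again parabolic in $G$, and your auxiliary fact lets you reinterpret it as parabolic in the smaller graph product so that the inductive/minimal hypothesis applies. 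Your closing caveat about the type of the intersection is also handled by Theorem~\ref{intersection}, which asserts $C\subseteq A\cap B$, so no extra work is needed there. This is, as far as one can tell from the citation, essentially the same strategy as in \cite{yago}.
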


Suppose that a subset $X \subseteq G$ is contained in a minimal parabolic subgroup of $G$. Then this subgroup will be called the \emph{parabolic closure} of $X$ and will be denoted by $\PC_{\Gamma}(X)$.

For a subset $X \subseteq$ and a subgroup $H$ we will use $N_H(X)$ to denote $\{ g \in G \mid gX = Xg\}$, the $H$-normaliser of $X$ in $G$. The following characterisation of normalisers of parabolic subgroups was given in \cite[Proposition 3.13]{yago}.

\begin{thrm}
\label{normalisers_of_parabolic_subgroups}
Let $K$ be a nontrivial parabolic subgroup of the graph product $G = \Gamma \mathcal{G}$. Choose $f \in G$ and $S \in V\Gamma$ such that $K = f G_S f^{-1}$ and $G_s \neq \{1\}$ for all $s \in S$. Then $N_G(K) = f G_{S\cup \link(S)}f^{-1}$; in particular the normaliser $N_G(K)$ is a parabolic subgroup of $G$.
\end{thrm}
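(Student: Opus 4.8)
The plan is to first strip off the conjugator, then settle the single-vertex case $S=\{v\}$ by a direct normal-form computation, and finally deduce the general statement from that case by a retraction argument. Since $N_G(fG_Sf^{-1})=fN_G(G_S)f^{-1}$ it suffices to handle $f=1$; write $T:=S\cup\link(S)$, so the goal becomes $N_G(G_S)=G_T$. One inclusion is cheap: every vertex of $\link(S)$ is joined to every vertex of $S$ and $S\cap\link(S)=\emptyset$, so $\Gamma_T$ is the join of $\Gamma_S$ and $\Gamma_{\link(S)}$, hence $G_T=G_S\times G_{\link(S)}$; thus $G_{\link(S)}$ centralises $G_S$ and $G_T=G_SG_{\link(S)}\le N_G(G_S)$. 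Only the reverse inclusion $N_G(G_S)\le G_T$ requires work.

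\textbf{The special case $S=\{v\}$.} Here $T=\sstar(v)$ and I claim $N_G(G_v)=G_{\sstar(v)}$. Given $h\in N_G(G_v)$, choose $h$ of minimal $\Gamma$-length in the double coset $G_{\sstar(v)}\,h\,G_{\sstar(v)}$; since $G_{\sstar(v)}\le N_G(G_v)$ this new $h$ still lies in $N_G(G_v)$, and it is enough to show $h=1$. Minimality together with Theorem \ref{nft} (peel the first, resp.\ last, syllable of $h$ whenever its vertex lies in $\sstar(v)$ and absorb it into a double-coset factor) forces $\FL(h)\cap\sstar(v)=\LL(h)\cap\sstar(v)=\emptyset$; in particular neither $\FL(h)$ nor $\LL(h)$ meets $\link(v)$. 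Suppose $h\neq 1$ and pick $x\in G_v\setminus\{1\}$; then $hxh^{-1}\in G_v\setminus\{1\}$, so $\supp(hxh^{-1})=\{v\}$. Writing $h=h_1\cdots h_n$ ($n\ge 1$) as a $\Gamma$-reduced word and examining the word $(h_1,\dots,h_n,x,h_n^{-1},\dots,h_1^{-1})$, one checks that $x$ cannot be shuffled past $h_n$ or $h_n^{-1}$ (their vertex avoids $\link(v)$), and that any cancellation of the form $h_p h_p^{-1}$ across $x$ would place $v(h_p)$ in $\LL(h)\cap\sstar(v)$; ruling out all such cases (using also that $(h_1,\dots,h_n)$ is $\Gamma$-reduced) shows this word is already $\Gamma$-reduced. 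Hence $v(h_n)\in\supp(hxh^{-1})=\{v\}$, contradicting $v(h_n)\in\LL(h)$. So $h=1$ and $N_G(G_v)=G_{\sstar(v)}$.

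\textbf{The general case.} Let $h\in N_G(G_S)$ and let $\rho:=\rho_S\colon G\to G_S$ be the retraction of Remark \ref{full_is_retract}. Set $k:=\rho(h)^{-1}h$; then $k\in N_G(G_S)$ (as $\rho(h)\in G_S$) and $\rho(k)=1$. For every $x\in G_S$ we have $kxk^{-1}\in G_S$, so $kxk^{-1}=\rho(kxk^{-1})=\rho(k)\rho(x)\rho(k)^{-1}=x$; thus $k\in C_G(G_S)$. Fixing $s\in S$, the special case gives $k\in C_G(G_s)\le N_G(G_s)=G_{\sstar(s)}=G_s\times G_{\link(s)}$, and the $G_s$-component of $k$ equals $\rho_s(k)=\rho_s(\rho(k))=1$ (since $\rho_s=\rho_s\circ\rho_S$), so $\supp(k)\subseteq\link(s)$. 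As $s\in S$ was arbitrary, $\supp(k)\subseteq\bigcap_{s\in S}\link(s)=\link(S)$, hence $h=\rho(h)k\in G_S G_{\link(S)}=G_T$. Combined with the cheap inclusion this yields $N_G(G_S)=G_T$, and conjugating back $N_G(K)=fG_{S\cup\link(S)}f^{-1}$, which is parabolic by definition.

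\textbf{Main obstacle.} I expect the technical heart to be the combinatorial step in the special case: showing that the conjugated word $(h_1,\dots,h_n,x,h_n^{-1},\dots,h_1^{-1})$ stays $\Gamma$-reduced. This means anticipating every type of cancellation that syllable-shuffling could create and verifying that the hypothesis $\FL(h)\cap\link(v)=\LL(h)\cap\link(v)=\emptyset$ destroys each of them; it is crucial here that we have excluded $\link(v)$ and not merely $v$, which is exactly why the general $S$ is routed through the single-vertex case rather than attacked directly (for a general $S$ one only controls $\link(S)$, not each $\link(s)$). Once this lemma is in hand, the retraction trick reduces the rest to bookkeeping.
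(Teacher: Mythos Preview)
Your argument is correct. Note, however, that the paper does not actually prove Theorem~\ref{normalisers_of_parabolic_subgroups}: it is quoted without proof from \cite[Proposition~3.13]{yago}. So there is no ``paper's own proof'' to compare against here; you have supplied a self-contained proof where the paper simply cites the literature.

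A few remarks on the write-up. In the single-vertex case, your sketch of why $(h_1,\dots,h_n,x,h_n^{-1},\dots,h_1^{-1})$ is $\Gamma$-reduced is right but compressed. The only genuinely non-automatic cancellation is indeed the ``diagonal'' one $h_p\leftrightarrow h_p^{-1}$: any attempted join $h_i\leftrightarrow h_j^{-1}$ with $i\neq j$ and $v(h_i)=v(h_j)$ dies immediately because one of $h_j$ or $h_i^{-1}$ sits strictly between them and has the same vertex, which cannot lie in its own link. It would be worth stating this explicitly rather than folding it into ``ruling out all such cases''. The retraction step in the general case is clean; the identity $\rho_s=\rho_s\circ\rho_S$ (since $s\in S$) is exactly what makes $\rho_s(k)=1$, and then $k\in G_{\sstar(s)}=G_s\times G_{\link(s)}$ forces $k\in G_{\link(s)}$.

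One small wording issue: saying ``$\Gamma_T$ is the join of $\Gamma_S$ and $\Gamma_{\link(S)}$'' is slightly more than you need and slightly more than is true in general (there is no constraint on edges inside $\link(S)$); what you actually use, and what is immediate, is that every vertex of $\link(S)$ is adjacent to every vertex of $S$, hence $G_T=G_S\,G_{\link(S)}$ with $[G_S,G_{\link(S)}]=1$.
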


For a subset $X \subseteq$ and a subgroup $H$ we will use $C_H(X)$ to denote $\{ g \in G \mid gx = xg\}$, the $H$-centraliser of $X$ in $G$. Centralisers in graph products were fully described by Barkauskas in \cite{barkauskas}. We give simple a lemma describing centralisers of elements in terms of certain special subgroups and centralisers in full subgroups.
\begin{lemma}
\label{centralisers_in_graph_products}
	Let $G = \Gamma \mathcal{G}$ be a graph product of groups and let $g \in G$ be arbitrary. Suppose that there is $A \subseteq G$ such that $\PC_{\Gamma}(\langle g \rangle) = G_A$. Then $C_G(g) = C_{G_A}(g)G_{\link(A)}$.
\end{lemma}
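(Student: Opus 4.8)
The plan is to deduce the structure of $C_G(g)$ from that of the normaliser $N_G(G_A)$: I will show that $C_G(g)\leq N_G(G_A)$, that $N_G(G_A)$ decomposes as an internal direct product $G_A\times G_{\link(A)}$, and then simply read off the centraliser inside this product. First the set-up. We may assume that every vertex group is nontrivial (the standard convention), so that $A$ is determined by the full subgroup $G_A=\PC_{\Gamma}(\langle g\rangle)$; in the case $g=1$ we have $A=\emptyset$ and $\link(A)=V\Gamma$, and the asserted identity reads $G=\{1\}\cdot G$, so from now on assume $g\neq 1$, whence $g\in\langle g\rangle\leq G_A$. Since no vertex of a simplicial graph lies in its own link, $a\notin\link(A)$ for every $a\in A$, so $A\cap\link(A)=\emptyset$; and as every vertex of $\link(A)$ is adjacent to every vertex of $A$, the full subgraph on $A\cup\link(A)$ is the join of $\Gamma_A$ and $\Gamma_{\link(A)}$, so by the Normal Form Theorem~\ref{nft} we get $G_{A\cup\link(A)}=G_A\times G_{\link(A)}$. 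In particular $G_{\link(A)}$ commutes elementwise with $G_A$, so $G_{\link(A)}\leq C_G(g)$, and together with $C_{G_A}(g)\leq C_G(g)$ this already yields the inclusion $C_{G_A}(g)\,G_{\link(A)}\leq C_G(g)$.

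For the reverse inclusion, take $c\in C_G(g)$, so that $c\langle g\rangle c^{-1}=\langle g\rangle$. Since $\langle g\rangle$ is finitely generated (being cyclic), Theorem~\ref{parabolic_closure_well-defined} tells us that $G_A$ is the \emph{unique} minimal parabolic subgroup of $G$ containing $\langle g\rangle$. A conjugate of a parabolic subgroup is again parabolic, so $cG_Ac^{-1}$ is the unique minimal parabolic subgroup containing $c\langle g\rangle c^{-1}=\langle g\rangle$; by uniqueness $cG_Ac^{-1}=G_A$, i.e. $c\in N_G(G_A)$. Hence $C_G(g)\leq N_G(G_A)$, and Theorem~\ref{normalisers_of_parabolic_subgroups} (applied with trivial conjugator and $S=A$, which is legitimate since all vertex groups are nontrivial) gives $N_G(G_A)=G_{A\cup\link(A)}=G_A\times G_{\link(A)}$. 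Since $g$ lies in the first factor and the second factor centralises $g$, computing the centraliser of $g$ in this direct product yields $C_G(g)=C_{N_G(G_A)}(g)=C_{G_A}(g)\times G_{\link(A)}=C_{G_A}(g)\,G_{\link(A)}$, as required.

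The only step that is more than bookkeeping is the inclusion $C_G(g)\leq N_G(G_A)$ — equivalently, the fact that conjugation by an element of $G$ carries $\PC_{\Gamma}(\langle g\rangle)$ to $\PC_{\Gamma}(c\langle g\rangle c^{-1})$ — and this is forced by the uniqueness clause of Theorem~\ref{parabolic_closure_well-defined} once one observes that conjugates of parabolic subgroups are parabolic and that minimality is preserved under conjugation. The remaining ingredients, namely the join decomposition of $G_{A\cup\link(A)}$ and the description of centralisers in direct products, are elementary.
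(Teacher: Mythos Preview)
Your proof is correct and follows essentially the same route as the paper: both arguments show $C_G(g)\leq N_G(G_A)=G_A\,G_{\link(A)}$ and then read off the centraliser inside that product. The only difference is cosmetic---the paper quotes \cite[Lemma~3.12]{yago} for the inclusion $N_G(\langle g\rangle)\subseteq N_G(G_A)$, whereas you derive it directly from the uniqueness clause of Theorem~\ref{parabolic_closure_well-defined}, which is exactly how that lemma is proved.
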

\begin{proof}
	Clearly $C_G(g) \leq N_G(\langle g\rangle)$. Since $G_A = \PC_{\Gamma}(\langle g \rangle)$ we see by \cite[Lemma 3.12]{yago} that $N_G(\langle g \rangle) \subseteq N_G(G_A)$. By Theorem \ref{normalisers_of_parabolic_subgroups} we see that $N_G(G_A) = G_A \cdot G_{\link(A)}$ and thus $C_G(g) \subseteq G_A \cdot G_{\link(A)}$. We can then write $C_G(g) = C_G(g) \cap G_A \cdot G_{\link(A)}$. Note that $G_{\link(A)} \leq C_{G}(G_A)$ and thus $G_{\link(A)} \leq C_G(g)$. This means that $C_G(g) \cap G_A \cdot G_{\link(A)} = (C_G(g) \cap G_A)G_{\link(A)}$ and we see that $C_G(g) = C_{G_A}(g)G_{\link(A)}$.
\end{proof}

\subsection{Cyclically reduced elements and conjugacy in graph products}

Let $g \in G$, let $W \equiv (g_1, \dots ,g_n)$ be a $\Gamma$-reduced expression for $g$. We say that a sequence $W' = (g_{j+1}, \dots, g_n, g_{1}, \dots, g_j)$, where $j \in \{1,\dots, n-1\}$, is a \emph{cyclic permutation} of $W$. We say that the element $g'\in G$ is a \emph{cyclic permutation} of $g$ if $g'$ can be expressed by a cyclic permutation of some $\Gamma$-reduced expression for $g$.
 
Let $W \equiv (g_1, \dots, g_n)$ be some reduced expression in $G$. We say that $W$ is \emph{$\Gamma$-cyclically reduced} if all cyclic permutations of $W$ are $\Gamma$-reduced. We would like to extend this definition to elements of $G$. However, to achieve that we first need to show that this property does not depend on the choice of $\Gamma$-reduced expression.

\begin{lemma}
	Let $g \in G$ be arbitrary and let $W \equiv (g_1, \dots g_n)$ be some $\Gamma$-reduced expression for $g$. If $W$ is $\Gamma$-cyclically reduced then all $\Gamma$-reduced expressions representing $g$ are $\Gamma$-cyclically reduced.
\end{lemma}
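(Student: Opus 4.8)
The plan is to exploit the Normal Form Theorem (Theorem~\ref{nft}) to reduce everything to the case of a single syllable shuffle. First I would take an arbitrary $\Gamma$-reduced expression $W'$ for $g$; by Theorem~\ref{nft} it is obtained from $W=(g_1,\dots,g_n)$ by a finite sequence of syllable shufflings (T3), and since a single shuffle sends one $\Gamma$-reduced word for $g$ to another, an induction on the length of this sequence reduces the claim to the following statement: \emph{if $W$ is $\Gamma$-cyclically reduced and $W'$ is obtained from $W$ by transposing two consecutive syllables $g_i\in G_u$, $g_{i+1}\in G_v$ with $\{u,v\}\in E\Gamma$, then $W'$ is $\Gamma$-cyclically reduced.} (Note $u\neq v$ and $g_i,g_{i+1}$ commute.) Throughout I read indices cyclically modulo $n$, and I would dispose of the tiny cases $n\le 2$ separately, where the statement is immediate.

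Next I would check that every cyclic permutation $W'_{(j)}$ of $W'$ is $\Gamma$-reduced. If the cut point $j$ is not $i$, then the transposed syllables $g_i,g_{i+1}$ stay consecutive, so $W'_{(j)}$ is obtained from the cyclic permutation $W_{(j)}$ of $W$ by transposing this commuting consecutive pair; since $W$ is $\Gamma$-cyclically reduced, $W_{(j)}$ is $\Gamma$-reduced, and because a syllable shuffle preserves both the represented element and the length, Theorem~\ref{nft} forces $W'_{(j)}$ to be $\Gamma$-reduced too. The only remaining cut point is $j=i$, which splits the transposed pair between the two ends: here $W'_{(i)}=(g_i,\,g_{i+2},\,g_{i+3},\,\dots,\,g_{i-1},\,g_{i+1})$, and this is the single place where the hypothesis has to be used.

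To handle $j=i$ I would let $m$ be the element represented by $(g_{i+2},\dots,g_{i-1})$; this is a contiguous subword of the $\Gamma$-reduced word $W_{(i+1)}$, hence $\Gamma$-reduced, so $|m|=n-2$. Then $W'_{(i)}$ represents $x:=g_i\,m\,g_{i+1}$, has length $n$, and it suffices to prove $|x|=n$, since then $W'_{(i)}$ is a minimal-length expression for $x$ and so is $\Gamma$-reduced by Theorem~\ref{nft}. If $|x|<n$ then a short length computation shows $u\in\FL(m)$ or $v\in\LL(m)$. In the first case there is a $\Gamma$-reduced word for $m$ starting with a syllable $g_k\in G_u$, equivalently (all reduced words for $m$ being shuffles of $(g_{i+2},\dots,g_{i-1})$) a syllable $g_k\in G_u$ among $g_{i+2},\dots,g_{i-1}$ all of whose predecessors in that word lie in vertex groups of $\link(u)$; but then in $W_{(i-1)}=(g_i,g_{i+1},g_{i+2},\dots,g_{i-1})$ the syllables strictly between $g_i$ and $g_k$ are $g_{i+1}$ (whose vertex group $v$ lies in $\link(u)$ since $\{u,v\}\in E\Gamma$) together with those predecessors, all in $\link(u)$, so $g_i$ and $g_k$, both in $G_u$, can be joined together in $W_{(i-1)}$, contradicting $\Gamma$-reducedness of $W_{(i-1)}$. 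The case $v\in\LL(m)$ is the mirror image, carried out inside $W_{(i+1)}=(g_{i+2},\dots,g_{i-1},g_i,g_{i+1})$ with $g_k\in G_v$ and $g_{i+1}$ at the end, using $u\in\link(v)$. This gives $|x|=n$ and finishes the single-shuffle step, hence the lemma.

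I expect the $j=i$ case to be the only real obstacle: its content is that passing to a cyclic permutation never creates a new joinable pair of syllables, and the delicate point is that the ``wrap-around'' arc between two same-group syllables becomes an ordinary inner arc after permuting --- which is exactly what the assumed $\Gamma$-cyclic reducedness of $W$ controls, via the permutations $W_{(i-1)}$ and $W_{(i+1)}$. The rest (the induction set-up, the non-$i$ cut points, the reformulation ``minimal length $\Leftrightarrow$ $\Gamma$-reduced'') is routine bookkeeping with Theorem~\ref{nft}.
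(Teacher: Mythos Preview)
Your proposal is correct and follows essentially the same strategy as the paper: both reduce via Theorem~\ref{nft} to a single syllable shuffle, handle cyclic permutations at cut points $j\neq i$ by observing they are shuffles of cyclic permutations of $W$, and treat the critical cut point $j=i$ by showing that any putative join of $g_i$ or $g_{i+1}$ with another syllable would already contradict $\Gamma$-reducedness of an appropriate cyclic permutation of $W$. Your packaging of the $j=i$ case via $\FL(m)$ and $\LL(m)$ is a slight notational streamlining of the paper's direct case analysis on the position of the joinable syllable, but the substance is the same; just note that your ``short length computation'' is really the observation that since $(g_{i+2},\dots,g_{i-1})$ is $\Gamma$-reduced and $u\neq v$, any join in $W'_{(i)}$ must involve $g_i$ with a syllable of $m$ (forcing $u\in\FL(m)$) or $g_{i+1}$ with a syllable of $m$ (forcing $v\in\LL(m)$).
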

\begin{proof}
	Assume that $W = (g_1, \dots, g_n)$ is $\Gamma$-cyclically reduced sequence and let $i \in \{1, \dots, n-1\}$ be arbitrary such that $[g_i, g_{i+1}] = 1$. Consider the expression $W'=(g_1, \dots, g_{i-1}, g_{i+1}, g_i, g_{i+2}, \dots , g_n)$. Obviously $W'$ is a $\Gamma$-reduced expression for $g$ as well. Let $W''$ be some cyclic permutation of $W'$. Then there are three cases to consider:
	\begin{enumerate}
		\item[(i)] $W'' = (g_{j+1}, \dots, g_{i-1}, g_{i+1}, g_i, g_{i+2},\dots , g_n, g_1, \dots, g_j)$ for some $j < i$,
		\item[(ii)] $W'' = (g_i, g_{i+2}, \dots, g_{n}, g_1, \dots, g_{i-1}, g_{i+1})$,
		\item[(iii)] $W'' = (g_{j+1}, \dots, g_n, g_1, \dots, g_{i-1}, g_{i+1}, g_i, g_{i+2}, \dots , g_j)$ for some $j > i$.
	\end{enumerate}
	
	Consider the case (i) first. The expression $W''$ can be rewritten to the expression $V = (g_{j+1}, \dots, g_n, g_1, \dots,g_j)$ by swapping the syllables $g_i$ and $g_{i+1}$. We see that $V$ is $\Gamma$-reduced as it is a cyclic permutation of $W$ and $W$ is $\Gamma$-cyclically reduced by assumption. It follows by Theorem \ref{nft} that $W''$ is $\Gamma$-reduced as it represents the same element as $V$ and both $W''$ and $V$ are of the same length. The case (iii) can be dealt with similarly.
	
	For case (ii) we see that the segment $(g_{i+2}, \dots, g_{n}, g_1, \dots, g_{i-1})$ is $\Gamma$-reduced as it is a segment of a cyclic permutation of $W$ and $W$ is $\Gamma$-cyclically reduced. Suppose that the sequence $W''$ is not $\Gamma$-reduced. Suppose that $g_i$ can be joined with $g_k$, where $k \in  \{i+2, \dots, n\}$. If this was the case then the syllable $g_{i}$ could have been joined with $g_k$ in $W$ which is a contradiction with our assumption that $W$ is $\Gamma$-reduced. Suppose that the syllable $g_i$ can be joined with $g_l$, where $l \in \{1, \dots, i-1\}$. Since the syllables $g_i$ and $g_{i+1}$ commute we see that $g_i$ and $g_l$ could be joined in the expression $P = (g_i, g_{i+1}, \dots, g_n, g_1, \dots, g_{i-1})$. However, $P$ is a cyclic permutation of $W$ and therefore $P$ is $\Gamma$-reduced as $W$ is $\Gamma$-cyclically reduced by assumption. By a similar argumentation we can show that the syllable $g_{i+1}$ cannot be joined with any of the syllables $g_1, \dots, g_{i-1}$ or $g_{i+1},\dots, g_n$. Clearly $g_i$ cannot be joined with $g_{i+1}$ as we assume that $W$ is $\Gamma$-reduced. Therefore we see that $W''$ is $\Gamma$-reduced.
	
	We have shown that the property of being $\Gamma$-cyclically reduced is preserved by transformation (T3). By Theorem \ref{nft} every $\Gamma$-reduced expression for $g$ can be obtained from $W$ by a finite sequence of transformation of type (T3). Hence all $\Gamma$-reduced expressions for $g$ are $\Gamma$-cyclically reduced.
\end{proof}

As a direct consequence of this lemma we see that a $\Gamma$-reduced expression $W = (g_1, \dots, g_n)$ is $\Gamma$-cyclically reduced if and only if the following condition is satisfied: let $i,j \in \{1\dots, n\}$ be such that $g_i$ can be shuffled to the beginning of $W$ and $g_j$ can be shuffled to the end of $W$ and $g_i$ and $g_j$ belong to the same vertex group; then $i=j$.

\begin{definition}
	Let $g \in G$ be arbitrary. We say that $g$ is $\Gamma$-cyclically reduced if either $g$ is trivial or some $\Gamma$-reduced word representing $g$ is $\Gamma$-cyclically reduced. 
\end{definition}

Note that $\FL(g)\cap\LL(g) \neq \emptyset$ does not necessarily mean that $g$ is not $\Gamma$-cyclically reduced. Suppose that $\supp(g) \cap \sstar(\supp(g)) \neq \emptyset$. Then there is $v \in \supp(g)$ such that it is connected with all the other vertices in $\supp(g)$. This means that there is $i \in  \{1, \dots, n\}$ such that the syllable $g_i$ commutes with all the other syllables and can be shuffled to both ends of $g$, thus $v \in \FL(g) \cap \LL(g)$.

\begin{definition}[P-S decomposition]
	Let $g \in G$. We define $S(g) = \supp(g) \cap \sstar(\supp(g))$. Similarly, we define $P(g) = \supp(g) \setminus S(g)$. Obviously $g$ uniquely factorises as a $\Gamma$-reduced product $g = s(g) p(g)$ where $\supp(s(g)) = S(g)$ and $\supp(p(g)) = P(g)$. We call this factorisation the P-S decomposition of $g$.
\end{definition}

Note that $\FL(g) = S(g) \dot\cup \FL(p(g))$, $\LL(g) = S(g) \dot\cup \LL(p(g))$ and $S(p(g)) = \emptyset$. Another simple observation is that if $g'$ is a cyclic permutation of $g$ then $g'$ can be uniquely factorised as $s(g) p'$, where $p'$ is a cyclic permutation of $p(g)$.

\begin{lemma}
	\label{graph_product_cyclically_reduced_criterion}
	Let $g \in G$. Then the following are equivalent:
	\begin{enumerate}
		\item[(i)]	$g$ is $\Gamma$-cyclically reduced,
		\item[(ii)]	$(\FL(g)\cap \LL(g)) \setminus S(g) = \emptyset$,
		\item[(iii)]$\FL(p(g)) \cap \LL(p(g)) = \emptyset$,
		\item[(iv)] $p(g)$ is $\Gamma$-cyclically reduced.
	\end{enumerate}
\end{lemma}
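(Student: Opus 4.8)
The plan is to prove the four conditions equivalent along the chain $(i)\Leftrightarrow(ii)\Leftrightarrow(iii)\Leftrightarrow(iv)$. The equivalence $(i)\Leftrightarrow(ii)$ carries all the content; the remaining two are then formal consequences of it together with the identities $\FL(g)=S(g)\,\dot\cup\,\FL(p(g))$, $\LL(g)=S(g)\,\dot\cup\,\LL(p(g))$ and $S(p(g))=\emptyset$ recorded immediately above.

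For $(i)\Leftrightarrow(ii)$, fix a $\Gamma$-reduced word $W\equiv(g_1,\dots,g_n)$ for $g$; by the preceding lemma it does not matter which one, and if $g$ is not $\Gamma$-cyclically reduced then $W$ itself is not. The crucial preliminary remark is that, by Theorem~\ref{nft}, every $\Gamma$-reduced word for $g$ is obtained from $W$ by applications of (T3), and (T3) only transposes consecutive syllables lying in \emph{distinct} adjacent vertex groups; hence the relative order of those syllables of $W$ that belong to a fixed vertex group $G_v$ is the same in every $\Gamma$-reduced word for $g$. From this one reads off that, for $v\in\supp(g)$, we have $v\in\FL(g)$ if and only if the first $G_v$-syllable of $W$ can be shuffled to the front of $W$, which in turn happens precisely when that syllable commutes with every syllable preceding it in $W$; symmetrically $v\in\LL(g)$ is governed by the last $G_v$-syllable and the syllables following it. One also needs the observation that if $v\in S(g)$ then every vertex of $\supp(g)\setminus\{v\}$ is adjacent to $v$, so $W$ contains exactly one syllable from $G_v$ and that syllable commutes with every other syllable of $W$. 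Granting these, $(i)\Leftrightarrow(ii)$ follows from the shuffle criterion recorded just before the definition of a $\Gamma$-cyclically reduced element: if $W$ is not $\Gamma$-cyclically reduced there are $i\neq j$ with $g_i,g_j$ in a common $G_v$, $g_i$ shuffleable to the front and $g_j$ to the back, whence $v\in\FL(g)\cap\LL(g)$ and, since vertices of $S(g)$ contribute only one syllable to $W$, $v\notin S(g)$, so $(\FL(g)\cap\LL(g))\setminus S(g)\neq\emptyset$; conversely a vertex $v\in(\FL(g)\cap\LL(g))\setminus S(g)$ forces $W$ to contain at least two $G_v$-syllables — a single one would commute with everything and put $v$ into $S(g)$ — the first of which is front-shuffleable and the last end-shuffleable, so $W$ is not $\Gamma$-cyclically reduced.

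The remaining equivalences are short. For $(ii)\Leftrightarrow(iii)$, since $\FL(p(g))$ and $\LL(p(g))$ are contained in $P(g)=\supp(g)\setminus S(g)$, intersecting the two disjoint-union decompositions gives $\FL(g)\cap\LL(g)=S(g)\cup(\FL(p(g))\cap\LL(p(g)))$, so $(\FL(g)\cap\LL(g))\setminus S(g)=\FL(p(g))\cap\LL(p(g))$, and $(ii)$, $(iii)$ just assert the vanishing of the two equal sides. For $(iii)\Leftrightarrow(iv)$, apply $(i)\Leftrightarrow(ii)$ with $p(g)$ in place of $g$: since $S(p(g))=\emptyset$, condition $(ii)$ for $p(g)$ reads exactly $\FL(p(g))\cap\LL(p(g))=\emptyset$, which is $(iii)$. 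The one genuinely delicate point is the bundle of facts in the middle paragraph — above all the identification of membership in $\FL(g)$ and $\LL(g)$ with front-/end-shuffleability of the extreme $G_v$-syllables, and the observation that an $S(g)$-vertex contributes a unique central syllable; once these are in place the rest is formal manipulation of the displayed identities, Theorem~\ref{nft}, and the shuffle criterion already recorded.
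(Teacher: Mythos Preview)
Your proof is correct and uses the same ingredients as the paper --- the shuffle criterion, the disjoint-union identities $\FL(g)=S(g)\,\dot\cup\,\FL(p(g))$ and $\LL(g)=S(g)\,\dot\cup\,\LL(p(g))$, and the observation that a vertex in $S(g)$ contributes a single syllable commuting with all others --- but the organization differs slightly. The paper runs the cycle $(i)\Rightarrow(ii)\Rightarrow(iii)\Rightarrow(iv)\Rightarrow(i)$; in particular its $(iv)\Rightarrow(i)$ step first re-derives $\FL(p(g))\cap\LL(p(g))=\emptyset$ from the hypothesis that $p(g)$ is $\Gamma$-cyclically reduced, then pushes back through $(ii)$ to $(i)$ via the shuffle criterion. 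Your route is tighter: you establish the full equivalence $(i)\Leftrightarrow(ii)$ first, and then obtain $(iii)\Leftrightarrow(iv)$ for free by applying that equivalence to $p(g)$ and invoking $S(p(g))=\emptyset$. This avoids the mild redundancy in the paper's $(iv)\Rightarrow(i)$ argument, at the cost of having to argue both directions of $(i)\Leftrightarrow(ii)$ up front rather than deferring half of that work to the end of the cycle.
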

\begin{proof}
	$\text{(i)} \Rightarrow \text{(ii)}$: assume that $g$ is $\Gamma$-cyclically reduced. Let $(g_1, \dots, g_n)$ be some $\Gamma$-reduced expression for $g$. Without loss of generality we may assume that $s(g) = g_1 \dots g_s$ and $p(g) = g_{s+1} \dots g_n$, where $s = |S(g)|$. Suppose that $v \in (\FL(g)\cap \LL(g)) \setminus S(g)$. Then there are $1 \leq i < j \leq n$ such that $g_i, g_j \in G_v$. Since $v \in \FL(g)$ we see that $g_i$ can be shuffled to beginning of $g$. Similarly $g_j$ can be shuffled to the end of $g$ and hence 
	\begin{displaymath}
		W = (g_i, g_1, \dots ,g_{i-1}, g_{i+1}, \dots, g_{j-1}, g_{j+1}, \dots, g_n, g_j)
	\end{displaymath}
	is also a $\Gamma$-reduced expression for $g$. However, the expression 
	\begin{displaymath}
		W' = (g_j, g_i, g_1, \dots, g_{i-1}, g_{i+1}, \dots, g_{j-1}, g_{j+1}, \dots, g_n)
	\end{displaymath}
	is not reduced which is a contradiction as $W'$ is a cyclic permutation of $W$ and $g$ is $\Gamma$-cyclically reduced.
	
	$\text{(ii)}\Rightarrow \text{(iii)}$: suppose that $(\FL(g)\cap \LL(g)) \setminus S(g) = \emptyset$. As mentioned before, $\FL(g) = \FL(p(g)) \dot\cup S(g)$ and $\LL(g) = \LL(p(g)) \dot\cup S(g)$ and therefore $\FL(p(g)) \cap \LL(p(g)) = \emptyset$.
	
	$\text{(iii)} \Rightarrow \text{(iv)}$: if $\FL(p(g)) \cap \LL(p(g)) = \emptyset$ then clearly $p(g)$ is $\Gamma$-cyclically reduced.
	
	$\text{(iv)} \Rightarrow \text{(i)}$: assume that $p(g)$ is $\Gamma$-cyclically reduced and there is $ v \in \FL(p(g)) \cap \LL(p(g))$. Let $(p_1, \dots, p_m)$ be a $\Gamma$-reduced expression for $p(g)$. Suppose that there are $1 \leq i<j \leq m$ such that $p_i, p_j \in G_v$. This is clearly a contradiction since $p(g)$ is $\Gamma$-cyclically reduced by assumption. This means that there is $i \in \{1, \dots, n\}$ such that the expression $(p_1, \dots, p_n)$ can be rewritten by shuffling to $(p_i, p_1, \dots, p_{i-1}, p_{i+1}, \dots, p_n)$ and to $(p_1, \dots, p_{i-1}, p_{i+1}, \dots, p_n, p_i)$ as well. This means that $p_i$ commutes with all the other syllables from $p(g)$ and hence the vertex $v$ is adjacent to all the vertices in $P(g)\setminus \{v\}$. But since $v$ is also connected to all the vertices in $S(g)$ by the definition of $S(g)$ we see that $v \in S(g)$. This is a contradiction as $v \in \supp(p(g))\subseteq P(g)$, hence we may assume that $\FL(p(g)) \cap \LL(p(g)) = \emptyset$. As stated before, $\FL(g) = \FL(p(g)) \dot{\cup} S(g)$ and $\LL(g) = \LL(p(g)) \dot{\cup} S(g)$. Since $\FL(p(g)) \cap \LL(p(g)) = \emptyset$ we see that $\FL(g)\cap \LL(g) = S(g)$. Let $W = (g_1, \dots, g_m)$ be a $\Gamma$-reduced expression for $g$. Suppose that there are $i,j \in \{1,\dots,n\}$ such that $g_i$ can be shuffled to the beginning of $W$, $g_j$ can be shuffled to the end of $g$ and $g_i$ and $g_i$ belong to the same vertex group. Since $\FL(g)\cap \LL(g) = S(g)$ we see that $g_i, g_j \in G_s$ for some $s\in S(g)$ as $W$ is $\Gamma$-reduced. This means that $i=j$ and consequently that $g$ is $\Gamma$-cyclically reduced.
\end{proof}

\begin{lemma}[Conjugacy criterion for graph products]
	\label{conjugacy_criterion_for_graph_products}
	Let $x, y$ be $\Gamma$-cyclically reduced elements of $G = \Gamma \mathcal{G}$. Then $x \sim_G y$ if and only if the all of the following are true:
	\begin{enumerate}
		\item[(i)] $|x| = |y|$ and $\supp(x) = \supp(y)$,
		\item[(ii)] $p(x)$ is a cyclic permutation of $p(y)$,
		\item[(iii)] $s(y) \in s(x)^{G_{S(x)}}$.
	\end{enumerate}
\end{lemma}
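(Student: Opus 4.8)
The plan is to prove the two implications separately, exploiting the fact that the P-S decomposition splits the relevant full subgroup into a direct product. Put $A=\supp(x)$, $S=S(x)$ and $P=P(x)$. Since every vertex of $S$ is adjacent to every other vertex of $A$, the full subgraph $\Gamma_S$ is complete, $G_S=\prod_{v\in S}G_v$, and $G_A=G_S\times G_P$ with $G_P=\Gamma_P\mathcal{G}_P$ and $S(p(x))=\varnothing$. Conjugacy inside the direct factor $G_S$ is unconstrained, which is exactly what condition (iii) records; conjugacy inside $G_P$ is governed by the rigid ``cyclic permutation'' behaviour, which is condition (ii). Note also that $S(\cdot)$ and $P(\cdot)$ are determined by $\supp(\cdot)$ and $\Gamma$, so an assertion $\supp(x)=\supp(y)$ automatically yields $S(x)=S(y)$ and $P(x)=P(y)$.

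For ``$\Leftarrow$'', assume (i)--(iii) and write $S=S(x)=S(y)$, $P=P(x)=P(y)$. Choose a $\Gamma$-reduced word $(p_1,\dots,p_m)$ representing $p(x)$ and an index $j$ with $p(y)=p_{j+1}\cdots p_m p_1\cdots p_j$; putting $u=p_1\cdots p_j\in G_P$ we get $p(y)=u^{-1}p(x)u$. Since $G_S$ centralises $G_P$, the element $s(x)\in G_S$ commutes with $u$, so $u^{-1}xu=s(x)p(y)$. By (iii) pick $c\in G_S$ with $s(y)=cs(x)c^{-1}$; as $c$ commutes with $p(y)\in G_P$ we obtain $c\,(s(x)p(y))\,c^{-1}=s(y)p(y)=y$, hence $y=(cu^{-1})\,x\,(cu^{-1})^{-1}$ and $x\sim_G y$.

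For ``$\Rightarrow$'', write $y=gxg^{-1}$ with $x\neq 1$. First I would pin down $\supp(x)=\supp(y)$ and locate $g$. Because $x$ is $\Gamma$-cyclically reduced, the parabolic closure $\PC_\Gamma(\langle x\rangle)$ (well defined since $\langle x\rangle$ is finitely generated, Theorem \ref{parabolic_closure_well-defined}) equals the full subgroup $G_{\supp(x)}$: a cyclically reduced element is not conjugate into any proper full subgroup of $G_{\supp(x)}$ (cf.~\cite{yago}). Conjugation by $g$ carries $\PC_\Gamma(\langle x\rangle)$ to $\PC_\Gamma(\langle y\rangle)$, so $gG_{\supp(x)}g^{-1}=G_{\supp(y)}$; two conjugate full subgroups have the same vertex set, so $\supp(x)=\supp(y)=:A$, and then $g\in N_G(G_A)=G_{A\cup\link(A)}$ by Theorem \ref{normalisers_of_parabolic_subgroups}. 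Now $A\cup\link(A)$ is the disjoint union of $S$ and $P\cup\link(A)$, with $\Gamma_S$ complete and $G_S$ centralising $G_{P\cup\link(A)}$, so $G_{A\cup\link(A)}=G_S\times G_{P\cup\link(A)}$; write $g=g_1g_2$ in this decomposition. Using that $g_2$ commutes with $s(x)\in G_S$ and $g_1$ commutes with $g_2p(x)g_2^{-1}\in G_{P\cup\link(A)}$, one computes
\begin{displaymath}
	y=gxg^{-1}=\bigl(g_1\,s(x)\,g_1^{-1}\bigr)\bigl(g_2\,p(x)\,g_2^{-1}\bigr),
\end{displaymath}
a product of an element of $G_S$ with one of $G_{P\cup\link(A)}$. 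Since $y=s(y)p(y)$ is another such product and the decomposition in the direct product $G_S\times G_{P\cup\link(A)}$ is unique, we get $s(y)=g_1s(x)g_1^{-1}$ with $g_1\in G_S=G_{S(x)}$, which is (iii), and $p(y)=g_2p(x)g_2^{-1}$. Writing further $G_{P\cup\link(A)}=G_P\times G_{\link(A)}$ and noting that the $G_{\link(A)}$-component of $g_2$ centralises $G_P$ shows $p(x)\sim_{G_P}p(y)$. Here $p(x),p(y)$ are $\Gamma_P$-cyclically reduced elements of $\Gamma_P\mathcal{G}_P$ with full support $P$ and trivial $S$-part, so the ``$S=\varnothing$'' case of the statement applies and yields that $p(y)$ is a cyclic permutation of $p(x)$, i.e., (ii). Finally, since $|s(x)|=|s(y)|=|S|$ and a cyclic permutation preserves length by Theorem \ref{nft}, $|y|=|S|+|p(y)|=|S|+|p(x)|=|x|$, giving (i).

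The main obstacle is precisely the ``$S=\varnothing$'' case invoked above: two $\Gamma$-cyclically reduced elements with full support and trivial $S$-part that are conjugate must be cyclic permutations of one another (the reverse being the easy direction already handled). This is the genuinely combinatorial heart of the lemma; I would prove it from the Normal Form Theorem \ref{nft} by induction on the length of the conjugator, tracking how conjugation by a single syllable transforms a $\Gamma$-reduced word and when the result fails to be $\Gamma$-reduced or $\Gamma$-cyclically reduced. This step, together with the supporting fact that the parabolic closure of a cyclically reduced element is the full subgroup on its support, carries essentially all the work; given it, the remainder is the bookkeeping carried out above.
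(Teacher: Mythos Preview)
Your argument is correct and takes a genuinely different route from the paper. The paper gives a single self-contained combinatorial proof: starting from an arbitrary conjugator it picks, among all $x'\in X^{G_{S(x)}}$ (with $X$ the set of cyclic permutations of $x$), one whose conjugator to $y$ has minimal length, arranges by an inner induction that the product $g''x''$ is $\Gamma$-reduced, and then uses the Normal Form Theorem to force $g''=1$; the P--S split and conditions (i)--(iii) fall out at the end. You instead use the structural machinery already quoted in Section~\ref{full_subgroups}: parabolic closures to get $\supp(x)=\supp(y)$ and to place the conjugator in $N_G(G_A)=G_{A\cup\link(A)}$, then the direct-product splitting $G_S\times G_{P\cup\link(A)}$ to separate (iii) from (ii), finally reducing (ii) to the $S=\varnothing$ case in $G_P$. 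This is cleaner conceptually and explains \emph{why} the criterion has its shape, but it outsources two facts to \cite{yago} (that $\PC_\Gamma(\langle x\rangle)=G_{\supp(x)}$ for cyclically reduced $x$, and that conjugate full subgroups on nontrivial vertices share their vertex set) and, as you say, still leaves the $S=\varnothing$ step---which is exactly the minimal-conjugator argument the paper carries out in full. So your reduction is sound, but the combinatorial core you defer is essentially the paper's entire proof; the gain is a transparent explanation of the role of $G_S$ at the cost of depending on results the paper's direct argument does not need.
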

\begin{proof}
	The "if" part of the claim holds trivially.
	
	Let $x, y \in G$ be $\Gamma$-cyclically reduced such that $x \sim_G y$. Without loss of generality we will assume that $|x| \geq |y|$. Let $X \subseteq G$ denote the set of all cyclic permutations of $x$. Clearly $X^{G_{S(x)}} \subseteq x^G$. Pick $x' \in X^{G_{S(x)}}$ such that the corresponding $g' \in G$, where $g'x'{g'}^{-1} = y$, is of minimal length. First, we show by induction on $|\LL(g') \cap \FL(x')|$ that there  are elements $x'' \in X^{G_{S(x)}}$ and $g'' \in G$ such that $|g''| = |g'|$, $g''x''{g''}^{-1} = y$ and the product $g''x''$ is $\Gamma$-reduced. If $|\LL(g') \cap \FL(x')| = 0$ then clearly the product $g'x'$ is $\Gamma$-reduced and the claim holds for $g'' = g'$ and $x'' = x'$. Suppose that $|\LL(g) \cap \FL(x')| = c > 0$ and that the statement holds for all $c' < c$. Let $(g_1,\dots,g_k)$ be a $\Gamma$-reduced expression for $g'$ and let $(x_1, \dots, x_n)$ be a $\Gamma$-reduced expression for $x'$. Without loss of generality we may assume that $g_k$ and $x_1$ belong to the same vertex group, say $G_v$. Then
	\begin{align*}
		y 	&= g_1 \dots g_k x_1 \dots x_n g_k^{-1} \dots g_1^{-1}\\
			&= g_1 \dots g_{k-1}(g_k x_1) x_2 \dots x_n x_1 (g_k x_1)^{-1} g_{k-1}^{-1} \dots g_1^{-1}.
	\end{align*}
 Obviously $g_k \neq x_1^{-1}$ as otherwise we could replace $x'$ by $x_2 \dots x_n x_1$, a cyclic permutation of $x$, and $g'$ by $g_1 \dots g_{k-1}$. Clearly $x_2 \dots x_n x_1 \in X^{G_{S(X)}}$ and
 \begin{displaymath}
 	g_1 \dots g_{k-1} x_2 \dots x_n x_1 g_{k-1}^{-1} \dots g_1^{-1} = y
 \end{displaymath}
 which is a contradiction with our choice of $x'$ and $g'$ as $|g_1 \dots g_{k-1}| < |g'|$. If $v \in S(x)$ then $g_k x g_k^{-1} \in X^{G_{S(x)}}$ and again we have a contradiction with our choice of $x'$ and $g'$. We see that $v \not\in S(x)$ and thus $\LL(g') = \LL(g' x_1)$ and also $v \not\in \FL(x_2\dots x_n x_1)$. Note that if $g_i$ can be shuffled to the end of $g'$ then $[g_i,g_k] = 1$ and necessarily $\{u,v\} \in E\Gamma$, where $g_i \in G_u$. If $w \in \FL(x_2 \dots x_n x_1) \setminus \FL(x')$ then we see that $\{v,w\}\not\in E\Gamma$ hence $w \not\in \LL(g'x_1)$. From this we can conclude that $v \not\in \LL(g'x_1) \cap \FL(x_2 \dots x_n) \subseteq \LL(g')\cap \FL(x')$ hence $\LL(g' x_1) \cap \FL(x_2 \dots x_n x_1)$ is a proper subset of $\LL(g') \cap \FL(x')$. Now we can use induction hypothesis and we are done.
 
	We have $g'' x'' = y g''$. Since $g'' x''$ is a $\Gamma$-reduced product we see that $|g'' x''| = |g''| + |x''| = |g| + |x| = n + k$. Also $|y g''| \leq |y| + |g| = m + k$, where $m = |y|$. However, we assumed that $|x| \geq |y|$ and thus we see that $n = m$ and consequently $y g''$ is a $\Gamma$-reduced product as well. Let $(y_1, \dots, y_n)$ be some $\Gamma$-reduced expression for $y$ and suppose that $(x_1, \dots, x_n)$ and $(g_1, \dots, g_k)$ are $\Gamma$-reduced expressions for $x''$ and $g''$. We have 
	\begin{displaymath}
		g_1 \dots g_k x_1 \dots x_n g_k^{-1} \dots g_1^{-1} = y_1 \dots y_n.
	\end{displaymath}
	The expression $(g_1, \dots, g_k, x_1,\dots, x_n, g_k^{-1}, \dots, g_1^{-1})$ cannot be $\Gamma$-reduced by Theorem \ref{nft}. Assume that the syllable $g_k^{-1}$ can be joined up with $g_j$ for some $j \in \{1, \dots, k\}$. But then by definition $[g_k^{-1}, x_i]=1$ for all $i=1,\dots, n$. Clearly 
	\begin{displaymath}
		g_1 \dots g_{k-1} x_1 \dots x_n g_{k-1}^{-1} \dots g_1^{-1} = y_1 \dots y_n.
	\end{displaymath}
	which is a contradiction with the minimality of $|g|$.
	 Since $g'' x''$ is a $\Gamma$-reduced product we then see that the expression $(x_1, \dots, x_n, g_k^{-1}, \dots g_1^{-1})$ is not $\Gamma$-reduced. Without loss of generality we may assume that $g_k$ and $x_n$ belong to the same vertex group. Assume that $g_k \neq x_n$. Then we have
	\begin{displaymath}
		g_1 \dots g_k x_1 \dots x_{n-1}(x_n g_k^{-1}) = y_1 \dots y_n g_1 \dots g_{k-1}.
	\end{displaymath}
	From the construction of $x''$ and $g''$ we see that $(g_1, \dots ,g_n, x_1, \dots ,x_{n-1},x_n g_k^{-1})$ is a $\Gamma$-reduced expression and so is $(y_1, \dots ,y_n ,g_1, \dots ,g_{k-1})$. However, this is a contradiction with Theorem \ref{nft} as both of these expressions represent the same group element, but they are not of the same length. Hence we see that $g_k = x_n$
	\begin{displaymath}
		y_1 \dots y_n = g_1 \dots g_{k-1} x_n x_1 \dots x_{n-1} g_{k-1}^{-1} \dots g_1^{-1}
	\end{displaymath} 
	which is a contradiction as we could replace $x_n x_1 \dots x_{n-1}$, a cyclic permutation of $x''$ and thus element of $X^{G_{S(x)}}$ and $g''$ by $g_1 \dots g_{k-1}$ and get a shorter conjugator.
	We see that unless $g = 1$ we always get a contradiction. It follows that $y = x'' \in X^{G_{S(x)}}$ and consequently $\supp(x) = \supp(y)$, $s(y) \in s(x)^{G_{S(x)}}$ and $p(x)$ is a cyclic permutation of $p(y)$.
\end{proof}

\section{$\mathcal{C}$-centraliser conditions and $\mathcal{C}$-conjugacy separability}
\label{centraliser_conditions}
In this section we will assume that the class $\C$ satisfies (c1), (c2) and (c4), i.e. we will assume that the class $\C$ is closed under taking subgroups, direct products and extensions.

\begin{definition}
	We say that a group $G$ satisfies the $\mathcal{C}$-centraliser condition ($\mathcal{C}\text{-CC}$) if for every $K \in \mathcal{N_C}(G)$ and every $g \in G$ there is $L \in \mathcal{N_C}(G)$ such that $L \leq K$ and
	\begin{displaymath}
		C_{G/L}(\psi(g)) \subseteq \psi(C_G(g)K) \mbox{ in $G/L$},
	\end{displaymath}
	where $\psi \colon G \to G/L$ is the natural projection.
\end{definition}

Centraliser condition was introduced by Chagas and Zalesskii in \cite{chagas} in case when $\C$ is the class of all finite groups. However, their definition of centraliser condition was given in terms of profinite completion. They showed that if group $G$ is conjugacy separable and satisfies centraliser condition then $G$ is HCS. Minasyan gave the definition in terms of subgroups of finite index and showed that for residually finite groups the definitions are equivalent. Minasyan also showed that the implication in the other direction holds as well: CS group $G$ is HCS if and only if it satisfies CC (see \cite[Proposition 3.2]{raags}). Toinet proved that the same statement holds when $\C$ is the class of all finite $p$-groups for some $p\in \mathbb{P}$ (see \cite[Proposition 3.6]{toinet}). We show that the statement is true whenever the class $\C$ satisfies (c1), (c2) and (c4).

\begin{thrm}
	\label{cc_and_cs_is_hcs}
	Let $G$ be a group. Then the following are equivalent:
	\begin{itemize}
		\item[(i)] $G$ is $\mathcal{C}$-HCS,
		\item[(ii)] $G$ is $\mathcal{C}\text{-CS}$ and satisfies $\mathcal{C}\text{-CC}$. 
	\end{itemize}
\end{thrm}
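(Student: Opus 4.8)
The plan is to establish the two implications separately. Throughout I use the standing hypotheses (c1), (c2), (c4): by Lemma \ref{open_restriction} they guarantee that whenever $H \le G$ is $\C$-open, the topology $\proC(H)$ is a restriction of $\proC(G)$, so a subset of $H$ is $\C$-closed in $H$ if and only if it is $\C$-closed in $G$. I will use this "restriction property'' repeatedly.

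\emph{Proof of} (ii) $\Rightarrow$ (i). Assume $G$ is $\C$-CS and satisfies $\C$-CC, and let $H \le G$ be $\C$-open; I must show every conjugacy class $g^H$ ($g \in H$) is $\C$-closed in $H$, equivalently (restriction property) $\C$-closed in $G$. Fix $f \in G \setminus g^H$ and seek $N \in \NC(G)$ with $fN \cap g^H N = \emptyset$. If $f \not\sim_G g$, then separating $f$ from the larger $\C$-closed set $g^G \supseteq g^H$ (available since $G$ is $\C$-CS) does the job. The essential case is $f \sim_G g$ but $f \not\sim_H g$: write $f = x g x^{-1}$; the solutions of $y g y^{-1} = f$ form the coset $x C_G(g)$, and since $f \not\sim_H g$ none of them lies in $H$, so $x \notin H C_G(g)$. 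Pick $K \in \NC(G)$ with $K \le H$ (Lemma \ref{open_subgroups}) and apply $\C$-CC to $K$ and $g$, obtaining $L \in \NC(G)$, $L \le K$, with $C_{G/L}(\psi(g)) \subseteq \psi(C_G(g)K)$, $\psi \colon G \to G/L$. Then $N := L$ works: if $f L = h g h^{-1} L$ for some $h \in H$, comparing with $f L = x g x^{-1} L$ gives $\psi(x^{-1}h) \in C_{G/L}(\psi(g)) \subseteq \psi(C_G(g)K)$, hence $x^{-1}h \in C_G(g) K L = C_G(g) K$, so $x = h k^{-1} c^{-1}$ with $k \in K \subseteq H$ and $c \in C_G(g)$; thus $x = (h k^{-1}) c^{-1} \in H C_G(g)$, a contradiction.

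\emph{Proof of} (i) $\Rightarrow$ (ii). Since $\C$-HCS includes $\C$-CS, only $\C$-CC needs checking; fix $K \in \NC(G)$ and $g \in G$. The key point is that $J := \langle K, g \rangle = K\langle g\rangle = \bigcup_{m \in \mathbb{Z}} K g^m$ (using $K \normleq G$) is $\C$-open in $G$, as it contains $K \in \NC(G)$, and that $g^J = g^K$, because conjugating $g$ by a power of $g$ changes nothing. Hence $J$ is $\C$-CS (as $G$ is $\C$-HCS), so $g^K = g^J$ is $\C$-closed in $J$, and therefore $\C$-closed in $G$ by the restriction property. It remains to extract $\C$-CC. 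Fix any $L \in \NC(G)$ with $L \le K$ and write $\pi_K \colon G \to G/K$, $\theta \colon G/L \to G/K$; since $\psi(C_G(g)K) = \theta^{-1}(\pi_K(C_G(g)))$, the inclusion $C_{G/L}(\psi(g)) \subseteq \psi(C_G(g)K)$ is equivalent to $\theta(C_{G/L}(\psi(g))) \subseteq \pi_K(C_G(g))$. A routine computation shows that a coset $qK$ lies in $\pi_K(C_G(g))$ iff $q^{-1}gq \in g^K$, that $qK$ lies in $\theta(C_{G/L}(\psi(g)))$ iff $q^{-1}gq \in g^K L$, and that both conditions depend only on the coset $qK$; as there are only finitely many cosets, let $q_1 K, \dots, q_r K$ be those with $q_i^{-1} g q_i \notin g^K$. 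Using that $g^K$ is $\C$-closed, choose $L_i \in \NC(G)$ with $q_i^{-1} g q_i \notin g^K L_i$ and set $L := K \cap L_1 \cap \dots \cap L_r \in \NC(G)$; then $L \le K$ and $q_i^{-1} g q_i \notin g^K L$ for every $i$, hence, by the coset-dependence, $\theta(C_{G/L}(\psi(g))) \subseteq \pi_K(C_G(g))$ — which is exactly $\C$-CC for this $K$ and $g$.

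The calculations above — recovering $x \in H C_G(g)$ from $\psi(x^{-1}h) \in \psi(C_G(g)K)$ (which is where $K \le H$ is used) and the translation of the centraliser inclusion into the finitely many separations $q_i^{-1} g q_i \notin g^K L$ — are routine but must be carried out carefully. The genuinely clever step, and the main obstacle to a clean argument, is the identity $g^J = g^K$ for $J = K\langle g\rangle$: this is what allows $\C$-hereditary conjugacy separability of $G$ to control the $K$-conjugacy class of an element $g$ that need not itself lie in $K$.
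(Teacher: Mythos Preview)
Your proof is correct and follows essentially the same route as the paper. In both directions the key ideas coincide with the paper's: for (ii)$\Rightarrow$(i) you separate $f$ from $g^H$ by first handling $f\not\sim_G g$ via $\C$-CS and then, when $f=xgx^{-1}$ with $x\notin HC_G(g)$, invoking $\C$-CC; for (i)$\Rightarrow$(ii) you use the crucial observation that $J=K\langle g\rangle$ is $\C$-open with $g^J=g^K$, so $g^K$ is $\C$-closed in $G$, and then deduce $\C$-CC from finitely many separations. The only difference is packaging: the paper isolates your two main computations as standalone results (the lemma preceding Corollary~\ref{cor3.5} for the forward direction, and Lemma~\ref{3.7} for the reverse), whereas you inline them. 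Your choice of $K\in\NC(G)$ with $K\le H$ in the (ii)$\Rightarrow$(i) step is a mild streamlining --- the paper instead uses that $C_G(g)H$ is $\C$-closed (being a finite union of cosets of the $\C$-closed subgroup $H$) to produce a suitable $K$ --- but the underlying mechanism is identical.
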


Before we proceed with the proof of Theorem \ref{cc_and_cs_is_hcs} we need to define two more conditions.

\begin{definition}
	Let $G$ be a group and let $H \leq G$ and $g \in G$. We say that the pair $(H,g)$ satisfies the $\mathcal{C}$-centraliser condition in $G$ ($\mathcal{C}\text{-CC}_G$) if for every $K \in \mathcal{N_C}(G)$ there is $L \in \mathcal{N_C}(G)$ such that $L \leq K$ and
	\begin{displaymath}
		C_{\psi(H)}(\psi(g)) \subseteq \psi(C_H(g)K) \mbox{ in $G/L$},
	\end{displaymath}
	where $\psi \colon G \to G/L$ is the natural projection.
\end{definition}

Note that a group $G$ satisfies $\C$-CC if and only if the pair $(G,g)$ has $\C\text{-CC}_G$ for every $g \in G$.

\begin{definition}
	Let $G$ be a group and let $H \leq G$ be a subgroup. We say that that $H$ satisfies the $\mathcal{C}$-centraliser condition in $G$ ($\mathcal{C}\text{-CC}_G$) if the pair $(H,g)$ satisfies $\mathcal{C}\text{-CC}$ for every $g \in G$.
\end{definition}

Very often our proofs require case by case analysis. To keep the our proofs simple we will use the following lemma, which is a centraliser condition analogue of Lemma \ref{closed_simplification}.
\begin{lemma}
\label{cc_simplification}
		Let $G$ be a group and let $H \leq G$ and $g \in G$. Then the pair $(H,g)$ satisfies $\mathcal{C}\text{-CC}_G$ if and only if for every $K \in \mathcal{N_C}(G)$ there is a group $F$ and a surjective homomorphism $\phi: G \rightarrow F$, such that $\ker(\phi) \leq K$, the pair $(\phi(H),\phi(g))$ satisfies $\mathcal{C}\text{-CC}_F$ and
		\begin{displaymath}
			C_{\phi(H)}(\phi(g)) \subseteq \phi(C_H(g)K) \text{ in }F.
		\end{displaymath}
\end{lemma}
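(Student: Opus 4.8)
The plan is to prove both implications of the ``if and only if''; the ``only if'' direction is essentially formal, and the ``if'' direction requires a short diagram chase through two nested quotients. For the ``only if'' direction, assume $(H,g)$ satisfies $\mathcal{C}\text{-CC}_G$ and fix $K\in\mathcal{N_C}(G)$; the definition produces $L\in\mathcal{N_C}(G)$ with $L\le K$ and $C_{\psi(H)}(\psi(g))\subseteq\psi(C_H(g)K)$ in $G/L$, where $\psi\colon G\to G/L$. I would simply take $F=G/L$ and $\phi=\psi$. Then $\ker\phi=L\le K$, the displayed containment is precisely the one required, and the pair $(\phi(H),\phi(g))$ satisfies $\mathcal{C}\text{-CC}_F$ for free: since $F=G/L\in\mathcal{C}$ is finite, for any $K'\in\mathcal{N_C}(F)$ one may take $L'=\{1\}\in\mathcal{N_C}(F)$, for which the condition over $F/\{1\}\cong F$ degenerates to the tautology $C_{\phi(H)}(\phi(g))\subseteq C_{\phi(H)}(\phi(g))K'$.

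For the ``if'' direction, fix $K\in\mathcal{N_C}(G)$ and feed it to the hypothesis, obtaining a surjection $\phi\colon G\to F$ with $\ker\phi\le K$, with $(\phi(H),\phi(g))$ satisfying $\mathcal{C}\text{-CC}_F$ and with $C_{\phi(H)}(\phi(g))\subseteq\phi(C_H(g)K)$ in $F$. The first observation I would record is that $\phi(K)\in\mathcal{N_C}(F)$: it is normal in $F$ as the image of a normal subgroup under a surjection, and $F/\phi(K)\cong G/(K\ker\phi)=G/K\in\mathcal{C}$ because $\ker\phi\le K$. Now apply the $\mathcal{C}\text{-CC}_F$ property of the pair $(\phi(H),\phi(g))$ to the co-$\mathcal{C}$ subgroup $\phi(K)$, obtaining $M\in\mathcal{N_C}(F)$ with $M\le\phi(K)$ and $C_{\theta(\phi(H))}(\theta(\phi(g)))\subseteq\theta\bigl(C_{\phi(H)}(\phi(g))\,\phi(K)\bigr)$ in $F/M$, where $\theta\colon F\to F/M$. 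I then set $L=\phi^{-1}(M)\normleq G$: since $\phi$ is onto, $G/L\cong F/M\in\mathcal{C}$, so $L\in\mathcal{N_C}(G)$, and $L=\phi^{-1}(M)\le\phi^{-1}(\phi(K))=K\ker\phi=K$. This is the $L$ I would submit to the definition of $\mathcal{C}\text{-CC}_G$.

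It remains to verify $C_{\psi_L(H)}(\psi_L(g))\subseteq\psi_L(C_H(g)K)$ in $G/L$, where $\psi_L\colon G\to G/L$, and here I would use the isomorphism $\bar\phi\colon G/L\to F/M$ induced by $\phi$ (well-defined as $\phi(L)=M$), which satisfies $\bar\phi\circ\psi_L=\theta\circ\phi$. Given $\bar c\in C_{\psi_L(H)}(\psi_L(g))$, write $\bar c=\psi_L(h)$ with $h\in H$ and push it to $F/M$: then $\bar\phi(\bar c)\in C_{\theta(\phi(H))}(\theta(\phi(g)))$, so $\bar\phi(\bar c)=\theta(d k')$ for some $d\in C_{\phi(H)}(\phi(g))$ and $k'\in\phi(K)$. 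By the hypothesis $d\in\phi(C_H(g)K)$, say $d=\phi(e\kappa)$ with $e\in C_H(g)$ and $\kappa\in K$; also $k'=\phi(\kappa')$ with $\kappa'\in K$. Then $\bar\phi(\bar c)=\theta\phi(e\kappa\kappa')=\bar\phi\bigl(\psi_L(e\kappa\kappa')\bigr)$, and injectivity of $\bar\phi$ forces $\bar c=\psi_L(e\kappa\kappa')\in\psi_L(C_H(g)K)$ since $\kappa\kappa'\in K$. This gives $\mathcal{C}\text{-CC}_G$ for $(H,g)$ and finishes the proof.

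The only place requiring care --- the mild ``obstacle'' --- is the bookkeeping: one must track the three quotients $G/L$, $F$, $F/M$ together with the maps $\phi$, $\theta$, $\psi_L$, $\bar\phi$ between them, and remember that $C_H(g)K$ and $C_{\phi(H)}(\phi(g))\,\phi(K)$ are sets of products whose members need not normalise anything, so that the identities ``$\phi$ of a product of sets is the product of the images'' and ``$\phi^{-1}$ of the image of $K$ equals $K\ker\phi$'' are invoked only in their valid direction. No genuine group-theoretic content beyond the closure hypotheses (c1), (c2), (c4) is needed; those are used solely to keep the auxiliary subgroups $\phi(K)$, $M$ and $L=\phi^{-1}(M)$ inside $\mathcal{N_C}$ of the appropriate group.
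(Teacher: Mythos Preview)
Your proof is correct and follows essentially the same approach as the paper's; the ``if'' direction is the same diagram chase (your $M$, $\theta$, $\bar\phi$ are the paper's $L'$, $\zeta$, and the identification $G/L\cong F/L'$). The only cosmetic difference is in the ``only if'' direction: the paper simply takes $F=G$ and $\phi=\mathrm{id}_G$ (so that $(\phi(H),\phi(g))=(H,g)$ inherits $\mathcal{C}\text{-CC}_G$ by hypothesis and the displayed containment becomes $C_H(g)\subseteq C_H(g)K$), whereas you pass to the finite quotient $G/L$; both choices work.
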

\begin{proof}
	Assume that the pair $(H,g)$ has $\mathcal{C}\text{-CC}_G$, thus for every $K \in \mathcal{N_C}(G)$ there is $L \in \mathcal{N_C}(G)$ such that $L \leq K$ and
	\begin{displaymath}
		C_{\psi(H)}(\psi(g)) \subseteq \psi(C_H(g)K) \mbox{ in $G/L$},
	\end{displaymath}
	where $\psi \colon G \to G/L$ is the natural projection. Then we can take $\phi = \text{id}_G$ and the statement clearly holds.
	
	To prove sufficiency let $K \in \mathcal{N_C}(G)$ be arbitrary. By assumption there is a group $F$ and a homomorphism $\phi\colon G \to F$ such that $\ker(\phi) \leq K$, $\phi(K) \in \mathcal{N_C}(F)$ and the pair $(\phi(H), \phi(g))$ satisfies $\mathcal{C}\text{-CC}_F$, thus there is $L' \in \mathcal{N_C}(F)$ such that $L' \leq \phi(K)$ and
	\begin{equation}
		\label{eqn}
		C_{\zeta(\phi(H))}(\zeta(\phi(g))) \subseteq \zeta(C_{\phi(H)}(\phi(g))\phi(K)) \mbox{ in $F/L'$},
	\end{equation}
	where $\zeta\colon F \to F/L'$ is the natural projection. Define $\psi\colon G \to F/L'$ to be given by $\psi = \zeta \circ \phi$. Set $L = \phi^{-1}(L')$. As $L' \leq \phi(K)$ and $\ker(\phi) \leq K$ we get that $L = \phi^{-1}(L') \leq K$. We see that $\phi^{-1}(L')=\ker(\psi) \in \mathcal{N_C}(G)$. Since $C_{\phi(H)}(\phi(g)) \subseteq \phi(C_H(g)K)$ in $F$ we see that
	\begin{displaymath}
		\zeta(C_{\phi(H)}(\phi(g))) \subseteq \zeta(\phi(C_H(g)K)) = \psi(C_H(g)K)
	\end{displaymath}
	and thus the equation (\ref{eqn}) can be rewritten to
	\begin{displaymath}
		C_{\psi(H)}(\psi(g)) \subseteq \psi(C_H(g)K) \mbox{ in $G/L$}.
	\end{displaymath}
	Since $K$ was arbitrary we see that the pair $(H,g)$ satisfies $\mathcal{C}\text{-CC}_G$.
\end{proof}

In order to be able to prove Theorem \ref{cc_and_cs_is_hcs} we will need the following three statements. All the proofs in this chapter (except for Lemma \ref{cc_simplification}) closely follow those given in \cite[Section 3]{raags}.

\begin{lemma}
	Let $G$ be a group, let $H \leq G$ and $g \in G$. Assume that the pair $(G,g)$ satisfies $\mathcal{C}\text{-CC}_G$ and the conjugacy class $g^G$ is $\mathcal{C}$-closed in $G$. If the double coset $C_G(g)H$ is $\mathcal{C}$-closed in $G$ then the set $g^H$ is also $\mathcal{C}$-closed in $G$.
\end{lemma}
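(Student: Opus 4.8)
The plan is to reduce, via Lemma \ref{closed_simplification}, to producing a suitable finite quotient. Since the pro-$\mathcal{C}$ topology on any group lying in $\mathcal{C}$ is discrete, for every $L \in \mathcal{N_C}(G)$ the image of $g^H$ under the projection $\psi \colon G \to G/L$ — which is exactly $\psi(g)^{\psi(H)}$ — is automatically $\mathcal{C}$-closed in $G/L$. Hence, by Lemma \ref{closed_simplification}, it suffices to show that for every $x \in G \setminus g^H$ there is $L \in \mathcal{N_C}(G)$ with $\psi(x) \notin \psi(g^H)$, equivalently with $x \notin g^H L$.

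Fix $x \in G \setminus g^H$. First I would dispose of the case $x \notin g^G$: here the hypothesis that $g^G$ is $\mathcal{C}$-closed yields $L \in \mathcal{N_C}(G)$ with $x \notin g^G L$, and since $g^H \subseteq g^G$ this $L$ works. So assume $x \in g^G \setminus g^H$ and write $x = tgt^{-1}$ for some $t \in G$. The key translation is that $x = tgt^{-1}$ lies in $g^H$ if and only if $h^{-1}t \in C_G(g)$ for some $h \in H$, i.e. if and only if $t \in H C_G(g)$; thus in our case $t \notin H C_G(g)$. Now $H C_G(g) = (C_G(g)H)^{-1}$ is $\mathcal{C}$-closed, because $C_G(g)H$ is $\mathcal{C}$-closed by hypothesis and inversion is a self-homeomorphism of $G$ in its pro-$\mathcal{C}$ topology (inversion permutes the basic open sets $gN$, $N \in \mathcal{N_C}(G)$, among themselves, as $Ng^{-1} = g^{-1}(gNg^{-1})$). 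Hence there is $K \in \mathcal{N_C}(G)$ with $t \notin H C_G(g) K$.

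Next I would invoke the centraliser condition. Since the pair $(G,g)$ satisfies $\mathcal{C}\text{-CC}_G$, applied with this $K$ there is $L \in \mathcal{N_C}(G)$, $L \leq K$, such that $C_{G/L}(\psi(g)) \subseteq \psi(C_G(g)K)$, where $\psi \colon G \to G/L$ is the natural projection. I claim $\psi(x) \notin \psi(g^H)$. Arguing by contradiction, if $\psi(x) = \psi(hgh^{-1})$ for some $h \in H$ then, since also $\psi(x) = \psi(tgt^{-1})$, we get $\psi(h^{-1}t) \in C_{G/L}(\psi(g)) \subseteq \psi(C_G(g)K)$; unwinding this (using $L \leq K$ and $K \normleq G$, so $\psi^{-1}(\psi(C_G(g)K)) = C_G(g)KL = C_G(g)K$) forces $h^{-1}t \in C_G(g)K$, whence $t \in h C_G(g) K \subseteq H C_G(g)K$, contradicting the choice of $K$. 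So $\psi(x) \notin \psi(g^H)$, and Lemma \ref{closed_simplification} then gives that $g^H$ is $\mathcal{C}$-closed in $G$.

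I do not anticipate a genuine obstacle: once Lemma \ref{closed_simplification} and the definition of $\mathcal{C}\text{-CC}_G$ are available, the argument is essentially the book-keeping above, closely following \cite[Section 3]{raags}. The only points needing a little care are the reduction to finite quotients (which hinges on images of $g^H$ being closed there), the equivalence $tgt^{-1} \in g^H \Leftrightarrow t \in H C_G(g)$ together with the passage between $C_G(g)H$ and $H C_G(g)$ by inversion, and the routine identities $(C_G(g)K)^{-1} = C_G(g)K$ and $\psi^{-1}(\psi(C_G(g)K)) = C_G(g)K$ for $K \normleq G$.
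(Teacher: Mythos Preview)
Your proof is correct and follows essentially the same route as the paper's: split into the cases $x\notin g^G$ and $x\in g^G\setminus g^H$, in the second case translate $x=tgt^{-1}\notin g^H$ into a statement about the double coset, separate using $\mathcal{C}$-closedness of that coset, and then apply $\mathcal{C}\text{-CC}_G$ to pass to a suitable $L\leq K$. The only cosmetic difference is that the paper works directly with $t^{-1}\notin C_G(g)H$ rather than $t\notin HC_G(g)$ via inversion, and concludes straight from the definition of $\mathcal{C}$-closed rather than through Lemma~\ref{closed_simplification}.
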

\begin{proof}
	Let $y \in G \setminus g^H$ be arbitrary.
	
	If $y \not\in g^G$ then there is $L\in\mathcal{N_C}(G)$ such that $\phi(y) \not\in \phi(g^G)$ in $G/L$, where $\phi \colon G \to G/L$ is the natural projection, therefore $\phi(y) \not\in \phi(g^H)$.
	
	Assume that $y \in g^G \setminus g^H$, thus $y = zgz^{-1}$ for some $z \in G \setminus H$. Suppose $C_G(g) \cap z^{-1}H$ is nonempty, thus there is $f \in C_G(g)$ such that $zf \in H$. Then $g = fgf^{-1}$ and thus $y = zgz^{-1} = (zf)g(zf)^{-1} \in g^H$ which is a contradiction as we assume that $y \not\in g^H$, thus $C_G(g) \cap z^{-1}H = \emptyset$, in other words $z^{-1} \not\in C_G(g)H$. Since $C_G(g)H$ is $\mathcal{C}$-closed in $G$ by assumption, there is $K \in \mathcal{N_C}(G)$ such that $\{z^{-1}\} \cap C_G(g)HK = \emptyset$. Since the pair $(G,g)$ has $\mathcal{C}\text{-CC}_G$ by assumption, there is $L \in \mathcal{N_C}(G)$ such that $L \leq K$ and 
	\begin{displaymath}
		C_{G/L}(\phi(g)) \subseteq \phi(C_G(g)K),
	\end{displaymath}
	where $\phi \colon G \to G/L$ is the natural projection.
	
	Suppose that $\phi(y) \in \phi(g^H)$, thus there is some $h \in H$ such that $\phi(y) = \phi(zgz^{-1}) = \phi (hgh^{-1})$. We see that $\phi(z^{-1}h) \in C_{G/L}(\phi(g))$, thus 
	\begin{displaymath}
		\phi(z^{-1}) \in C_{G/L}(\phi(g))\phi(H) \subseteq \phi(C_G(g)K)\phi(H) = \phi(C_G(g)HK).
	\end{displaymath} This means that $z^{-1} \in C_G(g)HKL = C_G(g)HK$ as $L \leq K$. But that is a contradiction with the construction of $K$.
	
	He have showed that for arbitrary $y \in G \setminus g^H$ there is $L \in \mathcal{N_C}(G)$ such that $\phi(y) \not\in \phi(g^H)$, hence the set $g^H$ is $\mathcal{C}$-closed in $G$.
\end{proof}

\begin{cor}
	\label{cor3.5}
	Let $G$ be a $\mathcal{C}\text{-CS}$ group satisfying $\mathcal{C}\text{-CC}$ and let $H \leq G$ such that $C_G(h)H$ is $\mathcal{C}$-closed in $G$ for every $h \in H$. Then $H$ is $\mathcal{C}\text{-CS}$. Moreover, for every $h \in H$ the set $h^H$ is $\mathcal{C}$-closed in $G$.
\end{cor}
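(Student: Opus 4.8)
The plan is to read this off directly from the preceding lemma, so the argument is essentially a bookkeeping step. First I would check that for every $g \in H$ the hypotheses of that lemma are satisfied: since $G$ is $\mathcal{C}\text{-CS}$, the conjugacy class $g^G$ is $\mathcal{C}$-closed in $G$; since $G$ satisfies $\mathcal{C}\text{-CC}$, the pair $(G,g)$ satisfies $\mathcal{C}\text{-CC}_G$ (this is the remark recorded immediately after the definition of the pairwise centraliser condition, namely that $G$ has $\mathcal{C}\text{-CC}$ iff $(G,g)$ has $\mathcal{C}\text{-CC}_G$ for all $g$); and the double coset $C_G(g)H$ is $\mathcal{C}$-closed in $G$ by the hypothesis of the corollary. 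Applying the preceding lemma with this $g$ then gives that $g^H$ is $\mathcal{C}$-closed in $G$. As $g \in H$ was arbitrary, this already proves the ``moreover'' clause.

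For the first assertion I would then invoke the elementary fact (recorded in the subsection on restrictions of pro-$\mathcal{C}$ topologies) that if $X \subseteq H$ is $\mathcal{C}$-closed in $G$ then $X$ is automatically $\mathcal{C}$-closed in $H$ — this is the direction that always holds, obtained by pulling back along the inclusion $H \hookrightarrow G$, or equivalently because the subspace topology on $H$ is coarser than $\text{pro-}\mathcal{C}(H)$. Hence, for each $h \in H$, the set $h^H$ is $\mathcal{C}$-closed in $G$ by the previous paragraph and is contained in $H$, so it is $\mathcal{C}$-closed in $H$. Since $h^H$ is exactly the $H$-conjugacy class of $h$, every conjugacy class of $H$ is $\mathcal{C}$-closed in $H$, which is precisely the statement that $H$ is $\mathcal{C}\text{-CS}$.

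There is no real obstacle here; all the substance lives in the preceding lemma, and the only point that requires a moment of care is to use the correct direction of the transfer of $\mathcal{C}$-closed sets between $G$ and a subgroup: we need only that a $\mathcal{C}$-closed subset of $G$ lying inside $H$ is $\mathcal{C}$-closed in $H$ (the reverse implication fails in general, as the $\mathrm{BS}(2,3)$ example earlier shows), and that direction is available unconditionally.
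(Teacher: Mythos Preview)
Your proof is correct and matches exactly what the paper intends: the corollary is stated without proof as an immediate consequence of the preceding lemma, and your argument fills in precisely those details. The only content is verifying the three hypotheses of that lemma for each $h \in H$ and then passing from $\mathcal{C}$-closed in $G$ to $\mathcal{C}$-closed in $H$ via the easy direction of subspace topology, both of which you handle correctly.
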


\begin{lemma}
	\label{3.7}
	Let $G$ be a group and suppose that $H \leq G$, $g \in G$ and $K \in \mathcal{N_C}(G)$. If the set $g^{H\cap K}$ is $\mathcal{C}$-closed in $G$ then there is $L \in \mathcal{N_C}(G)$ such that $L \leq K$ and
	\begin{displaymath}
		C_{\phi(H)}(\phi(g)) \subseteq \phi(C_H(G)K) \mbox{ in }G/L,
	\end{displaymath}
	where $\phi \colon G \to G/L$ is the natural projection.	
\end{lemma}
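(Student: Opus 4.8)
The plan is to reduce the statement to finitely many cosets of $K$ and then apply the separability hypothesis. Since $K \in \NC(G)$ has finite index in $G$, the subgroup $H \cap K$ has finite index in $H$; I would fix a finite transversal $t_1,\dots,t_n \in H$, so that $H$ is the disjoint union of the cosets $(H\cap K)t_i$. The first point to record is that for $h = ut_i$ with $u \in H\cap K$ two things depend only on the coset $(H\cap K)h$: first, $h \in C_H(g)K$ if and only if $t_i \in C_H(g)K$, because $C_H(g)K$ is a subgroup of $G$ (as $K \normleq G$) that contains $K$ and hence $H\cap K$; second, the condition that $\phi(h)$ centralises $\phi(g)$, that is, $hgh^{-1} \in gL$, amounts to $u\,(t_igt_i^{-1})\,u^{-1} \in gL$.

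Next I would set $g_i = t_i g t_i^{-1}$ and establish two facts about the subset $g_i^{H\cap K} = \{u g_i u^{-1} : u \in H\cap K\}$. (a) It is $\mathcal{C}$-closed in $G$: since $t_i$ normalises $H$ (being in $H$) and normalises $K$ (since $K \normleq G$), it normalises $H\cap K$, so $g_i^{H\cap K} = t_i\,g^{H\cap K}\,t_i^{-1}$; by hypothesis $g^{H\cap K}$ is $\mathcal{C}$-closed, and translates of $\mathcal{C}$-closed sets are $\mathcal{C}$-closed, so $g_i^{H\cap K}$ is $\mathcal{C}$-closed. (b) If $t_i \notin C_H(g)K$ then $g \notin g_i^{H\cap K}$: for if $g = u g_i u^{-1}$ with $u \in H\cap K$ then $(ut_i)g(ut_i)^{-1} = g$ and $ut_i \in H$, so $ut_i \in C_H(g)$, whence $t_i = u^{-1}(ut_i) \in (H\cap K)\,C_H(g) \subseteq C_H(g)K$, contradicting the hypothesis on $t_i$.

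To finish: for each index $i$ with $t_i \notin C_H(g)K$, facts (a) and (b) give some $N_i \in \NC(G)$ with $gN_i \cap g_i^{H\cap K}N_i = \emptyset$. Put $L = K \cap \bigcap_i N_i$, the intersection over those finitely many indices; then $L \in \NC(G)$ and $L \leq K$. To see this $L$ works, suppose $h \in H$ with $\phi(h) \in C_{\phi(H)}(\phi(g))$ but $h \notin C_H(g)K$; writing $h = ut_i$ with $u \in H\cap K$, the first observation forces $t_i \notin C_H(g)K$, while the centralising condition gives $u g_i u^{-1} \in gL \subseteq gN_i$, and clearly $u g_i u^{-1} \in g_i^{H\cap K} \subseteq g_i^{H\cap K}N_i$, contradicting the choice of $N_i$. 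Hence every such $h$ lies in $C_H(g)K$, and since every element of $C_{\phi(H)}(\phi(g))$ is $\phi(h)$ for some $h \in H$, this yields $C_{\phi(H)}(\phi(g)) \subseteq \phi(C_H(g)K)$ in $G/L$.

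I do not expect a genuine obstacle; the argument is essentially that of \cite[Section 3]{raags}. The only points needing care are the bookkeeping reducing membership in $C_H(g)K$ to the transversal element $t_i$, and the identification $g_i^{H\cap K} = t_i\,g^{H\cap K}\,t_i^{-1}$, which is exactly what makes the $\mathcal{C}$-closedness hypothesis on $g^{H\cap K}$ usable.
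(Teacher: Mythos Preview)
Your proof is correct and follows essentially the same approach as the paper's. Both arguments pick a finite transversal for $H/(H\cap K)$, use the $\mathcal{C}$-closedness of $g^{H\cap K}$ to separate the finitely many ``bad'' coset representatives, and intersect with $K$ to produce $L$; the only cosmetic differences are that the paper separates $z_i^{-1}gz_i$ from $g^{H\cap K}$ directly (rather than $g$ from the translate $g_i^{H\cap K}$) and labels the bad indices by the equivalent condition $z_i^{-1}gz_i \notin g^{H\cap K}$ rather than $t_i \notin C_H(g)K$.
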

\begin{proof}
	Denote $k = |H : (H\cap K)| < \infty$. Then $H = \bigsqcup_{i=1}^{k}z_i(H \cap K)$ for some $z_1, \dots, z_k \in H$. If necessary we can renumber the elements $z_i$ so that there is $l \in \{0,1,\dots, k-1\}$ such that $z_i ^{-1}g z_i \not\in g^{H\cap K}$ if $1 \leq i \leq l$ and $z_i ^{-1}g z_i \in g^{H\cap K}$ if $l < i \leq k$. Since $g^{H\cap K}$ is $\mathcal{C}$-closed in $G$ there is $L \in \mathcal{N_C}(G)$ such that $\phi(z_i^{-1} g z_i) \not\in \phi(g^{H\cap K})$ in $G/L$, where $\phi \colon G \to G/L$ is the natural projection, for all $i = 1, 2, \dots, l$. Note that by replacing $L$ with $L \cap K$ we may assume that $L \leq K$.
	
	Let $\overline{x}\in C_{\phi(H)}(\phi(g))$ be arbitrary. Clearly $\overline{x} = \phi(x)$ for some $x \in H$ and thus $\phi(x^{-1}gx) = \phi(g)$ in $G/L$, therefore $x^{-1}gx \in gL$ in $G$. As $x \in H$ there is $i \in \{1, 2, \dots, k\}$ and $y \in H \cap K$ such that $x = z_i y$, thus $x^{-1}gx = y^{-1} z_i^{-1} g z_i y$. As a consequence we see that $z_i^{-1}gz_i \in y gLy^{-1} = ygy^{-1}L \subseteq g^{H\cap K}L$. This means that $\phi(z_i^{-1} g z_i) \in \phi(g^{H\cap K})$ in $G/L$ and thus from construction of $L$ we see that $l < i \leq k$, therefore $z_i^{-1} g z_i \in g^{H\cap K}$ and there is some $u \in H \cap K$ such that $z_i^{-1} x z_i = u g u^{-1}$. We see that $z_i u \in C_H(g)$ and since $x = z_i y = z_i u u^{-1} y$ we see that $x \in C_H(g)(H\cap K) \subseteq C_H(g)K$. This means that $\overline{x} \in \phi(C_H(g)K)$ in $G/L$. Since $\overline{x} \in C_{\phi(H)(\phi(g))}$ was arbitrary we see that
	\begin{displaymath}
		C_{\phi(H)}(\phi(g)) \subseteq \phi(C_H(g)K) \mbox{ in } G/L,
	\end{displaymath}
	which concludes the proof. 
\end{proof}

Now we are ready to prove the main statement of this chapter.
\begin{proof}[Proof of Theorem \ref{cc_and_cs_is_hcs}]
	$\text{(i)} \Leftarrow \text{(ii)}$: let $H \leq G$ be a $\mathcal{C}$-open subgroup of $G$. By Lemma $\ref{open_subgroups}$ we see that $H$ is of finite index in $G$, hence the double coset $C_G(h)H$ is a finite union of $\mathcal{C}$-closed sets and thus is $\mathcal{C}$-closed in $G$. By Corollary \ref{cor3.5} we see that $H$ is $\mathcal{C}\text{-CS}$.
	
	$\text{(i)} \Rightarrow \text{(ii)}$: assume that $G$ is $\mathcal{C}\text{-HCS}$. Let $g\in G$ and $K \in \mathcal{N_C}(G)$ be arbitrary. Let $H = \langle g \rangle K$ and note that $g^K = g^{H} = g^{H\cap K}$. Clearly $H$ is $\mathcal{C}$-open in $G$ by Lemma \ref{open_subgroups} and thus it is $\mathcal{C}\text{-CS}$. Since $g \in H$ and $H$ is $\mathcal{C}\text{-CS}$ we see that $g^H$ is $\mathcal{C}$-closed in $H$. By Lemma \ref{open_restriction} we see that $g^H = g^{H \cap K}$ is $\mathcal{C}$-closed in $G$. By previous lemma there is $L \in \mathcal{N_C}(G)$ such that $L \leq K$ and
	\begin{displaymath}
		C_{\phi(H)}(\phi(g)) \subseteq \phi(C_H(G)K) \mbox{ in }G/L,
	\end{displaymath}
	where $\phi \colon G \to G/L$ is the natural projection. Since $g \in G$ and $K \in \mathcal{N_C}(G)$ were arbitrary we see that $G$ has $\mathcal{C}\text{-CC}$.
\end{proof}
Note that we used the fact the class $\C$ is closed under extensions only in the proof of Theorem \ref{cc_and_cs_is_hcs} when we used Lemma \ref{open_restriction}. All the other statements in this chapter require only (c1) and (c2).

\section{Special amalgams}
\label{special_amalgams}

In order to be able to understand certain properties of graph products we will turn our attention to \emph{special amalgams}. The following section is a close analogue of \cite[Section 7]{raags}.
\begin{definition}
\label{special_amalgam}
	Let $A, C$ be groups and let $H \leq A$. Then we define $A \specam_H C$, the \emph{special amalgam} of $A$ and $C$ over $H$, to be the following free product with amalgamation:
	\begin{displaymath}
		A \ast_H (H \times C) \mbox{ given by presentation } \langle A,C \| [h,c] = 1 \ \forall h\in H ,\forall c \in C\rangle,
	\end{displaymath}
	where $[h,c] = hch^{-1}c^{-1}$.
\end{definition}

The main reason why we are interested in special amalgams is that they naturally appear in graph products.
\begin{remark}
	\label{structure_of_graph_products}
	Let $G =\Gamma\mathcal{G}$ be a graph product and suppose that $|V\Gamma|\geq 2$. Then for every $v \in V\Gamma$ there is a natural splitting of $G = G_A \specam_{G_C} G_B$ as a special amalgam of full subgroups, where $A = V \setminus \{v\}$, $B = \text{star}(v)$ and $C = \link(v)$. 
\end{remark}
\begin{proof}
	Let $v \in V\Gamma$ be arbitrary, set $A, C \subseteq V\Gamma$ as in the statement of the remark and let $B = \text{star}(v)$. Obviously $G_C \leq G_B$, $G_C \leq G_A$ and $G = \langle G_A, G_B\rangle$. By looking at the presentations it is easy to see that $G \cong G_A \ast_{G_C} G_B$. Note that the vertex $v$ is central in the graph $\Gamma_B$ therefore $G_B = G_v \times G_C$. Consequently $G \cong G_A\ast_{G_C}(G_C \times G_v) = G_A \specam_{G_C} G_v$.
\end{proof}

There are two extreme cases that can occur. If $v \in V\Gamma$ is an isolated vertex, i.e. $v$ is not connected to any other vertex, we see that $G_C = \{1\}$ and $G = G_A \ast G_v$. On the other side, if $\link(v)=V\Gamma \setminus \{v\}$, i.e. if $v$ is central in $\Gamma$, we see that $G = G_B = G_A \times G_v$.

\subsection{Normal form and functorial property}
Let $G = A \specam_{H}C$. Obviously every element $g \in G$ can be represented as a product $x_0 c_1 x_1 \dots c_n x_n$ where $x_i \in A$ for $i=0,1, \dots, n$ and $c_j \in C$ for $j = 1, \dots, n$. We say that $g = x_0 c_1 x_1 \dots c_n x_n$ is in a \emph{reduced form} if $x_i \not\in H$ for $i = 1, \dots, n-1$ and $c_j \neq 1$ for $j = 1, \dots, n$. By using the normal form theorem for free products with amalgamation \cite[Theorem 4.4]{magnus} we can prove the following.

\begin{lemma}
	\label{nft_special_amalgam}
	Let $H \leq A, C$ be groups and let $G = A \specam_H C$. Suppose that $g = x_0 c_1 x_1 \dots c_n x_n$, where $x_0, x_1, \dots, x_n \in A$ and $c_1, \dots, c_n \in C$, with $n \geq 1$ is in reduced form. Then $g \neq 1$ in G.
	
	Moreover, suppose that $f = y_0 d_1 y_1 \dots d_m y_m$, where $y_0, y_1, \dots, y_m \in A$ and $d_1, \dots, d_m \in C$, is in reduced form with $m \geq 1$ as well and $f = g$. Then $m = n$ and $c_i = d_i$ for all $i = 1, \dots, n$.
\end{lemma}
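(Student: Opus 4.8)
The plan is to realise $G = A \specam_H C = A \ast_H B$ as an amalgamated free product with factors $A$ and $B := H \times C$, where $H$ sits inside $B$ as $H \times \{1\}$, and then to feed suitably rewritten products into the normal form theorem for amalgamated free products \cite[Theorem 4.4]{magnus}. The one structural observation that makes everything work is that inside $B = H \times C$ an element $c \in C$ lies in $H$ if and only if $c = 1$; combined with the fact (also part of \cite[Theorem 4.4]{magnus}) that $A$ and $B$ embed in $G$ with $A \cap B = H$, this also yields $C \cap A = \{1\}$ in $G$. In particular, the hypothesis that $g = x_0 c_1 x_1 \cdots c_n x_n$ is in reduced form translates exactly into: each $c_j$ lies outside $H$, and each interior syllable $x_i$ (those with $1 \le i \le n-1$) lies outside $H$.

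For the nontriviality assertion I would set $g' = c_1 x_1 c_2 x_2 \cdots x_{n-1} c_n$, so that $g = x_0 g' x_n$. The word $g'$ alternates between the $B$-syllables $c_1, \dots, c_n$ and the $A$-syllables $x_1, \dots, x_{n-1}$, and by the remark above none of these syllables lies in $H$; hence $g'$ is a reduced word of the amalgam, so $g' \neq 1$ by \cite[Theorem 4.4]{magnus}. Moreover $g' \notin A$: if $n \ge 2$ this is because a reduced word of length $\ge 2$ lies in neither factor, and if $n = 1$ then $g' = c_1 \in C \setminus \{1\}$, which is not in $A$ since $C \cap A = \{1\}$. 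Consequently $g = x_0 g' x_n = 1$ would force $g' = x_0^{-1} x_n^{-1} \in A$, a contradiction, so $g \neq 1$.

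For the uniqueness clause, suppose also $f = y_0 d_1 y_1 \cdots d_m y_m$ is in reduced form with $m \ge 1$ and $f = g$. Writing $f = y_0 f' y_m$ with $f' = d_1 y_1 \cdots d_m$ a reduced word beginning and ending with a $B$-syllable, the equation $g = f$ rearranges to $g' = a f' b$, where $a = x_0^{-1} y_0$ and $b = y_m x_n^{-1}$ lie in $A$. I would then normalise $a f' b$ to a reduced word $\tilde f$ of the amalgam. Since the first syllable of $f'$ is $d_1 \in B$ and the last is $d_m \in B$, the prefix $a$ merges into the word only when $a \in H$ (producing the $B$-syllable $a d_1$, still outside $H$ because $d_1 \neq 1$), and symmetrically for $b$; thus $\tilde f$ begins (resp.\ ends) with a $B$-syllable precisely when $a \in H$ (resp.\ $b \in H$). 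Since $g' = \tilde f$ and $g'$ begins and ends with a $B$-syllable, the uniqueness part of \cite[Theorem 4.4]{magnus}---equal syllable length, with corresponding syllables in the same factor---forces $a, b \in H$. Then $\tilde f$ has exactly $m$ $B$-syllables (those of $f'$, possibly with an element of $H$ absorbed into the first and into the last) and $m-1$ $A$-syllables, so its length $2m - 1$ must equal the length $2n - 1$ of $g'$, giving $n = m$. Finally, the same uniqueness clause says the $j$-th $B$-syllables of $g'$ and $\tilde f$ agree up to left- and right-multiplication by elements of $H$; applying the projection $H \times C \to C$ (which kills $H$) to these identities gives $c_j = d_j$ in $C$ for every $j$.

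I expect the only delicate point to be the bookkeeping in this last step: one must verify that rewriting $g$ and $f$ as an $A$-syllable, followed by a genuinely reduced amalgam word, followed by an $A$-syllable, really does produce words that are reduced in the sense of \cite{magnus}---this is exactly where we use that an element of $C$ lies in $H$ precisely when it is trivial---and that the boundary syllables $x_0, x_n, y_0, y_m$, which may well lie in $H$, get absorbed uniformly rather than through a proliferation of sub-cases. Everything else is a direct appeal to the cited normal form theorem.
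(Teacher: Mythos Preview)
Your proof is correct and takes a genuinely different route from the paper's. The paper establishes the uniqueness clause by an induction on $m+n$: writing out $f^{-1}g = 1$, one first sees that $y_0^{-1}x_0 \in H$ (otherwise the word is reduced and nontrivial), then that $d_1^{-1}c_1 = 1$, and then one peels off the leading syllables and repeats. In contrast, you strip the outer $A$-syllables once and for all, obtain two genuinely reduced amalgam words $g'$ and $\tilde f$ representing the same element, and invoke the full uniqueness statement of the normal form theorem (equal length, matching factor pattern, corresponding syllables in the same $H$-double coset) in one shot; the projection $H\times C \to C$ then kills all the $H$-ambiguity and reads off $c_j = d_j$ directly. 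Your approach is cleaner and avoids the induction at the cost of appealing to the stronger form of \cite[Theorem~4.4]{magnus} (uniqueness with coset representatives, not just nontriviality of reduced words); the paper's approach is more elementary in that respect but requires the inductive bookkeeping. Your treatment of the boundary syllables $x_0,x_n,y_0,y_m$ is also more careful than the paper's somewhat terse ``follows directly'' for the first assertion, since as you note these syllables may lie in $H$ and so the given expression for $g$ is not itself a reduced amalgam word.
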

\begin{proof}
	The first assertion of the lemma follows directly from normal form theorem for free products with amalgamation.
	Now, assume that $g = x_0 c_1 x_1 \dots c_n x_n$, where $x_i \in A$ for $i = 0,1,\dots,n$ and $c_i \in C$ for $i = 1, 2, \dots, n$, $f = y_0 d_1 y_1 \dots d_m y_m$, where $y_i \in A$ for $i = 0, 1, \dots, m$ and $d_i \in C$ for $i= 1,2, \dots, m$, are both in reduced form and $f = g$. We will proceed by induction on $m+n$.
	
	In case $m + n = 2$ we see that $m = n = 1$ and thus $g = x_0 c_1 x_1$, $f = y_0 d_1 y_1$. By the assumption we have that $y_1^{-1} d_1^{-1} y_0^{-1} x_0 c_1 x_1 = 1$. This product cannot be in a reduced form and thus $y_0^{-1} x_0 \in H$. Then we see that
	\begin{displaymath}
		y_1^{-1} d_1^{-1} y_0^{-1} x_0 c_1 x_1 = y_1^{-1} (d_1^{-1} c_1) (y_0^{-1} x_0 x_1) = 1.
	\end{displaymath}
	Again, this product is not reduced and thus $d_1 = c_1$ and we are done.
	
	Now assume that $m+n = K > 2$. Then clearly
	\begin{equation}
	\label{nf}
		y_m^{-1} d_m^{-1} \dots y_1^{-1} d^{-1}_1 y_0^{-1} x_0 c_1 x_1 \dots c_n x_n = 1
	\end{equation}
	and thus the left hand side of (\ref{nf}) is not reduced. Since both $f,g$ were in reduced form we see that $y_0^{-1} x_0 \in H$ and therefore $d^{-1}_1 y_0^{-1} x_0 c_1 = d^{-1}_1 c_1 y_0^{-1} x_0$ and thus we can rewrite (\ref{nf}) to
	\begin{equation*}
		y_m^{-1} d_m^{-1} \dots y_1^{-1} d^{-1}_1 c_1 y_0^{-1} x_0 x_1 \dots c_n x_n = 1.
	\end{equation*}
	Without loss of generality we may assume that $n \geq m$ and thus $x_1 \not\in H$. Since $x_1 \not\in H$ we see that $y_0^{-1} x_0 x_1 \not\in H$ and thus $d^{-1} c_1 = 1$ and we can rewrite (\ref{nf}) to
	\begin{equation*}
		y_m^{-1} d_m^{-1} \dots y_1^{-1} y_0^{-1} x_0 x_1 \dots c_n x_n = 1.
	\end{equation*}
	Since both $f$ and $g$ were in reduced form we see that $f_1 = g_1$, where $f_1 = (x_0 x_1) c_2 x_2 \dots c_n x_n$ and $g_1 = (y_0 y_1) d_2 y_2 \dots y_m d_m$ are in reduced form as well. Thus by induction hypothesis we get that $m = n$ and $d_i = c_i$ for $i = 2, \dots, n$.
\end{proof}

The above lemma shows that if $g = x_0 c_1 x_1 \dots c_n x_n$ is reduced then $c_1, \dots, c_n$ are given uniquely. We will call them the \emph{consonants} of $g$. Denote $|g|_C = n$ and we will call $|g|_C$ the \emph{consonant length} of $g$. 

Special amalgams are useful because they have a functorial property.

\begin{remark}
\label{extension}
Let $H,A, C, Q, S$ be groups such that $H \leq A$ and let $\psi_A \colon A \to B$, $\psi_C \colon C \to S$ be group homomorphisms. Then by universal property of amalgamated free products $\psi_A, \psi_C$ uniquely extend to a homomorphism $\psi: G \rightarrow P$, where $G = A\specam_{H}C$ and $P = Q \specam_{\psi(H)}S$, such that
\begin{displaymath}
	\psi(g) =	\left\{\begin{array}{ll}
					\psi_A(a)	& \mbox{ if } g = a \mbox{ for some } a\in A,\\
					\psi_C(c)	& \mbox{ if } g = c \mbox{ for some } c\in C.
			 	\end{array}\right.
\end{displaymath}
\end{remark}

\begin{lemma}
	\label{extension_kernel}
	With notation as stated in Remark \ref{extension}, $\ker(\psi) = \lnorm \ker(\psi_A),\ker(\psi_C) \rnorm^G$.
\end{lemma}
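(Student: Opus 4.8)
The plan is to prove the equality by two inclusions. Put $N := \lnorm\ker(\psi_A),\ker(\psi_C)\rnorm^G$. The inclusion $N \subseteq \ker(\psi)$ is immediate: by Remark \ref{extension} the homomorphism $\psi$ restricts to $\psi_A$ on $A$ and to $\psi_C$ on $C$, so $\ker(\psi_A) \cup \ker(\psi_C) \subseteq \ker(\psi)$, and since $\ker(\psi) \normleq G$ it contains the normal closure $N$.

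For the reverse inclusion I would fix $g \in \ker(\psi)$, write it as a product $g = x_0 c_1 x_1 \cdots c_n x_n$ with $x_i \in A$ and $c_j \in C$, and induct on $n$. If $n = 0$, then $\psi_A(x_0) = \psi(g) = 1$, so $g \in \ker(\psi_A) \subseteq N$. If $n \geq 1$, then in $P = Q \specam_{\psi_A(H)} S$ we have $1 = \psi(g) = \psi_A(x_0)\,\psi_C(c_1)\cdots\psi_C(c_n)\,\psi_A(x_n)$, so by Lemma \ref{nft_special_amalgam} this expression cannot be in reduced form. Hence either $\psi_C(c_j) = 1$ for some $j \in \{1,\dots,n\}$, or $\psi_A(x_i) \in \psi_A(H)$ for some $i \in \{1,\dots,n-1\}$.

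In the first case $c_j \in \ker(\psi_C) \subseteq N$; since $N \normleq G$, deleting the syllable $c_j$ changes $g$ only by an element of $N$, and the resulting product $x_0 c_1 \cdots c_{j-1}(x_{j-1}x_j)c_{j+1}\cdots c_n x_n$ has $n-1$ consonants. In the second case $x_i = hk$ with $h \in H$ and $k \in \ker(\psi_A) \subseteq N$; after removing $k$ at the cost of an element of $N$, we use the defining relations $[h,c_i] = [h,c_{i+1}] = 1$ of the special amalgam to slide $h$ to the left and merge $c_ic_{i+1}$ into a single $C$-syllable, obtaining a product $x_0c_1\cdots(x_{i-1}h)(c_ic_{i+1})\cdots c_n x_n$ again with $n-1$ consonants. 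In both cases the new element still lies in $\ker(\psi)$ because $N \subseteq \ker(\psi)$, so by the induction hypothesis it — and therefore $g$ — belongs to $N$. This gives $\ker(\psi) \subseteq N$ and hence equality.

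The only delicate point is the bookkeeping in the inductive step: one must check that each reduction genuinely lowers $n$ (a whole consonant is eliminated while the neighbouring $A$-syllables absorb the leftover element of $H$ or of $\ker(\psi_C)$) and that the two ways Lemma \ref{nft_special_amalgam} can fail to produce a reduced word are exactly the two cases above. Note that the argument needs neither surjectivity of $\psi_A$ and $\psi_C$ nor that the chosen expression for $g$ be reduced in $G$.
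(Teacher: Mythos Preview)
Your argument is correct, and it proceeds by a genuinely different route than the paper's. The paper does not induct on consonant length at all: instead, with $N = \lnorm \ker(\psi_A),\ker(\psi_C)\rnorm^G$, it factors $\psi$ as $G \xrightarrow{\phi} G/N \xrightarrow{\theta} P$ and then uses von Dyck's theorem to build a homomorphism $\xi \colon P \to G/N$ by sending $\psi_A(a) \mapsto \phi(a)$ and $\psi_C(c) \mapsto \phi(c)$ (this is well defined precisely because $\ker(\psi_A),\ker(\psi_C) \subseteq N$), and checks that $\xi \circ \theta = \mathrm{id}_{G/N}$, so $\theta$ is injective and $\ker(\psi) = N$. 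Your approach is more elementary and concrete---it extracts the content directly from Lemma~\ref{nft_special_amalgam} without invoking any universal property---while the paper's approach is shorter, avoids the case analysis, and makes explicit that the inclusion $\ker(\psi) \subseteq N$ is really the statement that $G/N$ itself satisfies the defining relations of a special amalgam $\psi_A(A)\specam_{\psi_A(H)}\psi_C(C)$.
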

\begin{proof}
Let's use $N = \lnorm \ker(\psi_A), \ker(\psi_C) \rnorm^G$. Obviously $N \leq \ker(\psi)$, thus we need to show the opposite inclusion.
	
Let $\phi: G \rightarrow G/N$ be the natural projection, thus $N = \ker(\phi)$. Let $\theta: G/N \rightarrow P$ be a homomorhism such that $\ker(\theta)=\phi(\ker(\psi))$ and $\psi=\theta \circ \phi$. Note that $\ker(\psi_A) = \ker(\psi) \cap A$ and $\ker(\psi) \cap C = \ker(\psi_C)$ thus it makes sense to define $\xi_A \colon \psi(A) \to \phi(A)$ be the homomorphism given by $\xi_A(\psi(a))=\phi(a)$ and $\xi_C \colon \psi(C) \to \phi(C)$ be the homomorphism given by $\xi_C(\psi(c))=\phi(c)$. Clearly $\xi_A, \xi_C$ are isomorphisms. Let $h \in H$ and $c\in C$, then $[\xi_A(\psi(h)), \xi_C(\psi(c))] = [\phi(h),\phi(c)] = \phi([h,c]) = 1$. Therefore by von Dyck's theorem the homomorphisms $\xi_A, \xi_C$ extend to a homomorphism $\xi \colon P \to G/L$ defined on the generators of $P$ by
	\begin{displaymath}
		\xi(p) =
			\left\{ \begin{array}{ll}
				\phi(a) &\mbox{ if }p = \psi_A(a) \mbox{ for some } a \in A,\\
				\phi(c) &\mbox{ if }p = \psi_C(c) \mbox{ for some } c \in C.
			\end{array}\right.
	\end{displaymath}

Then clearly $\xi\circ\theta \colon G/L \to G/L$ is the identity as it is defined on the generators of $G/L$ by following:
\begin{displaymath}
		\xi\circ \theta(q) = 
			\left\{ \begin{array}{ll}
				\xi(\theta(\phi(a))) = \xi(\psi(a)) = \phi(a) &\mbox{ if }q = \phi(a) \mbox{ for some } a \in A,\\
				\xi(\theta(\phi(c))) = \xi(\psi(c)) = \phi(c) &\mbox{ if }q = \phi(c) \mbox{ for some } c \in C.
			\end{array}\right.
	\end{displaymath}
It follows that $\xi$ is injective and thus $\ker(\theta) = \{1\} = \phi(\ker(\psi))$. Therefore $\ker(\psi) \leq \ker(\phi)$. Altogether we see that $\ker(\psi) = \lnorm \ker(\psi_A), \ker(\psi_C) \rnorm^G$.
\end{proof}

\subsection{Cyclically reduced elements and conjugacy}
From now on let $H \leq A, C$ be groups and let $G$ denote $A \specam_H C$, the special amalgam of $A$ and $C$ along $H$.

\begin{definition}
Let $g = c_1 x_1 \dots c_n x_n$, where $x_i \in A$ and $c_i \in C$ for $i=1, \dots,n$. We say that $g$ is \emph{cyclically reduced} if $c_1 x_1 \dots c_n x_n$ is a reduced expression and if $n \geq 2$ then $x_n \not\in H$. We will say that an element $p \in G$ is a \emph{prefix} of $g$ if $p = c_1 x_1 \dots c_l x_l$ for some $0 \leq l \leq n$ and that $s \in G$ is a \emph{suffix} of $g$ if $s = c_{n-m} x_{n-m} \dots c_n x_n$ for some $-1 \leq m \leq n-1$. 
\end{definition}
Note that we define prefix and suffix only for cyclically reduced elements. 

\begin{lemma}
\label{conjugator_in_special_amalgam}
	Let $g = c_1 x_1 \dots c_n x_n$ and $f = d_1 y_1 \dots d_n y_n$, where $x_i,y_i \in A$ and $c_i, d_i \in C$ for $i = 1, 2, \dots, n$, be cyclically reduced elements of $G$ such that $n \geq 1$ and $x_n \not\in H$. Assume that $ugu^{-1}=f$ for some $u \in G$. Let $u = z_0e_1z_1 \dots e_m z_m$, where $z_i \in A$ and $e_j \in C$ for $i = 0, 1, \dots, m$ and $j = 1, \dots, m$, be a reduced expression. Then exactly one of the following is true
	\begin{enumerate}
		\item[a)] $m = 0$ and $u \in H$,
		\item[b)] $m \geq 1$, $z_m \in H$ and there is a prefix $p$ of $g$ such that $u = hp^{-1}g^{-l}$ for some $h \in H$ and $l \in \mathbb{N}_0$,
		\item[c)] $m \geq 1$, $x_n z_m^{-1} \in H$ and there is a suffix $s$ of $g$ such that $u = hsg^l$ for some $h \in H$ and $l \in \mathbb{N}_0$.
	\end{enumerate}
\end{lemma}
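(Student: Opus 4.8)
The plan is to analyze the reduced form of the conjugator $u$ and to use the normal form theorem for special amalgams (Lemma \ref{nft_special_amalgam}) to control how syllables cancel when we expand $ug = fu$. First I would rewrite the hypothesis $ugu^{-1} = f$ as $ug = fu$ and substitute the reduced expressions for all three elements, obtaining a single word equation in $G = A \specam_H C$. The key structural observation is that $g$ is cyclically reduced with $x_n \notin H$, so the word $g^l$ is itself reduced for every $l \in \mathbb{N}_0$, and more generally a prefix $p = c_1 x_1 \dots c_\ell x_\ell$ of $g$ satisfies $g = p \cdot (c_{\ell+1} x_{\ell+1} \dots c_n x_n)$ as a reduced product. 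I would therefore aim to show that the reduced expression for $u$ must ``align'' with a suitable power of $g$, in the sense that $u$ differs from $g^{-l} p^{-1}$ or from $s g^l$ only by a factor in $H$.

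The main case split is on how the syllable $z_m$ of $u$ interacts with the first syllable $c_1$ of $g$ (equivalently, the last cancellation that must occur on the left of the product $ug$). If $m = 0$, then $u = z_0 \in A$, and comparing consonant lengths of $ugu^{-1}$ and $f$ via Lemma \ref{nft_special_amalgam} — both have consonant length $n$ — forces $z_0$ to commute appropriately with the consonants of $g$; a short argument then shows $z_0 \in H$, giving case (a). If $m \geq 1$, I would compare the consonant $c_1$ of $g$ with $z_m$: either $z_m \in H$ (so $z_m$ commutes with everything in $C$ and the leftmost reduced syllable of $ug$ past $u$ is $c_1$), or $z_m \notin H$ but $x_n z_m^{-1} \in H$, corresponding to cancellation between $z_m^{-1}$ (from $u^{-1}$, on the far right) and $x_n$ (the last syllable of $g$). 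These two subcases are exchanged by replacing $g$ with $g^{-1}$ and $u$ with $u^{-1}$, so it suffices to treat one of them carefully; this symmetry gives cases (b) and (c). In the $z_m \in H$ subcase I would peel off syllables from the right of $u$ and the left of $g$ simultaneously, running an induction on $m$ (or on $|u|_C$) to show that after absorbing copies of $g^{-1}$ we are left with $u$ of the form $h p^{-1} g^{-l}$ for a prefix $p$; Lemma \ref{nft_special_amalgam} is what guarantees the intermediate expressions stay reduced and the consonants match up at each stage.

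The hard part will be bookkeeping in the inductive step: when $z_m \in H$ and we cancel $z_m$ against $c_1$, the resulting word $z_0 e_1 \dots e_m (z_m c_1) x_1 c_2 x_2 \dots$ must be re-examined for whether $z_m c_1 x_1$ lies in $H$ or triggers further cancellation deeper into $g$, and one must track the case where $u$ wraps around more than once (the integer $l$ can be larger than $1$), which is exactly why prefixes rather than just ``initial segments of a single copy of $g$'' appear in the statement. I would set up the induction so that the inductive hypothesis already includes the conclusion for shorter conjugators of cyclically reduced elements, and verify the base case $m = 1$ directly. I should also separately confirm that the three cases (a), (b), (c) are mutually exclusive — this follows because $m = 0$ versus $m \geq 1$ separates (a) from the rest, and $z_m \in H$ versus $x_n z_m^{-1} \in H$ cannot both hold when $n \geq 1$ and $x_n \notin H$ (else $x_n = (z_m x_n)(z_m)^{-1} \cdot(\text{stuff})$... more precisely $z_m \in H$ and $x_n z_m^{-1}\in H$ would give $x_n \in H$, a contradiction), so the dichotomy in the $m \geq 1$ case is genuinely exclusive.
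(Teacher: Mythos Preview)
Your overall strategy matches the paper's: induct on $|u|_C = m$, split at $m \geq 1$ into the two subcases $z_m \in H$ versus $x_n z_m^{-1} \in H$ (mutually exclusive since $x_n \notin H$), and in each subcase shorten $u$ by one consonant. Two points of looseness are worth flagging before you execute.

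First, you announce that you will work with the equation $ug = fu$, but your description of the dichotomy (``cancellation between $z_m^{-1}$, from $u^{-1}$, and $x_n$'') only makes sense for the form $ugu^{-1} = f$. The paper works with $ugu^{-1} = f$ throughout; the point is that in $z_0 e_1 \dots e_m z_m\, g\, z_m^{-1} e_m^{-1} \dots z_0^{-1}$ the two junctions where reduction can occur both involve $z_m$, which is exactly what yields the clean dichotomy $z_m \in H$ or $x_n z_m^{-1} \in H$. Pick that formulation and stick with it.

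Second, and more substantively, the inductive step is not literally ``peel off syllables from the left of $g$''. What actually happens (and what you will find once you unwind the algebra) is that after using $z_m \in H$ you deduce $e_m = c_1^{-1}$ and rewrite $u = u'(c_1 x_1)^{-1}$ with $|u'|_C = m-1$; then $u g u^{-1} = u' g' (u')^{-1}$ where $g' = c_2 x_2 \dots c_n x_n c_1 x_1$ is a \emph{cyclic permutation} of $g$, not a truncation. The inductive hypothesis is then applied to the pair $(u', g')$, still with the same $f$; a prefix $p'$ of $g'$ pulls back to either a prefix $c_1 x_1 p'$ of $g$ or to $g$ itself (which is how the exponent $l$ can increase). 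Your proposed symmetry $g \mapsto g^{-1}$, $u \mapsto u^{-1}$ does not literally swap (b) and (c), since $g^{-1}$ is no longer in the required normal form; the paper simply says case (c) is analogous and redoes the computation with suffixes instead of prefixes.
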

\begin{proof}
	If $m=0$, then $u = z_0$ and thus
	\begin{displaymath}
		y_n^{-1}d_n^{-1} \dots y_1^{-1}d_1^{-1}z_0 c_1 x_1 \dots c_n x_n z_0^{-1} = 1.
	\end{displaymath}
	This product clearly cannot be reduced and therefore $z_0$ must belong to $H$.
	
	Now suppose $m \geq 1$. Then since $f = ugu^{-1}$ we get
	\begin{displaymath}
		z_0 e_1 z_1 \dots e_m z_m c_1 x_1 \dots c_n x_n z_m^{-1} e_m^{-1} \dots z_1^{-1} e_1^{-1} z_0^{-1} = d_1 y_1 \dots d_n y_n.
	\end{displaymath}
	Right hand side of this equation is shorter than left hand side and right hand side is reduced by assumption, therefore left hand side cannot be reduced and thus we see that either $z_m \in H$ or $x_n z_m^{-1} \in H$. Since $x_n \not\in H$ we see that exactly one of these two possibilities may happen.
	
	First suppose that $z_m \in H$. Then $e_m z_m = z_m e_m$ and thus we have
	\begin{equation}
	\label{rovnice_1}
		z_0 e_1 z_1 \dots e_{m-1}(z_{m-1}z_m) (e_m c_1) x_1 \dots c_n x_n z_m^{-1} e_m^{-1} \dots z_1^{-1} e_1^{-1} z_0^{-1} = d_1 y_1 \dots d_n y_n.
	\end{equation}
	Again, left hand side cannot be reduced. Since $z_m \in H$ we cannot have $z_{m-1}z_m \in H$ as that would make $z_{m-1}\in H$ which would contradict our assumption that $u$ is reduced. Thus we must have that $e_m = c_1^{-1}$. Denote $h_1 := z_{m-1}z_{m}x_1 \in A$. Then
	\begin{displaymath}
		z_m^{-1} e_m^{-1} z_{m-1}^{-1} = e_m^{-1}z_m^{-1}z_{m-1}^{-1} = c_1 z_m^{-1} z_{m-1}^{-1} = c_1 x_1 x_1^{-1} z_m^{-1} z_{m-1}^{-1} = c_1 x_1 h_1^{-1}
	\end{displaymath}
	Since $z_{m-1} z_m = h_1 x_1^{-1}$ we get 
	\begin{displaymath}
		(z_0 e_1 z_1 \dots e_{m-1} h_1)(c_2 x_2 \dots c_n x_n c_1 x_1)(z_0 e_1 z_1 \dots e_{m-1} h_1)^{-1} = d_1 y_1 \dots d_n y_n.
	\end{displaymath}
	Set $u' = z_0 e_1 z_1 \dots e_{m-1} h_1$ and $g'= c_2 x_2 \dots c_n x_n c_1 x_1$.
	On the left hand side of the equation (\ref{rovnice_1}) we have
	\begin{displaymath}
		z_0 e_1 z_1 \dots e_{m-1} h_1 c_2 x_2 \dots c_n x_n c_1 x_1 h^{-1}_1 e_{m-1}^{-1} \dots z_1^{-1} e_1^{-1} z_0^{-1}
	\end{displaymath}
	and since this expression has longer consonant length than the right hand side of the equation we see that it cannot be reduced and therefore $h_1 = z_{m-1} z_m x_1 \in H$.
	If $m=1$ we get that $u = h_1 (c_1 x_1)^{-1}$ and the lemma is proved.
	
	Now suppose that $m = M > 1$ and that the statement has been already proved for all $u \in G$ such that $|u|_C < M$, thus we can use the induction hypothesis for $f, g'$ and $u'$ as $|u'|_C = |u|_C - 1$.
	
	We have $u' g' {u'}^{-1} = f$, 
	\begin{equation}
	\label{fin}
		z_0 e_1 z_1 \dots e_{m-1} h_1 c_2 x_2 \dots c_n x_n c_1 x_1 h^{-1}_1 e_{m-1}^{-1} \dots z_1^{-1} e_1^{-1} z_0^{-1} = d_1 y_1 \dots d_n y_n
	\end{equation}
	Since we have already shown that $h_1 \in H$ we can use induction hypothesis and by $b)$ we see that there is a prefix $p'$ of $g'$ such that $z_0 e_1 z_1 \dots z_{m-2} e_{m-1} h_1 = h {p'}^{-1}{g'}^{-l}$ for some $h\in H$ and $l \in \mathbb{N}$. As a result of this we have
		\begin{align*}
			u 	&= z_0 e_1 z_1 \dots e_{m-1} z_{m-1} e_m z_m\\
				&= z_0 e_1 z_1 \dots e_{m-1} z_{m-1} z_{m} c_1^{-1}\\
				&= z_0 e_1 z_1 \dots e_{m-1} h_1 x_1^{-1} c_1^{-1}\\
				&= h{p'}^{-l} {g'}^{-l} x_1^{-1} c_1^{-1} = h {p'}^{-1}x_1^{-1}c_1^{-1}g^{-l}= h (c_1 x_1 p')^{-1} g^{-l}.
		\end{align*}
Now two possibilities can occur. As $p'$ is a prefix of $g'$ we see that $p' = c_2 x_2 \dots c_k x_k$ where $2 \leq k \neq n$. Then either $c_1 x_1 p'$ is a prefix of $g$ or $c_1 x_1 p' = g$. Either way we are done.

In case $x_n z_m^{-1} \in H$ we can proceed analogously.
\end{proof}

Let $g = c_1 x_1 \dots c_n x_n$, where $x_1, \dots, x_n \in A$ and $c_1, \dots, c_n \in C$ with $h$, be a cyclically reduced element of $G$. Then we say that $g' \in G$ is a \emph{cyclic permutation} of $g$ if $g' = c_m x_m \dots c_n x_n c_1 x_1 \dots c_{m-1} x_{m-1}$ for some $1 \leq m \leq n$. Equivalently, $g'$ is a cyclic permutation of $g$ if there is $f$, a prefix (or a suffix) of $g$ , such that $g = x^{-1}gx$ (or $g' = xgx^{-1}$).

\subsection{Centralisers and a conjugacy criterion}
Recall that $G = A \specam_H C$. The following lemma is a special version of \cite[Chapter IV, Theorem 2.8]{shupp}.
\begin{lemma}[Conjugacy criterion for special amalgams]
	\label{collinss_lemma}
	Let $g = c_1 x_1 \dots c_n x_n$ and $f = d_1 y_1 \dots d_m y_m$, where $x_1, \dots, x_n, y_1, \dots y_m \in A$, and $c_1, \dots, c_n, d_1, \dots, d_m \in C$, be cyclically reduced elements of $G$ with $n \geq 1$. Then $g \not \in A^G$. If $f \sim_G g$ then $m = n$ and there is $g' \in G$, a cyclic permutation of $g$, such that $f \sim_H g'$.
\end{lemma}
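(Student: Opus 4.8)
The plan is to recognize $G = A \specam_H C$ as the amalgamated free product $A \ast_H B$ with $B = H \times C$ and then transfer the classical conjugacy theorem for amalgamated products (\cite[Chapter IV, Theorem 2.8]{shupp}) through this identification, using Lemma \ref{conjugator_in_special_amalgam} for the final bookkeeping. First I would match up the two normal-form conventions. If $g = c_1 x_1 \dots c_n x_n$ is cyclically reduced with $n \geq 1$, then either $n = 1$ and $x_1 \in H$, in which case $g = x_1 c_1 \in B \setminus H$ has syllable length $1$ over $A \ast_H B$, or else $x_n \notin H$ (which is automatic once $n \geq 2$), in which case the alternating word $(c_1, x_1, \dots, c_n, x_n)$ is reduced and cyclically reduced of syllable length $2n$ in $A \ast_H B$: indeed $c_i \in B \setminus H$ since $c_i \neq 1$, and $x_i \in A \setminus H$ for $i < n$ since the expression is reduced. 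The same dichotomy applies to $f$, the consonant data being unambiguous by Lemma \ref{nft_special_amalgam}.

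From this the first two assertions are immediate. Since conjugate cyclically reduced elements of an amalgamated free product have equal syllable length and a cyclically reduced element of syllable length $\geq 2$ (resp. of syllable length $1$ lying outside the amalgamated subgroup) is never conjugate into a different factor, we get $g \notin A^G$ at once; moreover the syllable length of $f$ equals that of $g$, which is $2n$ or $1$ according to the case, forcing $2m = 2n$ or $m = n = 1$, i.e. $m = n$.

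It remains to produce the cyclic permutation, and here I would split into the same two cases. In the generic case $x_n \notin H$ (with $m = n$ now known), apply Lemma \ref{conjugator_in_special_amalgam} to any $u$ with $u g u^{-1} = f$. In case (a) we have $u \in H$, so $f \sim_H g$ and $g' = g$ works. In cases (b) and (c), $u = h p^{-1} g^{-l}$ or $u = h s g^{l}$ with $h \in H$, $l \in \mathbb{N}_0$, and $p$ (resp. $s$) a prefix (resp. suffix) of $g$; since powers of $g$ commute with $g$ this yields $f = u g u^{-1} = h\,(p^{-1} g p)\,h^{-1}$ (resp. $h\,(s g s^{-1})\,h^{-1}$), and $p^{-1} g p$ (resp. $s g s^{-1}$) is by definition a cyclic permutation $g'$ of $g$, which is again cyclically reduced. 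Thus $f \sim_H g'$. In the degenerate case $n = 1$, $x_1 \in H$, we have $g, f \in B \setminus H$, and since $g$ lies outside the amalgamated subgroup any conjugator must already lie in $B$; writing $g = (x_1, c_1)$ and $f = (y_1, d_1)$ in $B = H \times C$, conjugacy in this direct product forces $d_1 = c_1$ and $y_1 = h x_1 h^{-1}$ for some $h \in H$, so $f = h g h^{-1}$ and $g' = g$ is the (only) cyclic permutation we need.

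The hard part is entirely organizational rather than conceptual: keeping the two presentations straight (the consonant presentation $c_1 x_1 \dots c_n x_n$ with $H \leq A$ versus the alternating syllable presentation over $A \ast_H (H \times C)$), checking that "cyclically reduced'' and "cyclic permutation'' in the special-amalgam sense correspond correctly to their amalgamated-product counterparts (including that cyclic permutations stay cyclically reduced), and isolating the syllable-length-one case $n=1$, $x_1 \in H$, which the general Collins' Lemma treats separately and where one must pass from conjugacy in $B$ to conjugacy in $H$ by using that $B = H \times C$ splits as a direct product.
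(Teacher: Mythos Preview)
The paper gives no proof of this lemma, only the citation to Collins' theorem in Lyndon--Schupp. Your approach---identify $G = A \specam_H C$ with $A \ast_H B$ for $B = H \times C$, match the special-amalgam reduced form against the alternating syllable form, deduce $g \notin A^G$ and $m = n$ from the classical conjugacy theorem, and then invoke Lemma~\ref{conjugator_in_special_amalgam} to produce the cyclic permutation and the $H$-conjugator---is exactly the intended elaboration, and for the generic case $x_n \notin H$ it is correct and complete.

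Your treatment of the degenerate case $n = 1$, $x_1 \in H$, however, contains a genuine error. You assert that conjugacy of $(x_1, c_1)$ and $(y_1, d_1)$ in the direct product $B = H \times C$ ``forces $d_1 = c_1$''. It does not: conjugation by $(h, c)$ sends $(x_1, c_1)$ to $(h x_1 h^{-1},\, c c_1 c^{-1})$, so one only obtains $d_1 \sim_C c_1$, and hence $f \sim_H g$ need not follow. Worse, this gap cannot be repaired, because the lemma as stated is actually false in this case. Take $A = H$ (so that $G = H \times C$), any $x_1 \in H$, and any non-central $c_1 \in C$; for $c \in C$ with $c c_1 c^{-1} \neq c_1$ the element $f = (c c_1 c^{-1}) x_1$ is cyclically reduced with $m=1$ and is $G$-conjugate to $g = c_1 x_1$, yet $h g h^{-1} = c_1 (h x_1 h^{-1}) \neq f$ for every $h \in H$, and the only cyclic permutation of $g$ is $g$ itself. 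The classical theorem \cite[IV, Theorem~2.8]{shupp} for length-one factor elements only yields $f \sim_B g$ here, not $f \sim_H g$; the stated conclusion would require either the extra hypothesis $x_n \notin H$ (which is exactly how Lemma~\ref{conjugator_in_special_amalgam} is set up) or a weakening of the conclusion to $f \sim_{H \times C} g'$ in the degenerate case.
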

	Clearly every cyclically reduced element has only finitely many cyclic permutations. Lemma \ref{collinss_lemma} motivates us to give a sufficient and necessary condition for whether two cyclically reduced elements of $G$ are conjugate by some element of $H$.
	
\begin{lemma}
	Suppose $g = cx \in G$ such that $c \in C\setminus\{1\}$ and $x \in H$. Then 
	\begin{displaymath}
		C_G(g)=C_C(c) C_H(x) \cong C_C(c) \times C_H(x).
	\end{displaymath}
\end{lemma}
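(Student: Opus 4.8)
The plan is to exploit the amalgam structure $G = A \ast_H (H\times C)$ and, writing $B = H\times C$ for the relevant vertex subgroup, to prove the key fact that $C_G(g)\subseteq B$; once this is known, the lemma reduces to a trivial computation of a centraliser inside the direct product $H\times C$. The easy inclusion $\supseteq$ and the direct-product assertion are immediate: if $a\in C_C(c)$ and $b\in C_H(x)$ then $a$ commutes with $c$ and, since $b\in H$, $c\in C$, also with $c$ again — more precisely $ab$ commutes with $c$ because $[a,c]=[b,c]=1$, and $ab$ commutes with $x$ because $[a,x]=[b,x]=1$ (the relations $[H,C]=1$), so $ab\in C_G(g)$. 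Furthermore $C_C(c)\subseteq C$ and $C_H(x)\subseteq H$ commute elementwise in $G$ and $C_C(c)\cap C_H(x)\subseteq C\cap H=\{1\}$ (inside $B=H\times C$ the subgroups $C$ and $H$ are $\{1\}\times C$ and $H\times\{1\}$), whence the product $C_C(c)C_H(x)$ is direct, i.e.\ $C_C(c)C_H(x)\cong C_C(c)\times C_H(x)$.

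The substance is the inclusion $C_G(g)\subseteq B$. Note first that $g=cx\in B$, and $g\notin H$, since $c=gx^{-1}\notin H$ because $c\in C\setminus\{1\}$ and $C\cap H=\{1\}$. Next, $g = cx$ with $c\in C\setminus\{1\}$ and $x\in A$ is a reduced, indeed cyclically reduced, expression with a single consonant ($n=1$, so the condition ``$x_n\notin H$'' is vacuous), so Lemma~\ref{collinss_lemma} applies and gives $g\notin A^G$; in particular $g$ is not conjugate in $G$ into $H$. Now take $u\in C_G(g)$ and suppose, for contradiction, that $u\notin B$; write $u=u_1\cdots u_k$ in reduced form for the amalgam $A\ast_H B$, so $k\geq 1$, consecutive $u_i$ lie in different factors, and $u_i\notin H$ whenever $k\geq 2$. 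Consider $u g u^{-1}=u_1\cdots u_k\,g\,u_k^{-1}\cdots u_1^{-1}$ and distinguish the factor containing the last syllable $u_k$. If $u_k\in A\setminus H$ (which includes the case $k=1$, $u_1\in A\setminus H$), then $u_k g u_k^{-1}$ is a reduced word of syllable length $3$, and prepending and appending the remaining syllables keeps the word reduced (the factors alternate and no middle syllable lies in $H$), so $ugu^{-1}$ has syllable length $2k+1\geq 3$. If $u_k\in B\setminus H$ (so necessarily $k\geq 2$), then $u_kgu_k^{-1}\in B$; it cannot lie in $H$, for otherwise $g\in u_k^{-1}Hu_k$ would be conjugate in $G$ into $H$, contrary to the previous paragraph; hence $u_kgu_k^{-1}\in B\setminus H$ is a single syllable, and the same concatenation argument shows $ugu^{-1}$ has syllable length $2k-1\geq 3$. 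In either case $ugu^{-1}$ has syllable length $\geq 3$, while $g\in B\setminus H$ has syllable length $1$, contradicting the normal form theorem for amalgamated free products. Therefore $u\in B$, i.e.\ $C_G(g)=C_B(g)$.

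Finally, under the identification $B=H\times C$ the element $g=cx$ corresponds to the pair $(x,c)$, so $C_B(g)=C_H(x)\times C_C(c)=C_C(c)C_H(x)$; together with the first paragraph this yields $C_G(g)=C_C(c)C_H(x)\cong C_C(c)\times C_H(x)$. The main obstacle is precisely the length/cancellation bookkeeping in the inclusion $C_G(g)\subseteq B$: the only mechanism that could push $C_G(g)$ outside $B$ would be for conjugation of $g$ by the last syllable $u_k$ of $u$ to land inside the edge group $H$, and excluding this is exactly the statement that $g$ is not conjugate into $H$, which is supplied by Lemma~\ref{collinss_lemma}.
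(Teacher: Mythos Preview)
Your proof is correct and takes a genuinely different route from the paper's. The paper argues directly with the special-amalgam normal form $f = z_0 e_1 z_1 \dots e_m z_m$ for an arbitrary $f\in C_G(g)$, writes out the equation $fgf^{-1}=g$, and performs repeated cancellation by cases on $m$ to force $f\in C_C(c)C_H(x)$. You instead work with the plain amalgamated-product decomposition $G = A\ast_H B$, $B = H\times C$, prove the structural fact $C_G(g)\subseteq B$ via a syllable-length argument, and then read off $C_B(g)=C_H(x)\times C_C(c)$ from the direct-product structure of $B$.

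The key difference is how the potentially delicate case is handled. In the paper's computation this is the step where one must check that the syllable $z_m x z_m^{-1}$ does not collapse in a way that lets $f$ escape the expected form; the paper does this by explicit reduction. In your argument the corresponding danger is that conjugation of $g$ by the last syllable $u_k\in B\setminus H$ lands in $H$; you dispose of this cleanly by invoking Lemma~\ref{collinss_lemma} ($g$ cyclically reduced with $n=1$ implies $g\notin A^G$, hence $g$ is not conjugate into $H$). This is legitimate since Lemma~\ref{collinss_lemma} precedes the present lemma and does not depend on it. Your approach is a bit more conceptual and reuses established machinery; the paper's approach is more self-contained at this point and avoids the forward reference to Collins-type statements. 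Both yield the same conclusion with comparable effort.
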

\begin{proof}
	Obviously $f \in C_G(g)$ if and only if $fgf^{-1} = g$. Let $f \in C_G(g)$ and let $f =  z_0 e_1 z_1 \dots e_m z_m$, where $z_0, z_1, \dots, z_m \in A$ and $e_1, \dots, e_m \in C$, be the reduced expression for $f$. Then
	\begin{equation}
		\label{rovnice}
		z_0 e_1 z_1 \dots e_m z_m c x z_m^{-1} e_m^{-1} \dots z_1^{-1} e_1^{-1} z_0^{-1} = cx.
	\end{equation}
	If $m=0$ we get that $z_0 c x z_0^{-1} x^{-1} c^{-1} = 1$ thus $x z_0^{-1} x^{-1} \in H$, therefore $z_0 \in H$ and $z_0 c = c z_0$. Consequently we get $z_0 x z_0^{-1}x^{-1} = 1$ and thus $z_0 \in C_H(x)$.
	
	Assume $m\geq 1$. Then either $x z_m^{-1} \in H$ or $z_m \in H$. Since $x \in H$ we see that both must be true, thus $z_m c = c z_m$ and $e_m z_m c x  z_m^{-1}e_m^{-1} = (e_m c e_m^{-1})(z_m x z_m^{-1})$ and thus (\ref{rovnice}) rewrites to 
\begin{displaymath}
		z_0 e_1 z_1 \dots e_{m-1} z_{m-1} (e_m c e_m^{-1})(z_m x z_m^{-1}) z_{m-1}^{-1} e_{m-1}^{-1} \dots z_1^{-1} e_1^{-1} z_0^{-1} = cx.
\end{displaymath}
Since $c \neq 1$ we see that $e_m c e_m^{-1} \neq 1$. Left hand side cannot be reduced and therefore either $z_{m-1} \in H$ or $(z_m x z_m^{-1})z_{m-1}^{-1} \in H$. Since $z_m x z_m^{-1} \in H$ we see that both must be true. If $m \geq 2$ this contradicts $z_{m-1} \not\in H$.

Thus we may assume that $m=1$ and consequently $f = z_0 e z_1$ with $z_0, z_1 \in H$. Therefore
\begin{align*}
	g^{-1}fgf^{-1} 	&= x^{-1} c^{-1} z_0 e z_1 c x z_1^{-1} e^{-1} z_0^{-1}\\
					&= x^{-1} (c^{-1} e c) (z_0 z_1 x z_{1}^{-1}) e^{-1} z_0^{-1} = 1.
\end{align*}
Since $m \geq 1$ we see that $e \neq 1$ and consequently $c^{-1} e c \neq 1$. This leaves us with $z_0 z_1 x z_{1}^{-1} \in H$. As $z_1 x z_1^{-1} \in H$ and $z_0 \in H$ we see that
\begin{displaymath}
	1 = g^{-1}fgf^{-1} = (c^{-1}ece^{-1})\cdot(x^{-1}(z_0 z_1) x (z_0 z_1)^{-1}) \in C H.
\end{displaymath}
This gives us that $e \in C_C(c)$ and $z_0 z_1 \in C_H(x)$. Altogether we see that $f = e z_0 z_1 \in C_C(c)\times C_H(x)$.
\end{proof}

\begin{lemma}
\label{7.8}
	Let $H \leq A, C$ be groups and let $G = A\specam_H C$. Suppose that $g = c_1 x_1 \dots c_n x_n$, $c_i \in C$, $x_i \in A$ for $i = 1, \dots, n$, is cyclically reduced in $G$ and $n \geq 1$.
	
	If $x_n \in H$ then $n=1$ and $C_G(g) = C_C(c_1) \times C_H(x_1) \leq G$.
	
	If $x_n \not\in H$, let $\{p_1, \dots ,p_k\}$, where $1 \leq k \leq n+1$, be the set of all prefixes of $g$ satisfying $p_i^{-1} g p_i \in g^H$. For each $i = 1, \dots, k$ choose $h_i \in H$ such that $h_i p_i^{-1} g p_i h_i^{-1} = g$ and define finite subset $\Omega \subseteq G$ by $\Omega = \{h_i p_i^{-1} | i=1,\dots,k\}$.
	
	Then $C_G(g) = C_H(g)\langle g \rangle \Omega$.
\end{lemma}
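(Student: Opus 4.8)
The plan is to treat the two cases of the statement separately. If $x_n\in H$, then cyclic reducedness of $g=c_1x_1\cdots c_nx_n$ forces $n=1$: for $n\geq 2$ the definition of a cyclically reduced element requires $x_n\notin H$. Hence $g=c_1x_1$ with $c_1\in C\setminus\{1\}$ (as $c_1x_1$ is reduced) and $x_1\in H$, and the asserted equality $C_G(g)=C_C(c_1)\times C_H(x_1)$ is exactly the content of the preceding lemma; nothing further is needed in this case.

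Assume from now on that $x_n\notin H$. The inclusion $C_H(g)\langle g\rangle\Omega\subseteq C_G(g)$ is immediate: $C_H(g)\leq C_G(g)$ and $\langle g\rangle\leq C_G(g)$ trivially, and each $h_ip_i^{-1}\in\Omega$ lies in $C_G(g)$ by the very choice of $h_i$, namely $h_ip_i^{-1}\,g\,p_ih_i^{-1}=g$; since $C_G(g)$ is a subgroup, every product of such elements stays inside it. I would also record at the outset that the empty prefix $p=1$ satisfies $p^{-1}gp=g\in g^H$, so it occurs among $p_1,\dots,p_k$ (this is what gives $k\geq 1$), and that the corresponding $h_i$ can be chosen in $C_H(g)$; thus $\Omega$ always contains an element of $C_H(g)$, which is convenient below.

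The substance is the reverse inclusion $C_G(g)\subseteq C_H(g)\langle g\rangle\Omega$. Fix $u\in C_G(g)$ and apply Lemma~\ref{conjugator_in_special_amalgam} with $f=g$: it puts $u$ into exactly one of the three shapes (a) $u\in H$; (b) $u=hp^{-1}g^{-l}$ with $p$ a prefix of $g$, $h\in H$, $l\in\mathbb{N}_0$; (c) $u=hsg^{l}$ with $s$ a suffix of $g$, $h\in H$, $l\in\mathbb{N}_0$. In case (a), $u\in H\cap C_G(g)=C_H(g)$, and $u=(u\omega^{-1})g^0\omega$ for the element $\omega\in\Omega\cap C_H(g)$ noted above. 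In case (b), substituting $u=hp^{-1}g^{-l}$ into $ugu^{-1}=g$ and cancelling the powers of $g$ yields $hp^{-1}gph^{-1}=g$, so $p^{-1}gp\in g^H$ and hence $p=p_i$ for some $i$; comparing with $h_ip_i^{-1}gp_ih_i^{-1}=g$ gives $hh_i^{-1}\in C_H(g)$, and then writing $p_i^{-1}=h_i^{-1}\omega_i$ (where $\omega_i=h_ip_i^{-1}\in\Omega$) and using that $\omega_i$ centralises $g$ to slide $g^{-l}$ past it, one obtains $u=(hh_i^{-1})\,g^{-l}\,\omega_i\in C_H(g)\langle g\rangle\Omega$. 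Case (c) goes the same way once one observes that a suffix $s$ and its complementary prefix $p$, i.e. $g=ps$, satisfy $sgs^{-1}=p^{-1}gp$; then $ugu^{-1}=g$ forces $sgs^{-1}\in g^H$, hence $p^{-1}gp\in g^H$, so the complementary prefix is some $p_i$, and writing $s=p_i^{-1}g$, $\omega_i=h_ip_i^{-1}$ and once more sliding a power of $g$ past $\omega_i$ rewrites $u$ in the required form.

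I expect case (c) to be the only real obstacle: one must spot the prefix/suffix complementarity $sgs^{-1}=p^{-1}gp$ in order to route the suffix case back through the family $p_1,\dots,p_k$ that defines $\Omega$, and then handle the one-sided powers $g^{\pm l}$ carefully — it is precisely the fact that every element of $\Omega$ centralises $g$ that lets the power of $g$ be moved to the correct side of $\omega_i$. Everything else reduces to bookkeeping with the normal form (Lemma~\ref{nft_special_amalgam}) and the trichotomy of Lemma~\ref{conjugator_in_special_amalgam}.
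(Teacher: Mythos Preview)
Your proposal is correct and follows essentially the same approach as the paper. The only cosmetic difference is that the paper collapses all three alternatives of Lemma~\ref{conjugator_in_special_amalgam} into a single case by writing any suffix $s$ as $p^{-1}g$ (so that $u=hsg^l=hp^{-1}g^{l+1}$) and absorbing case~(a) into the empty-prefix instance, whereas you keep the three cases separate and route case~(c) back through the prefixes via the identity $sgs^{-1}=p^{-1}gp$; both manoeuvres are equivalent and the remaining bookkeeping ($hh_i^{-1}\in C_H(g)$, commuting $\omega_i$ past powers of $g$) is identical.
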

\begin{proof}
	If $x_n \in H$ then $g$ is cyclically reduced in $G$ if and only if $n=1$. Then the claim follows from the previous lemma.
	Suppose $x_n \in A \setminus H$. Let $u \in C_G(g)$ thus $g = ugu^{-1}$. Then by Lemma \ref{conjugator_in_special_amalgam} we know that there are $h\in H$ and $l \in \mathbb{N}$ such that either there is a prefix $p$ of $g$ such that $u = hp^{-1} g^{-l}$ or there is a suffix $s$ of $g$ such that $u =  hsg^l$. In the second case there is a prefix $p$ of $g$ such that $s = p^{-1}g$ and thus $u = hsg^l = h p^{-1} g^{l+1}$. Thus without loss of generality we may assume that $u = hp^{-1} g^l$ for some prefix $p$ of $g$, $h\in H$ and $l\in \mathbb{Z}$.
	
	We see that $g = hp^{-1}g^l g g ^{-l} p h^{-1} = h p^{-1} g p h^{-1}$, therefore $p \in \{p_1, \dots, p_k\}$ and thus there is $h_i \in H$ such that $h_i p^{-1} g ph_i^{-1} = g = h p^{-1} g p h^{-1}$. This yields $hh_i^{-1} \in C_H(g)$, thus 
	\begin{displaymath}
		u = hp_i^{-1}g^l \in C_H(g)h_i p_i^{-1} g^l \subseteq C_H(g) \Omega \langle g \rangle.
	\end{displaymath}
	Since $\Omega \subseteq C_G(g)$ we see that $C_H(g) \Omega \langle g \rangle = C_H(g) \langle g \rangle \Omega$.
	
	So it has been proven that $C_G(g) \subseteq C_H(g) \langle g \rangle \Omega$. Inclusion in the opposite direction is obvious.
\end{proof}

\begin{lemma}
	\label{conjugacy_criterion_special_amalgams}
	Let $H \leq A, C$ be groups and let $G = A\specam_H C$. Suppose $B \leq A$ $f,g \in G$ are arbitrary. Let $g = x_0 c_1 x_1 \dots c_n x_n$, $f = y_0 d_1 y_1 \dots y_m d_m$, where $x_0, \dots, x_n, y_0, \dots, y_m \in A$ and $c_1, \dots, c_n, d_1, \dots, d_m \in C$, be reduced expressions for $g$ and $f$ respectively and assume that $n \geq 1$. Then $f \in g^B$ if and only if all of the following conditions are met
	\begin{enumerate}
		\item[(i)] $m = n$ and $c_i = d_i$ for $i = 1, \dots , n$,
		\item[(ii)] $y_0 y_1 \dots y_n \in (x_0 x_1 \dots x_n)^B$ in $A$,
		\item[(iii)] for every $b_0 \in B$ such that $y_0 y_1 \dots y_n = b_0 (x_0 x_1 \dots x_n) b_0^{-1}$ we have $I \neq \emptyset$ where
			\begin{displaymath}
				I = b_0 C_B(x_0 x_1 \dots x_n) \cap y_0 H x_0^{-1} \cap (y_0 y_1) H (x_0 x_1)^{-1} \cap \dots \cap (y_0 y_1 \dots y_{n-1}) H (x_0 x_1 \dots x_{n-1})^{-1}. 
			\end{displaymath}
	\end{enumerate}
\end{lemma}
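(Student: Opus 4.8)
The plan is to prove the biconditional by establishing each direction separately. Throughout, write $X_k = x_0 x_1 \cdots x_k$ and $Y_k = y_0 y_1 \cdots y_k$ for $0 \le k \le n$. The workhorse is the identity $a\, c\, a' = (ah)\, c\, (h^{-1} a')$, valid for $a,a' \in A$, $c \in C$, $h \in H$, which holds because $h^{-1}$ and $c$ commute inside $H \times C \leq G$. Together with the normal form results of Lemma \ref{nft_special_amalgam} (and the normal form theorem for amalgamated free products behind it), this identity pins down exactly how much freedom there is in a reduced expression, and both implications amount to a careful exploitation of that freedom.

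For the \emph{if} direction, assume (i)--(iii). By (ii) pick $b_0 \in B$ with $Y_n = b_0 X_n b_0^{-1}$, and by (iii) pick $t \in I$. Writing $t = b_0 w$ with $w \in C_B(X_n)$ gives $t \in B$ and $t X_n t^{-1} = b_0 X_n b_0^{-1} = Y_n$; the remaining clauses of $I$ say that $t \in Y_k H X_k^{-1}$ for $0 \le k \le n-1$, so $\ell_k := Y_k^{-1} t X_k$ lies in $H$ for those $k$. A one-line telescoping check then shows $t x_0 = y_0 \ell_0$, that $\ell_{k-1} x_k = y_k \ell_k$ for $1 \le k \le n-1$, and that $\ell_{n-1} x_n t^{-1} = Y_{n-1}^{-1} t X_n t^{-1} = Y_{n-1}^{-1} Y_n = y_n$. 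Starting from $t g t^{-1} = (t x_0)\, c_1 x_1 c_2 \cdots c_n\, (x_n t^{-1})$ and repeatedly using the workhorse identity to push each $\ell_k \in H$ rightwards past the consonant $c_{k+1}$, one rewrites $t g t^{-1}$ as $y_0 c_1 y_1 c_2 y_2 \cdots c_n y_n$, which equals $f$ by (i). Since $t \in B$, this gives $f \in g^B$.

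For the \emph{only if} direction, suppose $b \in B$ satisfies $b g b^{-1} = f$. Then $b g b^{-1} = (b x_0)\, c_1 x_1 c_2 \cdots c_n\, (x_n b^{-1})$ is a reduced expression of consonant length $n$ (the interior syllables $x_1, \dots, x_{n-1}$ are unchanged, hence still outside $H$, and no $c_i$ is trivial). Comparing it with the given reduced expression for $f$ via Lemma \ref{nft_special_amalgam} yields $m = n$ and $d_i = c_i$, which is (i). The normal form theorem for amalgamated free products furthermore supplies $k_1, \dots, k_n \in H$ with $y_0 = b x_0 k_1$, with $y_i = k_i^{-1} x_i k_{i+1}$ for $0 < i < n$, and with $y_n = k_n^{-1} x_n b^{-1}$. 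Telescoping these relations gives $Y_k = b X_k k_{k+1}$ for $0 \le k \le n-1$ and $Y_n = b X_n b^{-1}$; the latter is (ii), and the former shows $b \in Y_k H X_k^{-1}$ for every $k < n$. Finally, if $b_0 \in B$ is any element with $Y_n = b_0 X_n b_0^{-1}$, then $b_0^{-1} b$ centralises $X_n$ and lies in $B$, so $b \in b_0 C_B(X_n)$; combining this with the previous inclusions shows $b \in I$, whence $I \neq \emptyset$ and (iii) holds.

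The normal-form computations are routine; the one genuine subtlety is that Lemma \ref{nft_special_amalgam} as stated only asserts uniqueness of the consonants, whereas the argument above needs the full normal form theorem for amalgamated free products (\cite[Theorem 4.4]{magnus}) to extract the shuffling elements $k_1, \dots, k_n$ — equivalently, one reruns the induction in the proof of Lemma \ref{nft_special_amalgam} while tracking the $H$-parts. I expect the main bookkeeping hazard to be keeping the index ranges in the definition of $I$ aligned with the telescoped products $Y_k = b X_k k_{k+1}$ and with the push-identities $\ell_{k-1} x_k = y_k \ell_k$; getting those ranges exactly right — in particular the asymmetric roles of the endpoints $x_0, x_n$ versus the interior syllables — is where such a proof most easily goes wrong.
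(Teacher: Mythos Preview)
Your proof is correct and follows essentially the same route as the paper: both directions rest on the normal-form machinery for the amalgam, iteratively forcing the elements $Y_k^{-1} b X_k$ to lie in $H$ (your $\ell_k$ and $k_{k+1}$). The only cosmetic difference is that the paper obtains (ii) via the natural retraction $\rho\colon G\to A$ rather than from the telescoping, and carries out the iterative reduction of $f^{-1}bgb^{-1}=1$ inline instead of packaging it as ``the normal form theorem supplies $k_1,\dots,k_n$''; your explicit acknowledgement that this extraction amounts to rerunning the induction of Lemma~\ref{nft_special_amalgam} while tracking the $H$-parts is exactly right.
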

\begin{proof}
	Suppose (i) - (iii) hold. Let $b_0 \in B$ be such that $y_0 y_1 \dots y_n = b_0 (x_0 x_1 \dots x_n) b_0^{-1}$ and let $b \in I$. Clearly $y_0 y_1 \dots y_n = b (x_0 x_1 \dots x_n) b^{-1}$ as $b \in b_0 C_B(x_0 x_1 \dots x_n)$. We want to show that $f^{-1} b g b^{-1} = 1$.
	Since (i) holds we can write
	\begin{equation}
		\label{prepisovana}
		f^{-1} b g b^{-1} = y_n^{-1} c_n^{-1} \dots y_1^{-1} c_1^{-1} y_0^{-1} b x_0 c_1 x_1 \dots c_n x_n b^{-1}.
	\end{equation}
	Since $b \in y_0 H x_0^{-1}$ we see that $y_0^{-1} b x_0 \in H$ and thus
	\begin{displaymath}
		y_1^{-1} c_1^{-1} y_0^{-1} b x_0 c_1 x_1 = y_1^{-1} y_0^{-1} b x_0 x_1,
	\end{displaymath}
	therefore we can rewrite (\ref{prepisovana}) to
	\begin{displaymath}
		f^{-1} b g b^{-1} = y_n^{-1} c_n^{-1} \dots y_2^{-1} c_2 (y_0 y_1)^{-1} b (x_0 x_1) c_2 x_2 \dots c_n x_n b^{-1}.
	\end{displaymath}
	Again, since $b \in (y_0 y_1) H (x_0 x_1)^{-1}$ we see that $(y_0 y_1)^{-1} b (x_0 x_1) \in H$ and thus (\ref{prepisovana}) rewrites to
	\begin{displaymath}
		f^{-1} b g b^{-1} = y_n^{-1} c_n^{-1} \dots y_3^{-1} c_3 (y_0 y_1 y_2)^{-1} b (x_0 x_1 x_2) c_3 x_3 \dots c_n x_n b^{-1}.
	\end{displaymath}
	By repeating this argument $n$-times we rewrite (\ref{prepisovana}) to
	\begin{displaymath}
		f^{-1} b g b^{-1} = (y_0 y_1 \dots y_n)^{-1} b (x_0 x_1 \dots x_n) b^{-1}.
	\end{displaymath}
	Which is equal to $1$ by assumption, thus $f \in g^B$.
	
	Now assume $f\in g^B$, so there is $b \in B$ such that
	\begin{displaymath}
		y_0 d_1 y_1 \dots d_m y_m = b (x_0 c_1 x_1 \dots c_n x_n) b^{-1}.
	\end{displaymath}
	By Lemma \ref{nft_special_amalgam} we see that $m = n$ and $c_i = d_i$ for $i = 1, \dots, n$, thus we've established (i).
	There is a natural retraction $\rho: G \rightarrow A$ defined by $\rho(a) = a$ for all $a \in A$ and $\rho(c) = 1$ for all $c \in C$. By applying this retraction we establish (ii). Let $b_0$ be an arbitrary element of $B$ such that $y_0 y_1 \dots y_n = b_0 (x_0 x_1 \dots x_n) b_0^{-1}$. Obviously $b \in b_0 C_B(x_0 x_1 \dots x_n)$. From assumptions we have that
	\begin{equation}
		\label{prepisovana2}
		y_n^{-1}c_n^{-1} \dots y_1^{-1} c_1^{-1} (y_0^{-1}b x_0)c_1 x_1 \dots c_n x_n b^{-1} = 1 
	\end{equation}
	By Lemma \ref{nft_special_amalgam} this expression cannot be reduced thus $y_0^{-1} b x_0 \in H$, therefore $b \in y_0 H x_0^{-1}$. This means that we can rewrite (\ref{prepisovana2}) as
	\begin{displaymath}
		y_n^{-1}c_n^{-1} \dots y_2^{-1} c_2^{-1} (y_1 ^{-1} y_0^{-1}b x_0 x_1)c_2 x_2 \dots c_n x_n b^{-1} = 1. 
	\end{displaymath}
	Again, by Lemma \ref{nft_special_amalgam} we see that $(y_0 y_1)^{-1}b (x_0 x_1) \in H$ or equivalently $b \in (y_0 y_1)H (x_0 x_1)^{-1}$. By applying this step $n$-times we establish (iii), thus $I \neq \emptyset$.
\end{proof}

\begin{lemma}
	\label{special_amalgam_centraliser}
	Let $H \leq A, C$ be groups and let $G = A\specam_H C$, suppose $B \leq A$. Let $g \in G$ and $g = x_0 c_1 x_1 \dots c_n x_n$, where $x_0, \dots, x_n \in A$ and $c_1, \dots, c_n \in C$, be a reduced expression of $g$ with $n \geq 1$. Then $C_B(g) = I$, where
	\begin{displaymath}
				I= C_B(x_0 x_1 \dots x_n) \cap x_0 H x_0^{-1} \cap (x_0 x_1) H (x_0 x_1)^{-1} \cap \dots \cap (x_0 x_1 \dots x_{n-1}) H (x_0 x_1 \dots x_{n-1})^{-1}. 
	\end{displaymath}
\end{lemma}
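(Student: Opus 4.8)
The plan is to read this off from the conjugacy criterion of Lemma \ref{conjugacy_criterion_special_amalgams} by taking $f = g$, with the same reduced expression $g = x_0 c_1 x_1 \dots c_n x_n$, so that in the notation of that lemma $m = n$, $d_i = c_i$ and $y_i = x_i$ for all $i$. With these choices one may take $b_0 = 1$ as a valid conjugator in $A$ of $x_0 x_1 \dots x_n$ to itself, and then the set occurring in condition (iii) of Lemma \ref{conjugacy_criterion_special_amalgams} is exactly the set $I$ in the present statement.

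First I would establish $I \subseteq C_B(g)$. Let $b \in I$. Following the \lq\lq if\rq\rq{} part of the proof of Lemma \ref{conjugacy_criterion_special_amalgams} with $f=g$ and $b_0=1$: the memberships $b \in (x_0 x_1 \dots x_{j}) H (x_0 x_1 \dots x_{j})^{-1}$ for $j = 0, 1, \dots, n-1$ allow one to absorb the consonants of $g^{-1} b g b^{-1}$ one at a time, reducing that word to $(x_0 x_1 \dots x_n)^{-1} b (x_0 x_1 \dots x_n) b^{-1}$, which is trivial because $b \in C_B(x_0 x_1 \dots x_n)$. Hence $b g b^{-1} = g$ and $b \in C_B(g)$.

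For the reverse inclusion let $b \in C_B(g)$. Applying the natural retraction $\rho \colon G \to A$ (which fixes $A$ pointwise and kills $C$) to $b g b^{-1} = g$ gives $b (x_0 x_1 \dots x_n) b^{-1} = x_0 x_1 \dots x_n$, so $b \in C_B(x_0 x_1 \dots x_n)$. Writing $g^{-1} b g b^{-1} = 1$ as a word in the generators of $G = A \specam_H C$ and arguing exactly as in the \lq\lq only if\rq\rq{} part of Lemma \ref{conjugacy_criterion_special_amalgams}: the word is not reduced (Lemma \ref{nft_special_amalgam}), which forces $x_0^{-1} b x_0 \in H$; substituting $b \in x_0 H x_0^{-1}$ back and repeating yields $b \in (x_0 x_1 \dots x_j) H (x_0 x_1 \dots x_j)^{-1}$ for every $j = 0, \dots, n-1$. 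Together with $b \in C_B(x_0 x_1 \dots x_n)$ this is precisely the statement $b \in I$.

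Since both the syllable-absorption computation and the iterated appeal to the normal form lemma are already carried out in the proof of Lemma \ref{conjugacy_criterion_special_amalgams}, I do not expect any genuine obstacle; the only points requiring care are the index bookkeeping in the intersection defining $I$ (the conjugated copies of $H$ involve the prefixes $x_0 x_1 \dots x_j$ for $0 \leq j \leq n-1$, whereas the centraliser factor uses the full product $x_0 x_1 \dots x_n$) and the hypothesis $n \geq 1$, which guarantees the presence of at least one consonant so that the normal form argument has something to act on.
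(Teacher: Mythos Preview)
Your proposal is correct and follows essentially the same approach as the paper: both apply Lemma \ref{conjugacy_criterion_special_amalgams} to the pair $f=g$ with $b_0=1$, using the ``if'' direction to obtain $I \subseteq C_B(g)$ and the ``only if'' direction for the reverse inclusion. Your write-up is in fact slightly more explicit than the paper's (you spell out the use of the retraction $\rho$ to get $b \in C_B(x_0 x_1 \dots x_n)$), but the underlying argument is identical.
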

\begin{proof}
	Clearly $g \sim_G g$ and thus by previous lemma for any $b_0 \in B$ such that $b_0 (x_0 x_1 \dots x_n) b_0^{-1} = x_0 x_1 \dots x_n$ we have $I \neq \emptyset$, where
	\begin{displaymath}
	I = b_0C_B(x_0 x_1 \dots x_n) \cap x_0 H x_0^{-1} \cap (x_0 x_1) H (x_0 x_1)^{-1} \cap \dots \cap (x_0 x_1 \dots x_{n-1}).
	\end{displaymath}
	We can set $b_0 = 1$. Now take $b \in I$ by argumentation analogous to the proof of the previous lemma we see that $bgb^{-1}=g$ and thus $I \subseteq C_B(g)$.
	
	Let $b\in C_B(g)$. Then $bgb^{-1} = g$. By the previous lemma we see that $b \in I$. Therefore $I = C_G(g)$.
\end{proof}

\section{Finite graph products of $\C$-HCS groups}
\label{HCS}

From now on we will assume that the class $\C$ is an extension closed variety of finite groups, i.e. $\C$ satisfies (c1), (c2), (c3) and (c4). The main result of this section is the following generalisation of \cite[Theorem 1.1]{raags} and \cite[Theorem 6.15]{toinet}.
\begin{thrm}
	\label{main_hcs}
	Assume that $\C$ is an extension closed variety of finite groups. Then the class of $\CHCS$ groups is closed under taking finite graph products.
\end{thrm}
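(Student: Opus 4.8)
The plan is to induct on $|V\Gamma|$. If $|V\Gamma|\le 1$ then $\Gamma\mathcal G$ is the trivial group or a single vertex group, so it is $\CHCS$ by hypothesis. Suppose $|V\Gamma|=n\ge 2$ and that the theorem holds for all finite graph products on fewer than $n$ vertices. Fix $v\in V\Gamma$ and use Remark \ref{structure_of_graph_products} to write $G=\Gamma\mathcal G=G_A\specam_{G_C}G_v$, where $A=V\Gamma\setminus\{v\}$ and $C=\link(v)$; equivalently $G=G_A\ast_{G_C}(G_C\times G_v)$. Here $G_A$ is a graph product on $n-1$ vertices and $G_C$ one on at most $n-1$ vertices, so both are $\CHCS$ by the inductive hypothesis, $G_v$ is $\CHCS$ by hypothesis, and $G_C\le G_A$ is a full subgroup, hence a retract of $G_A$ (Remark \ref{full_is_retract}); as $G_A$ is residually-$\C$, Lemma \ref{retract_restriction} gives that $\proC(G_C)$ is a restriction of $\proC(G_A)$, and likewise $\proC(G_A)$ and $\proC(G_C)$ are restrictions of $\proC(G)$ since $G_A$ and $G_C$ are full subgroups of $G$. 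By Theorem \ref{cc_and_cs_is_hcs} it then suffices to show that $G$ is $\CCS$ and satisfies $\C$-CC.

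To prove $G$ is $\CCS$, take $f,g\in G$ with $f\not\sim_G g$; by the $\CCS$-version of Lemma \ref{closed_simplification} it is enough to find a $\CCS$ group $P$ and a homomorphism $\phi\colon G\to P$ with $\phi(f)\not\sim_P\phi(g)$. I would cyclically reduce $f$ and $g$ with respect to the splitting and case-split using the conjugacy criterion for special amalgams (Lemma \ref{collinss_lemma}). If both cyclically reduced forms have consonant length $0$, then up to conjugacy $f,g\in G_A$, and since $G_A$ is a retract of $G$, $G$-conjugacy of elements of $G_A$ coincides with $G_A$-conjugacy; one reduces to $f\not\sim_{G_A}g$, applies $\CCS$ of $G_A$, and pulls the resulting finite quotient back along the retraction $\rho_A\colon G\to G_A$. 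If at least one of $f,g$ has positive consonant length, Lemma \ref{collinss_lemma} reduces $G$-conjugacy to $G_C$-conjugacy (hence, since $G_C\le G_A$, to $G_A$-conjugacy) of finitely many cyclic permutations, and Lemma \ref{conjugacy_criterion_special_amalgams} translates this into a conjugacy question in $G_A$ together with nonemptiness of finitely many sets built from the $\C$-closed double cosets $xG_Cx^{-1}$ and centraliser cosets $b\,C_{G_C}(\cdot)$ in $G_A$ --- all $\C$-closed because $G_A$ is $\CHCS$ and $G_C$ is a retract of it. Having located the obstruction to conjugacy inside $G_A$ (and the distinctness of the consonants inside $G_v$, using that $G_v$ is residually-$\C$), one chooses $N_A\in\NC(G_A)$ and $N_v\in\NC(G_v)$ small enough and invokes the functorial property of special amalgams (Remark \ref{extension}, Lemma \ref{extension_kernel}): the quotient maps $G_A\to G_A/N_A$ and $G_v\to G_v/N_v$ extend to a surjection $\phi\colon G\to P:=(G_A/N_A)\specam_{\overline{G_C}}(G_v/N_v)$ onto a special amalgam of finite $\C$-groups, which is a finitely generated virtually free group, in fact free-by-$\C$ (the natural map $P\to(G_A/N_A)\times(G_v/N_v)$ has free kernel), hence $\CCS$ by the theorem of Ribes and Zalesskii \cite{zalesskii}. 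By construction $\phi(f)\not\sim_P\phi(g)$, which finishes this half.

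For the $\C$-centraliser condition, fix $g\in G$ and $K\in\NC(G)$; by Lemma \ref{cc_simplification} it suffices to produce a suitable finite quotient of $G$. The plan is again to transfer everything to $G_A$: by Lemma \ref{7.8} and Lemma \ref{special_amalgam_centraliser} (equivalently, the graph-product description in Lemma \ref{centralisers_in_graph_products}), $C_G(g)$ is a product of a centraliser carried inside $G_A$, the cyclic subgroup $\langle g\rangle$, and a finite set of conjugators of prefixes of $g$, and each of these pieces is controlled in $\proC(G_A)$ because $G_A$ is $\CHCS$ and $G_C$ is a retract of it. Combining $\C$-CC of $G_A$ with the $\C$-closedness of the double cosets $C_{G_A}(h)G_C$ (a consequence of $G_A$ being $\CHCS$) and gluing co-$\C$ subgroups of $G_A$ and $G_v$ via the functorial property as above, one produces $L\in\NC(G)$ with $L\le K$ and $C_{G/L}(\psi(g))\subseteq\psi(C_G(g)K)$.

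I expect the $\C$-CC step to be the main obstacle. The delicate point is that the ``extra'' parts of the centraliser in the amalgam --- powers of $g$ and conjugators of prefixes --- must be recognised faithfully in finite $\C$-quotients, which forces one to establish $\C$-closedness of a whole family of double cosets of the form $C_{G_A}(h)G_C$ and their translates, uniformly in $h$. The companion difficulty, already present in the $\CCS$ half, is identifying precisely which conjugacy data occurring in Lemmas \ref{conjugacy_criterion_special_amalgams} and \ref{special_amalgam_centraliser} constitutes a $\proC$-closed condition; both are handled by systematic use of the retract property of $G_C$ inside $G_A$ (and of $G_A$, $G_C$ inside $G$), which lets separability statements be transported among $G_C$, $G_A$, $G_B$ and $G$.
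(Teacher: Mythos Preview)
Your overall architecture---induct on $|V\Gamma|$, split $G=G_A\specam_{G_C}G_v$, analyse conjugacy and centralisers via Lemmas~\ref{collinss_lemma}, \ref{conjugacy_criterion_special_amalgams}, \ref{7.8}, \ref{special_amalgam_centraliser}, then push everything to a special amalgam of finite $\C$-groups using Remark~\ref{extension} and Corollary~\ref{my_dyer}---is exactly the paper's strategy. The gap is in the strength of the inductive hypothesis.

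You only carry ``$G_A$ is $\CHCS$'' through the induction, and you assert that this makes the double cosets $C_{G_A}(h)G_C$ $\C$-closed and lets you control, in a finite $\C$-quotient of $G_A$, the intersections
\[
I=b_0 C_{B}(x_0\cdots x_n)\cap y_0G_Cx_0^{-1}\cap\cdots\cap (y_0\cdots y_{n-1})G_C(x_0\cdots x_{n-1})^{-1}
\]
arising from Lemma~\ref{conjugacy_criterion_special_amalgams} (with $B=G_C$). Neither of these follows from $\CHCS$ of $G_A$ alone. Knowing that each factor in $I$ is $\C$-closed does \emph{not} give a co-$\C$ quotient of $G_A$ in which the image intersection is still empty; that uniform control is precisely the content of Lemma~\ref{humus}, and Lemma~\ref{humus} needs, as hypotheses on $G_A$, that for \emph{every} full subgroup $B\leq G_A$ the pair $(B,g)$ has $\C\text{-CC}_{G_A}$ and that $g^{B}$ is $\C$-closed in $G_A$. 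Likewise the $\C$-closedness of $C_{G_A}(h)G_C$ comes from Lemma~\ref{ashot_5.7}, whose hypotheses again involve $\C\text{-CC}$ and $\C$-closedness of $g^{B}$ for intersections of full subgroups. Plain $\C$-CC of $G_A$ (the pair $(G_A,g)$) is strictly weaker than this.

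The fix---and this is what the paper does---is to strengthen the inductive statement: prove by simultaneous induction on $|V\Gamma|$ the two assertions ``for every full subgroup $B\leq G$ and every $g\in G$, the set $g^{B}$ is $\C$-closed in $G$'' and ``every full subgroup $B\leq G$ satisfies $\C\text{-CC}_G$'' (Lemmas~\ref{hlavni1} and \ref{hlavni2}). With this loaded hypothesis available for $G_A$, Lemma~\ref{humus} applies and your case analysis (which is otherwise correct) goes through; Theorem~\ref{main_hcs} is then the special case $B=G$ combined with Theorem~\ref{cc_and_cs_is_hcs}.
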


\subsection{Some auxiliary statements}

The following two statements were proved first by Minasyan in \cite[Lemma 5.6 and 5.7]{raags} in case when $\mathcal{C}$ is the class of all finite groups. Later in his paper \cite{toinet} Toinet proved them in case when $\C$ is the class of all finite $p$-groups for some prime number $p$. The proofs can easily be generalised for the case when the class $\C$ is an extension closed variety of finite groups and we leave them to the reader.

\begin{lemma}
\label{ashot_5.6}
Let $G$ be a group and let $A, B \leq G$ be retracts of $G$ with corresponding retractions $\rho_A, \rho_B \in \End(G)$ such that $\rho_A \circ \rho_B = \rho_B \circ \rho_A$. Let $x$ be an element of $G$ and let $\alpha = \rho_A(\rho_B(x)x^{-1})x \rho_B(x^{-1}) \in G$. Suppose that the pair $(A \cap B, \alpha)$ satisfies $\mathcal{C}\text{-CC}$ in $G$. Then for any $K \in \mathcal{N_C}(G)$ there exists $M \in \mathcal{N_C}(G)$ such that $M \leq K$, $\rho_A(M) \subseteq M$, $\rho_B(M) \subseteq M$ and $\phi(A) \cap \phi(xBx^{-1}) \subseteq \phi(A \cap xBx^{-1})\phi(K)$ in $G/M$, where $\phi \colon G \to G/M$ is the natural epimorphism.
\end{lemma}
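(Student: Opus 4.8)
The plan is to exploit the fact that the complicated-looking element $\alpha$ is engineered precisely so that $A\cap xBx^{-1}$ becomes, up to a harmless conjugation inside $A$, the centraliser $C_{A\cap B}(\alpha)$; once this is established, the $\mathcal{C}\text{-CC}$ hypothesis for the pair $(A\cap B,\alpha)$ feeds in directly to control that centraliser in a suitable finite quotient, and only bookkeeping remains.

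First I would record the purely group-theoretic facts. Write $\rho=\rho_A\circ\rho_B=\rho_B\circ\rho_A$; since $\rho_A,\rho_B$ are commuting idempotents, $\rho$ is a retraction of $G$ onto $A\cap B$, and $\{\id,\rho_A,\rho_B,\rho\}$ is a finite submonoid of $\End(G)$. Set $u:=\rho_A(\rho_B(x)x^{-1})\in A$ and $w:=\rho_B(x)\in B$, so that $\alpha=uxw^{-1}$ and $x=u^{-1}\alpha w$. A direct computation using idempotence and commutativity of $\rho_A,\rho_B$ gives $\rho_A(\alpha)=\rho_B(\alpha)=1$. The key identity is then
\[
 A\cap\alpha B\alpha^{-1}=C_{A\cap B}(\alpha).
\]
For the non-trivial inclusion, take $a\in A$ with $a=\alpha b\alpha^{-1}$ for some $b\in B$; applying $\rho_A$ and then $\rho_B$ (using $\rho_A(\alpha)=\rho_B(\alpha)=1$ and $b\in B$) yields $a=\rho_A(b)$ and $\rho_B(a)=b$, hence $a=\rho_A(\rho_B(a))=\rho(a)$, so $a\in A\cap B$ and therefore $b=\rho_B(a)=a$, i.e.\ $\alpha a\alpha^{-1}=a$; the reverse inclusion is immediate. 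Since $w\in B$ and $u\in A$ we have $wBw^{-1}=B$ and $uAu^{-1}=A$, whence
\[
 A\cap xBx^{-1}=A\cap u^{-1}\alpha B\alpha^{-1}u=u^{-1}\bigl(A\cap\alpha B\alpha^{-1}\bigr)u=u^{-1}C_{A\cap B}(\alpha)u .
\]
Crucially, everything here is expressed through $\rho_A,\rho_B$ and $x$ alone, so the same identities survive in any quotient of $G$ by a normal subgroup invariant under both $\rho_A$ and $\rho_B$, with $\rho_A,\rho_B,\alpha,u$ replaced by their images.

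Now to the lemma itself. Given $K\in\NC(G)$, I would apply $\mathcal{C}\text{-CC}$ for $(A\cap B,\alpha)$ to this $K$, obtaining $L_0\in\NC(G)$ with $L_0\le K$ and $C_{\psi(A\cap B)}(\psi(\alpha))\subseteq\psi\bigl(C_{A\cap B}(\alpha)K\bigr)$, where $\psi\colon G\to G/L_0$ is the projection. Then I replace $L_0$ by $M:=\bigcap_{\theta\in\{\id,\rho_A,\rho_B,\rho\}}\theta^{-1}(L_0)$. Each $\theta^{-1}(L_0)$ lies in $\NC(G)$ (as $G/\theta^{-1}(L_0)$ embeds in $G/L_0\in\C$, using (c1)), so $M\in\NC(G)$ by (c1)--(c2); moreover $M\le L_0\le K$, and $M$ is invariant under the whole monoid, in particular $\rho_A(M)\subseteq M$ and $\rho_B(M)\subseteq M$. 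Let $\phi\colon G\to G/M$ be the projection. Since $M$ is $\rho_A$- and $\rho_B$-invariant, $\rho_A,\rho_B$ descend to commuting retractions of $G/M$ onto $\phi(A),\phi(B)$, one has $\phi(A)\cap\phi(B)=\phi(A\cap B)$ (a point fixed by both descended retractions is fixed by the descent of $\rho$), and $\phi(\alpha),\phi(u)$ are the elements built from $\phi(x)$ by the formulas above. Applying the identity of the previous paragraph inside $G/M$ gives $\phi(A)\cap\phi(xBx^{-1})=\phi(u)^{-1}\,C_{\phi(A\cap B)}(\phi(\alpha))\,\phi(u)$. The centraliser containment descends from $G/L_0$ to $G/M$ along the natural projection $G/M\to G/L_0$: if $\bar g\in C_{\phi(A\cap B)}(\phi(\alpha))$, its image lies in $\psi(C_{A\cap B}(\alpha)K)$, so $\bar g\in\phi\bigl(C_{A\cap B}(\alpha)K L_0\bigr)=\phi\bigl(C_{A\cap B}(\alpha)K\bigr)$ because $L_0\le K$. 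Combining these, and using $K\normleq G$ so that $\phi(u)^{-1}\phi(K)\phi(u)=\phi(K)$,
\[
 \phi(A)\cap\phi(xBx^{-1})\subseteq\phi(u)^{-1}\phi\bigl(C_{A\cap B}(\alpha)K\bigr)\phi(u)=\phi\bigl(u^{-1}C_{A\cap B}(\alpha)u\bigr)\phi(K)=\phi\bigl(A\cap xBx^{-1}\bigr)\phi(K),
\]
as required.

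I expect the only genuinely delicate step to be establishing the algebraic identity $A\cap xBx^{-1}=u^{-1}C_{A\cap B}(\alpha)u$: this is exactly the point of the definition of $\alpha$, and it requires careful juggling of the two commuting retractions (computing $\rho_A(\alpha)=\rho_B(\alpha)=1$ and then running the retractions against a hypothetical element of $A\cap\alpha B\alpha^{-1}$). Everything after that — arranging $M$ to be $\rho_A$- and $\rho_B$-invariant, descending the centraliser inclusion along $G/M\to G/L_0$, and absorbing the conjugation by $\phi(u)$ using normality of $K$ — is routine.
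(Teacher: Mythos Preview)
Your proof is correct and follows exactly the approach intended: the paper itself does not supply a proof here but refers to Minasyan \cite[Lemma~5.6]{raags} (and Toinet's $p$-version), stating that the generalisation to an extension closed variety $\C$ is straightforward and left to the reader. Your argument is precisely that generalisation: the identity $A\cap xBx^{-1}=u^{-1}C_{A\cap B}(\alpha)u$ (via $\rho_A(\alpha)=\rho_B(\alpha)=1$ and $A\cap\alpha B\alpha^{-1}=C_{A\cap B}(\alpha)$) is Minasyan's key computation, and your construction of the $\{\id,\rho_A,\rho_B,\rho\}$-invariant $M\in\NC(G)$ together with the descent of the centraliser inclusion along $G/M\to G/L_0$ is the routine completion.
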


\begin{lemma}
\label{ashot_5.7}
Let $G$ be a group and let $A,B \leq G$ be retracts of $G$ with corresponding retractions $\rho_A, \rho_B \in \End(G)$ such that $\rho_A \circ \rho_B = \rho_B \circ \rho_A$. Consider arbitrary elements $x,g \in G$. Denote $D = xBx^{-1} \leq G$ and $\alpha = \rho_A(\rho_B(x)x^{-1})x \rho_B(x^{-1}) \in G$. Suppose that the conjugacy classes $\alpha^{A\cap B}$ and $g^{A\cap D}$ are $\mathcal{C}$-closed in $G$, and the pair $(A \cap B, \alpha)$ satisfies $\mathcal{C}\text{-CC}_G$. Then the double coset $C_A(g)D$ is $\mathcal{C}$-closed in $G$.
\end{lemma}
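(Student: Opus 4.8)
The plan is to transplant the proof of \cite[Lemma 5.7]{raags} to the pro-$\C$ setting; the substantive inputs are already available as Lemma \ref{ashot_5.6}, Lemma \ref{3.7} and Lemma \ref{closed_simplification}, so what remains is organisation rather than new ideas. First I would reduce to finite quotients: by Lemma \ref{closed_simplification} it suffices, for each $z \in G \setminus C_A(g)D$, to produce $M \in \NC(G)$ with $\phi(z) \notin \phi(C_A(g)D)$ in $G/M$, where $\phi \colon G \to G/M$ is the quotient map -- the image $\phi(C_A(g)D)$ is then automatically $\C$-closed, since $G/M \in \C$ and every subset of a group in $\C$ is $\C$-closed. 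So I fix such a $z$.

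Next I would record the bookkeeping identities behind the definition of $\alpha$. Writing $c = \rho_B(x)x^{-1}$ one checks $\rho_B(c)=1$ and $\alpha = \rho_A(c)\,c^{-1}$, whence $\rho_A(c) = \alpha c \in A$ and $c^{-1}Bc = xBx^{-1} = D$; consequently $\alpha B\alpha^{-1} = \rho_A(c)\,D\,\rho_A(c)^{-1}$, and since $\rho_A(c)\in A$,
\[
A \cap \alpha B\alpha^{-1} \;=\; \rho_A(c)\,(A\cap D)\,\rho_A(c)^{-1}.
\]
This identity is the device that converts statements phrased in terms of $A \cap B$ and $\alpha$ into statements about $A \cap D$. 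Meanwhile $A \cap D = A \cap xBx^{-1}$ is literally the intersection of the retract $A$ with the conjugate $xBx^{-1}$ of the retract $B$, so Lemma \ref{ashot_5.6} applies -- its hypothesis, $\C$-CC of the pair $(A\cap B,\alpha)$, being among our assumptions -- and produces, for any prescribed $K \in \NC(G)$, some $M \in \NC(G)$ with $M \le K$, $\rho_A(M)\subseteq M$, $\rho_B(M)\subseteq M$ and $\phi(A)\cap\phi(D)\subseteq\phi(A\cap D)\,\phi(K)$ in $G/M$.

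Then I would assemble $M$ by successive refinement and derive the contradiction. Using that $g^{A\cap D}$ is $\C$-closed, Lemma \ref{3.7} (applied to the subgroup $A\cap D$ and the element $g$) supplies, for a prescribed $K_1 \in \NC(G)$, some $L_1 \le K_1$ with $C_{\zeta(A\cap D)}(\zeta(g)) \subseteq \zeta\big(C_{A\cap D}(g)K_1\big)$, where $\zeta \colon G \to G/L_1$; using that $\alpha^{A\cap B}$ is $\C$-closed -- transported across the $\rho_A(c)$-identity above -- one arranges the matching separation on the ``$A\cap D$ side''. Finally one picks $M$ inside all of the co-$\C$ subgroups obtained so far (legitimate, as $\NC(G)$ is closed under intersection), retaining $\rho_A(M)\subseteq M$, $\rho_B(M)\subseteq M$ and the conclusion of Lemma \ref{ashot_5.6}, and feeds a hypothetical equality $\phi(z) = \phi(c_0)\phi(d_0)$ with $c_0 \in C_A(g)$, $d_0 \in D$, back through the three containments exactly as in \cite[Lemma 5.7]{raags}: $\phi(A)\cap\phi(D)\subseteq\phi(A\cap D)\phi(K)$ pushes the ambiguity down into $A\cap D$, the centraliser bound from Lemma \ref{3.7} absorbs the factor of $c_0$ that centralises $g$, and the closedness of $\alpha^{A\cap B}$ removes the residual slack -- forcing $z \in C_A(g)D$, contrary to the choice of $z$. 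Hence $\phi(z) \notin \phi(C_A(g)D)$, which is what we wanted.

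The adaptations relative to \cite{raags} are purely formal: ``subgroup of finite index'' becomes ``co-$\C$ subgroup'' throughout, all closures are read in the pro-$\C$ topology, and the closure properties (c1)--(c4) of $\C$ are invoked precisely where \cite{raags} uses the obvious closure properties of the class of all finite groups. I expect the genuinely fiddly point to be the last step's bookkeeping: arranging the chain of refinements in the right order, with the $\rho_A(c)$-transport inserted at the right place, so that the \emph{single} quotient $G/M$ simultaneously witnesses the failure of all the algebraic identities that ``$z \in C_A(g)D$'' would entail, rather than detecting each of them in a different quotient. Everything else is routine verification.
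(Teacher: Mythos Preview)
Your proposal is correct and matches the paper's approach exactly: the paper does not supply its own proof of this lemma but explicitly states that Minasyan's argument in \cite[Lemma~5.7]{raags} ``can easily be generalised for the case when the class $\C$ is an extension closed variety of finite groups'' and leaves this to the reader, which is precisely the transplantation you outline. Your identification of Lemmas~\ref{ashot_5.6}, \ref{3.7}, and \ref{closed_simplification} as the pro-$\C$ replacements for the corresponding profinite inputs is on target, and your closing remark about the bookkeeping being the only genuinely delicate point is an honest assessment of what the exercise involves.
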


Dyer \cite[Theorem 3]{dyer} proved that free-by-finite groups are CS. In \cite[Theorem 4.2]{toinet} Toinet proved that free-by-(finite-$p$) groups are $p$-CS. Ribes and Zalesskii generalised these results (see \cite[Section 3, Theorem 3.2]{zalesskii}) to the following.
\begin{thrm}
	\label{zalesskii}
	Let $\C$ be an extension closed variety of finite groups and let $G$ be finitely generated free-by-$\mathcal{C}$ group. Then $G$ is $\mathcal{C}$-CS.
\end{thrm}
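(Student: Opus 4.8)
The plan is to exploit the fact that a finitely generated free-by-$\C$ group is virtually free and therefore has a rigid splitting over finite subgroups. Write $1 \to F \to G \to Q \to 1$ with $F$ free and $Q \in \C$; since $Q$ is finite, $F$ has finite index and is itself finitely generated free. Because $F$ is torsion-free, the restriction of the quotient map to any finite subgroup of $G$ is injective, so every finite subgroup of $G$ is isomorphic to a subgroup of $Q$ and hence lies in $\C$, as $\C$ is closed under subgroups. By the structure theorem for finitely generated virtually free groups, $G$ is the fundamental group of a finite graph of groups $\mathcal{A}$ whose vertex and edge groups are all finite; by the previous observation they all belong to $\C$. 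Let $T$ be the associated Bass--Serre tree, on which $G$ acts cocompactly with finite stabilizers.

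First I would reduce to a conjugacy-descent statement in the pro-$\C$ completion. By the conjugacy form of Lemma \ref{closed_simplification} it suffices to prove that every conjugacy class $g^G$ is $\C$-closed in $G$. Since $G$ is free-by-$\C$ it is residually-$\C$ by Lemma \ref{free-by-C_is_residually-C}, so $G$ embeds in its pro-$\C$ completion $\overline{G}$, and the finite vertex and edge groups, being $\C$-closed, embed as their own closures. The action of $G$ on $T$ extends to an action of $\overline{G}$ on the pro-$\C$ tree $\overline{T}$ obtained by completing $T$, with the same finite stabilizers. By the standard completion argument, $g^G$ is $\C$-closed precisely when $g^{\overline{G}} \cap G = g^G$, i.e.\ when elements of $G$ that are conjugate in $\overline{G}$ are already conjugate in $G$. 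This is the statement I would establish, splitting into the elliptic and hyperbolic cases according to the $T$-action; granting it for every $g$ shows that $G$ is $\CCS$.

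In the elliptic case $g$ has finite order, hence fixes a vertex of $T$ and lies in a conjugate of some vertex group $G_v \in \C$. There are only finitely many vertex orbits, each vertex group is finite and $\C$-closed, and there are finitely many conjugacy classes of torsion elements; one therefore produces a single $\C$-quotient in which the images of these finitely many representatives remain pairwise non-conjugate. Consequently an elliptic $g$ and any $h \in G$ conjugate to it in $\overline{G}$ are conjugate already in $G$, so this case is routine.

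The hyperbolic case is the crux. Here $g$ has infinite order and acts on $T$ as a hyperbolic isometry with an invariant axis and positive translation length $\ell(g)$; reading off the edge and vertex contributions along a fundamental domain of the axis yields a cyclic normal form that determines the conjugacy class of $g$. The translation length is a conjugacy invariant detectable in finite $\C$-quotients, and the cyclic normal-form data is controlled up to the finite ambiguity coming from the edge groups. The hard part will be showing that pro-$\C$ conjugacy forces this combinatorial data to coincide \emph{and} that a conjugator may be chosen inside $G$ rather than merely in $\overline{G}$: this is exactly where the theory of pro-$\C$ trees enters, via the facts that a hyperbolic element of $\overline{G}$ has a well-defined axis in $\overline{T}$ and that two elements of $G$ whose axes are carried onto one another by an element of $\overline{G}$ admit a conjugator in $G$ because the relevant edge-group double cosets are $\C$-separable. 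Establishing this descent completes the proof that each $g^G$ is $\C$-closed and hence that $G$ is $\CCS$.
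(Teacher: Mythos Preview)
The paper does not prove this theorem at all: it is quoted verbatim as a result of Ribes and Zalesskii \cite[Section 3, Theorem 3.2]{zalesskii}, with Dyer's and Toinet's earlier special cases mentioned for context. So there is no in-paper proof to compare against; what you have written is a sketch of an independent proof rather than a reconstruction of anything the author does.

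Your outline is in fact the same architecture as the Ribes--Zalesskii argument: realise $G$ as the fundamental group of a finite graph of finite $\C$-groups, pass to the action on the pro-$\C$ tree obtained from the Bass--Serre tree, and analyse elliptic versus hyperbolic elements. That said, what you have is a plan, not a proof. The elliptic case is fine and essentially complete. The hyperbolic case is the entire theorem, and you only name what must be shown: that the pro-$\C$ tree is genuinely a tree in the relevant sense, that hyperbolic elements of $\overline{G}$ have well-defined axes there, and that a $\overline{G}$-conjugator carrying one axis to another can be replaced by a $G$-conjugator via $\C$-separability of certain double cosets. Each of these is a substantial theorem in the Ribes--Zalesskii theory of profinite (pro-$\C$) groups acting on profinite trees, not a routine verification; in particular, the ``axis'' statement and the descent of conjugators require the machinery developed in their monograph and paper, and cannot be bypassed. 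If you intend this as a self-contained argument you would need to supply or cite those results explicitly; as written, the hyperbolic paragraph is an accurate table of contents for the hard part rather than a proof of it.
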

Clearly, every $\C$-open subgroup of a finitely generated free-by-$\C$ group is finitely generated free-by-$\C$ group as well. We can state the following corollary as an immediate consequence of Theorem \ref{zalesskii}.
\begin{cor}
	\label{rz}
	Let $G$ be finitely generated free-by-$\C$ group. Then $G$ is $\mathcal{C}$-HCS.
\end{cor}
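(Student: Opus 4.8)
The plan is to verify the definition of $\CHCS$ directly, since both ingredients are already in place. Recall the restatement following Lemma \ref{open_subgroups}: a group is $\CHCS$ exactly when it is $\CCS$ and every $\C$-open subgroup of it is $\CCS$. So it suffices to check these two conditions for $G$.

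First I would record that $G$ itself is $\CCS$; this is immediate from Theorem \ref{zalesskii}, as $G$ is by hypothesis a finitely generated free-by-$\C$ group. Next I would take an arbitrary $\C$-open subgroup $H \leq G$ and observe that it is again finitely generated free-by-$\C$: by Lemma \ref{open_subgroups} the subgroup $H$ has finite index in $G$, hence is finitely generated, and if $N \normleq G$ is free with $G/N \in \C$, then (since $G$ is finitely generated and $G/N$ is finite) $N$ is a finitely generated free group, so $H \cap N$ is a finite-index subgroup of $N$ and therefore finitely generated free as well. Moreover $H \cap N \normleq H$ and $H/(H \cap N)$ embeds into $G/N \in \C$, so $H/(H \cap N) \in \C$ by closure under subgroups. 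Thus $H$ is finitely generated free-by-$\C$, and Theorem \ref{zalesskii} applies once more to give that $H$ is $\CCS$.

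Since $G$ is $\CCS$ and every $\C$-open subgroup of $G$ is $\CCS$, the group $G$ is $\CHCS$, which is what we want. There is essentially no obstacle in this argument: the only point requiring a word of care is the stability of the class of finitely generated free-by-$\C$ groups under passing to $\C$-open (equivalently, finite-index) subgroups — precisely the observation stated just before the corollary — and this rests only on the fact that finite-index subgroups of finitely generated groups are finitely generated, together with closure of $\C$ under (c1). (Alternatively one could combine Theorem \ref{zalesskii} with Theorem \ref{cc_and_cs_is_hcs}, but establishing $\C\text{-CC}$ by hand would be more work than simply invoking Theorem \ref{zalesskii} on each $\C$-open subgroup.)
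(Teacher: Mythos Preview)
Your proof is correct and follows exactly the approach indicated in the paper: the sentence preceding the corollary states that every $\C$-open subgroup of a finitely generated free-by-$\C$ group is again finitely generated free-by-$\C$, and the corollary is then declared an immediate consequence of Theorem~\ref{zalesskii}. You have simply spelled out the details of this observation (finite index implies finitely generated, Nielsen--Schreier gives that $H\cap N$ is free, and closure property~(c1) gives $H/(H\cap N)\in\C$), which the paper leaves implicit.
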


The following simple lemma will be crucial for our proofs.
\begin{lemma}
	\label{free-by-C}
	Suppose that $\C$ is a class of groups satisfying (c2). Let $Q, S \in \mathcal{C}$ and suppose that $R \leq Q$. Then $G = Q\specam_R S$ is free-by-$\C$.
\end{lemma}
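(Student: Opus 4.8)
The plan is to produce an explicit epimorphism from $G$ onto a group in $\C$ whose kernel is free, and then to verify freeness of the kernel via Bass--Serre theory. Recall that, by definition, $G = Q\specam_R S = Q \ast_R (R\times S)$ is the amalgamated free product of the finite groups $Q$ and $R\times S$ along $R$ (included into $Q$ as the subgroup $R$, and into $R\times S$ via $r\mapsto(r,1)$), and that the two amalgam factors $Q$ and $R\times S$ embed into $G$ by the normal form theorem for amalgams, recorded here in the special-amalgam setting as Lemma~\ref{nft_special_amalgam}. First I would define a homomorphism $\phi\colon G\to Q\times S$: since $G$ is presented by $\langle Q,S \,\|\, [r,s]=1\ \forall r\in R,\ \forall s\in S\rangle$, von Dyck's theorem yields a homomorphism with $\phi(q)=(q,1)$ for $q\in Q$ and $\phi(s)=(1,s)$ for $s\in S$, because the relations of $Q$, the relations of $S$, and each commutator $[r,s]$ hold in $Q\times S$. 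This $\phi$ is surjective, and $Q\times S\in\C$ by (c2) --- note that this is the only closure property of $\C$ that the argument uses. Put $N=\ker\phi$, so $G/N\cong Q\times S\in\C$; it then remains only to show that $N$ is free.

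Next I would locate $N$ inside the amalgam. For $q\in Q$ one has $\phi(q)=(q,1)$, hence $N\cap Q=\{1\}$; and every element of the full subgroup $R\times S\le G$ has the form $rs$ with $r\in R$, $s\in S$ (they commute in $G$), with $\phi(rs)=(r,s)$, so $N\cap(R\times S)=\{1\}$. Since $N\trianglelefteq G$, for every $g\in G$ we get $N\cap gQg^{-1}=g(N\cap Q)g^{-1}=\{1\}$ and $N\cap g(R\times S)g^{-1}=\{1\}$. Now let $G$ act on the Bass--Serre tree $T$ of the splitting $G=Q\ast_R(R\times S)$: the stabilisers of the vertices of $T$ are exactly the conjugates of $Q$ and of $R\times S$, and the edge stabilisers are conjugates of $R$. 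By the previous sentence, $N$ acts on $T$ with trivial vertex stabilisers (hence trivial edge stabilisers), i.e. $N$ acts freely on $T$, and a group acting freely on a tree is free. Therefore $N$ is free, and $G$ is free-by-$\C$, as claimed.

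There is no genuine obstacle here: the substance is simply the choice of the quotient $Q\times S$ together with the standard fact that a free action on a tree produces a free group, and the only point needing a word of care is that the amalgam factors $Q$ and $R\times S$ really do embed into $G$, so that the Bass--Serre tree has the stabilisers stated above --- which is precisely Lemma~\ref{nft_special_amalgam}. (If finite generation of $N$ is wanted for later use, one observes that $|G:N|=|Q\times S|<\infty$ while $G$ is finitely generated, so $N$ is finitely generated by Schreier's lemma.)
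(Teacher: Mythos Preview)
Your proof is correct and follows essentially the same approach as the paper: define the natural surjection $G\to Q\times S$, observe that the kernel meets both amalgam factors trivially, and conclude via Bass--Serre theory that the normal kernel acts freely on the Bass--Serre tree and is therefore free. Your write-up is in fact slightly more careful than the paper's, since you explicitly verify $N\cap Q=\{1\}$ as well as $N\cap(R\times S)=\{1\}$ before invoking normality.
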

\begin{proof}
	Let $\sigma \colon G \to Q\times S$ be the epimorphism defined on the generators of $G$ as follows:
		\begin{align*}
			\sigma(q) = q &\mbox{ for all $q \in Q$},\\
			\sigma(s) = s &\mbox{ for all $s \in S$}.
		\end{align*}
Clearly $\ker(\sigma) \in \mathcal{N_C}(G)$ as $\C$ is closed under taking direct products. We want to show that $\ker(\sigma)$ is a free group.  From the definition of $\sigma$ we see that $\ker(\sigma) \cap R \times S = \{1\}$. Let $T$ be the Bass-Serre tree for $Q \specam_R S = Q \ast_R (R \times S)$ and consider the induced action of $\ker(\sigma)$ on $T$. By a standard result of Bass-Serre theory (see \cite[Theorem 12.1]{bogopolski}) we know that the stabiliser of a vertex $v$ has to be conjugate either into $Q$ or $R\times S$, but since $\ker(\sigma)$ is normal and does not intersect any of the factors we see that $\ker(\sigma)$ acts freely on $T$ and thus it is free. As a consequence we see that $G$ is free-by-$\mathcal{C}$.
\end{proof}

Combining corollary \ref{rz} together with Lemma \ref{free-by-C} we immediately get the following.
\begin{cor}
	\label{my_dyer}
	Suppose that $\C$ is an extension closed variety of finite groups. Let $Q, S \in \mathcal{C}$ and assume that $R \leq Q$. Then $G = Q\specam_R S$ is $\mathcal{C}$-HCS. 
\end{cor}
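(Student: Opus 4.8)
The plan is to simply combine the two preceding results, Lemma \ref{free-by-C} and Corollary \ref{rz}, after checking the one hypothesis that is not literally handed to us: finite generation. First I would apply Lemma \ref{free-by-C} to conclude that $G = Q \specam_R S$ is free-by-$\C$; concretely, the epimorphism $\sigma \colon G \to Q \times S$ constructed there has free kernel $N = \ker(\sigma) \in \NC(G)$, and since $\C$ satisfies (c2) we indeed have $Q \times S \in \C$.

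Next I would observe that $G$ is finitely generated: it is generated by $Q \cup S$, which is a finite set because $Q, S \in \C$ are finite groups. Consequently $N$, being a subgroup of finite index (at most $|Q \times S|$) in the finitely generated group $G$, is itself finitely generated; being free, it is a finitely generated free group. Therefore $G$ is a finitely generated free-by-$\C$ group.

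Finally I would invoke Corollary \ref{rz}: every finitely generated free-by-$\C$ group is $\C$-HCS. Hence $G = Q\specam_R S$ is $\C$-HCS, as claimed. I do not anticipate any real obstacle here — the only point that requires a word of justification is the passage from ``free-by-$\C$'' to ``\emph{finitely generated} free-by-$\C$'', which follows immediately from the finiteness of $Q$ and $S$ together with the fact that finite-index subgroups of finitely generated groups are finitely generated.
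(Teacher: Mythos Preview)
Your proposal is correct and follows exactly the route the paper takes: combine Lemma \ref{free-by-C} with Corollary \ref{rz}. The only thing you add is the explicit verification of finite generation, which the paper leaves implicit but which is indeed needed for Corollary \ref{rz} to apply.
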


\subsection{Proof of Theorem {\protect\mainhcsref}}
Before we proceed to the proof we first prove a weaker statement. This was first proved by Green in \cite{green} both for the case when $\mathcal{C}$ is the class of all finite groups and for the case when $\mathcal{C}$ is the class of all finite $p$-groups for some prime number $p$.
\begin{lemma}
	\label{GP_residually_P}
	Let $\mathcal{C}$ be a class of finite groups satisfying (c1), (c2) and (c4). Then the class of residually-$\C$ groups is closed under taking graph products.
\end{lemma}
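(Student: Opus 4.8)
The plan is to establish the statement first for \emph{finite} graphs, by induction on $|V\Gamma|$, and then to deduce the general case by a retraction onto the full subgroup spanned by the (finite) support of a given element. We may assume $\C$ contains a non-trivial group, since otherwise residually-$\C$ groups are trivial and there is nothing to prove. So let $\Gamma$ be finite, assume every $G_v$ is residually-$\C$, and fix a non-trivial $g \in G = \Gamma\mathcal{G}$. When $|V\Gamma| \le 1$ the claim is immediate, so suppose $|V\Gamma| \ge 2$ and choose any $v \in V\Gamma$. By Remark~\ref{structure_of_graph_products} we may write $G = G_A \specam_{G_C} G_v$ with $A = V\Gamma \setminus \{v\}$ and $C = \link(v)$. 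The induction hypothesis applies to the graph product $G_A \cong \Gamma_A\mathcal{G}_A$, which has strictly fewer vertices, so $G_A$ is residually-$\C$; and $G_v$ is residually-$\C$ by hypothesis.

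The core step is to push $g$ through a homomorphism onto a special amalgam of two \emph{finite} groups from $\C$. Write $g$ in reduced form $g = x_0 c_1 x_1 \cdots c_k x_k$ (Lemma~\ref{nft_special_amalgam}), with $x_i \in G_A$, $c_j \in G_v \setminus \{1\}$, and $x_i \notin G_C$ for $1 \le i \le k-1$. If $k = 0$ then $g \in G_A \setminus \{1\}$, and composing a homomorphism from $G_A$ onto a group in $\C$ non-trivial on $g$ with the retraction $\rho_A \colon G \to G_A$ of Remark~\ref{full_is_retract} settles this case. If $k \ge 1$, note first that the full subgroup $G_C$ is a retract of $G_A$ (Remark~\ref{full_is_retract}), hence — as $G_A$ is residually-$\C$ — it is $\C$-closed in $G_A$ by Lemma~\ref{retract_restriction}; so for each $i \in \{1,\dots,k-1\}$ there is $N_i \in \NC(G_A)$ with $x_i \notin G_C N_i$, and putting $Q = G_A/\bigcap_i N_i \in \C$ with quotient map $\alpha \colon G_A \to Q$ gives $\alpha(x_i) \notin \alpha(G_C) =: R$ for all such $i$. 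Likewise, residual $\C$-ness of $G_v$ provides $\beta \colon G_v \to S \in \C$ with $\beta(c_j) \ne 1$ for $1 \le j \le k$. By the functorial property of special amalgams (Remark~\ref{extension}), $\alpha$ and $\beta$ extend to a homomorphism $\psi \colon G \to P := Q \specam_R S$, and
\[
	\psi(g) = \alpha(x_0)\,\beta(c_1)\,\alpha(x_1)\cdots\beta(c_k)\,\alpha(x_k)
\]
is, by the choices of $\alpha$ and $\beta$, a reduced expression in $P$ with $k \ge 1$ consonants; hence $\psi(g) \ne 1$ by Lemma~\ref{nft_special_amalgam}.

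It remains to separate $\psi(g)$ from $1$ inside $P$. Since $Q, S \in \C$ are finite, Lemma~\ref{free-by-C} shows $P$ is free-by-$\C$, hence residually-$\C$ by Lemma~\ref{free-by-C_is_residually-C}. So some homomorphism from $P$ onto a group in $\C$ is non-trivial on $\psi(g)$, and composing it with $\psi$ yields the homomorphism out of $G$ we need; this completes the induction and the finite case. For an arbitrary graph $\Gamma$ and $g \ne 1$, set $A = \supp(g)$, a finite subset of $V\Gamma$; then $g \in G_A \cong \Gamma_A\mathcal{G}_A$, which is residually-$\C$ by the finite case, and precomposing a suitable homomorphism $G_A \to F \in \C$ with the retraction $\rho_A \colon G \to G_A$ finishes the proof.

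The one genuinely non-formal point, and the main obstacle, is that amalgamated free products of residually-$\C$ groups need not be residually-$\C$, so the splitting $G = G_A \specam_{G_C} G_v$ alone is insufficient. What rescues the argument is that these are \emph{special} amalgams, which have two decisive features: the functorial property of Remark~\ref{extension}, which lets us transport a reduced expression for $g$ to a quotient built from finite groups of $\C$, and the fact (Lemma~\ref{free-by-C}) that a special amalgam of two finite groups of $\C$ is free-by-$\C$, hence residually-$\C$. The only delicate bookkeeping is the choice of $\alpha$ ensuring $\alpha(x_i) \notin \alpha(G_C)$, which is exactly where $\C$-closedness of the retract $G_C$ in $G_A$ is used.
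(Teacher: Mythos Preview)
Your proof is correct and follows essentially the same approach as the paper's: induction on $|V\Gamma|$ for finite graphs, splitting $G$ as a special amalgam $G_A \specam_{G_C} G_v$, using $\C$-closedness of the retract $G_C \leq G_A$ to push the reduced form of $g$ onto a special amalgam $Q \specam_R S$ of groups in $\C$, and then invoking free-by-$\C$ (hence residually-$\C$) for that amalgam; the infinite case is handled identically via the retraction onto $G_{\supp(g)}$. The only cosmetic difference is that the paper wraps the conclusion in Lemma~\ref{closed_simplification} rather than composing maps explicitly, and your added remark that one may assume $\C$ contains a non-trivial group is a harmless (and correct) simplification.
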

\begin{proof}
	First, we show the statement holds for all finite graph products. The proof will by done by induction on $|V\Gamma|$. If $|V\Gamma| = 1$ we see that $\Gamma \mathcal{G} = G_v$ and $G_v$ is residually-$\mathcal{C}$ by assumption.
	
	Now assume that the statement has been proved for all graph products $\Gamma\mathcal{G}$ such that $|V\Gamma| \leq r$. Let $G = \Gamma \mathcal{G}$ be such that $|V\Gamma| = r+1$	and let $g \in G \setminus \{1\}$ be arbitrary. Pick $v \in V\Gamma$ and denote $C = G_v$, $H = G_{\link_{\Gamma}(v)}$ and $A = G_{V\Gamma \setminus \{v\}}$. Then clearly $G = A \specam_H C$ by Remark \ref{structure_of_graph_products}. By the induction hypothesis we get that $A, C, H$ are residually-$\mathcal{C}$. Let $g = x_0 c_1 x_1 \dots c_n x_n$, where $x_0, x_1, \dots, x_n \in A$ and $c_1, \dots, c_n \in C$, be a reduced expression for $g$ in $G$.
	There are two cases to consider: either $n = 0$ or $n \geq 1$. If $n = 0$ then $g = x_0 \in A\setminus\{1\}$ and we can use the fact that $A$ is a retract in $G$, thus we can consider the canonical retraction $\rho_A \colon G \to A$. Then $\rho_A(x_0) = x_0$ and $A$ is residually-$\mathcal{C}$ by induction hypothesis.
	
 Suppose that $n \geq 1$. Clearly, $H$ is a retract of $A$ and therefore by Lemma \ref{retract_restriction} we see that $H$ is $\mathcal{C}$-closed in $A$. This means that there is a group $Q \in \mathcal{C}$ and an epimorphism $\alpha \colon A \to Q$ such that $\alpha(x_i) \not \in \alpha(H)$ for whenever $x_i \not \in H$. Similarly since $C$ is residually-$\mathcal{C}$ by assumption as it is a vertex group we see that there is a group $S \in \mathcal{C}$ and an epimorphism $\gamma \colon C \to S$ such that $\gamma(c_i) \neq 1$ in $S$ for all $i = 1, \dots, n$. Let $\phi \colon G \to P$, where $P = Q \specam_{\alpha(H)}S$, be the canonical extension of $\alpha$ and $\gamma$ (see Remark \ref{extension}). We see that
	\begin{displaymath}
 		\phi(g) = \alpha(x_0) \gamma(c_1) \alpha(x_1) \dots \gamma(c_n) \alpha(x_n),
	\end{displaymath}
	is a reduced expression and thus $\phi(g)$ nontrivial in $P$ is by Lemma \ref{nft_special_amalgam}. By Lemma \ref{free-by-C} we see that $P$ is free-by-$\C$ and thus $P$ is residually-$\C$ by Lemma \ref{free-by-C_is_residually-C}.
	
	We have showed that both in case if $n = 0$ and if $n \geq 1$ we can separate $g$ from $\{1\}$. Using Lemma \ref{closed_simplification} we see that $\{1\}$ is $\mathcal{C}$-closed in $G$ and thus $G$ is residually-$\mathcal{C}$.
	
	Now, assume that the graph $\Gamma$ is infinite and let $g \in G \setminus \{1\}$ be arbitrary. Obviously, $S = \supp(g)$ is finite. Let $G_S$ be the full subgroup corresponding to $S$ and let $\rho_S \colon G \to G_S$ be the canonical retraction. Clearly, $\rho_S(g) = g \neq 1$ and $G_S$ is residually-$\C$ as it is a finite graph product of residually-$\C$ groups. Using Lemma \ref{closed_simplification} we se that $\{1\}$ is $\C$-closed in $G$ and thus $G$ is residually-$\C$.
\end{proof}

The main idea of the proof of Theorem \ref{main_hcs} is somewhat similar to the proof of Lemma \ref{GP_residually_P}. However, significantly more work needs to be done. To be able to prove Theorem \ref{main_hcs} we will need the following two lemmas.
\begin{lemma}
	\label{hlavni1}
	Let $\Gamma$ be a finite graph and let $G = \Gamma\mathcal{G}$ be a graph product where $G_v$ is $\mathcal{C}\text{-HCS}$ for all $v \in V\Gamma$. Then all full subgroups of $G$ satisfy $\mathcal{C}\text{-CC}_G$.
\end{lemma}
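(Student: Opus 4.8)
The plan is to argue by induction on $|V\Gamma|$, run simultaneously with the companion $\mathcal{C}$-conjugacy-separability statement for finite graph products of $\CHCS$ groups; thus at graphs with strictly fewer vertices both conclusions are available, and in particular every proper full subgroup of $G$ — itself a finite graph product of $\CHCS$ groups over a smaller graph — is $\CHCS$ and has all of its own full subgroups satisfying the centraliser condition in it. The base case $|V\Gamma|\le 1$ is immediate: the trivial subgroup satisfies the condition vacuously, and the single vertex group $G=G_v$ satisfies $\mathcal{C}\text{-CC}$ because it is $\CHCS$, so by Theorem \ref{cc_and_cs_is_hcs} the pair $(G,g)$ has $\mathcal{C}\text{-CC}_G$ for every $g\in G$.

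For the inductive step fix a full subgroup $G_S$ ($S\subseteq V\Gamma$), an element $g\in G$ and a target $K\in\NC(G)$; by Lemma \ref{cc_simplification} it suffices to produce an epimorphism $\phi\colon G\to F$ with $\ker\phi\le K$ such that $(\phi(G_S),\phi(g))$ satisfies $\mathcal{C}\text{-CC}_F$ and $C_{\phi(G_S)}(\phi(g))\subseteq\phi(C_{G_S}(g)K)$ in $F$. To build $F$, pick a vertex $v$ and use the splitting $G=G_A\specam_{G_C}G_v$ of Remark \ref{structure_of_graph_products} (with $A=V\Gamma\setminus\{v\}$ and $C=\link(v)$), recalling that $G_A$ and $G_{\sstar(v)}$ are retracts of $G$ with commuting retractions and $G_A\cap G_{\sstar(v)}=G_C$. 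Exactly as in the proof of Lemma \ref{GP_residually_P}, choose epimorphisms $\alpha\colon G_A\to Q$ and $\gamma\colon G_v\to S'$ onto groups $Q,S'\in\mathcal{C}$ and let $\phi\colon G\to F:=Q\specam_{\alpha(G_C)}S'$ be their common extension (Remark \ref{extension}); by Lemma \ref{extension_kernel}, $\ker\phi=\lnorm\ker\alpha,\ker\gamma\rnorm^G$, so $\ker\phi\le K$ once $\ker\alpha,\ker\gamma$ are taken small enough, and we may in addition take $\ker\alpha$ invariant under the finitely many retractions of $G_A$ entering the argument. By Corollary \ref{my_dyer}, $F$ is $\CHCS$; in particular it is residually-$\mathcal{C}$ and $\CCS$, and the retract $Q$ is $\mathcal{C}$-open in $F$.

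One then verifies the two requirements on $\phi$ by cases. If $v\notin S$, then $G_S\le G_A$ and $\phi(G_S)\le Q$ is finite; a finite subgroup of a residually-$\mathcal{C}$ group forms a pair with any element automatically satisfying $\mathcal{C}\text{-CC}_F$ (given $K'\in\NC(F)$, pick $L'\le K'$ in $\NC(F)$ on which the quotient map is injective on $\phi(G_S)$ and which avoids the finitely many nontrivial elements $x\phi(g)x^{-1}\phi(g)^{-1}$, $x\in\phi(G_S)$), so only the centraliser inclusion remains. Writing $g=x_0c_1x_1\cdots c_nx_n$ in reduced form in $G_A\specam_{G_C}G_v$, Lemma \ref{special_amalgam_centraliser} presents $C_{G_S}(g)$ and $C_{\phi(G_S)}(\phi(g))$ as intersections of the centraliser of $x_0\cdots x_n$ (in $G_S$, respectively of $\alpha(x_0\cdots x_n)$ in $\phi(G_S)$) with finitely many conjugates of $G_C$ (respectively of $\alpha(G_C)$); choosing $\alpha$ fine enough to witness the centraliser condition $\mathcal{C}\text{-CC}_{G_A}$ for the pair $(G_S,x_0\cdots x_n)$ — available by the inductive hypothesis — and to $\mathcal{C}$-separate in $G_A$ the relevant cosets of the $\mathcal{C}$-closed subgroups arising as intersections of conjugates of $G_C$, one forces $C_{\phi(G_S)}(\phi(g))\subseteq\phi(C_{G_S}(g)K)$. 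If instead $v\in S$, then $G_S=G_{S\cap A}\specam_{G_{S\cap C}}G_v$, and choosing $\ker\alpha$ compatibly with $\rho_{S\cap A}$ makes $\phi(G_S)=\alpha(G_{S\cap A})\specam_{\alpha(G_{S\cap C})}S'$ a retract of $F$; since $Q,S'\in\mathcal{C}$, Corollary \ref{my_dyer} gives that $\phi(G_S)$ is $\CHCS$, and by Lemma \ref{retract_restriction} its pro-$\mathcal{C}$ topology is the restriction of that of $F$, so the argument of Theorem \ref{cc_and_cs_is_hcs} (applying Lemma \ref{3.7} to the $\mathcal{C}$-open subgroup $\langle\phi(g)\rangle(\phi(G_S)\cap K')$) shows $(\phi(G_S),\phi(g))$ satisfies $\mathcal{C}\text{-CC}_F$, while Lemma \ref{special_amalgam_centraliser} (together with Lemma \ref{7.8} once $\phi(g)$ is replaced by a cyclically reduced conjugate) and the inductively available $\mathcal{C}\text{-CC}_{G_{S\cap A}}$ yield the centraliser inclusion, again after choosing $\alpha$ fine enough.

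The main obstacle is the construction of the pair $(\alpha,\gamma)$: it must simultaneously keep $\ker\phi$ inside the prescribed $K$; preserve enough of the special-amalgam normal form of $g$ that the centraliser of $\phi(g)$ in $F$ is governed by the same combinatorial data (consonants, prefixes, and the subgroup $G_C$) as the centraliser of $g$ in $G$, via Lemmas \ref{special_amalgam_centraliser} and \ref{7.8}; remain invariant under the few retractions of $G_A$ used; and be fine enough to transport across $\alpha$ the inductively available centraliser condition for the full subgroup $G_S$ (or $G_{S\cap A}$) of the strictly smaller graph product $G_A$ together with the required separations of cosets. Since each of these is a fineness requirement on $\ker\alpha$ and $\ker\gamma$, and $\NC(G_A)$ is closed under intersection, they can all be imposed at once; the remainder is the routine normal-form bookkeeping from Section \ref{special_amalgams}.
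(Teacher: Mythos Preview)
Your overall architecture is right --- simultaneous induction with the conjugacy-separability companion, splitting $G$ as a special amalgam over a vertex, mapping to a special amalgam of $\mathcal{C}$-groups, and using Lemma~\ref{special_amalgam_centraliser} to write $C_{G_S}(g)$ as an intersection $I=C_{G_S}(x)\cap x_0G_Cx_0^{-1}\cap\cdots$ --- but the step ``choose $\alpha$ fine enough to $\mathcal{C}$-separate the relevant cosets'' hides the real difficulty. What you must show is that the analogous intersection $J$ computed downstairs in $Q$ satisfies $J\subseteq\alpha\bigl(I\,(K\cap G_A)\bigr)$; merely separating elements from cosets does not give this. The paper isolates this as Lemma~\ref{humus}, itself an induction on the number of intersectands. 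Its base case needs Lemma~\ref{ashot_5.7} (to get that the double coset $C_{G_A}(x)D$ is $\mathcal{C}$-closed, for $D$ a conjugate of a full subgroup) and Lemma~\ref{ashot_5.6} (to control $\psi(G_{A'})\cap\psi(xG_{A''}x^{-1})$ in a quotient), and both of these consume the \emph{conjugacy-separability} inductive hypothesis, not just the $\mathcal{C}\text{-CC}$ hypothesis you invoke. Concretely, when $I=\emptyset$ you need $J=\emptyset$ in some quotient, and this comes from the $\mathcal{C}$-closedness of a double coset of the form $C_{G_A}(x)\cdot(\text{conjugate of full subgroup})$, which is not a consequence of $\mathcal{C}\text{-CC}$ alone.

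Two smaller points. In your case $v\notin S$ you tacitly assume $g\notin G_A$ by writing a reduced form with consonants; the case $g\in G_A$ must be handled separately (the paper lifts the inductive $\mathcal{C}\text{-CC}_{G_A}$ via the retraction $\rho_A$). And your choice of $v$ is not tied to the data: when $G_S=G$ the paper selects $v$ via Lemma~\ref{maximal_full_subgroup} so that $g\notin G_A^{\,G}$, which is what produces a cyclically reduced conjugate with $n\ge 1$ and makes Lemma~\ref{7.8} applicable; your case $v\in S$ does not arrange this, and the sketch there does not address the control on the finite set $\Omega=\{h_ip_i^{-1}\}$ of prefix-conjugators (one must also ensure $\psi(p_i)^{-1}\psi(g)\psi(p_i)\notin\psi(g)^{\psi(H)}$ for the ``bad'' prefixes, which is the content of Lemmas~\ref{8.8} and~\ref{8.10}).
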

\begin{lemma}
\label{hlavni2}
	Let $\Gamma$ be a finite graph and let $G = \Gamma\mathcal{G}$ be a graph product where $G_v$ is $\mathcal{C}\text{-HCS}$ for all $v \in V\Gamma$. Then for all $g \in G$ and all full subgroups $B \leq G$ the set $g^B$ is $\mathcal{C}$-separable in $G$.
\end{lemma}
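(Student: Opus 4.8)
The plan is to induct on $|V\Gamma|$, running the argument simultaneously with Lemma \ref{hlavni1} so that, at each stage, Theorem \ref{main_hcs} is already available for all graph products over strictly smaller graphs; in particular every proper full subgroup of $G$, and every intersection of such, may be assumed to be $\CHCS$ and to satisfy Lemmas \ref{hlavni1} and \ref{hlavni2}. The base case $|V\Gamma|\le 1$ is trivial: then $G=G_v$ is $\CCS$, its only full subgroups are $\{1\}$ and $G_v$, and $g^{G_v}=g^{G}$ is $\C$-closed, while $g^{\{1\}}=\{g\}$ is a translate of the $\C$-closed set $\{1\}$ (the group being residually-$\C$ by Lemma \ref{GP_residually_P}).

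For the inductive step fix $g\in G$, a full subgroup $B=G_{B_0}\leq G$ and an element $f\in G\setminus g^B$; by Lemma \ref{closed_simplification} it suffices to exhibit finitely many $\C$-quotients of $G$ whose common refinement $\phi$ satisfies $\phi(f)\notin\phi(g)^{\phi(B)}$. We split $G=A\specam_HC$ as in Remark \ref{structure_of_graph_products}, with $A=G_{V\Gamma\setminus\{v\}}$, $H=G_{\link(v)}$ and $C=G_v$, choosing the vertex $v$ as follows. If $\supp(g)\cup B_0\neq V\Gamma$ we take $v\notin\supp(g)\cup B_0$, so that $g$ and $B$ both lie in the retract $A$ (Remark \ref{full_is_retract}); then $g^B$ is $\C$-closed in $A$ by the inductive hypothesis, hence $\C$-closed in $G$ by Lemma \ref{retract_restriction}. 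If $B\neq G$ but $\supp(g)\cup B_0=V\Gamma$, we take $v\in\supp(g)\setminus B_0$, so that $B\leq A$ while $v\in\supp(g)$, which forces $g$ to have consonant length $\geq1$ in $A\specam_HC$. Finally, if $B=G$, we first replace $g$ (and $f$) by $\Gamma$-cyclically reduced conjugates and then, using the P-S decomposition $g=s(g)p(g)$, choose $v$ so that $g$ becomes cyclically reduced of consonant length $\geq1$ in $A\specam_HC$; the only exception is when $\supp(g)$ is a clique, in which case $g$ is either central --- so $g^{G}=\{g\}$ is $\C$-closed by residual-$\C$ness --- or, for an appropriate $v\in\supp(g)$, falls under the case treated below.

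This leaves the main situation: $g=x_0c_1x_1\cdots c_nx_n$ is a (cyclically) reduced word of $A\specam_HC$ with $n\geq1$, and either $B\leq A$ or $B=G$. In the latter case Lemma \ref{collinss_lemma} reduces membership in $g^{G}$ to $H$-conjugacy among the finitely many cyclic permutations of $g$, which is in turn governed by Lemma \ref{conjugacy_criterion_special_amalgams} with the full subgroup $H$ in place of $B$; so in all cases $f\in g^B$ is equivalent to the three conditions of Lemma \ref{conjugacy_criterion_special_amalgams}, and we separate $f$ from $g^B$ according to which one fails. \emph{If the consonants or consonant lengths disagree}, we exploit that $H$ is a retract, hence $\C$-closed, in $A$ (Lemma \ref{retract_restriction}) and that $C$ is residually-$\C$ to build an epimorphism $\phi\colon G\to P=Q\specam_{\alpha(H)}S$ with $Q,S\in\C$ preserving the relevant reduced forms; $P$ is free-by-$\C$ by Lemma \ref{free-by-C}, hence $\CHCS$ by Corollary \ref{my_dyer}, and we conclude inside a $\C$-quotient of $P$. \emph{If the ``$A$-part'' fails}, i.e.\ $y_0y_1\cdots y_n\notin(x_0x_1\cdots x_n)^{B}$ in $A$ while the consonants agree, then $(x_0x_1\cdots x_n)^{B}$ is $\C$-closed in $A$ by the inductive hypothesis and we pull a separating $\C$-quotient of $A$ back along the retraction $\rho_A$. \emph{If the double-coset intersection condition fails}, we combine $\C$-closedness of $(x_0x_1\cdots x_n)^{B}$ in $A$, the $\C$-centraliser condition for the relevant full subgroup of $A$ (Lemma \ref{hlavni1} applied to $A$), and $\C$-closedness of the finitely many two-sided cosets $(y_0\cdots y_{j-1})H(x_0\cdots x_{j-1})^{-1}$ and of the centraliser coset of $C_B(x_0x_1\cdots x_n)$, and --- in the spirit of Lemmas \ref{ashot_5.6} and \ref{ashot_5.7} --- choose a single $M\in\NC(G)$ invariant under $\rho_A$ and $\rho_B$ that already separates all of these sets, so that emptiness of the intersection persists modulo $M$.

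I expect this last case to be the real obstacle: it requires separating not a single coset but a finite intersection of two-sided translates of $H$ together with a centraliser coset, all inside one $\C$-quotient that respects both retractions $\rho_A$ and $\rho_B$ --- precisely the interplay that forces the $\C$-centraliser condition and the technical Lemmas \ref{ashot_5.6}--\ref{ashot_5.7} into the argument. A secondary, but genuinely fiddly, point is matching the choice of the splitting vertex $v$ with the P-S decomposition of $g$, so that one may indeed assume that $g$ is cyclically reduced of consonant length $\geq1$ in $A\specam_HC$ (or else is central or lies over a clique, where separability is immediate from residual-$\C$ness or from the inductive hypothesis).
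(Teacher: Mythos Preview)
Your plan is essentially the paper's proof, and the two points you flag as difficult are exactly where the paper invests its technical work; let me point you to the precise tools.

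For the ``double-coset intersection'' case you should not improvise directly with Lemmas~\ref{ashot_5.6}--\ref{ashot_5.7}: the paper packages this step as Lemma~\ref{humus}, which shows that for full subgroups $A_1,\dots,A_n$ of $A$ and a conjugate $A_0$ of a full subgroup, one can find $L\in\NC(A)$ with
\[
\overline{b}\,C_{\overline{A}_0}(\overline{x}_0)\cap\bigcap_{i=1}^n \overline{x}_i\overline{A}_i\overline{y}_i
\ \subseteq\ \psi_A\!\left(\Bigl(bC_{A_0}(x_0)\cap\bigcap_{i=1}^n x_iA_iy_i\Bigr)K\right)\quad\text{in }A/L.
\]
Its proof is by induction on $n$; the base case $n=1$ is precisely where Lemmas~\ref{ashot_5.6}--\ref{ashot_5.7} enter, together with Theorem~\ref{intersection} on intersections of parabolics. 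With Lemma~\ref{humus} in hand, your Case~(iii) is immediate: if $I=\emptyset$ in $A$ then the displayed image is empty, so the intersection stays empty in $A/L$, and you extend to a map $G\to Q\specam_{\alpha(H)}S$ via Remark~\ref{extension}. This is exactly the content of Lemma~\ref{8.7}, which bundles all four of your sub-cases when $B\leq A$.

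For the choice of splitting vertex when $B=G$, the paper bypasses the P--S decomposition: Lemma~\ref{maximal_full_subgroup} (a one-line corollary of Theorem~\ref{intersection}) gives a maximal full $A$ with $g\notin A^G$, and then any cyclically reduced conjugate $g_0$ in $A\specam_HC$ automatically has consonant length $\ge 1$. This is cleaner than your clique analysis, which as written is not quite right: when $\supp(g)$ is a clique, $g$ is central only in $G_{\supp(g)}$, not in $G$, so $g^G=\{g\}$ does not follow.

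One small omission in the $B=G$ case: after passing to cyclically reduced $g_0,f_0$, you must treat $f_0\in A$ (consonant length $0$) separately, since Lemma~\ref{collinss_lemma} does not reduce it to $H$-conjugacy with a cyclic permutation of $g_0$. The paper handles this by mapping to $P=Q\specam_{\alpha(H)}S$ so that $\psi(g_0)$ stays cyclically reduced with $n\ge 1$, whence $\psi(g_0)\notin Q^P\supseteq\psi(f)^P$ by Lemma~\ref{collinss_lemma}.
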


Lemmas \ref{hlavni1} and \ref{hlavni2} will be proved simultaneously by induction on $|V\Gamma|$. If $|V\Gamma| = 1$ we see that both lemmas hold trivially as $G = G_v$ which is $\mathcal{C}$-HCS by assumption.
Now assume that the two lemmas are true for all $\Gamma \mathcal{G}$ where $|V\Gamma| \leq r$.

To be able to control conjugacy classes and centralisers in special amalgam $A\specam_H C$ we need to be able to control intersections of conjugates of the amalgamated group $H$ inside $A$ as stated in Lemmas \ref{7.8} and \ref{conjugacy_criterion_special_amalgams}. In terms of our setting with graph products this means that we need to be able to control intersections of conjugates of full subgroups. This is established in Lemma \ref{humus}. The rest of Section \ref{HCS} is a case analysis dealing with all possible situations that might occur and shows that in all of the cases we can construct a suitable homomorphism from our graph product onto a special amalgam groups belonging to the class $\C$ which is a free-by-$\C$ group and thus by Corollary \ref{my_dyer} is $\C$-CS group. 

\begin{remark}
	\label{pomocna}
	Let $G$ be a group and let $H, F \leq G$, $b,x,y \in G$ be arbitrary. If $b H \cap xFy \neq \emptyset$ then for any $a \in b H \cap xFy$ we have $b H \cap xFy = a(H \cap y^{-1} F y)$. 
\end{remark}
\begin{proof}
	Let $a \in b H \cap xFy$. Since $ a \in bH$ we see that $aH = bH$. Since $a \in xFy$ we have $a = xfy$ for some $f \in F$. Thus we can write
	\begin{displaymath}
		bH\cap xFy = aH \cap aa^{-1}xFy = aH \cap ay^{-1}f^{-1}x^{-1}xFy = aH \cap ay^{-1}Fy = a(H \cap y^{-1}Fy).
	\end{displaymath}
\end{proof}

The following statements and their proofs very closely follow the contents of \cite[Section 8]{raags}.

\begin{lemma}
	\label{humus}
	Let $G$ be a graph product and assume that every full subgroup $B \leq G$ satisfies $\mathcal{C}\text{-CC}_G$ and for each $g \in G$ the conjugacy class $g^B$ is $\mathcal{C}$-closed in $G$.
	
	Let $A_1, \dots, A_n \leq G$ be full subgroups of $G$, let $A_0$ be a conjugate of a full subgroup of $G$ and let $b, x_0, x_i, y_i \in G$ for $i = 1, \dots, n$. Then for any $K \in \mathcal{N_C}(G)$ there is $L \in \mathcal{N_C}(G)$ such that $L \leq K$ and
	\begin{displaymath}
		\overline{b}C_{\overline{A}_0}(\overline{x}_0)\cap \bigcap_{i=1}^{n} \overline{x}_i\overline{A}_i \overline{y}_i \subseteq \psi \left(\left( b C_{A_0}(x_0) \cap \bigcap_{i=1}^n {x_i A_i y_i}\right) K\right) \mbox{ in } G/L,
	\end{displaymath}
	where $\psi: G \rightarrow G/L$ is the natural projection and $\overline{b} = \psi(b)$, $\overline{A}_i = \psi(A_i)$ $\overline{x}_i = \psi(x_i)$, $i = 0, \dots, n$ and $\overline{y}_j = \psi(y_j)$, $j = 1, \dots, n$.
\end{lemma}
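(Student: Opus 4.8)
The plan is to argue by induction on $n$. The degenerate case $n=0$ falls out of the centraliser condition, while the inductive step amounts to rewriting intersections of cosets of parabolic subgroups and transferring the rewriting into a quotient, following \cite[Section 8]{raags}.

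\emph{Base case $n=0$.} Given $K\in\NC(G)$, left translation by $\overline{b}^{-1}$ reduces the claim to producing $L\in\NC(G)$ with $L\leq K$ and $C_{\overline{A}_0}(\overline{x}_0)\subseteq\psi\bigl(C_{A_0}(x_0)K\bigr)$ in $G/L$. Writing $A_0=zG_Sz^{-1}$ with $G_S$ a full subgroup and $z\in G$, we have $C_{A_0}(x_0)=z\,C_{G_S}(z^{-1}x_0z)\,z^{-1}$. By hypothesis $G_S$ satisfies $\mathcal{C}\text{-CC}_G$, hence so does the pair $(G_S,z^{-1}x_0z)$; applying this to $K$ gives $L\in\NC(G)$ with $L\leq K$ and $C_{\psi(G_S)}(\psi(z^{-1}x_0z))\subseteq\psi\bigl(C_{G_S}(z^{-1}x_0z)K\bigr)$ in $G/L$. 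Conjugating this inclusion by $\psi(z)$ and using that $L$ and $K$ are normal in $G$ (so $zLz^{-1}=L$, $zKz^{-1}=K$) gives precisely the required inclusion.

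\emph{Inductive step.} Fix $K\in\NC(G)$ and first analyse $E:=bC_{A_0}(x_0)\cap\bigcap_{i=1}^{n}x_iA_iy_i$ inside $G$. Writing $x_iA_iy_i=g_iP_i$ with $g_i=x_iy_i$ and $P_i=y_i^{-1}A_iy_i$ parabolic, repeated use of Remark~\ref{pomocna} shows that, whenever $E\neq\emptyset$ and $a\in E$, one has $\bigcap_{i=1}^{n}x_iA_iy_i=aP$ with $P:=\bigcap_{i=1}^{n}P_i$ parabolic (by $n$ uses of Theorem~\ref{intersection}), whence $E=a\bigl(C_{A_0}(x_0)\cap P\bigr)=a\,C_Q(x_0)$ with $Q:=A_0\cap P$ again a conjugate of a full subgroup by Theorem~\ref{intersection}. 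Thus $E$ has exactly the shape treated in the base case. To carry this over to a quotient I would first apply the inductive hypothesis to the sub-collection $A_1,\dots,A_{n-1}$, obtaining $L_1\in\NC(G)$ with $L_1\leq K$ such that the intersection in $G/L_1$ of the images of $bC_{A_0}(x_0)$ and of the first $n-1$ cosets lies inside $\psi_1\bigl((bC_{A_0}(x_0)\cap\bigcap_{i=1}^{n-1}x_iA_iy_i)K\bigr)$, where $\psi_1\colon G\to G/L_1$; I would then reintroduce the last factor $x_nA_ny_n$ — which is $\mathcal{C}$-closed, since $A_n$ is a retract of $G$ and hence $\mathcal{C}$-closed by Lemma~\ref{retract_restriction} — and, via one more application of the lemma in its $n\leq 1$ form (now with the parabolic built up so far playing the role of $A_0$), shrink $L_1$ to the required $L$. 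The remaining sub-cases, in which one of the partial intersections taken in $G$ is empty, are handled by the same description of intersections of cosets of parabolics, combined with the $\mathcal{C}$-closedness of the sets $x_iA_iy_i$ and of the conjugacy classes $g^B$ assumed in the statement, choosing $L$ so that the matching intersection in $G/L$ is already empty and the inclusion holds vacuously.

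\emph{Expected obstacle.} The crux is that passing to a quotient does not commute with intersection: the intersection computed in $G/L$ is in general strictly larger than the image of the intersection computed in $G$, so the identity $E=a\,C_Q(x_0)$ cannot simply be pushed forward. The entire point of the nested choices of $L$ — each obtained from a further appeal to the centraliser condition or to an earlier instance of the lemma — is to squeeze that discrepancy down to nothing, and keeping this bookkeeping coherent while organising the case split according to which partial intersections in $G$ are empty is where essentially all of the work lies.
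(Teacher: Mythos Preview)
Your overall architecture matches the paper's: induction on $n$, the $n=0$ case via conjugating the centraliser condition from the underlying full subgroup, and the reduction of $E$ to $a\,C_Q(x_0)$ via Remark~\ref{pomocna} and Theorem~\ref{intersection}. The inductive step for $n>1$ in the paper is exactly what you describe: use the case $n-1$ to control the first $n-1$ factors, then absorb the last one using the case $n=1$.

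The genuine gap is that you have not proved the case $n=1$, and your inductive step appeals to ``the lemma in its $n\leq 1$ form'' --- which is circular for $n=1$ itself. This case is where essentially all the content lives, and the two tools you do invoke (that $x_iA_iy_i$ is $\mathcal{C}$-closed, and that $g^B$ is $\mathcal{C}$-closed) are not enough to settle it. Concretely:
\begin{itemize}
\item In the \emph{empty} sub-case $bC_{A_0}(x_0)\cap x_1A_1y_1=\emptyset$, what you must separate is $x_1$ from the \emph{double coset} $bC_{A_0}(x_0)\,y_1^{-1}A_1$, not merely from a translate of $A_1$. Closedness of $x_1A_1y_1$ says nothing about closedness of such a double coset. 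In the paper this is handled by Lemma~\ref{ashot_5.7}, which uses both hypotheses of the lemma (the $\mathcal{C}\text{-CC}_G$ for full subgroups and the closedness of $g^{A\cap D}$) together with the commuting-retraction structure to show $C_A(g)D$ is $\mathcal{C}$-closed.
\item In the \emph{nonempty} sub-case, after rewriting $E=a\,C_{A_0\cap y_1^{-1}A_1y_1}(x_0)$ you need to know that in $G/L$ the intersection $\overline{A}_0\cap \overline{y}_1^{-1}\overline{A}_1\overline{y}_1$ is contained in $\psi\bigl((A_0\cap y_1^{-1}A_1y_1)M\bigr)$ for a suitable earlier $M$. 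Your ``Expected obstacle'' paragraph correctly identifies this as the crux, but gives no mechanism; the paper supplies one in Lemma~\ref{ashot_5.6}, again relying on the commuting retractions $\rho_A,\rho_{A_1}$ and on $\mathcal{C}\text{-CC}_G$ for the full subgroup $A\cap A_1$.
\end{itemize}
So to complete your argument you need to insert a genuine $n=1$ base case and either prove or cite analogues of Lemmas~\ref{ashot_5.6} and~\ref{ashot_5.7}; once that is in place, your inductive step is exactly the paper's.
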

\begin{proof}
	We will proceed by induction on $n$. If $n = 0$ then we just want $\overline{b}C_{\overline{A}_0}(\overline{x}_0) \subseteq \psi\left( b C_{A_0}(x_0)K\right)$. By assumption $A_0 = hAh^{-1}$ for some $h \in G$ and $A \leq G$ and thus the pair $(A, h^{-1} g h)$ has $\mathcal{C}\text{-CC}_G$. We can consider $\phi_{h^{-1}}$, the inner automorphism of $G$ given by $h^{-1}$. Obviously $C_{\phi_{h^{-1}}(H)}(\phi(g)) = \phi_{h^{-1}}(C_H(g)) \subseteq \phi_{h^{-1}}(C_H(g)K)$ in $G$ for any $K \in \mathcal{N_C}(G)$ since $\phi_{h^{-1}} \in \Aut(G)$. Since $\ker(\phi_{h^{-1}}) = \{1\}$ we can use Lemma \ref{cc_simplification} to see that the pair $(A_0, g)$ has $\mathcal{C}\text{-CC}_G$ as well, thus there is $L \in \mathcal{N_C}(G)$ such that $C_{\overline{A}_0}(\overline{x}_0) \subseteq \psi\left(C_{A_0}(x_0)K\right)$ in $G/L$. This is equivalent to $\overline{b}C_{\overline{A}_0}(\overline{x}_0) \subseteq \psi\left( b C_{A_0}(x_0)K\right)$.
	
	Base of the induction: let $n = 1$. First suppose that $bC_{A_0}(x_0)\cap x_1 A_1 y_1 = \emptyset$. This is equivalent to $x_1 \not\in b C_{A_0}(x_0)y_1^{-1}A_1$. Since $A_0 = h A h^{-1}$ then $C_{A_0}(x_0) = h C_A(g) h^{-1}$ where $g= h^{-1} x_0 h$. Thus $x_1 \not\in b hC_{A}(g)h^{-1}y_1^{-1}A_1 (y_1 h) (y_1 h)^{-1}$. Set $D = (y_1 h)^{-1} A_1 (y_1 h)$. Thus we have $x_1 \not\in b h C_{A}(g)D h^{-1} y^{-1}_1$.
	
	By theorem \ref{intersection} we see that $A\cap A_1$ and $A \cap D$ are conjugates of full subgroups. Thus for arbitrary $f\in G$ we have that $f^{A\cap D}$ (or $f^{A_1 \cap A}$) and the pair $(A \cap D, f)$ (or $(A_1 \cap A, f)$) has $\mathcal{C}\text{-CC}_G$ for all $f\in G$, thus by Lemma \ref{ashot_5.7} we see that the double coset $C_A(g)D$ is $\mathcal{C}$-separable in G. Then $bh C_A(g)D h^{-1}y^{-1}$ is $\mathcal{C}$-separable as well. Equivalently $bC_{A_0}(x_0)y_1^{-1}A_1$ is $\mathcal{C}$-closed in $G$ and thus there is $N \in \mathcal{N_C}(G)$ such that $x_1 \not\in (bC_{A_0}(x_0)y_1^{-1}A_1)N$. By replacing $K \cap N$ we can assume $N \leq K$. Since the pair $(A_0, x_0)$ has $\mathcal{C}\text{-CC}_G$ there is $L \in \mathcal{N_C}(G)$ such that $L \leq N$ and $C_{\overline{A}_0}(\overline{x}_0) \subseteq \psi\left(C_{A_0}(x_0)N\right) \subseteq \psi\left(C_{A_0}(x_0)K\right)$ in $G/L$ where $\psi: G \rightarrow G/L$ is the natural projection and $\overline{A}_0 = \psi(A_0)$, $\overline{x}_0 = \psi(x_0)$.
	
	This means that $\psi^{-1}(\overline{b})C_{\overline{A}_0}(\overline{x}_0)\overline{y}_1^{-1}\overline{A}_1) \subseteq b C_{A_0}(x_0)y_1^{-1}A_1N$, where $\overline{b} = \psi(b)$, and thus from construction of $N$ we see that $x_1 \not\in \psi^{-1}(\overline{b}C_{\overline{A}_0}(\overline{x}_0)\overline{y}_1^{-1}\overline{A}_1)$ which concludes that $\overline{x}_1 \not\in \overline{b}C_{\overline{A}_0}(\overline{x}_0)\overline{y}_1^{-1}\overline{A}_1$, thus $\overline{b}C_{\overline{A}_0}(\overline{x}_0) \cap \overline{x}_1 \overline{A}_1 \overline{y}_1 = \emptyset$ and therefore
\begin{displaymath}
	\emptyset = \overline{b}C_{\overline{A}_0}(\overline{x}_0) \cap \overline{x}_1 \overline{A}_1 \overline{y}_1 \subseteq \psi\left( (bC_{A_0}(x_0) \cap x_1 A _1 y_1) K \right).
\end{displaymath}

Now suppose $b C_{A_0}(x_0) x_1 A_1 y_1 \neq \emptyset$.	By Remark \ref{pomocna} we see that $b C_{A_0}(x_0) x_1 A_1 y_1 = a(C_{A_0}(x_0) \cap y_1^{-1} A_1 y_1)$, where $a \in b C_{A_0}(x_0) \cap x_1 A_1 y_1$. Clearly $C_{A_0}(x_0) \cap y_1^{-1} A_1 y_1 = C_{E}(x_0)$, where $E = A_0 \cap y^{-1} A_1 y$. By Theorem \ref{intersection} we see that $E$ is a conjugate of some full subgroup of $G$ and thus the pair $(E, x_0)$ has $\mathcal{C}\text{-CC}_G$ and therefore there is $M \in \mathcal{N_C}(G)$ such that $M \leq K$ and
\begin{equation}
	C_{\varphi(E)}(\varphi(x_0)) \subseteq \varphi(C_E(x_0) K) = \varphi\left((C_{A_0}(x_0)\cap y_1^{-1} A_1 y_1)K\right) \mbox{ in } G/M,
\end{equation}
	\label{prvni}
	where $\varphi: G \rightarrow G/M$ is the natural projection. However, we need to have control over $\overline{a}(C_{\overline{A}_0}(\overline{x}_0)\cap \overline{y}_1^{-1}\overline{A}_1\overline{y}_1)$. The full subgroups $A, A_1$ are retract whose corresponding retractions commute and their intersection, $A \cap A_1$, has $\mathcal{C}\text{-CC}_G$ because it is a full subgroup. Set $x = y_1 h$. By Lemma \ref{ashot_5.6} there is $L \in \mathcal{N_C}(G)$ such that $L \leq M$ and
	\begin{displaymath}
		\psi(A)\cap \psi(x A_1 x^{-1}) \subseteq \psi(A \cap x A_1 x^{-1})\psi(M) \mbox{ in }G/L
	\end{displaymath}
where $\psi \colon G \to G/L$ is the natural projection. It can be easily checked that
\begin{displaymath}
	\overline{A}_0 \cap \overline{y}_1^{-1}\overline{A}_1\overline{y}_1 = \overline{h}\overline{A}\overline{h}^{-1}\cap \overline{h} \left(\overline{y}_1 \overline{h} \right)^{-1}\overline{A}_1 \left(\overline{y}_1\overline{h}\right)\overline{h}^{-1}= \overline{h}(\overline{A}\cap \left(\overline{y}_1 \overline{h})^{-1} \overline{A}_1\left(\overline{y}_1 \overline{h}\right)\right)\overline{h}^{-1},
\end{displaymath}
where $\overline{h} = \psi(h)$. Thus
\begin{equation}
\label{druha}
\begin{split}
	\overline{A}_0 \cap \overline{y}_1{-1}\overline{A}_1\overline{y}_1 &\subseteq \psi(h)\left[ \psi\left(A \cap (y_1 h)^{-1}A_1(y_1h)\right)\psi(M)\right]\psi(h)^{-1}\\
	&=\psi(hAh^{-1} \cap y_1^{-1}Ay_1)\psi(M)\\
	&=\psi(E)\psi(M) = \psi(EM).
\end{split}
\end{equation}
	Since $\psi(a) = \overline{a} \in \overline{b}C_{\overline{A}_0}(\overline{x}_0)\cap \overline{x}_1 \overline{A}_1 \overline{y}_1$ we can use Remark \ref{pomocna} and write
	\begin{displaymath}
		\overline{b}C_{\overline{A}_0}(\overline{x}_0)\cap \overline{x}_1 \overline{A}_1 \overline{y}_1 = 
		\overline{a}(C_{\overline{A}_0}(\overline{x}_0) \cap \overline{y}_1^{-1}\overline{A}_1\overline{y}_1). 
	\end{displaymath}
Again, $C_{\overline{A}_0}(\overline{x}_0) \cap \overline{y}_1^{-1}\overline{A}_1\overline{y}_1 = C_{\overline{A}_0 \cap \overline{y}_1^{-1}\overline{A}_1\overline{y}_1}(\overline{x}_{0})$. Since $\overline{A}_0 \cap \overline{y}_1^{-1}\overline{A}_1\overline{y}_1 \subseteq \psi(EM)$ we get that
\begin{equation}
	\label{treti}
	\overline{b}C_{\overline{A}_0}(\overline{x}_0)\cap \overline{x}_1 \overline{A}_1 \overline{y}_1 \subseteq \overline{a}C_{\psi(EM)}(\overline{x}_0).
\end{equation}
Let $\varphi\colon G \to G/M$ be the natural projection. Since $L \leq M$ there is unique homomorphism $\xi \colon G/L \to G/M$ such that $\varphi = \xi \circ \psi$. Clearly $\psi(M) = \ker(\xi)$ and thus $\xi(\psi(EM)) = \xi(\psi(E)\psi(M)) = \xi(\psi(E)) = \varphi(E)$, also $\xi(\overline{x}_0)=\varphi(x_0)$. Therefore for arbitrary $z \in C_{\psi(EM)}(\overline{x}_0)$ in $G/L$ we have
\begin{displaymath}
	\xi(z) \in C_{\varphi(E)}(\varphi(x_0)) \subseteq \varphi(C_E(x_0)K) = \xi(\psi(C_E(x_0)K)).
\end{displaymath} 
	Altogether this means that $z \in \psi(C_E(x_0)K)\ker(\xi) = \psi(C_E(x_0)K)\psi(M) = \psi(C_E(x_0)K)$.
	Thus we get $C_{\psi(EM)}(\overline{x}_0)\subseteq \psi(C_E(x_0)K)$. Combined with (\ref{treti}) we get
\begin{displaymath}
	\overline{b}C_{\overline{A}_0}(\overline{x}_0) \cap \overline{x}_1 \overline{A}_1 \overline{y}_1 \subseteq \overline{a}\psi(C_E(x_0)K) = \psi(a C_E(x_0)K) = \psi((bC_{A_0}(x_0)\cap x_1 A_1 y_1)K).
\end{displaymath}

Now suppose $n>1$ and that the result has been proved for all $m \leq n-1$.

If $b C_{A_0}(x_0) \cap \bigcap_{i=1}^{n-1} x_i A_i y_i = \emptyset$ then by induction there is $L \in \mathcal{N_C}(G)$ such that $L \leq K$ and
\begin{displaymath}
	\overline{b} C_{\overline{A}_0}(\overline{x}_0) \cap \bigcap_{i=1}^{n-1} \overline{x}_i {A}_i \overline{y}_i \subseteq \psi\left(\left(b C_{A_0}(x_0) \cap \bigcap_{i=1}^{n-1} x_i A_i y_i\right) K\right) = \emptyset \mbox{ in } G/L.
\end{displaymath}
Clearly $\overline{b} C_{\overline{A}_0}(\overline{x}_0) \cap \bigcap_{i=1}^{n} \overline{x}_i {A}_i {y}_i \subseteq \overline{b} C_{\overline{A}_0}(\overline{x}_0) \cap \bigcap_{i=1}^{n-1} \overline{x}_i {A}_i {y}_i = \emptyset$ and thus we are done.

Now suppose that	$b C_{A_0}(x_0) \cap \bigcap_{i=1}^{n-1} x_i A_i y_i \neq \emptyset$ in $G$. Then for any $a \in b C_{A_0}(x_0) \cap \bigcap_{i=1}^{n-1} x_i A_i y_i$ we can use Remark \ref{pomocna} to see that $b C_{A_0}(x_0) \cap \bigcap_{i=1}^{n-1} x_i A_i y_i = a C_E(x_0)$ where $E = A_0 \cap \bigcap_{i=1}^{n-1}y_i^{-1} A_i y_i$.

Then $b C_{A_0}(x_0) \cap \bigcap_{i=1}^{n} x_i A_i y_i = aC_E(x_0) \cap x_n A_n y_n$ thus by using the base case of the induction we can get that there is $M \in \mathcal{N_C}$ such that $M \leq K$ and
\begin{equation}
	\label{ctvrta}
	\phi(a)C_{\phi(E)}(\phi(x_0)) \cap \phi(x_n A_n y_n) \subseteq \phi((a C_E(x_0)\cap x_n A_n y_n)K) \mbox{ in } G/M
\end{equation}
where $\phi\colon G \to G/M$ is the natural projection. Also by induction hypothesis there is $L \in \mathcal{N_C}(G)$ such that $L \leq M \leq K \leq G$ and
\begin{displaymath}
	\overline{b}C_{\overline{A}_0}(\overline{x_0}) \cap \bigcap_{i=1}^{n-1}\overline{x}_i \overline{A}_i \overline{y}_i \subseteq \psi \left( \left(b C_{A_0}(x_0) \cap \bigcap_{i=1}^{n-1}x_i A_i y_i \right)M \right) \mbox{ in } G/L.
\end{displaymath}	
Note that $\ker(\psi) = L \leq M = \ker(\phi)$. Therefore

\begin{align*}
	\psi^{-1}\left(\overline{b}C_{\overline{A}_0}(\overline{x}_0)\cap \bigcap_{i=1}^{n}\overline{x}_i\overline{A}_i\overline{y}_i\right)
				&= \psi^{-1}\left(\overline{b}C_{\overline{A}_0}(\overline{x}_0)\cap \bigcap_{i=1}^{n-1}\overline{x}_i\overline{A}_i\overline{y}_i \right) \cap \psi^{-1}\left(\overline{x}_n\overline{A}_n\overline{y}_n\right)\\
				& \subseteq	\left( b C_{A_0}(x_0) \cap \bigcap_{i=1}^{n-1}x_i A_i y_i\right)M \cap (x_n A_n y_n)L\\
				& \subseteq aC_E(x_0) M \cap x_n A_n y_n M\\
				& \subseteq \phi^{-1}\left[\phi(a)C_{\phi(E)}(\phi(x_0))\right] \cap \phi^{-1}\left[\phi(x_n A_n y_n)\right]\\
				& = \phi^{-1} \left[ \phi(a)C_{\phi(E)}(\phi(x_0))\cap \phi(x_n A_n x_n)\right]
\end{align*}
Using (\ref{ctvrta}) we get
\begin{align*}
			\phi^{-1} \left[ \phi(a)C_{\phi(E)}(\phi(x_0))\cap \phi(x_n A_n x_n)\right]	\subseteq \left(a C_E(x_0)\cap x_n A_n y_n\right)K.
\end{align*}
Finally, this leads us to
\begin{displaymath}
	 \psi^{-1}\left(\overline{b}C_{\overline{A}_0}(\overline{x}_0)\cap \bigcap_{i=1}^{n}\overline{x}_i\overline{A}_i\overline{y}_i\right) \subseteq \left(b C_{A_0}(x_0) \cap \bigcap_{i=1}^{n}x_i A_i y_i\right)K.
\end{displaymath}
Which concludes the proof of the lemma.	
\end{proof}

From now on we assume that Lemmas \ref{hlavni1} and \ref{hlavni2} are hold for all graph products $\Gamma \mathcal{G}$ such that $|V\Gamma| \leq r$. Let $G = \Gamma\mathcal{G}$ be a graph product such that $|V\Gamma| = r+1$. Let $v\in V\Gamma$ be arbitrary and set $A = G_{V\Gamma \setminus \{v\}}$, $H = G_{\link(v)}$ and $C = G_v$. Then by Remark \ref{structure_of_graph_products} we see that $G = A \specam_{H} C$. Also, suppose that $B \leq A$ is a full subgroup of $A$.

\begin{lemma}
	\label{8.7}
	Let $g \in G \setminus A$ and $f \in G \setminus g^B$. Then there are homomorphisms $\psi_A \colon A \to Q$ and $\psi_C \colon C \to S$ where $Q, S \in \mathcal{C}$ such that for the corresponding extension $\psi \colon G \to P$, where $P = Q \specam_{\psi_A(H)} S$, we have $\psi(f)\not\in \psi(g)^{\psi(B)}$.
\end{lemma}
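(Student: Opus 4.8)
The plan is to invoke the conjugacy criterion for special amalgams, Lemma~\ref{conjugacy_criterion_special_amalgams}, and split into cases according to which of its three conditions fails for $f$ and $g$. Fix reduced expressions $g = x_0 c_1 x_1 \cdots c_n x_n$ and $f = y_0 d_1 y_1 \cdots d_m y_m$; since $g \notin A$ we have $n \geq 1$. Two preliminary observations are used throughout. First, $A$ is a graph product on at most $r$ vertices of $\mathcal{C}$-HCS (hence residually-$\mathcal{C}$) vertex groups, so $A$ is residually-$\mathcal{C}$ by Lemma~\ref{GP_residually_P}, and $H = G_{\link(v)}$ is a full subgroup of $A$, hence a retract, hence $\mathcal{C}$-closed in $A$ by Lemma~\ref{retract_restriction}; in particular we may fix $K_0 \in \mathcal{N_C}(A)$ with $x_i \notin HK_0$ and $y_i \notin HK_0$ for every interior index $i$ (those $i$ with $1 \leq i \leq n-1$, resp.\ $1 \leq i \leq m-1$, which lie outside $H$ because the expressions are reduced). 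Second, $C = G_v$ is residually-$\mathcal{C}$, so we may fix an epimorphism $\psi_C \colon C \to S \in \mathcal{C}$ with $\psi_C(c_j) \neq 1$ and $\psi_C(d_j) \neq 1$ for all $j$. For any $L \in \mathcal{N_C}(A)$ with $L \leq K_0$, writing $\psi_A \colon A \to Q := A/L$ and $\psi \colon G \to P := Q \specam_{\psi_A(H)} S$ for the induced extension (Remark~\ref{extension}), the expressions $\psi(g) = \psi_A(x_0)\psi_C(c_1)\cdots\psi_C(c_n)\psi_A(x_n)$ and $\psi(f) = \psi_A(y_0)\psi_C(d_1)\cdots\psi_C(d_m)\psi_A(y_m)$ are reduced in $P$, of consonant lengths $n$ and $m$, so Lemma~\ref{conjugacy_criterion_special_amalgams} applies verbatim in $P$.

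Suppose now $f \notin g^B$, so at least one of (i)--(iii) of Lemma~\ref{conjugacy_criterion_special_amalgams} fails. If (i) fails then either $m \neq n$, in which case $L = K_0$ already makes $\psi(g)$, $\psi(f)$ have distinct consonant lengths in $P$, so (i) fails for them there; or $m = n$ but $c_{i_0}\neq d_{i_0}$ for some $i_0$, in which case I additionally arrange $\psi_C(c_{i_0}d_{i_0}^{-1}) \neq 1$ (possible since $C$ is residually-$\mathcal{C}$), so $\psi_C(c_{i_0}) \neq \psi_C(d_{i_0})$ and again (i) fails in $P$. Either way $\psi(f)\notin\psi(g)^{\psi(B)}$. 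If (i) holds but (ii) fails, then $m=n$, $c_i=d_i$, and $y_0\cdots y_n \notin (x_0\cdots x_n)^B$ in $A$; by Lemma~\ref{hlavni2} (applicable to $A$ by the inductive hypothesis) the set $(x_0\cdots x_n)^B$ is $\mathcal{C}$-closed in $A$, so some $L \leq K_0$ in $\mathcal{N_C}(A)$ achieves $\psi_A(y_0\cdots y_n) \notin \psi_A(x_0\cdots x_n)^{\psi_A(B)}$, which is exactly the negation of condition (ii) for $\psi(g),\psi(f)$ in $P$; hence $\psi(f)\notin\psi(g)^{\psi(B)}$.

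The remaining case, where (i) and (ii) hold but (iii) fails, is the crux. Here $m=n$, $c_i=d_i$, there is $b_0\in B$ with $Y := y_0\cdots y_n = b_0 X b_0^{-1}$ for $X := x_0\cdots x_n$, and, writing $X_i = x_0\cdots x_i$, $Y_i = y_0\cdots y_i$,
\[
	I := b_0 C_B(X) \cap \bigcap_{i=0}^{n-1} Y_i H X_i^{-1} = \emptyset .
\]
The difficulty is that passing to a quotient $Q=A/L$ can only enlarge centralisers, so $C_{\psi_A(B)}(\psi_A(X))$ may strictly contain $\psi_A(C_B(X))$ and residual separation alone will not keep the image of $I$ empty. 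This is precisely what Lemma~\ref{humus} is built to overcome, its hypotheses holding for $A$ by the inductive forms of Lemmas~\ref{hlavni1} and~\ref{hlavni2}. I apply Lemma~\ref{humus} with ambient group $A$, with $A_0 = B$, the role of ``$x_0$'' played by $X$ and of ``$b$'' by $b_0$, and for $i=1,\dots,n$ with $A_i = H$, ``$x_i$'' $= Y_{i-1}$, ``$y_i$'' $= X_{i-1}^{-1}$, and with $K = K_0$. This yields $L \in \mathcal{N_C}(A)$ with $L \leq K_0$ and
\[
	\psi_A(b_0)\,C_{\psi_A(B)}(\psi_A(X)) \cap \bigcap_{i=0}^{n-1}\psi_A(Y_i)\,\psi_A(H)\,\psi_A(X_i)^{-1} \subseteq \psi_A(I K_0) = \emptyset ,
\]
since $I = \emptyset$.

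Finally, with this $L$, if $\psi(f) \in \psi(g)^{\psi(B)}$ in $P$ then conditions (i) and (ii) persist and $\psi_A(Y) = \psi_A(b_0)\psi_A(X)\psi_A(b_0)^{-1}$, so condition (iii) of Lemma~\ref{conjugacy_criterion_special_amalgams} applied in $P$ with the admissible conjugator $\psi_A(b_0) \in \psi_A(B)$ forces the displayed intersection to be nonempty --- a contradiction. Hence $\psi(f)\notin\psi(g)^{\psi(B)}$, which completes the case analysis. The only point requiring care beyond citing the quoted results is the index bookkeeping that makes the conclusion of Lemma~\ref{humus} read exactly as ``$I$ maps to the empty set''; the rest is routine.
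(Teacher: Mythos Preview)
Your proof is correct and follows essentially the same approach as the paper's: a case split according to which of conditions (i)--(iii) in Lemma~\ref{conjugacy_criterion_special_amalgams} fails, with the first two cases handled by residual-$\mathcal{C}$ properties and the inductive form of Lemma~\ref{hlavni2}, and the critical third case by Lemma~\ref{humus}. Your organisation is slightly cleaner than the paper's in that you fix $K_0$ and $\psi_C$ once at the outset so that reducedness of $\psi(g)$ and $\psi(f)$ in $P$ is automatic in every case, whereas the paper reconstructs these choices separately in each of its four cases.
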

\begin{proof}
	Let $g = x_0 c_1 x_1 \dots c_n x_n$, where $x_0, x_1, \dots, x_n \in A$ and $c_1, \dots, c_n \in C$, and $f = y_0 d_1 y_1 \dots y_m d_m$, where $y_0, y_1, \dots, y_m \in A$ and $d_1, \dots, d_m \in C$, be the reduced expressions for $g$ and $f$ respectively. Since $g \not \in A$ we see that $n \geq 1$. We have to consider four separate cases.
	
	Case 1: suppose $n \neq m$. Since $G$ is residually-$\mathcal{C}$ by Lemma \ref{GP_residually_P} (and $A$ as well) and $H$ is a full subgroup of $A$, $H$ is a retract in $A$ and thus is $\mathcal{C}$-closed in $A$ by Lemma \ref{retract_restriction}. Thus there is $L \in \mathcal{N_C}(A)$ such that $\psi_A(x_i) \not \in \psi_A(H)$ whenever $x_i \not\in H$ and $\psi_A(y_j) \not\in \psi_A(H)$ whenever $y_j \not\in H$, where $\psi_A \colon A \to A/L$ is the natural projection. Since $C$ is a vertex group we have that $C$ is residually-$\mathcal{C}$ by assumption and thus there is $M \in \mathcal{N_C}(C)$ such that $\psi_C(c_i) \neq 1$ and $\psi_C(d_j) \neq 1$ for $i = 1,2,\dots, n$, $j = 1, 2, \dots, m$ where $\psi_C \colon C \to C/M$ is the natural projection. Then for the corresponding extension $\psi: A\specam_H C \to A/L \specam_{\psi_A(H)} C/M$ we have
\begin{equation*}
	\begin{split}
		\psi(g) &= \psi_A(x_0)\psi_C(c_1)\psi_A(x_1) \dots \psi_C(c_n) \psi_A(x_n),\\
		\psi(f) &= \psi_A(y_0)\psi_C(d_1)\psi_A(y_1) \dots \psi_C(d_m) \psi_A(y_m).
	\end{split}
\end{equation*}
These are again reduced expressions and $n \neq m$. Then by Lemma \ref{conjugacy_criterion_special_amalgams} we see that $\psi(g) \not\in \psi(f)^{\psi(B)}$.

Case 2: $n = m$ and $c_j \neq d_j$ for some $j$. Again by argumentation analogous to previous case we see that there are $L \in \mathcal{N_C}(A)$ and $M \in \mathcal{N_C}(C)$ such that $\psi_A(x_i) \not\in \psi_A(H)$ whenever $x_i \not\in H$, $\psi_A(y_i) \not\in \psi_A(H)$ whenever $y_i \not\in H$ and $\psi_C(c_j) \neq \psi_C(d_j) $ where $\psi_A \colon A \to A/L$ $\psi_C \colon C \to C/M$ are the corresponding natural projections. Then for the corresponding extension $\psi: A\specam_H  C \to A/L \specam_{\psi_A(H)} C/M$ we have
\begin{equation*}
	\begin{split}
		\psi(g) &= \psi_A(x_0)\psi_C(c_1)\psi_A(x_1) \dots \psi_C(c_n) \psi_A(x_n),\\
		\psi(f) &= \psi_A(y_0)\psi_C(d_1)\psi_A(y_1) \dots \psi_C(d_n) \psi_A(y_n).
	\end{split}
\end{equation*}
These are again reduced expressions and $\psi(c_j) \neq \psi(d_j)$. Then again by Lemma \ref{conjugacy_criterion_special_amalgams} we see that $\psi(g) \not\in \psi(f)^{\psi(B)}$.

Case 3: $n=m$, $c_i = d_i$ for $i = 1, 2, \dots, n$ and $x_0 x_1 \dots x_n \not\in (y_0 y_1 \dots y_n)^B$. Since $x = x_0 x_1 \dots x_n \in A$ and $B$ is a full subgroup of $A$ we see that $x^B$ is $\mathcal{C}$-closed in $G$ by Lemma \ref{hlavni2} and therefore there is $L \in \mathcal{N_C}(A)$ such that $\psi_A(x) \not \in \psi_A(g)^{\psi_A(B)}$, where $\psi_A \colon A/L$ is the natural projection. Since $C$ is a vertex group we know by assumption that it is residually-$\mathcal{C}$ and thus there is $M \in \mathcal{N_C}(C)$ such that $\psi_C(c_i) \neq 1 \neq \psi_C(d_i)$ for $i = 1, 2, \dots, n$, where $\psi_C \colon C \to C/M$ is the natural projection. By extending $\psi_A \colon A \to Q = A/L$  and $\psi_C \colon C \to S = C/M$ to $\psi \colon A \specam_H C \to Q \specam_{\psi_A(H)} S$ we get that $\psi(f) \not \in \psi(g)^{\psi(B)}$ by Lemma \ref{conjugacy_criterion_special_amalgams}.

Case 4: Now we assume that condition (i) and (ii) from conjugacy criterion for special amalgams are satisfied and (iii) is not. Namely: let $b_0 \in B$ be such that $bxb^{-1} = y$ and $I  = \emptyset$ where
\begin{equation*}
 I = b_0C_B(x) \cap x_0Hx_0^{-1} \cap (y_0 y_1)H(x_0x_1)^{-1} \cap \dots \cap (y_0 y_1 \dots y_{n-1})H(x_0 x_1 \dots x_{n-1})^{-1},
\end{equation*}
	where $x = x_0 x_1 \dots x_n$. By assumption $H$ is a full subgroup of $A$ and thus it is $\mathcal{C}$-closed in A by Lemma \ref{retract_restriction}, hence there is $K \in \mathcal{N_C}(A)$ such that $x_iK \cap H = \emptyset = y_i K \cap H$ for all $i = 0, 1, \dots, n$. We assume that Lemmas \ref{hlavni1} and \ref{hlavni2} are true for $A$ and thus assumptions of Lemma \ref{humus} are true for $A$. Therefore we can use Lemma \ref{humus} to see that there is $L \in \mathcal{N_C}(A)$ such that $L \leq K$ and
\begin{equation*}
	\overline{b}_0C_{\overline{B}}(\overline{x}) \cap \overline{y}_0 H \overline{x}^{-1}_0 \cap (\overline{x}_0 \overline{x_1})\overline{H}(\overline{y}_0\overline{y}_1)^{-1} \cap \dots \cap (\overline{x}_0 \overline{x}_1 \dots \overline{x}_{n-1})\overline{H}(\overline{y}_0 \overline{y}_1 \dots \overline{y}_{n-1})^{-1} \subseteq \psi_A(IK),
\end{equation*}
where $\psi_A \colon A \to A/L$ is the natural projection and $\overline{x} = \psi_A(x)$, $\overline{x}_i = \psi_A(x_i)$, $\overline{y}_i = \psi_A(y_i)$ for $i = 0, 1, \dots n-1$, $\overline{H} = \psi_A(H)$ and $\overline{B} = \psi_A(B)$. Note that since $I = \emptyset$ we have $\psi_A(IK) = \emptyset$. Also since $C$ is a vertex group we know it is $\mathcal{C}\text{-HCS}$ and thus residually-$\mathcal{C}$, hence there is $M \leq \mathcal{N_C}(C)$ such that $\psi_C(x_i) \neq 1 \neq \psi_C(y_i) \neq 1$ for $i = 1,2,\dots, n$, where $\psi_C \colon C \to C/M$ is the natural projection. Therefore if we extend $\psi_A \colon A \to Q = A/L$ and $\psi_C \colon C \to S = C/M$ to $\psi \colon A \specam_H C \to Q \specam_{\psi_A(H)} S$ we see that $\psi(f) \not\in \psi(g)^{\psi(B)}$ by Lemma \ref{conjugacy_criterion_special_amalgams} as the condition (iii) is not true for $\psi(f)$ and $\psi(g)$.
\end{proof}

\begin{lemma}
\label{8.8}
	Suppose that $g_0, f_0, f_1, \dots , f_n \in G$ are arbitrary and that the products $g_0 = c_1 x_1 \dots c_m x_m$, where $c_1, \dots, c_m \in C$ and $x_1, \dots, x_m \in A$, and $f_0 = d_1 y_1 \dots d_k y_k$, where $d_1, \dots, d_k \in C$ and $y_1, \dots, y_k \in A$, are cyclically reduced in $G$. If $f_j \not\in g^H$ for all $j = 1, 2, \dots, m$ then there are groups $Q, S \in \mathcal{C}$ and a epimorphisms $\psi_A \colon A \to Q$ and $\psi_C \colon C \to S$ such that for the extension $\psi \colon A \specam_{H} C \to P$, where $P = Q \specam_{\psi_A(H)} S$, all of the following are true:
	\begin{enumerate}
		\item[(i)] $\psi(f_i) \not\in \psi(g_0)^{\psi(H)}$ in $P$ for all $i = 1, 2, \dots ,n$,
		\item[(ii)] the products 
			\begin{equation*}
				\begin{split}
				\psi(g_0) = \psi_C(c_1) \psi_A(x_1) \dots \psi_C(c_m) \psi_A(x_m),\\
				\psi(f_0) = \psi_C(d_1) \psi_A(y_1) \dots \psi_C(d_k) \psi_A(y_k)
				\end{split}
			\end{equation*}
	 are cyclically reduced in $P$.
	\end{enumerate}
\end{lemma}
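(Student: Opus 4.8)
The plan is to build the map $\psi$ by amalgamating finitely many quotient maps: one map responsible for condition~(ii), and, for each $i$, one map — supplied by Lemma~\ref{8.7} — responsible for $\psi(f_i) \notin \psi(g_0)^{\psi(H)}$. The amalgamation is carried out through the functorial property of special amalgams, Remark~\ref{extension}.

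First I would take care of (ii). Since $A = G_{V\Gamma \setminus \{v\}}$ is a graph product on $r$ vertices it is residually-$\C$ by Lemma~\ref{GP_residually_P}, and $H = G_{\link(v)}$ is a full, hence retract, subgroup of $A$, so $H$ is $\C$-closed in $A$ by Lemma~\ref{retract_restriction}. Therefore there is $L_0 \in \NC(A)$ whose quotient separates from $H$ each syllable $x_i \notin H$ of $g_0$ and each $y_j \notin H$ of $f_0$; and since $C = G_v$ is residually-$\C$ there is $M_0 \in \NC(C)$ with $C/M_0$ separating every $c_i$ and $d_j$ from $1$. Because $g_0 = c_1 x_1 \dots c_m x_m$ and $f_0 = d_1 y_1 \dots d_k y_k$ are cyclically reduced, the separations provided by $L_0$, together with $M_0$ keeping the consonants non-trivial, are exactly what makes the displayed images reduced; and when $m \geq 2$ (resp.\ $k \geq 2$) the last syllable $x_m \notin H$ (resp.\ $y_k \notin H$) is among those separated, which also secures ``cyclically reduced''. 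Next, for each $i \in \{1, \dots, n\}$ I would apply Lemma~\ref{8.7} with $g := g_0 \in G \setminus A$ (its consonant length $m$ is at least $1$), $f := f_i$ and $B := H$ (a full subgroup of $A$, since $\link(v) \subseteq V\Gamma \setminus \{v\}$): as $f_i \notin g_0^H$, this yields epimorphisms $\psi_{A,i} \colon A \to Q_i$, $\psi_{C,i} \colon C \to S_i$ with $Q_i, S_i \in \C$ such that the corresponding extension $\psi_i \colon G \to P_i := Q_i \specam_{\psi_{A,i}(H)} S_i$ satisfies $\psi_i(f_i) \notin \psi_i(g_0)^{\psi_i(H)}$.

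Now I would glue. Let $\psi_A \colon A \to (A/L_0) \times \prod_{i=1}^{n} Q_i$ and $\psi_C \colon C \to (C/M_0) \times \prod_{i=1}^{n} S_i$ be the diagonal homomorphisms, set $Q := \psi_A(A)$ and $S := \psi_C(C)$ — both in $\C$ by (c1) and (c2) — and let $\psi \colon G \to P := Q \specam_{\psi_A(H)} S$ be the induced extension (Remark~\ref{extension}). Condition~(ii) is immediate from the $(A/L_0)$- and $(C/M_0)$-coordinates of $\psi_A$ and $\psi_C$. For condition~(i), fix $i$: the projection onto the $Q_i$-factor restricts to a surjection $\pi_i^{Q} \colon Q \to Q_i$ with $\pi_i^{Q} \circ \psi_A = \psi_{A,i}$, and similarly $\pi_i^{S} \colon S \to S_i$ with $\pi_i^{S} \circ \psi_C = \psi_{C,i}$; since $\pi_i^{Q}(\psi_A(H)) = \psi_{A,i}(H)$, Remark~\ref{extension} lifts $\pi_i^{Q}$ and $\pi_i^{S}$ to a homomorphism $\Pi_i \colon P \to P_i$, and $\Pi_i \circ \psi = \psi_i$ because both sides agree on $A \cup C$. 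Hence if $\psi(f_i) \in \psi(g_0)^{\psi(H)}$ held, applying $\Pi_i$ and using $\Pi_i(\psi(H)) = \psi_i(H)$ would give $\psi_i(f_i) \in \psi_i(g_0)^{\psi_i(H)}$, contradicting the choice of $\psi_i$; so $\psi(f_i) \notin \psi(g_0)^{\psi(H)}$ for every $i$.

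The one genuinely delicate point is this gluing step: one must check that the coordinate projections $Q \to Q_i$ and $S \to S_i$ carry the amalgamated subgroup $\psi_A(H)$ exactly onto $\psi_{A,i}(H)$, so that Remark~\ref{extension} applies and the comparison maps $\Pi_i$ are compatible with $\psi$ — this compatibility is what lets the finitely many separately constructed quotients be merged without losing any of the non-conjugacy conclusions. Everything else is routine bookkeeping; the substantive content has already been invested in Lemma~\ref{8.7}. (If $m = 0$, i.e.\ $g_0 \in A$, condition~(ii) is vacuous for $g_0$ and condition~(i) reduces, via the retraction $G \to A$, to the $\C$-separability of $g_0^H$ in $A$, i.e.\ Lemma~\ref{hlavni2} for $A$; if desired this is dispatched as a trivial preliminary case.)
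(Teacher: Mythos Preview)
Your proof is correct and follows essentially the same approach as the paper: apply Lemma~\ref{8.7} with $B=H$ to each pair $(g_0,f_i)$, separately secure condition~(ii) via the $\C$-closedness of $H$ in $A$ and the residual-$\C$ property of $C$, and then combine by intersecting the finitely many kernels (your diagonal-into-product construction is equivalent to this). The paper's proof is terser, simply asserting that the combined map ``clearly'' works where you spell out the comparison maps $\Pi_i$.
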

\begin{proof}
	We set $B := H$ which is a full subgroup of $A$ thus we can apply Lemma \ref{8.7} to pairs $(g_0, f_1), \dots, (g_0, f_n)$ to obtain $L_1, \dots, L_n \in \mathcal{N_C}(A)$ and $M_1, \dots M_n \in \mathcal{N_C}(C)$ with corresponding natural projections $\alpha_i \colon A \to A/L_i$ and $\gamma \colon C \to C/M_i$, such that $\psi_i(f_i)\not\in \psi_i(g_0)^{\psi_i(H)}$ in $P_i$, where $\psi_i \colon A \specam_H C \to A/L_i \specam_{\alpha_i(H)} C/M_i$, for $i = 1, 2, \dots, n$.
	Note that since $H$ is a retract of $A$ and $A$ is residually-$\mathcal{C}$ we get that $H$ is $\mathcal{C}$-closed in $A$ by Lemma \ref{retract_restriction}. Thus there is $K \in \mathcal{N_C}(A)$ such that $x_i K \cap H = \emptyset$ whenever $x_i \not \in H$ and $y_j K \cap H = \emptyset$ whenever $y_j \not \in H$. Also by the same argumentation there is $M' \in \mathcal{N}_{\mathcal{C}}(C)$ such that $c_i \not\in M'$ and $d_j \not\in M'$ for all $i = 1, 2, \dots , m$ and $j = 1,2, \dots, k$.
	
	Set $L = K \cap \bigcap_{i=1}^{n}L_i$ and $M = M'\cap_{i=1}^n M_i$. Let $\psi_A \colon A \to A/L$ and $\psi_C \colon C \to C/M$ be the natural projections and let $\psi\colon A\specam_H C \to A/L \specam_{\psi_A(H)}C/M$. Clearly, this is the map we are looking for.
\end{proof}

\begin{lemma}
\label{8.9}
	Let $K \in \mathcal{N_C}(G)$, $B \leq A$ be a full subgroup of $A$ (and thus of $G$). Let $g \in G \setminus A$ be an element with reduced form $g = x_0 c_1 x_1 \dots c_n x_n$, where $x_0, x_1, \dots, x_n \in A$ and $c_1, \dots, c_n \in C$, such that $n \geq 1$. Then there are  groups $Q, S \in \C$ and epimorphisms $\psi_A \colon A \to Q$, $\psi_C \colon C \to S$ with the corresponding extension $\psi \colon A \specam_H C \to P$, where $P = Q \specam_{\psi_A(H)} S$, such that the following are true:
\begin{enumerate}
	\item[(i)] $C_{\psi(B)}(\psi(g)) \subseteq \psi(C_B(g)K)$,
	\item[(ii)] $\ker(\psi_A) \leq A \cap K$, $\ker(\psi_C) \leq C \cap K$ and $\ker(\psi) \leq K$.
\end{enumerate}
\end{lemma}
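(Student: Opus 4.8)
The plan is to project $g$ onto a special amalgam of groups from $\C$ via a homomorphism whose restrictions to $A$ and $C$ are chosen carefully enough that the centraliser of the image can be computed explicitly by Lemma \ref{special_amalgam_centraliser} and then shown to lie in the image of $C_B(g)K$. Note first that since $|V\Gamma\setminus\{v\}|=r$, the induction hypothesis supplies Lemmas \ref{hlavni1} and \ref{hlavni2} for the group $A$: every full subgroup of $A$ satisfies $\C\text{-CC}_A$, and for every $f\in A$ and every full subgroup $D\leq A$ the class $f^D$ is $\C$-closed in $A$. These are precisely the hypotheses under which Lemma \ref{humus} may be applied inside $A$.

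I would begin by recording the shape of the centraliser. As $g=x_0c_1x_1\dots c_nx_n$ is a reduced expression in $G=A\specam_HC$ with $n\geq 1$, Lemma \ref{special_amalgam_centraliser} gives
\[
	C_B(g)=C_B(x_0x_1\dots x_n)\cap\bigcap_{j=0}^{n-1}(x_0x_1\dots x_j)\,H\,(x_0x_1\dots x_j)^{-1}.
\]
Because $A$ is residually-$\C$ (Lemma \ref{GP_residually_P}) and $H$ is a full subgroup of $A$, hence a retract, $H$ is $\C$-closed in $A$ by Lemma \ref{retract_restriction}; so I can choose $K_1\in\mathcal{N_C}(A)$ with $K_1\leq A\cap K$ (note $A\cap K\in\mathcal{N_C}(A)$ since $\C$ satisfies (c1)) such that $x_iK_1\cap H=\emptyset$ whenever $x_i\notin H$, in particular for $i=1,\dots,n-1$. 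Now apply Lemma \ref{humus} to $A$, with $A_0:=B$, distinguished element $x_0x_1\dots x_n$, $b:=1$, and $A_i:=H$, ``$x_i$'' $:=x_0x_1\dots x_{i-1}$, ``$y_i$'' $:=(x_0x_1\dots x_{i-1})^{-1}$ for $i=1,\dots,n$, taking the normal subgroup there to be $K_1$. This yields $L\in\mathcal{N_C}(A)$ with $L\leq K_1$ so that, writing $\psi_A\colon A\to Q:=A/L$ for the projection and overlines for $\psi_A$-images, we have in $Q$
\[
	C_{\overline{B}}\big(\overline{x_0\dots x_n}\big)\cap\bigcap_{j=0}^{n-1}\overline{x_0\dots x_j}\,\overline{H}\,\big(\overline{x_0\dots x_j}\big)^{-1}\;\subseteq\;\psi_A\big(C_B(g)\,K_1\big),
\]
where the right-hand side results from re-indexing and the displayed formula for $C_B(g)$ applied in $G$. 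For the vertex-group side I use that $C$ is residually-$\C$ to pick $M\in\mathcal{N_C}(C)$ with $M\leq C\cap K$ and $c_i\notin M$ for all $i=1,\dots,n$, set $S:=C/M$ with projection $\psi_C$, and then let $\psi\colon G\to P:=Q\specam_{\psi_A(H)}S$ be the extension of $\psi_A$ and $\psi_C$ from Remark \ref{extension}.

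It then remains to verify (i) and (ii). The expression $\psi(g)=\psi_A(x_0)\psi_C(c_1)\psi_A(x_1)\dots\psi_C(c_n)\psi_A(x_n)$ is reduced in $P$ of consonant length $n\geq 1$, since $\psi_C(c_i)\neq 1$ for all $i$ and, as $L\leq K_1$, $\psi_A(x_i)\notin\psi_A(H)$ for $i=1,\dots,n-1$. Hence Lemma \ref{special_amalgam_centraliser} applies in $P$ and, using $\psi(B)=\psi_A(B)$ and $\psi_A(x_0)\cdots\psi_A(x_j)=\overline{x_0\dots x_j}$, identifies $C_{\psi(B)}(\psi(g))$ with exactly the left-hand side of the last display; combining this with that display and $K_1\leq K$ gives $C_{\psi(B)}(\psi(g))\subseteq\psi_A(C_B(g)K_1)=\psi(C_B(g)K_1)\subseteq\psi(C_B(g)K)$, which is (i). For (ii), $\ker(\psi_A)=L\leq K_1\leq A\cap K$ and $\ker(\psi_C)=M\leq C\cap K$ hold by construction, and by Lemma \ref{extension_kernel} $\ker(\psi)=\lnorm L,M\rnorm^G\leq K$ since $K\normleq G$ contains both $L$ and $M$.

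The only genuine difficulty is bookkeeping: one must match the full subgroups and the left/right conjugating factors in Lemma \ref{humus} to the prefixes $x_0\dots x_j$ appearing in $C_B(g)$ so that its conclusion lines up precisely with the two applications of Lemma \ref{special_amalgam_centraliser} (one in $G$ to identify $C_B(g)$, one in $P$ to identify $C_{\psi(B)}(\psi(g))$), and one must make sure the separability inputs — $H$ being $\C$-closed in $A$, and the induction hypothesis providing the hypotheses of Lemma \ref{humus} for $A$ — are all legitimately available.
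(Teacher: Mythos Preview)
Your proposal is correct and follows essentially the same approach as the paper: use Lemma \ref{special_amalgam_centraliser} to express $C_B(g)$ as an intersection inside $A$, invoke the induction hypothesis so that Lemma \ref{humus} applies to $A$ with $A_0=B$, $A_i=H$ and the prefix elements, obtain $L\leq K_1\leq A\cap K$ controlling that intersection modulo $L$, choose $M\leq C\cap K$ keeping the $c_i$ nontrivial, extend to $\psi$, and then read off $C_{\psi(B)}(\psi(g))$ via Lemma \ref{special_amalgam_centraliser} in $P$. The paper's proof is the same argument with the same ingredients (its $M_1$, $L_1$, $Z$ playing the roles of your $K_1$, $L$, $M$).
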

\begin{proof}
	Since $A$ is residually-$\mathcal{C}$ and $H$ is a retract in $A$ we see that $H$ is $\mathcal{C}$-closed in $A$ by Lemma \ref{retract_restriction}. Thus there is $M_1 \in \mathcal{N_C}(A)$ such that $x_i M_1 \cap H = \emptyset$ for all $i = 1, 2, \dots, n-1$. We may replace $M_1$ by $M_1 \cap (A \cap K)$ to ensure that $M_1 \leq A \cap K$. By Lemma \ref{special_amalgam_centraliser} we have $C_B(g) = I$ where
	\begin{equation*}
		I = C_B(x)\cap x_0H x_0^{-1} \cap (x_0 x_1)H(x_0 x_1)^{-1} \cap \dots \cap (x_0 \dots x_{n-1})H(x_0 \dots x_{n-1})^{-1}
	\end{equation*}
	and $x = x_0 x_1 \dots x_n$. Since $A$ is a graph product with less than $n$ vertices we may assume that both Lemmas \ref{hlavni1} and \ref{hlavni2} hold for $A$. Thus we can use Lemma \ref{humus} to show the there is $L_1 \in \mathcal{N_C}(A)$ such that $L_1 \leq M_1 \leq K \cap A$ and
	\begin{equation*}
		J = C_{\overline{B}}(\overline{x})\cap \overline{x}_0\overline{H} \overline{x}_0^{-1} \cap \dots \cap (\overline{x}_0 \dots \overline{x}_{n-1})\overline{H}(\overline{x}_0 \dots \overline{x}_{n-1})^{-1} \subseteq \psi_A(IM_1) \mbox{ in } A/L_1,
	\end{equation*}
	 where $\psi_A \colon A \to Q = A/L_1$ is the natural projection and $\overline{x} = \psi_A(x)$, $\overline{x}_i = \psi_A(x_i)$, for $i = 0, 1, \dots n-1$, $\overline{H} = \psi_A(H)$ and $\overline{B} = \psi_H(B)$.
	 
	 Since $C$ is $\mathcal{C}\text{-HCS}$ it is also residually-$\mathcal{C}$ and thus there is $Z \in \mathcal{N_C}(C)$ such that $Z \leq  C \cap K$ and $\psi_C(c_i) \neq 1$ in $C/Z$ for $i = 1, 2, \dots, n$, where $\psi_C \colon C \to S = C/Z$ is the natural projection. 
	 
	  Let $P = Q\specam_{\psi_A(H)} S$ and let $\psi\colon A\specam_{H}C \to P$ be the canonical extension of $\psi_A$ and $\psi_C$ to $G$. Now
	 \begin{displaymath}
	 	\psi(g)=\psi_A(x_0) \psi_C(c_1) \psi_A(x_1) \dots \psi_C(c_n) \psi(x_n),
	 \end{displaymath}
thus	 $C_{\overline{B}}(g) = J$ in $P$ by Lemma \ref{special_amalgam_centraliser}. This means that 
\begin{displaymath}
	C_{\psi(B)}(\psi(g)) \subseteq \psi(IM_1) = \psi(C_B(g)M_1),
\end{displaymath}
therefore the first assertion of the lemma holds. Note that $\ker(\psi_A) = L_1 \leq M_1 \leq K \cap A$ and $\ker(\psi_C) \leq K \cap C$.
 Since $\ker(\psi) = \lnorm \ker(\psi_A) \ker(\psi_C) \rnorm^{G}$ by Lemma \ref{extension_kernel} we see that $\ker(\psi) \leq K$ and thus the second assertion holds as well.
\end{proof}

\begin{lemma}
\label{8.10}
	Let $K \in \mathcal{N_C}(G)$ and let $g = c_1 x_1 \dots c_n x_n$, where $c_1, \dots c_n \in C$ and $x_1, \dots, x_n \in A$, be a cyclically reduced element of $G$ with $n \geq 1$. Then there are homomorphisms $\psi_A \colon A \to Q$ and $\psi_C \colon C \to S$, where $Q, S \in \mathcal{C}$, with a corresponding extension $\psi \colon G \to P$, where $P = Q \specam_{\psi_A(H)} S$, such that the following is true 
\begin{enumerate}
	\item[(i)] $\ker(\psi_A) \leq A \cap K$, $\ker(\psi_C) \leq C \cap K$ and $\ker(\psi) \leq K$,
	\item[(ii)]	$C_{P}(\psi(g)) \subseteq \psi(C_G(g)K) \mbox{ in } P$.
\end{enumerate}
\end{lemma}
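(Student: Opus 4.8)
The plan is to read off the structure of $C_G(g)$ from Lemma~\ref{7.8} and then build $\psi$ so that, applying Lemma~\ref{7.8} again inside $P$, the description of $C_P(\psi(g))$ is matched term by term by $\psi$ applied to the description of $C_G(g)$ — giving $C_P(\psi(g))\subseteq\psi(C_G(g)K)$. Since $g$ is cyclically reduced with $n\geq 1$, Lemma~\ref{7.8} splits the argument into the case $x_n\in H$ (which forces $n=1$) and the case $x_n\notin H$. Throughout I will use that $A\cap K\in\NC(A)$ and $C\cap K\in\NC(C)$, and that a homomorphism extending $\psi_A,\psi_C$ has kernel $\leq K$ as soon as $\ker\psi_A\leq A\cap K$ and $\ker\psi_C\leq C\cap K$ (Lemma~\ref{extension_kernel}).

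\emph{Case $x_n\in H$.} Here $g=c_1x_1$ with $c_1\in C\setminus\{1\}$, $x_1\in H$, and $C_G(g)=C_C(c_1)C_H(x_1)$. As $C=G_v$ is a vertex group it is $\CHCS$, hence satisfies $\mathcal{C}\text{-CC}$ by Theorem~\ref{cc_and_cs_is_hcs}; applying this to $c_1$ and $C\cap K$, and shrinking the kernel to separate $c_1$ from $1$, I obtain $\psi_C\colon C\to S\in\C$ with $\ker\psi_C\leq C\cap K$, $\psi_C(c_1)\neq 1$ and $C_S(\psi_C(c_1))\subseteq\psi_C(C_C(c_1)(C\cap K))$. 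Since $A$ is a graph product on $\leq r$ vertices and $H$ is a full subgroup of $A$, the induction hypothesis (Lemma~\ref{hlavni1}) gives that $(H,x_1)$ satisfies $\mathcal{C}\text{-CC}_A$, so I obtain $\psi_A\colon A\to Q\in\C$ with $\ker\psi_A\leq A\cap K$ and $C_{\psi_A(H)}(\psi_A(x_1))\subseteq\psi_A(C_H(x_1)(A\cap K))$. Extend to $\psi\colon G\to P=Q\specam_{\psi_A(H)}S$; then $\psi(g)=\psi_C(c_1)\psi_A(x_1)$ is cyclically reduced with its $A$-syllable in $\psi_A(H)$, so by Lemma~\ref{7.8}, $C_P(\psi(g))=C_S(\psi_C(c_1))C_{\psi_A(H)}(\psi_A(x_1))\subseteq\psi(C_C(c_1)K)\psi(C_H(x_1)K)=\psi(C_G(g)K)$, using that $K$ is normal.

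\emph{Case $x_n\notin H$.} Let $p_1,\dots,p_k$ be the prefixes of $g$ with $p_i^{-1}gp_i\in g^H$ and $q_1,\dots,q_l$ the remaining prefixes; fix $h_i\in H$ with $h_ip_i^{-1}gp_ih_i^{-1}=g$ and $\Omega=\{h_ip_i^{-1}\}$, so $C_G(g)=C_H(g)\langle g\rangle\Omega$ by Lemma~\ref{7.8}. Each $q_j^{-1}gq_j$ is a cyclic permutation of the cyclically reduced $g$, hence cyclically reduced and outside $g^H$, so Lemma~\ref{8.8} (with $g_0=f_0=g$ and $f_j=q_j^{-1}gq_j$) yields $\psi^{(1)}\colon G\to P_1$, the extension of some $\psi_A^{(1)},\psi_C^{(1)}$, with $\psi^{(1)}(g)$ cyclically reduced in $P_1$ and $\psi^{(1)}(q_j^{-1}gq_j)\notin\psi^{(1)}(g)^{\psi^{(1)}(H)}$. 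Independently, Lemma~\ref{8.9} with $B=H$ (a full subgroup of $A$) applied to $g\in G\setminus A$ and $K$ yields $\psi^{(2)}\colon G\to P_2$, the extension of some $\psi_A^{(2)},\psi_C^{(2)}$, with $C_{\psi^{(2)}(H)}(\psi^{(2)}(g))\subseteq\psi^{(2)}(C_H(g)K)$ and kernels inside $A\cap K$, $C\cap K$, $K$ respectively. Using that $H$ is $\C$-closed in $A$ (Lemma~\ref{retract_restriction}) choose $N_0\in\NC(A)$ with $x_nN_0\cap H=\emptyset$, and set $\psi_A\colon A\to Q=A/(\ker\psi_A^{(1)}\cap\ker\psi_A^{(2)}\cap N_0)$, $\psi_C\colon C\to S=C/(\ker\psi_C^{(1)}\cap\ker\psi_C^{(2)})$, with extension $\psi\colon G\to P=Q\specam_{\psi_A(H)}S$. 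Then $\psi$ refines both $\psi^{(1)}$ and $\psi^{(2)}$, so $\psi(g)$ remains cyclically reduced of consonant length $n$ with last $A$-syllable outside $\psi_A(H)$, each $\psi(q_j^{-1}gq_j)$ stays outside $\psi(g)^{\psi(H)}$, $C_{\psi(H)}(\psi(g))\subseteq\psi(C_H(g)K)$ (this survives the refinement since $\ker\psi\leq\ker\psi^{(2)}\leq K$), and $\ker\psi\leq K$. Finally I apply Lemma~\ref{7.8} inside $P$: every prefix of $\psi(g)$ is the $\psi$-image of a prefix of $g$, and such an image conjugates $\psi(g)$ into $\psi(g)^{\psi(H)}$ exactly when the original prefix is one of the $p_i$; hence the relevant conjugator set may be taken to be $\psi(\Omega)$, and $C_P(\psi(g))=C_{\psi(H)}(\psi(g))\langle\psi(g)\rangle\psi(\Omega)\subseteq\psi(C_H(g)K)\langle\psi(g)\rangle\psi(\Omega)=\psi(C_H(g)\langle g\rangle\Omega K)=\psi(C_G(g)K)$.

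The main obstacle is the final step's bookkeeping: one must check that passing to $P$ neither changes the consonant length of $\psi(g)$ nor creates new ``good'' prefixes, so that the Lemma~\ref{7.8} description of $C_P(\psi(g))$ really is $\psi$ of the Lemma~\ref{7.8} description of $C_G(g)$. This is precisely what forces the simultaneous use of Lemma~\ref{8.8} (controlling prefixes and cyclic reducedness), Lemma~\ref{8.9} (controlling the factor $C_H(g)$), and the kernel-intersection argument, together with the routine observation that centraliser containments pass to finer quotients provided the relevant kernels lie in $K$.
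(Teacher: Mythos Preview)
Your proposal is correct and follows essentially the same approach as the paper: both split into the cases $x_n\in H$ and $x_n\notin H$ via Lemma~\ref{7.8}, handle the first case using the $\mathcal{C}$-CC of the vertex group $C$ together with the induction hypothesis (Lemma~\ref{hlavni1}) applied to $(H,x_1)$ in $A$, and handle the second case by intersecting the kernels coming from Lemma~\ref{8.8} (to preserve cyclic reducedness and prevent new ``good'' prefixes) and Lemma~\ref{8.9} with $B=H$ (to control $C_H(g)$), then reassembling $C_P(\psi(g))$ via Lemma~\ref{7.8} in $P$. Your extra $N_0$ ensuring $\psi_A(x_n)\notin\psi_A(H)$ is harmless but redundant, since Lemma~\ref{8.8} already guarantees this; and your justification that the inclusion $C_{\psi(H)}(\psi(g))\subseteq\psi(C_H(g)K)$ survives the refinement (because $\ker\psi^{(2)}\leq K$) is a point the paper glosses over.
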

\begin{proof}
	We need to consider two separate cases: $x_n \in H$ or $x_n \not \in H$.
	
	Suppose $x_n \in H$.
	Then by Lemma \ref{7.8} we see that $n=1$ and $C_G(c_1 x_1) = C_C(c_1) \times C_H(x_1)$. Since $A$ is a graph product with less than $n$ vertices we can use the induction hypothesis of Lemma \ref{hlavni2} to find $L \in \mathcal{N_C}(A)$ such that $L \leq K \cap A$ and $C_{\alpha(H)}(\alpha(x_1)) \subseteq \alpha(C_H(x_1)( K \cap A))$, where $\alpha \colon A \to Q = A/L$ is the natural projection. Since $C$ is a vertex group we assume that it is $\mathcal{C}\text{-HCS}$ and therefore it satisfies $\mathcal{C}\text{-CC}$ by Theorem \ref{cc_and_cs_is_hcs}. This means that there is $M \in \mathcal{N_C}(C)$ such that $M \leq K \cap C$ and $C_S(\gamma(c_1)) \subseteq \gamma(C_C(c_1)(K\cap C))$, where $S = C/M$ and $\gamma \colon C \to S$ is the canonical projection. Note that since $A$ is residually-$\mathcal{C}$ by Lemma \ref{GP_residually_P} and $C$ is residually-$\mathcal{C}$ by assumption these maps can be chosen so that $\alpha(x_1) \neq 1$ in $Q$ and $\gamma(c_1)\neq 1$ in $S$.
	
	 Let $\psi \colon A\specam_H C \to P$, where $P = Q\specam_{\alpha(H)}S$, be the canonical extension of $\alpha$ and $\gamma$ to $G$. Since $\psi(g) = \gamma(c_1) \alpha(x_1)$ is again reduced by Lemma \ref{7.8} we see that $C_P(g) = C_S(\gamma(c_1)) \times C_P(\alpha)$. From the construction of the maps $\alpha$ and $\gamma$ we see that $C_{P}(\psi(g)) \subseteq \psi(C_G(g)K)$. 

	Suppose $x_n \not\in H$.
	Let $\{p_1, p_2, \dots\, p_{n+1}\}$ be the set of prefixes of $g$ and assume there is $1 \leq m < n$ such that $p_i^{-1} g p_i \not\in g^H$ if $i \leq m$ and $p_i^{-1} g p_i \in g^H$ if $i > m$. For every $i > m$ there is $h_i \in H$ such that $h_i p_i^{-1} g p_i h^{-1} = g$ and thus $h_i p_i^{-1} \in C_G(g)$. Set 
	\begin{equation}
		\label{omega}
		\Omega = \{h_i p_i^{-1} \mid i >m\}.
	\end{equation}
	Let $f_i = p_i^{-1} g p_i$ for $i = 1, 2, \dots, m$. Clearly $f_1, f_2, \dots, f_m \not\in g^H$. Let $f_0 = g$. Using Lemma \ref{8.8} we see that there are $\mathcal{C}$-groups $Q_1$ and $S_1$ and epimorphism $\alpha_1 \colon A \to Q_1$ and $\gamma_1 \colon C \to S_1$ such that if we take the corresponding extension $\psi_1 \colon A \specam_H C \to Q_1 \specam_{\alpha_1(H)} S_1$ the element $\psi_1(f_0) = \gamma(c_1) \alpha(x_1) \dots \gamma(c_n) \alpha(x_n)$ is cyclically reduced and $\psi_i(f_i) \not\in \psi(g)^{\psi(H)}$ for $i=1, 2, \dots, m$. Let $L_1 = \ker(\alpha_1)$ and $M_1 = \ker(\gamma_1)$. Note that $L_1 \in \mathcal{N_C}(A)$ and $M_1 \in \mathcal{N_C}(C)$.
	
	By Lemma \ref{8.9} there are $\mathcal{C}$-groups $Q_2, S_2$ and an epimorphism $\alpha_2 \colon A \to Q_2$, $\gamma_2 \colon C \to S_2$ such that if we take the corresponding extension $\psi_2 \colon A\specam_H C \to Q_2 \specam_{\alpha_2(H)} S_2$ we have $\ker(\alpha_2) \leq K \cap A$, $\ker(\gamma_2) \leq K \cap C$, $\ker(\psi_2) \leq K$ and
	\begin{displaymath}
		C_{\psi_2(H)}(\psi_2(g)) \subseteq \psi_2(C_H(g)K) \mbox{ in } Q_2 \specam_{\alpha_2(H)} S_2.
	\end{displaymath}
	
	Take $L = \ker(\alpha_1) \cap \ker(\alpha_2) \normleq A$ and let $M = \ker(\gamma_1) \cap \ker(\gamma_2) \normleq C$. Let $\alpha \colon A \to Q = A/L$ and $\gamma \colon C \to S = C/M$  be the natural projections. Note that $L \in \mathcal{N_C}(A)$ and $M \in \mathcal{N_C}(C)$. Let $\psi \colon G \to P$ be the corresponding extension, where $P = Q \specam_{\alpha(H)} S$. We immediately get
	\begin{enumerate}
		\item $\ker(\alpha) \leq K \cap A$ and $\ker(\psi) \leq K$,
		\item $C_{\psi(H)}(\psi(g)) \subseteq \psi(C_H(g)K)$ in $P$,
		\item $\psi(f_1), \dots, \psi(f_m) \not\in \psi(g)^{\psi(H)}$,
		\item the element $\psi(g) = \gamma(c_1) \alpha(x_1) \dots \gamma(c_n) \alpha(x_n)$ is cyclically reduced in $P$. 
	\end{enumerate}
	Since we assume that $x_n \not\in H$ we get by Lemma \ref{7.8} that
	\begin{equation*}
		C_G(g) = C_H(g)\langle g \rangle \Omega,
	\end{equation*}
	where $\Omega$ is given by (\ref{omega}).
	Also by Lemma \ref  {7.8} we see that in $P$ we have
	\begin{equation*}
		C_P(\psi(g)) = C_{\psi(H}(\psi(g))\langle \psi(g) \rangle \tilde{\Omega}
	\end{equation*}
	where $\tilde{\Omega} = \{\overline{h_i} \overline{p_i}^{-1} \mid i>m \}$  such that $\overline{p}_i$ is a prefix of $\psi(g)$ such that $\overline{p}_i^{-1} \psi(g) \overline{p}_i \in \psi(g)^{\psi(H)}$ and $\overline{h_i} \in \psi(H)$ such that $\overline{h}_i \overline{p}_i \psi(g) \overline{p_i} \overline{h}_i^{-1} = \psi(H)$. Since $\psi(f_i) = \psi(p_i)^{-1}\psi(g)\psi(p_i) \not\in \psi(g)^{\psi(H)}$ when $i \leq m$, therefore we can conclude that $\psi(p_i)^{-1}\psi(g)\psi(p_i) \in \psi(g)^{\psi(H)}$ if and only if $p_i^{-1} g p_i \in g^H$. Altogether we see that $\tilde{\Omega} = \psi(\Omega)$.
	
	Finally, $\langle \psi(g) \rangle = \psi(\langle g \rangle)$. From this we see that
	\begin{displaymath}
		C_P(\psi(g)) \subseteq \tilde{\psi}(C_G(g)K)
	\end{displaymath}  
	and thus the lemma holds.
\end{proof}

\begin{proof}[Proof of Lemma \ref{hlavni1}]
We will proceed by induction on $|V\Gamma|$. If $|V\Gamma| = 0$ then $G = \{1\}$ and the statement holds trivially. Now suppose that the statement holds for all graph products $\Gamma\mathcal{G}$ with $|V \Gamma| \leq r-1$. Let $G = \Gamma\mathcal{G}$ where $|V\Gamma| = r$. There are two cases to be distinguished: $B \neq G$ and $B = G$.

Suppose $B$ is a proper full subgroup of $G$. Then we can pick a maximal proper full subgroup $A \leq G$ such that $B \leq A$. If $g \in A$ then $g^B$ is $\mathcal{C}$-closed in $A$ by induction hypothesis and thus it is $\mathcal{C}$-closed in $G$ by Lemma \ref{retract_restriction} as $A$ is a retract in $G$ and $G$ is residually-$\mathcal{C}$ by Lemma \ref{GP_residually_P}. Suppose that $g \in G \setminus A$ and let $f \in G \setminus g^B$ be arbitrary. By Lemma \ref{8.7} there are $\mathcal{C}$-groups $Q$, $S$ and epimorphism $\alpha \colon A \to Q$, $\gamma \colon C \to S$ with the corresponding extension $\psi \colon G \to Q \specam_{\alpha(H)} S$ such that $\psi(f) \not\in \psi(g)^{\psi(B)}$ in $P$. Since $P$ is a special amalgam of (finite) $\mathcal{C}$-groups we see that it is residually-$\mathcal{C}$ by Lemma \ref{my_dyer}. Since $|Q| < \infty$ we see that $\psi(B)$ is a finite subset of $Q$ and therefore $\psi(g)^{\psi(B)}$ is finite and thus is $\mathcal{C}$-closed in $P$. By Lemma \ref{closed_simplification} we see that $g^B$ is $\mathcal{C}$-closed in $G$.
  
 Now suppose $B=G$. If $g=1$ then $1^G = \{1\}$ is $\mathcal{C}$-separable in $G$ since it is finite subset of $G$ and $G$ is residually-$\C$. Let's assume $g \neq 1$. Then by Lemma \ref{maximal_full_subgroup} there is a maximal full subgroup $A \leq G$ such that $g \not \in A^G$. Then $G$ naturally splits as $G = A \specam_{H} C$ where $H$ is a full subgroup of $A$ and $C$ is a vertex group. Then $g$ is a conjugate to some cyclically reduced element of $G$, say $g_0$. Suppose $g_0 = c_1 x_1 \dots c_n x_n$, where $x_1, \dots, x_n \in A$ and $c_1, \dots, c_n \in C$, is the cyclically reduced expression for $g_0$ Note that $g^G = g_0^G$. Let $f \in G \setminus g^G$. There are two sub-cases to consider: $f \not\in A^G$ and $f \in A^G$.

Suppose $f\not\in A^G$. Let $f_0$ be a cyclically reduced element of $G$ conjugate to $f$, thus $f^G = f_0^G$. Let $f_0 = d_1 y_1 \dots d_m y_m$, where $y_1, \dots, y_m \in A$ and $d_1, \dots, d_m \in C$, be the reduced expression for $f_0$ and let $f_1, f_2, \dots f_m$ denote the set of all of its cyclic permutations. Clearly $f_i \not \in g^H$ for all $i$ since $f \not\in g^G$. Then by Lemma \ref{8.8} there are groups $Q, S \in \C$ and epimorphisms $\alpha \colon A \to Q$, $\gamma \colon C \to S$ with corresponding extension $\psi \colon G \to P$, where $P = Q\specam_{\alpha(H)} S$, such that $\psi(f_1), \psi(f_2), \dots \psi(f_m) \not\in \psi(g)^{\psi(H)}$ and $\psi(f_0) = \gamma(d_1)\alpha(x_1) \dots \gamma(d_m)\alpha(x_m)$ is cyclically reduced in $P$. Since $\psi(f_1), \dots, \psi(f_m)$ are all the cyclic permutations of $\psi(f_0)$ we can conclude that $\psi(f) \not \in \psi(g)^{P}$ by Lemma \ref{collinss_lemma}. 

Assume that $f \in A^G$. By Lemma \ref{8.8} there are groups $Q,S \in \mathcal{C}$ and projections $\alpha \colon A \to Q$, $\gamma \colon C \to S$ with extension $\psi \colon G \to P$, where $P = Q \specam_{\alpha(H)} S$, such that $\psi(g_0) = \gamma(c_1)\alpha(x_1) \dots \gamma(c_n)\alpha(x_n)$ is cyclically reduced in $P$. Since $n \geq 1$ by Lemma \ref{collinss_lemma} we see that $\psi(g_0) \not\in Q^P = \psi(A^G)$. As we assume that $f \in A^G$ we see that $\psi(f^G) = \psi(f)^P \subseteq \psi(A^G)$. We see that $\psi(g_0) \not \in \psi(f)^P$ and hence $\psi(f) \not\sim_P \psi(g)$. 

Either way, in both cases when $f \not\in A^G$ and $f \in A^G$ we have found a homomorphism $\psi$ that separates $f$ from $g^G$ in an amalgam of $\mathcal{C}$-groups which is $\mathcal{C}\text{-HSC}$ by Lemma \ref{my_dyer}. Thus by Lemma \ref{closed_simplification} we see that $g^G$ is $\mathcal{C}$-closed in $G$.
\end{proof}

\begin{proof}[Proof of Lemma \ref{hlavni2}]
Again, we proceed by induction on $|V\Gamma|$. If $|V\Gamma| = 0$ then $G = \{1\}$ and the statement holds trivially. Now suppose that the statement holds for all $\Gamma$ with $|V \Gamma| \leq r-1$. Let $G = \Gamma\mathcal{G}$ where $|V\Gamma| = r$. Let $K \in \mathcal{N_C}(G)$ be arbitrary. There are two cases to be distinguished: $B \neq G$ and $B = G$.

Suppose $B$ is a proper full subgroup of $G$. Then there is a maximal full subgroup $A \leq G$ such that $B \leq A$. Clearly $A$ is a graph product with $r-1$ vertices and therefore the statement holds for $A$. Let $K_A = K \cap A$ and let $K_C = K \cap C$. Obviously $G$ splits as $G = A \specam_H C$, where $H$ is a full subgroup of $A$ and $C$ is a vertex group. We consider two separate sub-cases: $g \in A$ and $g \in G \setminus A$.

Assume that $g \in A$. By induction we see that the pair $(B,g)$ has $\mathcal{C}\text{-CC}_A$ in $A$. Thus there is a $\mathcal{C}$-group $Q$ such that $L_1 = \ker(\alpha) \leq K_A$, and
\begin{displaymath}
	C_{\alpha(B)}\subseteq \alpha(C_B(g)K_A) \mbox{ in } Q,
\end{displaymath}
 where $\alpha \colon A \to Q$ is the natural projection. Let $\rho_A \colon G \to A$ be the canonical retraction of $G$ onto $A$ and set $L = \rho_A^{-1}(L_1) \cap K$. Clearly $L \in \mathcal{N_C}(G)$ and $\rho_A(L) = L_1 \leq K_A$. Let $\phi \colon G \to R = G/L$ be the natural projection. Note that $\ker(\alpha) = \ker(\phi) \cap A$ in $G$, thus we may assume that $Q \leq R$ and $\phi \restriction A = \alpha$. Then $\alpha(K_A) = \phi(K_A) \subseteq \phi(K)$ in $R$. Since $g \in A$, $B$ is a full subgroup of $A$ and $C_{\alpha(B)}\subseteq \alpha(C_B(g)K_1)$ in $Q$, we get that
\begin{displaymath}
	C_{\phi(B)}(\phi(g)) = C_{\alpha(B)}(\alpha(g)) \subseteq \alpha(C_B(g)K_A) \subseteq \phi(C_B(g)K) \mbox{ in } R.
\end{displaymath}
Thus we see that if $g \in A$ then the pair $(B,g)$ has $\mathcal{C}\text{-CC}_G$.

Now suppose that $g\in G \setminus A$. Let $g = x_0 c_1 x_1 \dots c_n x_n$, where $x_0, \dots, x_n \in A$ and $c_1, \dots, c_n \in C$, be a reduced expression for $g$. By Lemma \ref{8.9} we can find $\mathcal{C}$-groups $Q$, $S$ and epimorphisms $\alpha \colon A \to Q$, $\gamma \colon C \cap S$ with corresponding extension $\psi \colon G \to P = Q\specam_{\alpha(H)} S$ such that $\ker(\alpha) \leq K_A$, $\ker(\gamma) \leq K_C$, $\ker(\psi) \leq K$ and
\begin{displaymath}
	C_{\psi(B)}(\psi(g)) \subseteq \psi(C_B(g)K).
\end{displaymath}
P is a special amalgam of $\mathcal{C}$-groups and thus is residually-$\mathcal{C}$ by Corollary \ref{my_dyer}. Since $Q$ is finite we see that $\psi(K) \cap \psi(B) \leq \psi(B) \leq Q$ is finite, thus $\psi(g)^{\psi(B)\cap\psi(K)}$ is $\C$-closed in $P$. By Lemma \ref{3.7} one obtains $\xi \colon P \to R$, where $R \in \mathcal{C}$ such that $\ker(\xi) \leq \psi(K)$ and
\begin{displaymath}
	C_{\xi(\psi(B))}(\xi(\psi(B)))\subseteq \xi(C_{\psi(B)}(\psi(g))\psi(K)) \mbox{ in } R.
\end{displaymath}
Take $\phi \colon G \to R$ to be defined as $\phi = \xi \circ \psi$. Obviously, $\phi$ is the map we are looking for.

We are left with the last remaining case, when $B = G$. We may assume $g \in G\setminus\{1\}$ as the pair $(G,1)$ has $\mathcal{C}\text{-CC}_G$ trivially. By Lemma \ref{maximal_full_subgroup} there is a maximal full subgroup $A \leq G$ such that $g \not \in A^G$. Then $G$ naturally splits as $G = A \specam_{H} C$, where $H \leq A$ is a full subgroup of $A$ and $C$ is a vertex group in $G$. There is $z\in G$ such that $g_0 = z g z^{-1}$ is cyclically reduced in $G$. Let $g_0 = c_1 x_1 \dots c_n x_n$, where $x_i \in A$ and $c_i \in C$ for $i = 1,\dots, n$, be a reduced expression for $g_0$. Since $g \not\in A^G$ we see that $n \geq 1$. By Lemma \ref{8.10} there are $\mathcal{C}$-groups $Q$, $S$ and epimorphisms $\alpha \colon A \to Q$, $\gamma \colon C \to S$ with a corresponding extension $\psi \colon G \to P$, where $P = Q \specam_{\alpha(H)} S$, such that $\ker(\alpha) \leq K \cap A$, $\ker(\gamma) \leq K \cap C$ and
\begin{displaymath}
	C_P(\psi(g)) \subseteq \psi(C_G(g)K) \mbox{ in } P.
\end{displaymath}
Since $P$ is special amalgam of $\mathcal{C}$ groups we see that it is $\mathcal{C}\text{-HCS}$ by Corollary \ref{my_dyer} and thus the pair $(\psi(G), \psi(g))$ satisfies $\mathcal{C}\text{-CC}_P$ in $P$. Note that in every case the homomoprhism $\psi$ was constructed so that $\ker(\psi) \leq K$ thus by Lemma \ref{cc_simplification} we see get that the pair $(g,G)$ has $\mathcal{C}\text{-CC}_G$ in $G$.
\end{proof}

Now we are ready to prove Theorem \ref{main_hcs}.
\begin{proof}[Proof of Theorem \ref{main_hcs}]
	Let $G = \Gamma \mathcal{G}$ be a graph product such that $|V\Gamma| < \infty$ and $G_v$ is $\mathcal{C}\text{-HCS}$ for all $v \in V\Gamma$. Note that $G$ is a full subgroup of itself and thus by Lemma \ref{hlavni1} we see that the pair $(G,g)$ has $\mathcal{C}\text{-CC}_G$ for every $g \in G$ and thus $G$ satisfies $\mathcal{C}\text{-CC}$. By Lemma \ref{hlavni2} we see that the set $g^G$ is $\mathcal{C}$-closed in $G$ for every $g \in G$, hence $G$ is $\mathcal{C}\text{-CS}$. Finally using Theorem \ref{cc_and_cs_is_hcs} we get that $G$ is $\mathcal{C}\text{-HCS}$.
\end{proof}

Note that every group from the class $\C$ is $\mathcal{C}\text{-HCS}$. Then as an immediate consequence of the Theorem \ref{main_hcs} we get that graph products of groups belonging to an extension closed variety of finite groups $\C$ are $\C\text{-HCS}$.
\begin{cor}
	\label{finite}
	Assume that $\C$ is an extension closed variety of finite groups. Let $\Gamma$ be a finite graph and let $\mathcal{G} = \{G_v \mid v\in V\Gamma \}$ be a family of groups such that $G_v \in \mathcal{C}$ for all $v \in V\Gamma$. Then the group $G = \Gamma\mathcal{G}$ is $\mathcal{C}\text{-HCS}$.
\end{cor}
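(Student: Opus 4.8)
The plan is to deduce Corollary~\ref{finite} directly from Theorem~\ref{main_hcs}: as soon as we know that every finite group belonging to $\C$ is itself $\CHCS$, the group $G = \Gamma\mathcal{G}$ is a \emph{finite} graph product of $\CHCS$ groups, and Theorem~\ref{main_hcs} applies verbatim. So the entire task reduces to the observation that every $F \in \C$ is $\CHCS$.

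First I would check that an arbitrary $F \in \C$ is $\CCS$. Given $f,g \in F$ with $f \not\sim_F g$, the identity homomorphism $\id_F \colon F \to F$ has its target in $\C$ and keeps $f$ and $g$ non-conjugate, so by the $\CCS$-specialisation of Lemma~\ref{closed_simplification} the class $f^F$ is $\C$-closed in $F$. Equivalently: $F$ is residually-$\C$ because $\{1\} \in \NC(F)$, and then each conjugacy class of the finite group $F$ is a finite union of singletons, hence $\C$-closed.

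Next, for the hereditary part, let $H \leq F$ be any $\C$-open subgroup. Because $\C$ is closed under subgroups (property (c1)) and $F \in \C$, we get $H \in \C$, so $H$ is $\CCS$ by the previous paragraph. Since every $\C$-open subgroup of $F$ is thus $\CCS$, the group $F$ is $\CHCS$ by definition. (Alternatively one could note that $F$ trivially satisfies $\mathcal{C}\text{-CC}$ --- for any $K \in \NC(F)$ simply take $L = \{1\} \in \NC(F)$, so that the projection is (up to identification) $\id_F$ and $C_F(g) \subseteq C_F(g)K$ --- and then invoke Theorem~\ref{cc_and_cs_is_hcs}; but the direct argument is cleaner.)

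Finally, since $\Gamma$ is finite and each vertex group $G_v \in \C$ is $\CHCS$ by the above, Theorem~\ref{main_hcs} yields that $G = \Gamma\mathcal{G}$ is $\CHCS$. I do not expect any genuine obstacle here: all the substantive work lives in Theorem~\ref{main_hcs}, and the only point that needs a moment's care is remembering that $\CHCS$ is a statement about \emph{all} $\C$-open subgroups of $F$, which causes no trouble precisely because closure under subgroups confines us to $\C$.
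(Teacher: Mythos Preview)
Your proposal is correct and follows exactly the paper's own approach: the paper observes in one sentence that every group from $\C$ is $\CHCS$, and then invokes Theorem~\ref{main_hcs}. Your write-up simply unpacks that one-sentence observation in more detail, but the argument is the same.
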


\section{Infinite graphs and $\CCS$ groups}
\label{CS}
Again, we will assume that the class $\C$ is an extension closed variety of finite groups.
\subsection{Graph products of $\C$-CS groups}

Before we proceed we mention one important property of graph products: they are functorial. 
\begin{remark}
	\label{functorial_property_of_graph_products}
	Let $\Gamma$ be a graph and let $\mathcal{G} = \{G_v | v \in V\Gamma\}$ and $\mathcal{F} = \{F_v | v \in V\Gamma\}$ be two families of groups indexed by vertices of $V\Gamma$. Assume that for every $v \in V\Gamma$ there is a homomorphism $\phi_v \colon G_v \to F_v$. Then there is a unique homomorphism $\phi \colon G \to F$, where $G = \Gamma\mathcal{G}$ and $F = \Gamma \mathcal{F}$ such that $\phi \restriction_{G_v} = \phi_v$ for all $v \in V\Gamma$
\end{remark}

We will use  Corollary \ref{finite} to show that the class of $\mathcal{C}\text{-CS}$ groups is closed under graph products. The main idea is to construct a suitable map onto a finite graph product of groups belonging to the class $\C$. First we need to show that we can always find such a homomorphism that preserves length and support of a given element.
\begin{lemma}
	\label{GP_pomocna}
	Let $G =\Gamma\mathcal{G}$ be a graph product such that $G_v$ is residually-$\mathcal{C}$ for every $v \in V\Gamma$ and let $g \in G$. Then there is $\mathcal{F} = \{F_v | v\in V\Gamma\}$, a family of $\mathcal{C}$-groups indexed by $V\Gamma$, and a homomorphism $\phi_v \colon G_v \to F_v$ for every $v \in V\Gamma$ such that for the corresponding extension $\phi \colon G \to F$ (given by Remark \ref{functorial_property_of_graph_products}), where $F = \Gamma\mathcal{F}$, all of the following are true:
	\begin{itemize}
		\item[(i)] $|g| = |\phi(g)|$,
		\item[(ii)] $\supp(g) = \supp(\phi(g)),$
		\item[(iii)] If $g$ is $\Gamma$-cyclically reduced in $G$ then $\phi(g)$ is $\Gamma$-cyclically reduced in $F$.
	\end{itemize}
\end{lemma}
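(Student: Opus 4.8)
The plan is to build $\phi$ from a fixed $\Gamma$-reduced expression for $g$, using residual-$\C$-ness of the vertex groups only to guarantee that no syllable of that expression dies. First I would fix a $\Gamma$-reduced word $W \equiv (g_1, \dots, g_n)$ representing $g$, with $g_i \in G_{v_i} \setminus \{1\}$, and for each vertex $v$ put $I_v = \{ i \mid v_i = v\}$, a finite (possibly empty) index set. For a vertex $v$ with $I_v \neq \emptyset$, residual-$\C$-ness of $G_v$ provides for each $i \in I_v$ a group $F_{v,i} \in \C$ and a homomorphism $\phi_{v,i} \colon G_v \to F_{v,i}$ with $\phi_{v,i}(g_i) \neq 1$; I would then set $F_v = \prod_{i \in I_v} F_{v,i}$, which lies in $\C$ by (c2), and let $\phi_v = (\phi_{v,i})_{i \in I_v} \colon G_v \to F_v$, so that $\phi_v(g_i) \neq 1$ for every $i \in I_v$. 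For a vertex $v \notin \supp(g)$ I would simply take $F_v$ to be the trivial group (which belongs to $\C$ as soon as $\C$ is nonempty, by (c1)) and $\phi_v$ trivial. Finally, let $\phi \colon G \to F = \Gamma\mathcal{F}$ be the homomorphism induced by the $\phi_v$ via Remark \ref{functorial_property_of_graph_products}.

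The key observation is that $\phi(g)$ is represented in $F$ by the word $W' \equiv (\phi_{v_1}(g_1), \dots, \phi_{v_n}(g_n))$, which has exactly the same sequence of vertices $(v_1, \dots, v_n)$ as $W$ and all of whose syllables are nontrivial by construction. I would then argue that $W'$ is $\Gamma$-reduced: whether two syllables of a word can be joined together depends only on the vertex sequence and the graph $\Gamma$ (it requires $v_i = v_j$ and $v_k \in \link(v_i)$ for all $i < k < j$), so if some pair of syllables of $W'$ could be joined, the corresponding pair in $W$ could be joined too, contradicting that $W$ is $\Gamma$-reduced; since no syllable of $W'$ is trivial, $W'$ is $\Gamma$-reduced. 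By the Normal Form Theorem (Theorem \ref{nft}) this immediately gives $|\phi(g)| = n = |g|$, which is (i), and $\supp(\phi(g)) = \{v_1, \dots, v_n\} = \supp(g)$, which is (ii).

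For (iii) I would again exploit that the relevant data are combinatorial. By Theorem \ref{nft}, every $\Gamma$-reduced expression for $\phi(g)$ is obtained from $W'$ by syllable shuffling (transformation (T3)), and the applicability of (T3) depends only on the vertex sequence and on $\Gamma$; the same is true for $W$ and $g$. Hence $W$ and $W'$ support the same shufflings, which forces $\FL(\phi(g)) = \FL(g)$ and $\LL(\phi(g)) = \LL(g)$; and $S(\phi(g)) = \supp(\phi(g)) \cap \sstar(\supp(\phi(g))) = \supp(g) \cap \sstar(\supp(g)) = S(g)$ by (ii). So if $g$ is $\Gamma$-cyclically reduced, then $(\FL(g) \cap \LL(g)) \setminus S(g) = \emptyset$ by Lemma \ref{graph_product_cyclically_reduced_criterion}, hence $(\FL(\phi(g)) \cap \LL(\phi(g))) \setminus S(\phi(g)) = \emptyset$, and that same lemma shows $\phi(g)$ is $\Gamma$-cyclically reduced.

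The one point requiring care --- and really the only nontrivial content of the proof --- is the repeated use of the principle that the combinatorial invariants at play (``can two syllables be joined'', applicability of syllable shuffling, and therefore $\FL$, $\LL$, $\supp$, $S$, and $|\cdot|$) are determined purely by the vertex sequence of a $\Gamma$-reduced word together with the graph $\Gamma$, independently of which nontrivial elements sit in the syllables. This is exactly what lets all three conclusions be transported along $\phi$ once we know $\phi$ sends no syllable of $W$ to the identity. The degenerate case $g = 1$ (where $W$ is empty) and the vacuous case where $\C$ is empty (forcing every $G_v$, and hence $g$, to be trivial) should be noted separately.
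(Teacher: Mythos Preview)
Your proof is correct and follows essentially the same approach as the paper's: fix a $\Gamma$-reduced expression for $g$, use residual-$\C$-ness of each vertex group to kill none of the finitely many syllables landing there, and then observe that the resulting word $W'$ has the same vertex sequence as $W$, so all the combinatorial invariants ($|\cdot|$, $\supp$, $\FL$, $\LL$, $S$) transfer and Lemma~\ref{graph_product_cyclically_reduced_criterion} gives (iii). You are somewhat more explicit than the paper in constructing $\phi_v$ as a product of maps and in handling vertices outside $\supp(g)$, but the argument is the same.
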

\begin{proof}
	Let $(g_1, \dots, g_n)$ be a $\Gamma$-reduced expression for $g$ in $G$. For every $v \in V\Gamma$ let $I_v = \{i \mid g_i \in G_v\} \subseteq \supp(g)$ be the set of indices such that the corresponding syllables belong to $G_v$. Since $I_v$ is finite and $G_v$ is residually-$\mathcal{C}$ for every $v$ by assumption there is $F_v \in \mathcal{C}$ and a homomorphism $\phi_v \colon G_v \to F_v$ such that $\phi_v(g_i) \neq 1$ in $F_v$ for all $i \in I_v$. By Remark \ref{functorial_property_of_graph_products} we have the corresponding unique extension $\phi \colon G \to F$, where $F = \Gamma \mathcal{F}$. Clearly, $(\phi_{v_1}(g), \dots, \phi_{v_n}(g_n))$ is a $\Gamma$-reduced expression for $\phi(g)$ therefore $|g| = |\phi(g)|$ and $\supp(g) = \supp(\phi(g))$.
	
	Suppose that $g$ is $\Gamma$-cyclically reduced in $G$. Obviously $\FL(g) = \FL(\phi(g))$, $\LL(g) = \LL(\phi(g))$ and $S(g) = S(\phi(g))$ and thus $(\FL(\phi(g)) \cap \LL(\phi(g))) \setminus S(\phi(g)) = (\FL(g)\cap\LL(g))\setminus S(g) = \emptyset$ by Lemma \ref{graph_product_cyclically_reduced_criterion} because $g$ is $\Gamma$-cyclically reduced and therefore $\phi(g)$ is $\Gamma$-cyclically reduced in $F$ again by Lemma \ref{graph_product_cyclically_reduced_criterion}.
\end{proof}
In fact we are can generalise the previous lemma to any finite number of given elements.
\begin{cor}
\label{GP_pomocna_2}
Let $f, g \in G$ be $\Gamma$-cyclically reduced in $G$ and assume that $f \neq g$. Then there is $\mathcal{F} = \{F_v | v\in V\Gamma\}$, a family of $\mathcal{C}$-groups indexed by $V\Gamma$, and a homomorphism $\phi_v \colon G_v \to F_v$ for every $v \in V\Gamma$ such that for the corresponding extension $\phi \colon G \to F$, where $F = \Gamma\mathcal{F}$, all of the following are true:
	\begin{itemize}
		\item[(i)] $|g| = |\phi(g)|$ and $\supp(g) = \supp(\phi(g))$,
		\item[(ii)] $|f| = |\phi(f)|$ and $\supp(f) = \supp(\phi(f))$, 
		\item[(iii)] $\phi(f), \phi(g)$ are $\Gamma$-cyclically reduced in $F$,
		\item[(iv)] $\phi(f) \neq \phi(g)$ in $F$.
	\end{itemize}
\end{cor}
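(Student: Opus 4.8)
The plan is to refine the construction used in the proof of Lemma~\ref{GP_pomocna}, running the syllable-killing argument simultaneously for the three elements $f$, $g$ and $h = fg^{-1}$. Since $f \neq g$ we have $h \neq 1$, so $h$ admits a $\Gamma$-reduced expression of length at least $1$; the only reason to throw $h$ into the pot is that if $\phi$ preserves the length of $h$, then $\phi(f)\phi(g)^{-1} = \phi(h) \neq 1$, which is precisely clause~(iv). Everything else is already contained in Lemma~\ref{GP_pomocna}, just applied to two elements at once.

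Concretely, I would first fix $\Gamma$-reduced expressions $W_f$, $W_g$, $W_h$ for $f$, $g$, $h$. For each $v \in V\Gamma$ let $T_v \subseteq G_v \setminus \{1\}$ be the finite set of all syllables of $W_f$, $W_g$ and $W_h$ lying in $G_v$. As $G_v$ is residually-$\C$, for each $t \in T_v$ pick $N_t \in \NC(G_v)$ with $t \notin N_t$; then $N_v = \bigcap_{t \in T_v} N_t \in \NC(G_v)$ by (c1) and (c2), and $\phi_v \colon G_v \to F_v = G_v/N_v \in \C$ kills none of the elements of $T_v$. By Remark~\ref{functorial_property_of_graph_products} these extend to a single homomorphism $\phi \colon G \to F = \Gamma\mathcal{F}$ with $\mathcal{F} = \{F_v \mid v \in V\Gamma\}$.

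Then I would verify (i)--(iv) exactly as in the proof of Lemma~\ref{GP_pomocna}. The crucial observation, already implicit there, is that the relation ``two syllables can be joined together'' depends only on which vertex group each syllable lies in and on $\Gamma$ — never on whether the corresponding product of syllables is trivial. Hence applying $\phi$ to $W_f$ produces a word all of whose syllables are non-trivial, so it is again $\Gamma$-reduced and, by Theorem~\ref{nft}, represents $\phi(f)$ with $|\phi(f)| = |f|$ and $\supp(\phi(f)) = \supp(f)$; the same applies to $g$ and to $h$. This yields (i) and (ii), and from $|\phi(h)| = |h| \geq 1$ we get $\phi(h) \neq 1$, i.e. $\phi(f) \neq \phi(g)$, which is (iv). For (iii), since $\supp(\phi(f)) = \supp(f)$ one has $\FL(\phi(f)) = \FL(f)$, $\LL(\phi(f)) = \LL(f)$, and $S(\cdot)$ is determined by the support together with $\Gamma$; so Lemma~\ref{graph_product_cyclically_reduced_criterion} transfers $\Gamma$-cyclic reducedness from $f$ to $\phi(f)$, and likewise from $g$ to $\phi(g)$.

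The only content beyond Lemma~\ref{GP_pomocna} is clause~(iv), and the discussion above reduces it to a length-preservation statement for $h = fg^{-1}$, handled by the same recipe; so I do not anticipate a real obstacle. The one point to be careful about is the remark that ``$\Gamma$-reducedness is preserved under $\phi$'' uses only the non-triviality of the individual syllables and no statement about products of syllables — this is what makes it legitimate to control all three words by merely requiring $\phi_v$ to be injective (or even just non-vanishing) on the finite sets $T_v$.
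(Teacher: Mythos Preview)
Your proof is correct and follows essentially the same approach as the paper: apply the construction of Lemma~\ref{GP_pomocna} simultaneously to $f$, $g$, and $fg^{-1}$ (the paper uses $gf^{-1}$, which is immaterial) so that length and support of all three are preserved, the first two giving (i)--(iii) and the third giving (iv). The only cosmetic difference is that the paper invokes Lemma~\ref{GP_pomocna} three times and then intersects the resulting kernels $K_v = \ker(\phi_v^f)\cap\ker(\phi_v^g)\cap\ker(\phi_v^{fg^{-1}})$, whereas you unpack the construction directly by collecting all syllables into the sets $T_v$; the two are the same argument.
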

\begin{proof}
	We use Lemma \ref{GP_pomocna} on $g$,$f$ and $gf^{-1}$ to obtain three corresponding families $\mathcal{F}^f$, $\mathcal{F}^g$ and $\mathcal{F}^{fg^{-1}}$. For every $v \in V\Gamma$ we set $K_v = \ker(\phi_{v}^f) \cap \ker(\phi_{v}^g) \cap \ker(\phi_{v}^{fg^{-1}})$ and define $\phi_v \colon G_v \to F_v$, where $F_v = G_v / K_v$. Clearly the family of $\mathcal{C}$-groups $\mathcal{F} = \{F_v | v\in V\Gamma\}$ together with homomorphisms $\phi_v \colon G_v \to F_v$ and the extension $\phi \colon G \to \Gamma \mathcal{F}$ has all the claimed properties.
\end{proof}

The proof of the following remark is left as a simple exercise for the reader.
\begin{remark}
\label{direct}
	Suppose that $\C$ is a class of finite groups satisfying (c1) and (c2). Then the class of $\CCS$ groups is closed under taking direct products.
\end{remark}
\begin{proof}[Proof of Theorem \ref{main_cs}]
	Let $g \in G$ be arbitrary and let $f \in G$ such that $f \not\sim_G g$. Note that the set of vertices $X = \supp(g) \cup \supp(f) \subseteq V\Gamma$ is finite and $\rho_X(f) = f \not\sim_{G_X} \rho_X(g) = g$, where $\rho_X \colon G \to G_X$ is the canonical retraction corresponding to the full subgroup $G_X \leq G$ given by the set $X \subseteq V\Gamma$. Hence without loss of generality we may assume that $|V\Gamma| < \infty$. Let $f_0, g_0 \in G$ be $\Gamma$-cyclically reduced elements of $G$ such that $f_0 \sim_G f$ and $g_0 \sim_G g$. Clearly $f_0 \not\sim_G g_0$.  
	
	By Lemma \ref{conjugacy_criterion_for_graph_products} we have three possibilities to consider:
	\begin{itemize}
		\item[(i)] $\supp(g_0) \neq \supp(f_0)$ or $|g_0| \neq |f_0|$,
		\item[(ii)] $p(f_0)$ is not a cyclic permutation of $p(g_0)$,
		\item[(iii)] $s(f_0) \not\in s(g_0)^{G_{S(g_0)}}$.
	\end{itemize}
	Assume that either $\supp(g_0) \neq \supp(f_0)$ or $|f_0| \neq |g_0|$. Then we can use Corollary \ref{GP_pomocna_2} to obtain a family of $\mathcal{C}$-groups $\mathcal{F} = \{F_v|v \in V\Gamma\}$ and a homomorphism $\phi_v \colon G_v \to F_v$ for every $v \in V\Gamma$ such that for the corresponding extension $\phi \colon G \to \Gamma \mathcal{F}$ we have either $\supp(\phi(f_0)) \neq \supp(\phi(g_0))$ or $|\phi(f_0)| \neq |\phi(g_0)|$ respectively. By Lemma \ref{conjugacy_criterion_for_graph_products} we see that $\phi(f_0) \not\sim_{\Gamma\mathcal{F}}\phi(g_0)$ and hence $\phi(f) \not\sim_{\Gamma\mathcal{F}} \phi(g)$. Note that $\Gamma \mathcal{F}$ is a finite graph product of groups belonging to the class $\C$ and thus by Corollary \ref{finite} we see that the group $\Gamma \mathcal{F}$ is $\CHCS$.
	
	Assume that $\supp(g_0) = \supp(f_0)$ and $|g_0| = |f_0|$. Suppose that $p(f_0)$ is not a cyclic permutation of $p(g_0)$. Let $\{p_1, \dots, p_m\} \subset G$ be the set of all cyclic permutations of $p(g_0)$ including $p(g_0)$. Then $p_i \neq p(f_0)$ for $i = 1, \dots, m$ and we can use Corollary \ref{GP_pomocna_2} for each pair $p(f_0), p_i$, where $1 \leq i \leq m$, to obtain a family of $\mathcal{C}$-groups $\mathcal{F}_i = \{F_v^i | v \in V\Gamma\}$ with homomorphisms $\phi_v^i \colon G_v \to F_v^i$ for all $v \in V\Gamma$. For every $v \in V\Gamma$ set $K_v = \bigcap_{i = 1}^m \ker(\phi_v^i)$ and denote $F_v = G_v / K_v$. Set $\mathcal{F} = \{F_v | v\in V\Gamma\}$ and let $\phi_v \colon G_v \to F_v$ be the natural projection corresponding to $v$. Let $\phi \colon G \to \Gamma \mathcal{F}$ be the natural extension. Note that $p(\phi(f_0)) = \phi(p(f_0))$ and $p(\phi(g_0)) = \phi(p(g_0))$. Clearly the set $C = \{\phi(p_1),\dots, \phi(p_m)\}$ is the set of all cyclic permutations of $p(\phi(g_0))$ and we see that $p(\phi(f_0))\not\in C$ and thus $p(\phi(f_0))$ is not a cyclic permutation of $p(\phi(g_0))$. By Lemma \ref{conjugacy_criterion_for_graph_products} we see that $\phi(f_0) \not\sim_{\Gamma\mathcal{F}} \phi(g_0)$ and thus $\phi(f) \not\sim_{\Gamma\mathcal{F}} \phi(g)$. Again, by Corollary \ref{finite} we see that the group $\Gamma \mathcal{F}$ is $\CHCS$.

	Now assume that $\supp(g_0) = \supp(f_0)$, $|g_0| = |f_0|$, $p(f_0)$ is a cyclic permutation of $p(g_0)$. Since $\supp(f_0) = \supp(g_0)$ we see that $S(g_0) = S(f_0)$. Denote $S = S(g_0)$ and assume that $s(f_0) \not\in s(g_0)^{G_{S}}$. Note that 
	\begin{displaymath}
		G_{S} = \prod_{v \in S}G_v
	\end{displaymath}
	is a direct product of $\C$-CS groups and thus it is a $\C$-CS group by Remark \ref{direct}. Consider the retraction $\rho_S \colon G \to G_S$. Clearly $\rho_S(f_0) = s(f_0)$ and $\rho_S(g_0) = s(g_0)$. Therefore $\rho_S(g_0) \not \in \rho_S(g_0)^{G_S}$ by assumption and consequently $\rho_S(f) \not\in \rho_S(g)^{G_S}$.
	
	In each of the cases we have constructed a homomorphism onto a $\mathcal{C}\text{-CS}$ group, such that the images of $f$ and $g$ were not conjugate. Then by Lemma \ref{closed_simplification} we see that the conjugacy class $g^G$ is $\C$-closed in $G$. As $g$ was arbitrary we see that $G$ is $\mathcal{C}\text{-CS}$.		
\end{proof}

\subsection{Infinite graph products of $\CHCS$ groups}
The idea of the proof of Theorem \ref{main_chcs_infinite} somewhat similar to the proof of Theorem \ref{main_cs}. In the proof of Theorem \ref{main_cs} we started with a possibly infinite graph $\Gamma$ and showed that we can always retract to a full subgroup $G_A$ given by a finite set of vertices $A \subseteq V\Gamma$ and thus we were able to use Corollary \ref{finite}. In the proof of Theorem \ref{main_chcs_infinite} we start with a graph product $\Gamma \mathcal{G}$, where $\Gamma$ is an infinite graph, we show that for every $g \in G$ we can construct a finite graph product $\Delta \mathcal{D}$ of $\CHCS$ groups and a homomorphism $\delta \colon \Gamma \mathcal{G} \to \Delta \mathcal{D}$ such that $C_{\Delta \mathcal{D}}(\delta(g)) = \delta(C_G(g))$.
\begin{proof}[Proof of Theorem \ref{main_chcs_infinite}.]
	Let $\Gamma$ be a graph and let $\mathcal{G} = \{G_v \mid v\in V\Gamma\}$ be a family of groups such that the group $G_v$ is $\CHCS$ for every $v \in V\Gamma$. Let $G = \Gamma \mathcal{G}$. By Theorem \ref{main_cs} we see that the group $G$ is $\CCS$ so we need to show that $G$ satisfies $\C$-CC. Clearly, $G$ satisfies $\C$-CC if and only if for every $g \in G$ the pair $(G, g)$ satisfies $\C\text{-CC}_G$. Let $g \in G$ and $K \in \NC(G)$ be arbitrary. Pick $g' \in G$ such that $g \sim_G g'$ and $g'$ is $\Gamma$-cyclically reduced. By Lemma \ref{cc_simplification} we see that the pair $(G,g)$ has $\C\text{-CC}_G$ if and only if the pair $(G, g')$ has $\C\text{-CC}_G$. Denote $A = \supp(g')$.
	
	Let $\varphi \colon G \to G/K$ be the natural projection. Define a family of groups $\mathcal{F} = \{F_v \mid v \in V\Gamma\}$, where $F_v = G_v$ if $v \in A$ and $F_v = \varphi(G_v)$ otherwise. For every $v \in V\Gamma$ we have a group homomorphism $\phi_v \colon G_v \to F_v$ where $\phi_v = \text{id}_{G_v}$ if $v \in A$ and $\phi_v = \varphi \restriction_{G_v}$ otherwise. By Remark \ref{functorial_property_of_graph_products} there is a unique group homomorphism $\phi \colon \Gamma\mathcal{G} \to \Gamma \mathcal{F}$ such that $\phi \restriction_{G_v} = \phi_v$ for every $v \in V\Gamma$. Denote $F = \Gamma \mathcal{F}$. Note that $\ker{\phi} = \lnorm \ker(\phi_v)\rnorm_{v \in V\Gamma}^{G}$. Clearly if $v \in A$ then $\ker(\phi_v) = \{1\}$ and if $v \in V\Gamma \setminus A$ then $\ker(\phi_v) = K \cap G_v$. It follows that $\ker(\phi) \leq K$ and hence there is a unique homomorphism $\overline{\phi} \colon F \to G/K$ such that $\varphi = \overline{\phi}\circ \phi$.
		
	Set $A' = V\Gamma \setminus A$. Define equivalence $\approx_1$ on $A'$ as follows: $u \approx_1 v$ if $\link(u) \cap A = \link(v) \cap A$. Define equivalence $\approx_2$ on $A'$ as follows: $u \approx_K v$ if $\overline{\phi}(F_u) = \overline{\phi}(F_v)$ in $G/K$. Now let $\approx$ be the equivalence relation on $A'$ obtained as intersection of $\approx_1$ and $\approx_2$, i.e. $u \approx v$ if $\link(u) \cap A = \link(v) \cap A$ and $\overline{\phi}(F_u) = \overline{\phi}(F_v)$ in $G/K$. Note that $|A' / \approx_2| \leq 2^{|A|}$ and $|A' / \approx_2| \leq 2^{|G/K|}$, therefore we see that $|A' /\approx| < \infty$.
	
	Define a graph $\Delta$ with vertex set $V \Delta = A \cup (A' / \approx$). Note that $V\Delta$ is finite. For $u,v \in A$ we set $\{u,v\} \in E\Delta$ if and only if $\{u,v\} \in E\Gamma$, for $u \in A$ and $[x]_\approx \in A'/\approx$ we set $\{u, [x]_\approx\} \in E\Delta$ if and only if there is $x_0 \in [x]_\approx$ such that $\{u,x\} \in E\Gamma$. Similarly for $[x]_\approx,[y]_\approx \in A'/\approx$ we set $\{[x]_\approx,[y]_\approx\} \in E\Delta$ if and only if there are $x_0 \in [x]_\approx$ and $y_0 \in [y]_\approx$ such that $\{x_0, y_0\} \in E\Gamma$. Note that the natural map from $V\Gamma$ to $V\Delta$ actually extends to a graph morphism from $\Gamma$ to $\Delta$.
	
	To every vertex in $v\in V\Delta$ we assign a vertex group $D_v$ in the following way: if $v \in A$ then $D_v = G_v$; if $v = [v_0]_\approx$ for some $v_0 \in A'$ then $D_v = \varphi(G_{v_0}) = \overline{\phi}(F_v)$. This leads to a family of groups $\mathcal{D} = \{D_v \mid v \in V\Delta\}$. For every $v\in V\Gamma$ we define a group homomorphism $\overline{\varphi}_v \colon F_v \to D_{x_v}$, where $x_v = v$ if $v \in A$ and $x_v = [v]_\approx$ otherwise. If $v \in A$ then $\overline{\varphi}_v = \text{id}_{G_v}$ and if $v \in V\Gamma \setminus A$ then $\overline{\varphi}_v = \overline{\phi}\restriction_{F_v}$. By a theorem of von Dyck (see \cite[footnote 2, page 346]{rotman}) the family of group homomorphisms $\{\overline{\varphi}_v \mid v \in V\Gamma\}$ extends to a  homomorphism $\overline{\varphi} \colon F \to D$, where $D = \Delta \mathcal{D}$ is the corresponding graph product.
	
Let $x, y \in F$ be arbitrary. It is obvious that if $\overline{\varphi}(x) = \overline{\varphi}(y)$ then $\overline{\phi}(x) = \overline{\phi}(y)$ and thus $\ker(\overline{\varphi}) \leq \ker(\overline{\phi}) = \phi(K)$. We see that there is unique homomorphism $\overline{\delta} \colon D \to G/K$ such that $\overline{\phi} = \overline{\delta} \circ \overline{\varphi}$. Denote $\delta = \overline{\delta}\circ \overline{\phi}$. The following commutative diagram illustrates the situation.

\[
 	\xymatrix{
 		G=\Gamma \mathcal{G} \ar[dr]^{\phi} \ar@/_/[ddr]_{\varphi} \ar@/^/[drr]^{\delta}\\
		& F = \Gamma \mathcal{F} \ar[r]^{\overline{\varphi}} \ar[d]^{\overline{\phi}} &
 		D = \Delta \mathcal{D} \ar[dl]^{\overline{\delta}} \\
 		&
 		G / K
 	}
\]
Clearly $\ker(\delta) = \phi^{-1}(\ker(\overline{\varphi})) \leq K$.

Now we need to show that $C_{\delta(G)}(\delta(g)) \subseteq \delta(C_G(g)K)$. One can easily check that $\PC_{\Gamma}(\langle g'\rangle) = G_A$ and therefore by Lemma \ref{centralisers_in_graph_products} we have $C_G(g') = C_{G_A}(g') G_{\link(A)}$. Denote $\delta_A = \delta \restriction_{G_A}$. From the construction of $\delta$ it is easy to see that $\delta_A \colon G_A \to D_A$ is an isomorphism. Let $P = \PC_{\Delta}(\delta(\langle g'\rangle))$. As $D_A$ is a full (and hence parabolic) subgroup of $D$ and $\delta(g') \in P$ we see that $P \leq D_A$ due to minimality of $P$. By \cite[Lemma 3.7]{yago} we see that $P$ is actually parabolic in $\Delta_A \mathcal{D}_A = D_A$. Let $P' = \delta_A(P)^{-1} \leq G_A \leq G$. From the construction of the map $\delta$ we see that $P'$ is parabolic in $G_A$ (and thus in $G$) and that $g' \in P'$. Since $\PC_{\Gamma}(\langle g' \rangle) = G_A$ and $g' \in P'$ we see that $G_A \leq P'$ and therefore $P' = G_A$. This means that $P = D_A$. We see that $\PC_{\Delta}(\langle \delta(g') \rangle) = \PC_{\Delta}(\delta(\langle g' \rangle)) = D_A$ and hence by Lemma \ref{centralisers_in_graph_products} we get that $C_D(\delta(g')) = C_{D_A}(\delta(g'))D_{\link(A)}$.

Again, since $\delta \restriction_{G_A}$ is an isomorphism we see that $\delta(C_{G_A}(g')) = C_{D_A}(\delta(g')) = C_{\delta(G_A)}(\delta(g'))$. From the construction of the equivalence $\approx$ we see that for every $v \in V\Gamma$ we have $[v]_\approx \in \link(A)$ in $\Delta$ if and only if $v \in \link(A)$ in $\Gamma$ and hence $\delta(G_{\link(A)}) = D_{\link(A)}$. We see that 
	\begin{displaymath}
		C_D(\delta(g')) = C_{D_A}(\delta(g'))D_{\link(A)} = \delta(C_{G_A}(g')G_{\link(A)}) = \delta(C_G(g')) \subseteq \delta(C_G(g')K).
	\end{displaymath}
		
For every $v \in V\Delta$ the group $D_v$ is either an infinite $\CHCS$ group or belongs to the class $\C$. By Theorem \ref{main_hcs} we see that the group $D$ is $\CHCS$ and hence $D$ satisfies $\C$-CC by Theorem \ref{cc_and_cs_is_hcs}. Consequently, the pair $(D, \delta(g'))$ satisfies $\C$-CC in $D$. By Lemma \ref{cc_simplification} we see that the pair $(G,g)$ satisfies $\C\text{-CC}_G$ for any $g \in G$ and therefore $G$ satisfies $\C$-CC. We have proved that $G$ is $\C$-CS and satisfies $\C$-CC, hence by Theorem \ref{cc_and_cs_is_hcs} we see that $G$ is $\CHCS$.
\end{proof}

\subsection{Some corollaries}
Applying Theorem \ref{main_chcs_infinite} to the most obvious types of extension closed varieties of finitely presented groups we immediately get that the class of HCS groups is closed under taking finite graph products, similarly for $p$-HCS and (finite solvable)-HCS.

We can also extend the results of Minasyan (see \cite{raags}) and Toinet (see \cite{toinet}) to infinitely generated right angled Artin groups.
\begin{cor}
	Infinitely generated RAAGS are HCS and $p$-HCS for every prime number $p$.
\end{cor}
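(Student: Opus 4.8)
The plan is to recognise a right angled Artin group as a graph product of copies of $\mathbb{Z}$ and then to invoke Theorem \ref{main_chcs_infinite} twice: once with $\C$ the class of all finite groups, which yields plain HCS, and once, for a fixed prime $p$, with $\C$ the class of all finite $p$-groups, which yields $p$-HCS. Both of these classes are easily checked to be extension closed varieties of finite groups (closure under subgroups, finite direct products, quotients and extensions is immediate in each case), so Theorem \ref{main_chcs_infinite} applies as soon as we know the vertex groups are $\CHCS$ for the relevant $\C$.

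Thus the only point requiring verification is that $\mathbb{Z}$ is $\CHCS$ in each of these two settings. First, $\mathbb{Z}$ is residually-$\C$: it is residually finite, and in the pro-$p$ case the projections $\mathbb{Z} \to \mathbb{Z}/p^n\mathbb{Z}$ witness that $\mathbb{Z}$ is residually-$p$. Since $\mathbb{Z}$ is abelian, every conjugacy class in $\mathbb{Z}$ is a singleton, so $\C$-conjugacy separability of $\mathbb{Z}$ reduces to residual-$\C$-ness; hence $\mathbb{Z}$ is $\CCS$. Next, by Lemma \ref{open_subgroups} every $\C$-open subgroup $H \leq \mathbb{Z}$ has finite index in $\mathbb{Z}$ (of $p$-power index in the pro-$p$ case), so $H$ is again infinite cyclic, and the same argument shows $H$ is $\CCS$. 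Therefore $\mathbb{Z}$ is $\CHCS$ for $\C$ the class of all finite groups and for $\C$ the class of all finite $p$-groups.

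Finally, a right angled Artin group on a graph $\Gamma$ is by definition the graph product $\Gamma\mathcal{G}$ with $G_v = \mathbb{Z}$ for every $v \in V\Gamma$, and Theorem \ref{main_chcs_infinite} imposes no restriction on $|V\Gamma|$. Applying it with $\C$ the class of all finite groups shows this graph product is HCS, and applying it with $\C$ the class of all finite $p$-groups shows it is $p$-HCS; since $p$ was an arbitrary prime, the corollary follows. I do not expect any genuine obstacle: all of the substantive work is absorbed into Theorem \ref{main_chcs_infinite}, and the single step with any content — that $\mathbb{Z}$ is $\CHCS$ — is routine precisely because finite-index subgroups of $\mathbb{Z}$ are themselves infinite cyclic, so no new separability input is needed.
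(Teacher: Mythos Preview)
Your proposal is correct and matches the paper's approach: the corollary is stated without proof as an immediate consequence of Theorem \ref{main_chcs_infinite}, so your explicit verification that $\mathbb{Z}$ is $\CHCS$ for both choices of $\C$ (via abelianness and the fact that finite-index subgroups of $\mathbb{Z}$ are again infinite cyclic) simply spells out what the paper leaves implicit.
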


In \cite[Theorem 1.2]{racgs} Caprace and Minasyan proved that finitely generated RACGs are CS. By applying Theorem \ref{main_chcs_infinite} to RACGs once in the context of the class of all finite groups and once in the context of all finite 2-groups we get following strengthening of the mentioned result.
\begin{cor}
	\label{racgs_hcs}
	Arbitrary (possibly infinitely generated) right angled Coxeter groups are HCS and 2-HCS.
\end{cor}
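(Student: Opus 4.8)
The plan is to read this off directly from Theorem \ref{main_chcs_infinite}. Recall that a right angled Coxeter group is by definition the graph product $\Gamma\mathcal{G}$ associated to a (possibly infinite) simplicial graph $\Gamma$ with $G_v \cong C_2$ for every $v \in V\Gamma$. The two instances of the class $\C$ we shall use are $\C$ = the class of all finite groups and $\C$ = the class of all finite $2$-groups; both were observed in the introduction to be extension closed varieties of finite groups, so Theorem \ref{main_chcs_infinite} applies to each.

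First I would note that the single vertex group $C_2$ is $\C$-HCS for each of these two classes. Indeed $C_2$ is itself a finite $2$-group, hence lies in both classes, and as remarked immediately after Corollary \ref{finite} every group belonging to $\C$ is $\C$-HCS: for $F \in \C$ the conjugacy class $g^F$ is trivially $\C$-closed via $\id_F$, the trivial subgroup is co-$\C$ since $F/\{1\} = F \in \C$, so by Lemma \ref{open_subgroups} every subgroup of $F$ is $\C$-open, and every such subgroup again lies in $\C$ by closure under subgroups and is therefore $\C$-CS by the same argument.

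Now apply Theorem \ref{main_chcs_infinite} twice. Taking $\C$ to be the class of all finite groups, the theorem gives that $\Gamma\mathcal{G}$ is $\C$-HCS, i.e. HCS; taking $\C$ to be the class of all finite $2$-groups, it gives that $\Gamma\mathcal{G}$ is $\C$-HCS, i.e. $2$-HCS. Since Theorem \ref{main_chcs_infinite} imposes no restriction on the cardinality of $V\Gamma$, this covers arbitrary, possibly infinitely generated, right angled Coxeter groups, which is exactly the claim.

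There is essentially no obstacle here; the corollary is a pure specialisation of the main theorem, the only point to check being the routine fact that each vertex group $C_2$ is $\C$-HCS, which follows from its membership in $\C$. The substantive content of the statement, namely the strengthening of the Caprace--Minasyan result \cite[Theorem 1.2]{racgs} from conjugacy separability of finitely generated RACGs to hereditary and $2$-hereditary conjugacy separability of arbitrary RACGs, is entirely carried by Theorem \ref{main_chcs_infinite}.
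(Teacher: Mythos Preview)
Your proof is correct and follows essentially the same approach as the paper: both apply Theorem~\ref{main_chcs_infinite} twice, once with $\C$ the class of all finite groups and once with $\C$ the class of all finite $2$-groups, using that each vertex group $C_2$ lies in $\C$ and is therefore $\C$-HCS. The paper records this as an immediate consequence without spelling out the verification that $C_2$ is $\C$-HCS, which you do explicitly.
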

The statement of Corollary \ref{racgs_hcs} can be compared with the following example: the group $G = \text{FSym}(X)$ of finitary permutations of an infinite set $X$ is an infinitely generated Coxeter group, but it is not even residually finite. Clearly being right angled is a strong requirement.

As we mentioned in the introductory section virtually polycyclic groups are HCS, thus we can state the following corollary.
\begin{cor}
	Let $\Gamma$ be any graph and let $\mathcal{G} = \{G_v \mid v \in V\Gamma\}$ be a family of groups such that the group $G_v$ is virtually polycyclic for every $v \in V\Gamma$. Then the group $G = \Gamma \mathcal{G}$ is HCS.
\end{cor}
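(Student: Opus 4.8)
The plan is to deduce this from Theorem~\ref{main_chcs_infinite} by specialising the class $\C$ to the class of \emph{all} finite groups. First I would verify that this $\C$ is an extension closed variety of finite groups, i.e.\ that it satisfies (c1)--(c4): closure under subgroups, finite direct products, quotients and extensions is immediate. For this choice of $\C$ the pro-$\C$ topology on any group is the profinite topology, $\C\text{-CS}$ is just ordinary conjugacy separability, and the $\C$-open subgroups are precisely the finite-index subgroups: indeed, if $H\leq G$ has finite index then its normal core $N=\bigcap_{g\in G}gHg^{-1}$ is normal of finite index and contained in $H$, so $N\in\NC(G)$ and $H$ is $\C$-open by Lemma~\ref{open_subgroups}, while the converse inclusion is part of the same lemma. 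Consequently, for this $\C$, a group is $\CHCS$ if and only if it is HCS.

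Next I would record that every virtually polycyclic group is HCS. Any subgroup of finite index in a virtually polycyclic group is again virtually polycyclic, and virtually polycyclic groups are conjugacy separable by the results of Formanek~\cite{polycyclic_formanek} and Remeslennikov~\cite{polycyclic_remeslennikov}; hence a virtually polycyclic group is conjugacy separable together with all of its finite-index subgroups, which is exactly the statement that it is HCS. Therefore, with $\C$ as above, each vertex group $G_v$ of $\mathcal{G}$ is $\CHCS$.

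Finally I would apply Theorem~\ref{main_chcs_infinite}: since $\C$ is an extension closed variety of finite groups and every $G_v$ is $\CHCS$, the graph product $G=\Gamma\mathcal{G}$ is $\CHCS$, and by the identification in the first paragraph this means precisely that $G$ is HCS; note that $\Gamma$ is allowed to be arbitrary, which is what the statement requires. I do not expect any genuine obstacle here: the argument is a direct instantiation of the main theorem, and the only step deserving a moment's care is the identification of $\CHCS$ with HCS for the class of all finite groups, which rests on the elementary fact that finite-index subgroups are always open in the profinite topology.
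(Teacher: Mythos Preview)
Your proposal is correct and follows exactly the route the paper takes: the corollary is stated immediately after the sentence ``As we mentioned in the introductory section virtually polycyclic groups are HCS, thus we can state the following corollary,'' with no further proof given. Your write-up simply spells out in detail the instantiation of Theorem~\ref{main_chcs_infinite} with $\C$ the class of all finite groups, together with the identification $\CHCS=\text{HCS}$ for this class and the observation that finite-index subgroups of virtually polycyclic groups are again virtually polycyclic (hence CS by \cite{polycyclic_formanek,polycyclic_remeslennikov}).
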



\end{document}